\newtheorem{theorem}{Theorem}
\newtheorem{corollary}{Corollary}[theorem]
\newtheorem{proposition}{Proposition}
\newtheorem{definition}{Definition}
\newtheorem{lemma}[theorem]{Lemma}
\newtheorem{example}{Example}
\newtheorem{remark}{Remark}
\DeclareMathOperator\supp{supp}
\providecommand{\keywords}[1]
{
  \small{\textit{Keywords---} #1}
}
\title{\Large Generalized van Trees inequality:\\
Local minimax bounds for non-differentiable functionals and \\irregular statistical models}
\author{Kenta Takatsu}
\author{Arun Kumar Kuchibhotla}
\affil{Department of Statistics and Data Science, Carnegie Mellon University}
\date{}
\DeclareMathOperator{\E}{\mathbb{E}}
\begin{document}
\maketitle

\begin{abstract}
In a decision-theoretic framework, the minimax lower bound provides the worst-case performance of estimators relative to a given class of statistical models. For parametric and semiparametric models, the H\'{a}jek--Le Cam local asymptotic minimax (LAM) theorem provides the sharp local asymptotic lower bound. Despite its relative generality, this result comes with limitations as it only applies to the estimation of differentiable functionals under regular statistical models. On the other hand, minimax lower bound techniques such as Fano's or Assoud's are applicable in more general settings but are not sharp enough to imply the LAM theorem. To address this gap, we provide new non-asymptotic minimax lower bounds under minimal regularity assumptions, which imply sharp asymptotic constants. The proposed lower bounds do not require the differentiability of functionals or regularity of statistical models, extending the efficiency theory to broader situations where standard results fail. 
The use of the new lower bounds is illustrated through the local minimax lower bound constants for estimating the density at a point and directionally differentiable parameters.
\end{abstract}

\keywords{van Trees inequality, Hammersley–Chapman–Robbins bound, Local asymptotic minimax theorem, Local minimaxity, Efficiency theory, Non-differentiable functional estimation, Irregular statistical models}

\tableofcontents

\section{Introduction} 
In a decision-theoretic framework, the optimality of an estimation procedure is often motivated by the attainability to minimax lower bound. Specifically, a proposed estimator is considered rate-optimal when its convergence rate matches that of the corresponding minimax lower bound. A more refined assessment examines whether an estimator is \textit{asymptotically efficient}, meaning the risk of the estimator coincides with the minimax lower bound including the constant. If this is the case, the estimation procedure becomes no longer improvable at least asymptotically. Hence, the construction of precise minimax lower bounds has been a crucial step for evaluating many statistical procedures.

The optimality of functional estimation has been extensively studied in parametric and nonparametric models~\citep{ibragimov1981statistical, bickel1993efficient,lehmann2006theory}. Semiparametric efficiency theory, in particular, is considered a cornerstone for understanding the asymptotic optimality of functional estimation. The convolution theorem and the local asymptotic minimax (LAM) theorem are two fundamental results in the development of semiparametric efficiency~\citep{hajek1970characterization, hajek1972local, le1972limits}, which provide precise asymptotic constants; however, these theorems only apply to differentiable functionals and require certain regularity conditions on the underlying statistical models. As optimal estimation of non-differentiable functionals has gained growing interest, extending the classical efficiency theory to broader settings is an imminent task. 

This manuscript aims to strengthen the LAM theorem by developing new minimax lower bounds under minimal regularity assumptions imposed on functionals, estimators, and underlying statistical models. We consider a (nonparametric) functional $\psi: \mathcal{P} \mapsto \mathbb{R}^k$, where $\mathcal{P}$ is a collection of probability measures containing the distribution from which the observation $X$ is drawn. The unknown data-generating distribution is denoted by $P_{\theta_0}$ and a local statistical model containing $P_{\theta_0}$ is given by $\{P_\theta : \theta \in \Theta_0 \subseteq \Theta\}$, satisfying $\theta_0 \in  \Theta_0$. Here, the indexing set $\Theta_0$ is a subset of a metric space, which itself can be the space of probability measures. The primary focus of this manuscript is on the following general minimax risk for an arbitrary parameter set $\Theta_0 \subseteq \Theta$:
\begin{equation}\label{eq:local-minimax-risk}
    \inf_{T}\,  \sup_{\theta \in \Theta_0}\,  \mathbb{E}_{P_\theta}\|T(X)-\psi(P_\theta)\|^2
\end{equation}
where $\|\cdot\| : \mathbb{R}^k \mapsto \mathbb{R}_+$ for $\mathbb{R}_+ := [0, \infty)$ is any vector norm, and the infimum is taken over measurable functions. We also denote by $\mathbb{E}_{P_\theta}$ the expectation under $P_{\theta}$. It should be noted that this framework is not confined to a parametric model, and it can be extended to a nonparametric model using the standard machinery from semiparametric statistics \citep{bickel1993efficient, van2002semiparametric}.

\subsection{Summary of main results}
The main results are summarized as follows. On a conceptual level, we demonstrate that minimax risk for estimating non-differentiable functionals is characterized by the interplay between two non-negative terms in the proceeding display:
\begin{align*}
    \inf_{T}\,  \sup_{\theta \in \Theta_0}\,  \mathbb{E}_{P_\theta}\|T(X)-\psi(P_\theta)\|^2 \ge \sup_{\phi}\, \left[\left\{\textrm{S.Eff}(\phi; \Theta_0)\right\}^{1/2}-\left\{\textrm{Approx.Bias}(\psi, \phi; \Theta_0)\right\}^{1/2}\right]^2_+
\end{align*}
where $[t]_+ := \max(t, 0)$ and $\phi$ is any functional that approximates the original functional $\psi$. The first term, we call it \textit{surrogate efficiency}, is an efficiency bound for estimating the functional $\phi$. When $\phi$ provides sufficient structures such as smoothness, this term is well-studied in the literature for both parametric and nonparametric models. The second term, we call it \textit{approximation bias}, quantifies the deviation of the introduced functional $\phi$ from the original functional $\psi$. The overall minimax risk is then obtained by optimizing the choice of $\phi$. Such an interplay between surrogate efficiency and approximation bias resembles the classical bias-variance decomposition of an estimation error. Hence, the result of this manuscript can be seen as the minimax lower bound analog to the bias-variance trade-off.

We provide two concrete instances of the above idea. First, Theorem~\ref{cor:vt-approximation} shows that the minimax risk admits the lower bound based on the projection onto the set $\Phi_{\textrm{ac}}$ of all absolutely continuous functions of $\theta\in\Theta_0$. Let $\psi:\Theta_0 \mapsto \mathbb{R}$ be a possibly non-differentiable functional of interest\footnote{The formal result in Section~\ref{sec:vt-hellinger} holds for any vector-valued functional $\psi$.} and $\phi\in\Phi_{\textrm{ac}}$ with almost-everywhere derivative $\nabla \phi$. Assume that the statistical model $\{P_{\theta}:\,\theta\in\Theta_0\}$ is regular such that the Fisher information $\mathcal{I}(\theta)$ at $P_{\theta}$ is well-defined (see definition in \eqref{eq:hellinger-fisher-info}). Then, for all ``nice'' priors $Q$ on $\Theta_0$ (see Definition~\ref{as:regularprior-vt}) with the Fisher information $\mathcal{I}(Q)$ (see \eqref{eq:Fisher-information-prior}), it follows that:
\begin{align*}
& \inf_T\, \sup_{\theta\in\Theta_0}\,\mathbb{E}_{P_\theta}|T(X)-\psi(\theta)|^2 \ge \left[\Gamma^{1/2}_{Q, \phi} - \left(\int_{\Theta_0}|\psi(\theta)-\phi(\theta)|^2dQ(\theta)\right)^{1/2}\right]_+^2
\end{align*}
where 
\begin{align}
	\Gamma_{Q, \phi}:=\left(\int_{\Theta_0} \nabla \phi(\theta)\, dQ(\theta)\right)^{\top}\left(\mathcal{I}(Q) + \int_{\Theta_0} \mathcal{I}(\theta)\, dQ(\theta)\right)^{-1}\left(\int_{\Theta_0}  \nabla \phi(\theta)\, dQ(\theta)\right)\nonumber.
\end{align}
The lower bound holds for any choice of $\phi \in \Phi_{\textrm{ac}}$ and any ``nice'' prior $Q$, allowing for further optimization over them. Here, the surrogate efficiency is obtained by the \textit{van Trees inequality} \citep{gill1995applications, gassiat2013revisiting} for smooth functional $\phi$. 

Second, we consider approximating $\psi(t)$ with $\psi(t-h)$ for $h \in \Theta$. Such a local perturbation is well-defined, as long as $t, t-h \in \Theta_0$, even when the functional $\psi$ is non-differentiable. Then for any probability measure $Q$ over $\mathbb{R}^d$, Theorem~\ref{thm:crvtBound_hellinger} states for any $\psi:\mathbb{R}^d\mapsto\mathbb{R}^k$, that  
\begin{align*}
    &\inf_T\, \sup_{\theta \in\mathbb{R}^d}\, \mathbb{E}_{P_\theta}\|T(X)-\psi(\theta)\|^2\\
    &\qquad\ge \sup_{h\in\mathbb{R}^d}\,\left[\left(\frac{\|\int_{\mathbb{R}^d} \psi(t)-\psi(t-h)\,dQ\|^2}{4H^2(\mathbb{P}_0, \mathbb{P}_h)} \right)^{1/2}-\left(\int_{\mathbb{R}^d} \|\psi(t)-\psi(t-h)\|^2 \, dQ\right)^{1/2}\right]^2_+
\end{align*}
where $H^2(\mathbb{P}_0, \mathbb{P}_h)$ is the Hellinger distance between two mixture distributions $\mathbb{P}_0$ and $\mathbb{P}_h$ indexed by $h \in \mathbb{R}^d$ (see \eqref{eq:mixture-def}). Informally, $\mathbb{P}_h$ is a probability measure ``slightly perturbed'' from $\mathbb{P}_0$ by $h$, which is a mixture between $P_0$ and $Q$. 

Not only these lower bounds are non-asymptotic, but also they are asymptotically sharp for regular models. In Section~\ref{section:asympt_const}, we demonstrate that these bounds imply the LAM theorem.
Theorem~\ref{thm:crvtBound_hellinger} in particular holds under the irregularity exhibited through the underlying statistical model, which heuristically results in the undefined Fisher information and the failure of the LAM theorem. We address this irregularity and derive the asymptotic minimax rate for irregular problems. The conclusions of this manuscript are consistent with the literature on irregular estimation \citep{ibragimov1981statistical, shemyakin2014hellinger, lin2019optimal} and the asymptotic theory developed by H\'{a}jek and Le Cam \citep{hajek1970characterization,hajek1972local,le1972limits,pollardAsymptotia}. In summary, we provide non-asymptotic sequences of minimax lower bound, which remain valid beyond the context of the LAM theorem, including non-differentiable functionals or irregular models. Non-asymptotic minimax lower bounds in this manuscript converge to the asymptotic optimal constant for regular models and also yield the local asymptotic minimax results for irregular models, unifying these known results into a single expression.

Before proceeding, we would like to acknowledge that general-purpose minimax lower bounds can only go so far in the sense that their application to specific instances requires additional work. We present some examples to show how our lower bounds can be applied.

\subsection{Motivation}\label{section:lit}
\subsubsection{Limitations of the LAM theorem}
Classical local asymptotic results investigate the lower bound to \eqref{eq:local-minimax-risk} when $\Theta_0$ forms a shrinking ball around $\theta_0$. For example, the risk displayed by \eqref{eq:local-minimax-risk} becomes equivalent to the local asymptotic minimax risk under the root-$n$ rate, which is given by
\begin{equation}
    \liminf_{c\longrightarrow \infty}\,\liminf_{n\longrightarrow \infty}\, \inf_{T}\,  \sup_{\|\theta-\theta_0\| < cn^{-1/2}}\, n \mathbb{E}_{P_\theta}\|T(X)-\psi(P_\theta)\|^2 \quad \textrm{where} \quad X \sim P^n_{\theta}.
\end{equation}
where $P_{\theta_0}^n$ denotes the joint distribution of $n$ independent and identically distributed (IID) observations from $P_{\theta_0}$. Particularly, the LAM theorem states that the lower bound to the above display is given by the variance of the \textit{efficient influence function} under fairly demanding regularity conditions. 

One immediate limitation of the LAM theorem is its asymptotic nature. To address this limitation, numerous non-asymptotic minimax lower bounds have been proposed in the literature. Many share the following principle: maximizing the separation of functionals evaluated at two, or more, ``similar" distributions. The similarity of distributions is quantified by different metrics and divergences such as the Hellinger distance, total variation, or the $f$-divergences, which includes the KL and the chi-squared divergences \citep{guntuboyina2011lower}. While these non-asymptotic lower bounds may provide correct rates, they often fall short of recovering the correct constant implied by the LAM theorem. To the best of our knowledge, existing non-asymptotic lower bounds based on the Hellinger distance or total variation~\citep{lecam1973convergence,ibragimov1981statistical,assouad1983deux,donoho1987geometrizing, birge1987estimating} cannot imply the LAM theorem even for regular parametric models. On the other hand, Some chi-squared-based lower bounds {\citep{gill1995applications, gassiat2013revisiting}} can imply the LAM theorem. \citet{cai2011testing} proposed a chi-squared-based lower bound for non-differentiable functionals, resulting in a sharp constant for the problem they studied.

The second limitation of the LAM theorem is that it requires the existence of an efficient influence function. This excludes two classes of estimation problems: irregular distributions and non-differentiable functionals. Under irregular statistical models (e.g., $\mbox{Uniform}[0, \theta], \theta \ge 0$), the chi-squared divergences are often infinite, implying trivial results from the minimax lower bounds that use chi-squared divergences. On the other hand, other metrics such as the Hellinger or total variation distances, do not suffer from this problem under irregularity. Thus, one of the main motivations is to understand whether we can obtain lower bounds based on the Hellinger distance that yields the LAM theorem for regular parametric models. Regarding non-differentiable functionals, there is barely any analogous statement to the LAM theorem. A few works have extended local asymptotic minimaxity in the context of plug-in estimators for directionally differentiable functionals \citep{fang2014optimal} or specific non-differentiable functionals that are non-differentiable transforms of a regular parameter \citep{song2014local, song2014minimax}. This manuscript also aims to develop new local asymptotic results that apply to any estimator or possibly non-differentiable functionals 
\subsubsection{Existing minimax lower bounds for non-differentiable functionals and irregular models}

There has been a growing interest in investigating the minimax optimality of non-differentiable functional estimation. 
One popular method for deriving minimax lower bounds involves reducing the original estimation problem to a set of carefully designed composite versus composite hypothesis testing, known as the \textit{fuzzy hypothesis} approach ~\citep{ibragimov1987some, lepski1999estimation,nemirovski2000topics,tsybakov2008}.  This reduction often leads to the study of the divergence metrics between two mixture distributions. In view of the Neyman-Pearson Lemma, a uniformly most powerful test is based on the likelihood ratio, which has a natural connection to the total variation distance between two hypotheses. Minimax lower bounds can then be obtained in terms of various metrics, including the Hellinger distance, the chi-squared divergence, the KL divergence, and others, by invoking a series of inequalities between metrics. See, for instance, Section 2.7.4 of \cite{tsybakov2008}. However, the precision of the constant may be lost in the process of applying different inequalities between metrics. As a result, the minimax optimality of non-differentiable functional estimation is often considered in terms of rate, with no specific focus on the constant. 

In a slightly different approach, \cite{cai2011testing} extends the notion of fuzzy hypotheses to the constrained risk inequality~\citep{brown1996constrained}, deriving a precise minimax constant for the $L_1$-norm estimation under a Gaussian sequence model. In this manuscript, we aim to explore similar extensions of classical two-point risk lower bounds to mixture distributions in order to obtain a sharp constant for the minimax lower bounds. Towards this goal, we refer to the well-established literature on parametric estimation, which has extensively investigated risk lower bounds with accurate non-asymptotic constants. Among the commonly used bounds are the LAM bound \citep{fisher1922mathematical, radhakrishna1945information, cramer1999mathematical}, which was improved by \citet{barankin1949locally}, the higher-order analogue given by Bhattacharyya bound \citep{bhattacharyya1946some}, the Hammersley-Chapman-Robbins bounds \citep{hammersley1950estimating,chapman1951minimum}, and the Weiss-Weinstein bound \citep{weiss1985lower}. Each of these bounds requires varying degrees of regularity conditions and is a variant of the others, as unified by \citet{Weinstein1988AGC}. They all asymptotically imply the Cram\'{e}r-Rao bound under different levels of regularity conditions, and they offer sharper non-asymptotic constants for different settings. Notably, all of these bounds, including \cite{cai2011testing}, define the lower bounds in terms of the likelihood ratio between local models, or some analogous objects. This can be understood as a variant of the chi-squared divergence between local models, which typically yields trivial lower bounds for irregular models.

The lower bounds presented in this manuscript are also closely related to the technique of obtaining a sharp minimax lower bound by exploring the \textit{least-favorable prior} of the Bayes risk such as the \textit{van Trees inequality} \citep{van2004detection}. Traditionally, the literature on the van Trees inequality \citep{van2004detection, gill1995applications, jupp2010van} requires strong regularity conditions, such as pointwise differentiablity of the statistical model, while \citet{gassiat2013revisiting} recently proved that the van Trees inequality still holds without them. Taking inspiration from \citet{gassiat2013revisiting}, we provide further extensions of the van Trees inequality for non-differentiable functionals based on the Hellinger distance, which is well-defined for an irregular model. This property of the Hellinger distance has gathered a growing interest for studying minimax lower bounds in the presence of irregularity \citep{ibragimov1981statistical, donoho1987geometrizing, chen1997general,shemyakin2014hellinger, lin2019optimal, pollardAsymptotia}. The present construction is consistent with the message of these proponents. 

Finally, the presented lower bounds share connections to the classical result in nonparametric functional estimation, namely the \textit{modulus of continuity}, which was investigated by \citet{donoho1987geometrizing}. As a consequence of the new minimax lower bounds in this manuscript, we resolve one of the open problems since \cite{donoho1987geometrizing} to characterize the non-asymptotic minimax lower bound that implies a precise limiting constant. For linear functional estimation, the modulus of continuity has been analyzed with an attempt to establish non-asymptotic efficiency theory \citep{mou2022off}. The result in this manuscript can be seen as one of the first steps towards understanding non-asymptotics for non-differentiable functional estimation. The modulus of continuity is also considered in the context of impossibility results, often known as \textit{ill-posedness} in econometrics literature \citep{potscher2002lower, forchini2005ill}. The presented results generalize their asymptotic statements to the non-asymptotic context. 

\subsection*{Organization.}The remaining manuscript is organized as follows: Section~\ref{section:background} provides necessary background and states existing results that largely inspired this work. Section~\ref{section:van_trees} presents new minimax lower bounds. Section~\ref{section:asympt_const} investigates asymptotic properties of the proposed lower bounds with particular attention to the preservation of sharp constants. Section~\ref{section:nonasymptotic_lower_bound} presents the application to several estimation problems in the presence of non-differentiableness or irregularity and Section~\ref{section:estimators} provides a visual comparison of the new non-asymptotic lower bounds to upper bounds exhibited by different estimators. Finally, concluding remarks and a discussion of open problems are provided in Section~\ref{section:conclusion}.
\subsection*{Notation.}
Throughout the manuscript, we adopt the following convention for notation. We denote by $\|\cdot\|$ a general vector norm. Given $x\in\mathbb{R}$, we write $[x]_+ = \max(x,0)$. For $x\in\mathbb{R}^d$, $\|x\|_2$ denotes the Euclidean norm in $\mathbb{R}^d$. For a real-valued function $f$ on $S$, the supremum norm is defined as $\|f\|_\infty = \sup_{x\in S}|f(x)|$. The open $\mathbb{R}^d$-ball centered at $y_0 \in \mathbb{R}^d$ with radius $\delta > 0$ is denoted as $B(y_0, \delta) := \{y \,: \|y_0-y\|_2 < \delta\}$. The unit sphere in $\mathbb{R}^d$ is denoted by $\mathbb{S}^{d-1} := \{u \in \mathbb{R}^d : \|u\|_2=1\}$. We let $\mathbb{E}_P$ represent the expectation under $P$, whereas $\mathbb{E}_\theta$ denotes the expectation under $P_\theta$. Furthermore, $L_2(P)$ is the set of $P$-measurable functions that satisfy the condition
\[L_2(P) := \left\{f : \mathcal{X} \mapsto \mathbb{R}^k \, \bigg|\, \int \|f(x)\|_2^2 \, dP(x) < \infty\right\}.\]

\section{Setup and background}\label{section:background}
\subsection{Preliminaries}
Throughout this section, we consider probability measures defined on a shared measurable space $(\mathcal{X}, \mathcal{A})$ with a $\sigma$-finite measure $\nu$. We assume that any probability measure $P$ we consider is absolutely continuous with respect to $\nu$ and has a well-defined density function, denoted by $dP = dP/d\nu$. We omit the specification of the base measure $\nu$ when it is clear from the context. Suppose we observe $X$ from an unknown distribution $P_0$, belonging to a possibly nonparametric model $\mathcal{P}$. The \textit{parametric submodel} $\mathcal{P}_{\Theta} := \{P_\theta : \theta \in \Theta\} \subset \mathcal{P}$ is defined as a set of probability measures indexed by a parameter space $\Theta \subseteq \mathbb{R}^d$, which contains the data-generating distribution $P_0$. Without loss of generality, we assume that the 
true parameter corresponds to $\theta=0$ in $\Theta$.

The local behavior of a path $h \mapsto P_{t+h} \in \mathcal{P}_{\Theta}$ as it approaches $P_t$ is often of interest. A local path is said to be  \textit{Hellinger differentible} at $h = 0$ \citep{pollardAsymptotia} if there exists a measurable vector-valued function $\dot \xi_t : \mathcal{X}\mapsto \mathbb{R}^d$ that is square integrable with respect to the $\sigma$-finite measure $\nu$, and as $\|h\|_2 \longrightarrow 0$, 
\begin{align}
    \int \left[dP_{t+h}^{1/2} - dP_t^{1/2} - h^{\top}\dot \xi_t\right]^2 \, d\nu = o(\|h\|_2^2).\nonumber
\end{align}
The Fisher information matrix of $P_{t+h}$ at $h=0$ is defined as 
\begin{align}
    \mathcal{I}(t) := 4\int \dot \xi_t \,\dot \xi_t^{\top}\, d\nu \label{eq:hellinger-fisher-info}
\end{align} 
under the Hellinger differentiability. We also define a slightly stronger but more commonly used notion of differentiability for parametric paths, called \textit{quadratic mean differentiability} (QMD). A local path is differentiable in quadratic mean, or QMD, at $h = 0$ if there exists a $P_t$-square integrable vector-valued function $ g_t : \mathcal{X}\mapsto \mathbb{R}^d$ that satisfies, as $\|h\|_2 \longrightarrow 0$, 
\begin{align}\label{eq:QMD}
    \int \left[dP_{t+h}^{1/2} - dP_t^{1/2} - \frac{1}{2}h^{\top}g_t \, dP_t^{1/2}\right]^2\, d\nu = o(\|h\|_2^2).
\end{align}
The function $g_t$ is commonly referred to as the score function of the path at $h=0$. When $P_{t+h}$ is QMD at $h=0$, it implies the Hellinger differentiability with $\dot \xi_t = \frac{1}{2} g_t \, dP_t^{1/2}$ (See Theorem 20 of \citet{pollardAsymptotia}); however, the converse is not true. The corresponding Fisher information matrix under the QMD assumption is given by \[\mathcal{I}(t) := \int  g_t \, g_t^{\top}\, dP_t.\]
Neither definitions require the distributions to possess pointwise differentiable densities. For instance, consider the double-exponential density $t \mapsto \frac{1}{2}\exp(-|t - \theta|)$ with a parameter $\theta \in \Theta$. This density function is not differentiable at $t = \theta$, yet its Fisher information matrix exists.

When $\Theta \subseteq \mathbb{R}^d$ with some base measure $\mu$, we define a location family induced by a probability measure $Q$ on $\Theta$ as $\{Q(\cdot-h): h\in \Theta\}$. If we assume that $Q$ has an absolutely continuous density function $q$ with an almost-everywhere derivative $\nabla q$, the corresponding location family is differentiable in quadratic mean with the Fisher information given by
\begin{equation}\label{eq:Fisher-information-prior}
\mathcal{I}(Q) := \int \frac{\nabla q(t) \nabla q(t)^{\top}}{q(t)}I(q(t)>0)\, d\mu.
\end{equation}
See Example 7.8 of \cite{van2000asymptotic}. 
Finally, we impose the following regularity conditions on probability measure $Q$, under which $Q$ is called ``nice".
\begin{definition}[Nice priors]\label{as:regularprior-vt} Given an absolutely continuous mapping $\phi:\Theta_0\to\mathbb{R}^k$ and a vector norm $\|\cdot\| : \mathbb{R}^k \mapsto \mathbb{R}_+$, a probability measure $Q$ on $\Theta_0$  is ``nice" if it satisfies the following conditions: 
\begin{enumerate}
    \item[(1)] it has a Lebesgue density $q$ and $q$ is an absolutely continuous function with a positive definite Fisher information $\mathcal{I}(Q)$,
    \item[(2)] both $\|\phi\|$ and $\|\nabla \phi\|$ is $Q$-integrable and $\int_{\Theta_0}\, \mathrm{tr}\,(\mathcal{I}(t))\, dQ < \infty$, and 
    \item[(3)] it holds that $q(\theta) \to 0$ as $\theta$ approaches any boundary point of $\Theta_0$ with finite norm along some canonical direction.
\end{enumerate}
\end{definition}

This manuscript primarily focuses on two divergence metrics, namely the chi-squared divergence and the Hellinger distance. Consider two probability measures, $P_0$ and $P_1$, defined on a common measurable space. Further, assume that there exist densities $dP_0 = dP_0/d\nu$ and $dP_1= dP_1/d\nu$ with respect to a common $\sigma$-finite measure $\nu$. The chi-squared divergence is defined as 
\begin{align*}
    \chi^2(P_1 \| P_0) := \begin{cases}
        \int \left(dP_1/dP_0-1\right)^2 \, dP_0 & \mbox{if }P_1\textrm{ is dominated by } P_0 \\
        \infty & \mbox{otherwise}
    \end{cases}
\end{align*}
and the Hellinger distance is defined as 
\begin{align*}
     H^2(P_0, P_1) := \int \left(dP_0^{1/2}-dP_1^{1/2}\right)^2.
\end{align*}
\subsection{Minimax lower bounds via parametric paths}
This section states the \textit{local asymptotic minimax (LAM) theorem}, a fundamental result in the efficiency theory. We then provide several non-asymptotic minimax lower bounds that asymptotically imply the best constant in view of the LAM theorem. For clarity and ease of illustration, we focus on a real-valued function $\psi : \Theta \mapsto \mathbb{R}$ where $\Theta \subseteq \mathbb{R}^d$ under parametric models. Although the result applies to nonparametric models, we defer a detailed discussion to Section~\ref{sec:semiparametric_minimax}. We assume that $X_1, \dots, X_n \in \mathcal{X}^n$ are IID observations from $P_{\theta_0} \in \{P_\theta : \theta \in \Theta\}$ and $t\mapsto P_t$ is Hellinger differentiable at all $\theta \in \Theta$. We now state the LAM theorem.
\begin{theorem}[Local asymptotic minimax theorem \citep{hajek1970characterization, hajek1972local, le1972limits}]\label{theorem:LAM_parametric}
Assuming that the mappings $t \mapsto \psi(t)$ and $t \mapsto \mathcal{I}(t)$ are continuously differentiable at $t=\theta_0$, then for any measurable function $T : \mathcal{X}^n \mapsto \mathbb{R}$, 
\begin{align}\label{eq:lam_original_lower_bound}
    \liminf_{c \longrightarrow \infty} \, \liminf_{n \longrightarrow \infty}\, \inf_{T}\, \sup_{\|\theta-\theta_0\| < cn^{-1/2}} \, n \mathbb{E}_{P_\theta^n}|T(X)-\psi(\theta)|^2\ge\nabla \psi(\theta_0)^{\top} \mathcal{I}(\theta_0)^{-1}\nabla \psi(\theta_0).
\end{align}
\end{theorem}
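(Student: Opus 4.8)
The plan is to reduce the asymptotic local minimax problem to a clean finite-sample Bayesian problem and then apply the van Trees inequality along a one-dimensional least-favorable subpath. Concretely, I would first fix a unit direction $u \in \mathbb{S}^{d-1}$ and consider the one-parameter submodel $s \mapsto P_{\theta_0 + su}$ for $s$ in a shrinking interval; by Hellinger differentiability at $\theta_0$ this submodel is itself Hellinger (indeed QMD along this line, after reparametrization) with Fisher information $u^\top \mathcal{I}(\theta_0) u$ at $s=0$. The continuity of $t \mapsto \mathcal{I}(t)$ and $t \mapsto \psi(t)$ at $\theta_0$ lets me treat, on the $n^{-1/2}$ scale, the Fisher information as essentially constant and $\psi$ as essentially affine with gradient $\nabla\psi(\theta_0)$ over the local parameter ball, so the target functional along the subpath behaves like $s \mapsto \psi(\theta_0) + s\,\nabla\psi(\theta_0)^\top u + o(n^{-1/2})$.

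Next I would introduce a smooth, compactly supported prior $Q_c$ on the local interval (rescaled by $c\,n^{-1/2}$), e.g.\ a fixed smooth density $q$ on $[-1,1]$ rescaled so that its support sits inside $\{\|\theta-\theta_0\| < cn^{-1/2}\}$ — this prior is ``nice'' in the sense of Definition~\ref{as:regularprior-vt}, with $\mathcal{I}(Q_c) = n\,c^{-2}\,\mathcal{I}(q)$ for the unit-interval Fisher information $\mathcal{I}(q) := \int (q')^2/q$. I would then invoke the van Trees inequality (the chi-squared / smooth-functional surrogate, here with $\phi = \psi$ taken affine along the subpath so that Approx.Bias $= 0$, i.e.\ the bound $\Gamma_{Q,\phi}$ with zero bias term) applied to the $n$-fold product experiment, which tensorizes the per-observation Fisher information to $n\,u^\top\mathcal{I}(\theta_0)u$ (up to the $o(1)$ from continuity). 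This yields
\begin{align*}
\inf_T \sup_{\|\theta-\theta_0\|<cn^{-1/2}} n\,\mathbb{E}_{P_\theta^n}|T-\psi(\theta)|^2
\;\ge\; \frac{\big(\nabla\psi(\theta_0)^\top u\big)^2 + o(1)}{u^\top\mathcal{I}(\theta_0)u + c^{-2}\mathcal{I}(q) + o(1)}.
\end{align*}

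Finally I would take the iterated limits in the stated order: first $n\to\infty$ kills all the $o(1)$ remainders (which depend on $c$ but vanish for each fixed $c$), then $c\to\infty$ kills the $c^{-2}\mathcal{I}(q)$ term coming from the prior's Fisher information, leaving $\big(\nabla\psi(\theta_0)^\top u\big)^2 / \big(u^\top\mathcal{I}(\theta_0)u\big)$. Optimizing over $u \in \mathbb{S}^{d-1}$ — a straightforward Cauchy--Schwarz / generalized Rayleigh quotient computation with $u \propto \mathcal{I}(\theta_0)^{-1}\nabla\psi(\theta_0)$ — produces exactly $\nabla\psi(\theta_0)^\top \mathcal{I}(\theta_0)^{-1}\nabla\psi(\theta_0)$, completing the proof.

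The main obstacle I anticipate is the uniform control of the $o(1)$ error terms: I need the remainders in the Hellinger-differentiability expansion, in the near-affineness of $\psi$, and in the near-constancy of $\mathcal{I}$ to be uniform over $\theta$ in the $cn^{-1/2}$-ball (for each fixed $c$) so that they genuinely vanish after the inner $n\to\infty$ limit before $c$ is sent to infinity. Handling the product structure is the other delicate point: the van Trees bound must be applied to the $n$-observation experiment and I must verify that Hellinger differentiability of each coordinate gives the additive Fisher information $n\,\mathcal{I}(\theta_0)$ in the limit (rather than assuming QMD outright), which is precisely where the earlier remark that QMD $\Rightarrow$ Hellinger differentiability — but not conversely — forces me to argue at the level of Hellinger distances of product measures rather than naively differentiating log-likelihoods. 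Everything else (choice of prior, the Rayleigh-quotient optimization, the order of limits) is routine once these uniformity and tensorization estimates are in place.
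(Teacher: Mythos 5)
Your proposal is correct, and it is essentially the argument the paper itself invokes: Proposition~\ref{prop:LAM_paramtric1}(i) in the Supplement reduces the LAM bound to the van Trees inequality with $\phi=\psi$ (so zero approximation bias) and then explicitly defers the ``van Trees $\Rightarrow$ LAM'' step to Section~4 of \citet{gassiat2013revisiting}. What you have written out is precisely that deferred step: a one-dimensional least-favorable subpath $s\mapsto P_{\theta_0+su}$, a compactly supported ``nice'' prior rescaled to live inside the $cn^{-1/2}$-ball with $\mathcal{I}(Q_c)=nc^{-2}\mathcal{I}(q)$, the van Trees inequality on the $n$-fold product, the iterated limits $n\to\infty$ (kills the $o(1)$ from continuity of $\nabla\psi$ and $\mathcal{I}$) then $c\to\infty$ (kills the $c^{-2}\mathcal{I}(q)$ prior term), and the Rayleigh-quotient optimization over $u$ giving $\nabla\psi(\theta_0)^\top\mathcal{I}(\theta_0)^{-1}\nabla\psi(\theta_0)$.

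The one place where the paper's own mechanics differ is that its fully worked-out LAM proof (Proposition~\ref{prop:LAM_paramtric1}(iii)) reparametrizes the shrinking ball via a diffeomorphism $\varphi(t)=\theta_0+cn^{-1/2}\varphi_0(t)$ with $\varphi_0:\mathbb{R}^d\to B(0,1)$, applies the Hellinger mixture bound of Theorem~\ref{thm:crvtBound_hellinger}, and only at the end sends $\varphi_0$ to an $\arctan$-type map so that $\{\mathbb{E}_Q\varphi_0'\}^2/\mathbb{E}_Q\{\varphi_0'\}^2\to 1$. Your linear path with a compactly supported rescaled prior is a more elementary route to the same constant: you avoid the diffeomorphism bookkeeping and the mixture Hellinger expansion of Lemma~\ref{lemma:hellinger-decomposition} at the cost of needing $Q_c$ to satisfy Definition~\ref{as:regularprior-vt} and the Fisher-information tensorization under Hellinger differentiability alone, both of which you flag and which indeed hold (the latter via the additivity $H^2(P^n_s,P^n_{s'})=2-2(1-H^2(P_s,P_{s'})/2)^n$ and the Hellinger-differentiable expansion, rather than via QMD log-likelihood differentiation). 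Your proof is sound; it simply exhibits in full the step the paper cites rather than proves.
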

When the asymptotic risk of an estimator matches the constant provided by the LAM theorem, the estimator can no longer be improved asymptotically. This, for instance, suggests the asymptotic efficiency of maximum likelihood estimators (MLEs) for this specific estimation task. 
It is essential to note that the first supremum over a neighborhood around $\theta_0$ is a crucial feature that cannot be removed. This supremum over a small neighborhood eliminates any estimator that performs exceptionally well on a Lebesgue measure-zero set, known as superefficient estimators. Without the supremum, the LAM theorem only applies to a family of \textit{regular} estimators, which is restrictive in the context of non-differentiable functional estimation \citep{hirano2012impossibility}. See Section 2.3 of \citet{fang2014optimal} for further discussion on the motivation behind the local asymptotic minimax risk.


We state two existing risk lower bounds that play significant roles in the present development. The first result is the Hammersley-Chapman-Robbins (HCR) bound, initially introduced by \citet{hammersley1950estimating} and \citet{chapman1951minimum}:
\begin{theorem}[HCR bound \citep{hammersley1950estimating, chapman1951minimum}]\label{theorem:HCR-bound}
Let $\psi : \Theta \mapsto \mathbb{R}$ be a real-valued mapping for $\Theta \subseteq \mathbb{R}^d$. 
\begin{align*}  
    &\inf_{T:\mathrm{unbiased}}\, \sup_{\theta \in \Theta}\,\mathbb{E}_\theta|T(X)-\psi(\theta)|^2 \ge \sup_{\theta_0, \theta_1 \in \Theta}\, \frac{|\psi(\theta_1) -\psi(\theta_0)|^2}{\chi^2\left(P_{\theta_1} \| P_{\theta_0}\right)}.
\end{align*}
\end{theorem}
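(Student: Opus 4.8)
The plan is to establish the inequality separately for each pair $\theta_0,\theta_1\in\Theta$ and each unbiased estimator $T$, and then take suprema and the infimum. Fix such a pair and such a $T$. Two edge cases dispose of themselves: if $P_{\theta_1}$ is not dominated by $P_{\theta_0}$, then $\chi^2(P_{\theta_1}\|P_{\theta_0})=\infty$ while $|\psi(\theta_1)-\psi(\theta_0)|<\infty$, so the corresponding term on the right-hand side is $0$ and there is nothing to prove for that pair; and if $\mathbb{E}_{\theta_0}|T(X)-\psi(\theta_0)|^2=\infty$, the left-hand side is already infinite. So I may assume $P_{\theta_1}\ll P_{\theta_0}$ and $T(X)-\psi(\theta_0)\in L_2(P_{\theta_0})$; together with $\chi^2(P_{\theta_1}\|P_{\theta_0})<\infty$, Cauchy--Schwarz then also gives $T\in L_1(P_{\theta_1})$, so all integrals below are well defined.

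The key identity comes from unbiasedness: since $\mathbb{E}_{\theta_j}[T(X)]=\psi(\theta_j)$ for $j=0,1$ and $\int (dP_{\theta_1}/dP_{\theta_0}-1)\,dP_{\theta_0}=0$,
\[
\psi(\theta_1)-\psi(\theta_0)
=\int T\,(dP_{\theta_1}-dP_{\theta_0})
=\int \big(T-\psi(\theta_0)\big)\Big(\tfrac{dP_{\theta_1}}{dP_{\theta_0}}-1\Big)\,dP_{\theta_0}.
\]
The recentering by $\psi(\theta_0)$ inside the integral is the one move that matters for sharpness: it is what turns the second moment $\mathbb{E}_{\theta_0}[T^2]$ into the mean-squared error $\mathbb{E}_{\theta_0}|T-\psi(\theta_0)|^2$ in the next step.

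Applying the Cauchy--Schwarz inequality in $L_2(P_{\theta_0})$ to this last integral yields
\[
|\psi(\theta_1)-\psi(\theta_0)|^2
\le \mathbb{E}_{\theta_0}\big|T(X)-\psi(\theta_0)\big|^2\cdot\int\Big(\tfrac{dP_{\theta_1}}{dP_{\theta_0}}-1\Big)^2 dP_{\theta_0}
= \mathbb{E}_{\theta_0}\big|T(X)-\psi(\theta_0)\big|^2\cdot\chi^2(P_{\theta_1}\|P_{\theta_0}).
\]
Rearranging and using $\mathbb{E}_{\theta_0}|T(X)-\psi(\theta_0)|^2\le\sup_{\theta\in\Theta}\mathbb{E}_\theta|T(X)-\psi(\theta)|^2$, then taking the supremum over $\theta_0,\theta_1\in\Theta$ and the infimum over unbiased $T$, gives the stated bound.

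There is no genuinely hard step here; the proof is a one-line Cauchy--Schwarz argument once the setup is in place. The only points requiring care are the degenerate cases (non-domination, infinite risk), the measure-theoretic bookkeeping that makes $\psi(\theta_1)-\psi(\theta_0)=\int T\,(dP_{\theta_1}-dP_{\theta_0})$ legitimate, and --- for anyone interested in the constant rather than merely the inequality --- remembering to center by $\psi(\theta_0)$ before invoking Cauchy--Schwarz.
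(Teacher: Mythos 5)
Your proof is correct and is the standard argument for the HCR bound. The paper treats Theorem~\ref{theorem:HCR-bound} as a cited background result (attributed to Hammersley and Chapman--Robbins) and does not supply its own proof, so there is nothing in the paper to diverge from; but the paper's proof of the generalized mixture version, Theorem~\ref{thm:crvtBound}, follows the same skeleton you use here: write the difference of the target functional across two distributions as a covariance between the estimator and a density-ratio quantity, then apply Cauchy--Schwarz. Your attention to the recentering by $\psi(\theta_0)$ (which converts the second moment into the mean-squared error) and to the degenerate cases (non-domination yielding $\chi^2=\infty$, infinite risk) is exactly the right bookkeeping. One minor stylistic note: after establishing $|\psi(\theta_1)-\psi(\theta_0)|^2 \le \mathbb{E}_{\theta_0}|T-\psi(\theta_0)|^2\,\chi^2(P_{\theta_1}\|P_{\theta_0})$, the passage to the stated inequality only requires observing that the left-hand side of the theorem's display is $T$-free once the supremum over $(\theta_0,\theta_1)$ is taken, so the infimum over unbiased $T$ can be applied afterward without any further argument, which you do.
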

Assuming that $P_{\theta_0}$ is a fixed data generating distribution and taking the limit as $\theta_1 \rightarrow \theta_0$, the HCR lower bound implies the Cram\'{e}r–Rao bound under the strong regularity conditions. 
The regularity conditions can be weakened to the Hellinger differentiability by deriving an analogous lower bound in terms of the Hellinger distance (see \cite{simons1983cramer} and Exercise VI.5 of \cite{polyanskiy2022information}). However, one major limitation of the HCR bound is that it only holds for unbiased estimators. In particular, if the function $\psi$ is non-differentiable, no sequences of unbiased estimators exist \citep{hirano2012impossibility}, indicating that the HCR lower bound cannot be directly applied to the context of this manuscript.

The second result is the van Trees inequality, which considers the minimax lower bound in terms of the worst Bayes risk. It achieves a sharp constant by seeking the supremum over all possible priors, known as the \textit{least-favorable prior}. Let $\Theta \subseteq \mathbb{R}^d$ and $Q$ be a probability measure defined on $\Theta$ with a density function $dQ$. Further assuming that $Q$ is ``nice", we state the following:
\begin{theorem}[The van Trees inequality \citep{gassiat2013revisiting}]\label{theorem:original_van_trees}
Suppose $\{P_\theta : \theta \in \Theta\}$ for $\Theta \subseteq \mathbb{R}^d$ is Hellinger differentiable for all $\Theta$ and the mapping $\psi$ is absolutely continuous on $\Theta$. Then for any measurable function $T$ and a ``nice" prior distribution $Q$ (see Definition~\ref{as:regularprior-vt}) ,
\begin{align*}
    &\inf_{T}\,\sup_{\theta \in \Theta}\,\mathbb{E}_\theta|T(X)-\psi(\theta)|^2\ge \left(\int_\Theta \nabla \psi(\theta)\, dQ(\theta)\right)^{\top}\left(\mathcal{I}(Q) + \int_{\Theta} \mathcal{I}(\theta)\, dQ(\theta)\right)^{-1}\left(\int_\Theta \nabla \psi(\theta)\, dQ(\theta)\right)
\end{align*}
where $\mathcal{I}(Q)$ is the Fisher information of the location family induced by the prior $Q$ defined in~\eqref{eq:Fisher-information-prior}. 
\end{theorem}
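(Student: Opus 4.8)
The plan is to follow the classical Bayesian Cramér–Rao argument, but carried out at the level of square-root densities so that only Hellinger differentiability (not pointwise smoothness) is used — this is the point of view of \citet{gassiat2013revisiting}. Fix a measurable estimator $T$ and a nice prior $Q$ with density $q$. Consider the joint ``density'' on $\mathcal{X}\times\Theta$ given by $f(x,\theta) = dP_\theta(x)\, q(\theta)$ with respect to $\nu\times\mu$. For any fixed unit vector $u\in\mathbb{S}^{d-1}$ define the score of the joint model in direction $u$, namely $\ell_u(x,\theta) := 2\,\partial_u \big( f^{1/2}(x,\theta)\big)/f^{1/2}(x,\theta)$, which decomposes into the parametric part $u^\top g_\theta(x)$ (coming from Hellinger/QMD differentiability of $\theta\mapsto P_\theta$) plus the prior part $u^\top \nabla q(\theta)/q(\theta)$; crucially, $\ell_u$ has finite second moment under $f$ equal to $u^\top\big(\mathcal{I}(Q)+\int \mathcal{I}(\theta)\,dQ(\theta)\big)u$, by the niceness conditions (2) and the definition of $\mathcal{I}(Q)$.

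First I would establish the key integration-by-parts identity
\begin{align*}
\int_{\mathcal{X}}\int_{\Theta} \big(T(x)-\psi(\theta)\big)\,\ell_u(x,\theta)\, f(x,\theta)\, d\mu(\theta)\, d\nu(x) = u^\top \int_\Theta \nabla\psi(\theta)\, dQ(\theta).
\end{align*}
The $T(x)$ term integrates to zero: integrating $\ell_u f = 2\,\partial_u(f^{1/2})\,f^{1/2}$ in $\theta$ over $\Theta$ gives $\partial_u\!\int_\Theta f\,d\mu$ up to the measure-theoretic justification, and since $\int_\Theta f(x,\theta)\,d\mu(\theta) = dP$-mixture is, after using $\int q = 1$ and Fubini, a quantity whose directional derivative vanishes — more carefully, one shows $\int_\Theta \partial_u(dP_\theta(x) q(\theta))\,d\mu(\theta)=0$ using condition (3) (the density $q$ vanishes at the finite boundary, killing the boundary term) together with absolute continuity of $q$. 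The $\psi(\theta)$ term produces $-\int_\Theta \psi(\theta)\,\partial_u q(\theta)\,d\mu(\theta)\,\big(\text{integrated against } dP_\theta, \text{then Fubini}\big)$, which by integration by parts (valid since both $\psi$ and $q$ are absolutely continuous, $\|\psi\|$ and $\|\nabla\psi\|$ are $Q$-integrable, and the boundary terms vanish by condition (3)) equals $\int_\Theta u^\top\nabla\psi(\theta)\, q(\theta)\,d\mu(\theta) = u^\top\int\nabla\psi\,dQ$.

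Given this identity, Cauchy–Schwarz in $L_2(f)$ yields
\begin{align*}
\Big(u^\top\!\int\nabla\psi\,dQ\Big)^2 \le \Big(\int\!\!\int (T-\psi)^2 f\Big)\Big(\int\!\!\int \ell_u^2 f\Big) \le \Big(\sup_{\theta}\mathbb{E}_\theta|T-\psi|^2\Big)\cdot u^\top\Big(\mathcal{I}(Q)+\int\mathcal{I}(\theta)\,dQ\Big)u,
\end{align*}
where the last step bounds the Bayes risk $\int\!\!\int(T-\psi)^2 f$ by the sup-risk. Finally I would optimize over $u$: choosing $u \propto \big(\mathcal{I}(Q)+\int\mathcal{I}(\theta)\,dQ\big)^{-1}\int\nabla\psi\,dQ$ (legitimate since $\mathcal{I}(Q)$ is positive definite by condition (1), hence the sum is invertible) turns the inequality into exactly the claimed bound $\big(\int\nabla\psi\,dQ\big)^\top\big(\mathcal{I}(Q)+\int\mathcal{I}(\theta)\,dQ\big)^{-1}\big(\int\nabla\psi\,dQ\big)$.

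The main obstacle is the rigorous justification of the integration-by-parts / vanishing-boundary-term step \emph{without} assuming pointwise differentiability of $\theta\mapsto dP_\theta(x)$: one cannot naively differentiate under the integral sign. The fix, following \citet{gassiat2013revisiting}, is to work with the map $\theta\mapsto dP_\theta^{1/2}(x)$, which is absolutely continuous along lines as a consequence of Hellinger differentiability (after a Fubini argument to select a good version), write the directional increment as an integral of $\partial_u(dP_\theta^{1/2})$, and then control the interchange of integrals and the boundary behavior using exactly conditions (2) and (3) of ``niceness'' — square-integrability of the joint score gives the $L_1$ domination needed for Fubini, and the decay of $q$ at finite boundary points annihilates the boundary contributions. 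Handling the possibility that $\Theta$ is unbounded in some directions requires an additional truncation-and-limit argument, again using $Q$-integrability of $\|\psi\|$ and $\mathrm{tr}\,\mathcal{I}(\theta)$ to pass to the limit.
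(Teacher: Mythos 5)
The paper does not prove this result — it is quoted verbatim from \citet{gassiat2013revisiting} — but your sketch reproduces precisely the argument of that reference: Cauchy–Schwarz on the joint measure $f(x,\theta)=dP_\theta(x)q(\theta)$ at the level of square-root densities, integration by parts in $\theta$ with boundary terms killed by the decay of the nice prior, and a final optimization over the direction $u$. The plan and the resulting algebra are correct.

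Two minor refinements for a fully rigorous write-up. First, the ratio $\ell_u = 2\,\partial_u f^{1/2}/f^{1/2}$ is undefined on $\{f=0\}$; it is cleaner (and is what Gassiat and coauthors do) to apply Cauchy–Schwarz directly to the pair $(T-\psi)f^{1/2}$ and $2\,\partial_u f^{1/2}$, using $\partial_u f = 2 f^{1/2}\,\partial_u f^{1/2}$, rather than forming the score ratio. Second, the second-moment identity $\int\!\!\int 4(\partial_u f^{1/2})^2 = u^\top\big(\mathcal{I}(Q)+\int\mathcal{I}(\theta)\,dQ\big)u$ uses, beyond the integrability in condition (2), the vanishing of the cross term via $\int \dot\xi_\theta\, dP_\theta^{1/2}\,d\nu = 0$, which you do not mention; and writing the parametric part of the score as $u^\top g_\theta(x)$ is the QMD normalization — under bare Hellinger differentiability you should phrase this in terms of $\dot\xi_\theta$, since $\dot\xi_\theta$ need not equal $\tfrac12 g_\theta\, dP_\theta^{1/2}$.
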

The van Trees inequality is particularly desirable since it does not confine the choice of estimators to unbiased estimators. Furthermore, it has been shown that the exact constant for the LAM theorem can be obtained using the van Trees inequality by assuming $\psi$ is continuously differentiable at $\theta_0$ and selecting a prior distribution $Q$ that concentrates at the true parameter $\theta_0$ with the rate $n^{-1/2}$ \citep{gassiat2013revisiting}. However, the van Trees inequality gives a trivial lower bound for an irregular statistical model where the Fisher information is undefined. The main results, presented below, aim to remove these limitations from the existing results.

\section{General minimax lower bounds under weaker assumptions}\label{section:van_trees}

This section provides the main results of this manuscript. Section~\ref{sec:smooth-approximation} provides a general minimax lower bound via smooth approximation. Sections~\ref{sec:vt-chi-sq} and \ref{sec:vt-hellinger} provide two extensions of the HCR bound without requiring the differentiability of functionals. These results hold for nonparametric functionals and are achieved through the standard least-favorable parametric paths argument from semiparametric statistics, which is postponed to Section~\ref{sec:semiparametric_minimax}. Proofs of all theorems in this section are provided in the Supplementary Material.

\subsection{Lower bound based on the approximating functionals}\label{sec:smooth-approximation}
When the target functional $\psi$ does not possess certain properties, such as differentiability, it is often useful to approximate $\psi$ with an alternative functional $\phi$ at the expense of the bias introduced by the approximation. This is a common approach for non-differentiable functional estimation where the target functional is first smoothed by, for instance, convolution or basis expansion. We formalize this idea in the first main result. 

Suppose $X$ is drawn from an unknown distribution $P_0$, which belongs to a statistical model $\mathcal{P} := \{P_t : t \in \Theta\}$ defined on a measurable space $(\mathcal{X}, \mathcal{A})$ with each possessing a density with respect to $\sigma$-finite measure $\nu$. Let $\psi : \mathcal{P} \mapsto \mathbb{R}^k$ denote a vector-valued functional where the estimand of interest is the evaluation of the functional at the population parameter $\psi(P_{0})$. The follow result is merely a consequence of the reverse triangle inequality:
\begin{lemma}\label{thm:L2-approximation}
Given a measure space $(\Theta, \mathcal{T}, \mu)$, let $\Theta_0 \subseteq \Theta$ be any subset of $\Theta$ and let $\mathcal{Q} = \mathcal{Q}(\Theta_0)$ be a collection of probability measures on $\Theta_0$ equipped with a density function with respect to the base measure $\mu$, denoted as $dQ = dQ/d\mu$. Let $\Phi$ be any collection of functionals $\Phi := \{\phi : \mathcal{P} \mapsto  \mathbb{R}^k\}$. For any measurable function $T : \mathcal{X} \mapsto \mathbb{R}^k$ and vector norm $\|\cdot\| : \mathbb{R}^k \mapsto \mathbb{R}_+$, it holds that
\begin{align*}
&\sup_{\theta\in\Theta_0}\,\mathbb{E}_{\theta}\|T(X)-\psi(P_\theta)\|^2  \\
&\qquad \ge \sup_{\phi \in \Phi,\, Q\in \mathcal{Q}}\, \left[\left(\int \,\mathbb{E}_{\theta}\left\|T(X)-\phi(P_\theta)\right\|^2\, dQ\right)^{1/2} - \left(\int \|\psi(\theta)-\phi(\theta)\|^2dQ(\theta)\right)^{1/2}\right]_+^2.
\end{align*}
\end{lemma}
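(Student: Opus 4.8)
The plan is to prove the pointwise inequality for each fixed $\phi\in\Phi$ and $Q\in\mathcal{Q}$, and then take the supremum over all such choices on the right-hand side. So fix $\phi$ and $Q$, and write $m := \sup_{\theta\in\Theta_0}\mathbb{E}_\theta\|T(X)-\psi(P_\theta)\|^2$. Since the supremum dominates the $Q$-average, we have $m \ge \int \mathbb{E}_\theta\|T(X)-\psi(P_\theta)\|^2\, dQ(\theta)$. The key step is then to lower-bound this $Q$-average of the $\psi$-risk by (a function of) the $Q$-average of the $\phi$-risk and the $Q$-average of the approximation error $\|\psi(\theta)-\phi(\theta)\|^2$.

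First I would work in the Hilbert space $L_2(\mathbb{P})$ where $\mathbb{P}$ is the joint distribution of $(\theta, X)$ with $\theta\sim Q$ and $X\mid\theta\sim P_\theta$ — equivalently, think of $T(X)-\psi(P_\theta)$, $T(X)-\phi(P_\theta)$, and $\phi(P_\theta)-\psi(P_\theta)$ as elements whose squared $L_2(\mathbb{P})$-norms (measured with the vector norm $\|\cdot\|$ on $\mathbb{R}^k$) are exactly $\int\mathbb{E}_\theta\|T-\psi\|^2\, dQ$, $\int\mathbb{E}_\theta\|T-\phi\|^2\, dQ$, and $\int\|\psi(\theta)-\phi(\theta)\|^2\, dQ$ respectively. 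The identity $T-\phi = (T-\psi) + (\psi-\phi)$ together with the triangle inequality in this $L_2$-norm gives
\begin{align*}
\left(\int\mathbb{E}_\theta\|T-\phi\|^2\, dQ\right)^{1/2} \le \left(\int\mathbb{E}_\theta\|T-\psi\|^2\, dQ\right)^{1/2} + \left(\int\|\psi(\theta)-\phi(\theta)\|^2\, dQ\right)^{1/2}.
\end{align*}
Rearranging yields
\begin{align*}
\left(\int\mathbb{E}_\theta\|T-\psi\|^2\, dQ\right)^{1/2} \ge \left(\int\mathbb{E}_\theta\|T-\phi\|^2\, dQ\right)^{1/2} - \left(\int\|\psi(\theta)-\phi(\theta)\|^2\, dQ\right)^{1/2},
\end{align*}
and since the left-hand side is nonnegative we may replace the right-hand side by its positive part $[\,\cdot\,]_+$. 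Squaring both sides (legitimate since both are nonnegative after taking the positive part) gives $\int\mathbb{E}_\theta\|T-\psi\|^2\, dQ \ge [\,(\int\mathbb{E}_\theta\|T-\phi\|^2\, dQ)^{1/2} - (\int\|\psi(\theta)-\phi(\theta)\|^2\, dQ)^{1/2}\,]_+^2$. Chaining with $m\ge\int\mathbb{E}_\theta\|T-\psi\|^2\, dQ$ and taking $\sup_{\phi,Q}$ completes the argument.

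The one point requiring a little care — and the closest thing to an obstacle — is justifying the triangle inequality for the $\|\cdot\|$-valued $L_2$ norm when $\|\cdot\|$ is an arbitrary (not necessarily Euclidean) vector norm on $\mathbb{R}^k$: the quantity $f\mapsto(\int\|f\|^2\, d\mathbb{P})^{1/2}$ is still a genuine seminorm on the space of $\mathbb{R}^k$-valued functions, because for fixed $\omega$ one has $\|f(\omega)+g(\omega)\|\le\|f(\omega)\|+\|g(\omega)\|$ pointwise, and then Minkowski's inequality in scalar $L_2$ applied to the nonnegative functions $\omega\mapsto\|f(\omega)\|$ and $\omega\mapsto\|g(\omega)\|$ gives $(\int\|f+g\|^2)^{1/2}\le(\int(\|f\|+\|g\|)^2)^{1/2}\le(\int\|f\|^2)^{1/2}+(\int\|g\|^2)^{1/2}$. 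I would also note the trivial measurability/integrability bookkeeping (if the $\psi$-risk is infinite the bound is vacuous; otherwise all three integrals are finite and the manipulations are valid), and that the whole argument is indeed just the reverse triangle inequality in disguise, as the statement of the lemma already advertises.
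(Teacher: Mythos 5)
Your proof is correct and takes essentially the same route as the paper: lower-bound the supremum by the $Q$-average, apply the reverse triangle inequality in $L_2$ of the joint measure to the decomposition $T-\psi = (T-\phi)+(\phi-\psi)$, truncate at zero, and take the supremum over $\phi$ and $Q$. Your remark justifying Minkowski's inequality for a general vector norm on $\mathbb{R}^k$ via the pointwise triangle inequality plus scalar Minkowski is a useful elaboration the paper leaves implicit, but it does not change the argument.
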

This result states that the minimax lower bound for any functional estimation can be expressed as the trade-off between the Bayes risk of estimating $\phi$ and the approximation error of $\psi$ by $\phi$. For a real-valued functional, the approximation error is expressed as $L_2(Q)$-norm, hence the functional $\phi$ needs not to approximate $\psi$ uniformly well. 

We present the application of this result by considering the projection of $\psi$ onto the set of absolutely continuous functions. The Bayes risk for absolutely continuous functionals can be characterized by the van Trees inequality. We define a collection of probability measures $\mathcal{Q}^\dag$ supported on $\Theta_0$, satisfying Definition \ref{as:regularprior-vt}. Note that $\mathcal{Q}^\dag$ depends on $\phi$. We then state the following:
\begin{theorem}\label{cor:vt-approximation}
Suppose $\{P_\theta : \theta \in \Theta_0\}$ for any open subset $\Theta_0 \subseteq \mathbb{R}^d$ is Hellinger differentiable for all $\Theta_0$ and $\phi : \mathbb{R}^d \mapsto \mathbb{R}^k$ is an absolutely continuous vector-valued functions on $\Theta_0$ with almost-everywhere derivative $\nabla \phi$. Then for any measurable function $T : \mathcal{X} \mapsto \mathbb{R}^k$, vector norm $\|\cdot\| : \mathbb{R}^k \mapsto \mathbb{R}_+$, its dual norm  $\|\cdot\|_*$, 
\begin{align*}
&\inf_T\, \sup_{\theta\in\Theta_0}\,\mathbb{E}_{\theta}\|T(X)-\psi(\theta)\|^2 \ge\sup_{Q \in \mathcal{Q}^\dag}\, \left[\sup_{\|u\|_* \le 1}\, \|\Gamma^{1/2} _{Q, \phi}u\|- \left(\int \|\psi(\theta)-\phi(\theta)\|^2dQ(\theta)\right)^{1/2}\right]_+^2
\end{align*}
where
\begin{align*}
	\Gamma_{Q, \phi} := \left(\int_{\Theta_0} \nabla \phi(t)\,dQ\right)^\top \left(\mathcal{I}(Q)+\int_{\Theta_0}\mathcal{I}(t)\, dQ\right)^{-1} \left(\int_{\Theta_0} \nabla \phi(t)\,dQ\right).
\end{align*}
\end{theorem}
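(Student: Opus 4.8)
The plan is to deduce this statement as a corollary of two tools already on the table: the approximation inequality of Lemma~\ref{thm:L2-approximation}, which supplies the ``surrogate efficiency minus approximation bias'' skeleton, and the van Trees inequality of Theorem~\ref{theorem:original_van_trees}, which controls the Bayes risk of the surrogate $\phi$ once one passes to a real-valued functional via the dual-norm representation of $\|\cdot\|$.

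First I would apply Lemma~\ref{thm:L2-approximation} with the singleton class $\Phi=\{\phi\}$ and $\mathcal{Q}=\mathcal{Q}^\dag$. For any measurable $T$ this immediately produces the outer structure
\begin{align*}
\sup_{\theta\in\Theta_0}\mathbb{E}_\theta\|T(X)-\psi(\theta)\|^2 \;\ge\; \sup_{Q\in\mathcal{Q}^\dag}\left[\Big(\int\mathbb{E}_\theta\|T(X)-\phi(\theta)\|^2\,dQ\Big)^{1/2}-\Big(\int\|\psi(\theta)-\phi(\theta)\|^2\,dQ\Big)^{1/2}\right]_+^2 .
\end{align*}
Because $s\mapsto[s-b]_+^2$ is nondecreasing on $[0,\infty)$ for every fixed $b\ge0$, it then suffices to lower-bound the surrogate Bayes risk $\int\mathbb{E}_\theta\|T(X)-\phi(\theta)\|^2\,dQ$ by $\big(\sup_{\|u\|_*\le1}\|\Gamma^{1/2}_{Q,\phi}u\|_2\big)^2$ for each $Q\in\mathcal{Q}^\dag$; taking $\inf_T$ at the very end then delivers the theorem.

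To bound the surrogate Bayes risk I would fix $Q\in\mathcal{Q}^\dag$ and a vector $u\in\mathbb{R}^k$ with $\|u\|_*\le1$, and use the duality $\|v\|=\sup_{\|w\|_*\le1}w^\top v$, valid for any norm on $\mathbb{R}^k$, to obtain the pointwise inequality $\|T(X)-\phi(\theta)\|^2\ge(u^\top T(X)-u^\top\phi(\theta))^2$. The map $\theta\mapsto u^\top\phi(\theta)$ is a \emph{real}-valued functional, absolutely continuous on $\Theta_0$, whose almost-everywhere derivative is the corresponding contraction of $\nabla\phi$ against $u$; moreover $Q$, being nice for $\phi$ in the sense of Definition~\ref{as:regularprior-vt}, is also nice for $u^\top\phi$, since conditions (1) and (3) do not involve the functional and condition (2) transfers because $|u^\top\phi|$ and the norm of the contraction of $\nabla\phi$ against $u$ are dominated, up to the equivalence constants between $\|\cdot\|$ and $\|\cdot\|_2$ on $\mathbb{R}^k$, by $\|\phi\|$ and $\|\nabla\phi\|$. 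Hence Theorem~\ref{theorem:original_van_trees} applies to the pair $(u^\top\phi,\,u^\top T)$, and integrating the pointwise inequality over $X\sim P_\theta$ and then over $\theta\sim Q$, together with an inspection of the definition of $\Gamma_{Q,\phi}$, gives
\begin{align*}
\int\mathbb{E}_\theta\|T(X)-\phi(\theta)\|^2\,dQ \;\ge\; \int\mathbb{E}_\theta\big(u^\top T(X)-u^\top\phi(\theta)\big)^2\,dQ \;\ge\; u^\top\Gamma_{Q,\phi}u \;=\; \|\Gamma^{1/2}_{Q,\phi}u\|_2^2,
\end{align*}
where the last equality uses symmetry and positive semidefiniteness of $\Gamma_{Q,\phi}$. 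Taking the supremum over $\|u\|_*\le1$ produces exactly the lower bound required in the previous step, and substituting back closes the argument.

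I expect the only genuine work to be bookkeeping in this middle step: matching the Jacobian/transpose conventions so that the output of the scalar van Trees inequality is recognized as the quadratic form $u^\top\Gamma_{Q,\phi}u$ appearing in the definition of $\Gamma_{Q,\phi}$, and verifying that niceness of $Q$ for $\phi$ descends to niceness for each projected functional $u^\top\phi$ so that Theorem~\ref{theorem:original_van_trees} is legitimately invoked. There is no interchange-of-suprema subtlety here, since every inequality is established for an arbitrary but fixed $u$ with $\|u\|_*\le1$ before any supremum is taken; and the outer supremum over $Q\in\mathcal{Q}^\dag$ simply rides along from Lemma~\ref{thm:L2-approximation}.
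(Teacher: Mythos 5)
Your proposal is correct and follows essentially the same route as the paper's own proof: Lemma~\ref{thm:L2-approximation} supplies the outer $[\,\cdot\,]_+^2$ skeleton, the dual-norm reduction $\|v\|^2 \ge |u^\top v|^2$ for $\|u\|_*\le 1$ turns the vector-valued surrogate Bayes risk into a scalar one, and the scalar van Trees inequality applied to $\theta\mapsto u^\top\phi(\theta)$ yields the quadratic form $u^\top\Gamma_{Q,\phi}u$. Your attention to how niceness of $Q$ for $\phi$ descends to $u^\top\phi$ is a point the paper leaves implicit but is a legitimate check. One small note: both your derivation and the paper's supplementary proof deliver $\sup_{\|u\|_*\le1}(u^\top\Gamma_{Q,\phi}u)^{1/2}=\sup_{\|u\|_*\le1}\|\Gamma^{1/2}_{Q,\phi}u\|_2$ (Euclidean norm), whereas the theorem's display writes $\|\Gamma^{1/2}_{Q,\phi}u\|$ with the general vector norm; the latter does not follow from either argument, so the Euclidean form you (and the paper's own proof) establish is the intended conclusion.
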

Theorem~\ref{cor:vt-approximation} is a straightforward application of the multivariate van Trees inequality (Theorem 12 of \cite{gassiat2013revisiting}) in conjunction with Lemma~\ref{thm:L2-approximation}. Here, $\mathcal{I}(t)$ is the Fisher information associated with the data-generating distribution $P_t$. It is well-known that the Fisher information for an $n$-fold product measure is given by $n\mathcal{I}(t)$ where $\mathcal{I}(t)$ is the Fisher information associated with a single observation under $P_t$. Hence, under $n$ IID observations from $P_t$, the theorem above yields the identical lower bound but replaces $\Gamma_{Q, \phi}$ with 
\begin{align*}
    \left(\int_{\Theta_0} \nabla \phi(t)\,dQ\right)^\top \left(\mathcal{I}(Q)+n\int_{\Theta_0} \mathcal{I}(t)\, dQ\right)^{-1} \left(\int_{\Theta_0} \nabla \phi(t)\,dQ\right).
\end{align*}

\subsection{Lower bound based on the chi-squared divergence}\label{sec:vt-chi-sq}
We now move on to direct minimax lower bounds without introducing an alternative functional. 
We first define two probability measures on a product space $(\mathcal{X} \times \mathbb{R}^d)$. Specifically, we let 
\begin{align}\label{eq:mixture-def}
    d\mathbb{P}_0(x, \theta) := dP_\theta(x)\, dQ(\theta)\quad\mbox{and}\quad d\mathbb{P}_h(x,\theta) := dP_{\theta+h}(x)\, dQ(\theta+h).
\end{align}
The measure under the translation $Q(\theta+h)$ is well-defined for all $h \in \mathbb{R}^d$. We first present the results for parametric models with $\Theta = \mathbb{R}^d$ for ease of exposition, with an extension to more general sets $\Theta \subseteq \mathbb{R}^d$ discussed in Section~\ref{section:diffeomorphism}. We now present the following lower bound:

\begin{theorem}
\label{thm:crvtBound}
For any probability measure $Q$ on $\mathbb{R}^d$ and $\lambda \in [0,1]$, define 
\begin{align*}
	A_{\lambda, \psi, Q, h} := \frac{\|\int_{\mathbb{R}^d} \psi(t)-\psi(t-h)\,dQ(t)\|^2}{\chi^2(\mathbb{P}_h \| \lambda\mathbb{P}_h + (1-\lambda)\mathbb{P}_0)}\quad\textrm{and}\quad
	B_{\psi, Q, h}:= \int_{\mathbb{R}^d}\|\psi(t)-\psi(t-h)\|^2\,dQ(t).
\end{align*}
Then for any measurable function $T : \mathcal{X} \mapsto \mathbb{R}^k$, vector norm $\|\cdot\| : \mathbb{R}^k \mapsto \mathbb{R}_+$, 
\begin{align*}
    \inf_{T}\,\sup_{\theta \in \mathbb{R}^d}\,\mathbb{E}_\theta\|T(X)-\psi(\theta)\|^2  &\ge \sup_{Q,\, \lambda\in[0,1],\, h\in\mathbb{R}^d}\,\left[\sqrt{(1-\lambda)^2A_{\lambda, \psi, Q, h}} - \sqrt{\lambda B_{\psi, Q, h}}\right]_+^2.
\end{align*}
\end{theorem}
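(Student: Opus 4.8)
\emph{Proof sketch.} Fix a measurable $T$, a probability measure $Q$ on $\mathbb{R}^d$, $\lambda\in[0,1]$ and $h\in\mathbb{R}^d$, and abbreviate $\mathfrak{M}:=\sup_{\theta\in\mathbb{R}^d}\mathbb{E}_\theta\|T(X)-\psi(\theta)\|^2$ and $\mathbb{P}_\lambda:=\lambda\mathbb{P}_h+(1-\lambda)\mathbb{P}_0$. We may assume $\mathfrak{M}<\infty$, $B_{\psi,Q,h}<\infty$ and $\chi^2(\mathbb{P}_h\|\mathbb{P}_\lambda)\in(0,\infty)$, since otherwise the relevant term on the right-hand side is $0$ or the bound is immediate. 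The key structural observation is that $\mathbb{P}_0$ and $\mathbb{P}_h$ have the same $X$-marginal: the change of variables $s=\theta+h$ carries $dP_{\theta+h}(x)\,dQ(\theta+h)$ to $dP_s(x)\,dQ(s)$. Consequently $\mathbb{E}_{\mathbb{P}_h}[T(X)]=\mathbb{E}_{\mathbb{P}_0}[T(X)]$, and the same substitution gives $\mathbb{E}_{\mathbb{P}_h}[\psi(\theta)]=\int\psi(t-h)\,dQ(t)$. Writing $D:=T(X)-\psi(\theta)$ on $\mathcal{X}\times\mathbb{R}^d$ and $\Delta:=\int(\psi(t)-\psi(t-h))\,dQ(t)$, this gives $\mathbb{E}_{\mathbb{P}_h}[D]-\mathbb{E}_{\mathbb{P}_0}[D]=\Delta$, hence $\mathbb{E}_{\mathbb{P}_h}[D]-\mathbb{E}_{\mathbb{P}_\lambda}[D]=(1-\lambda)\Delta$.

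Next I would establish a Hammersley--Chapman--Robbins-type bound that, in contrast to Theorem~\ref{theorem:HCR-bound}, holds for an arbitrary (possibly biased) $T$ and an arbitrary norm: for probability measures $P'\ll P$ and any $\mathbb{R}^k$-valued random vector $D$ with $\mathbb{E}_P\|D\|^2<\infty$,
\[
  \mathbb{E}_P\|D\|^2\ \ge\ \frac{\|\mathbb{E}_{P'}[D]-\mathbb{E}_P[D]\|^2}{\chi^2(P'\|P)}.
\]
The proof passes to the dual norm: for every $u$ with $\|u\|_*\le1$, $\langle u,\mathbb{E}_{P'}D-\mathbb{E}_PD\rangle=\mathbb{E}_P[\langle u,D-\mathbb{E}_PD\rangle(dP'/dP-1)]\le(\mathrm{Var}_P\langle u,D\rangle)^{1/2}\chi^2(P'\|P)^{1/2}\le(\mathbb{E}_P\|D\|^2)^{1/2}\chi^2(P'\|P)^{1/2}$, and one takes the supremum over $u$. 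It is essential that the variance be controlled by the \emph{second} moment in the last inequality: the naive bound $\mathbb{E}\|D\|^2\ge\mathbb{E}\|D-\mathbb{E}D\|^2$ is false for general norms (already for $\|\cdot\|_1$), so the usual ``variance form'' of the HCR inequality cannot be invoked. Applying the displayed bound with $P=\mathbb{P}_\lambda$ and $P'=\mathbb{P}_h$ yields $\mathbb{E}_{\mathbb{P}_\lambda}\|D\|^2\ge(1-\lambda)^2\|\Delta\|^2/\chi^2(\mathbb{P}_h\|\mathbb{P}_\lambda)=(1-\lambda)^2A_{\lambda,\psi,Q,h}$.

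To convert this into a statement about $\mathfrak{M}$, I would unfold the mixture as $\mathbb{E}_{\mathbb{P}_\lambda}\|D\|^2=\lambda\rho_h+(1-\lambda)\rho_0$, where $\rho_0:=\int\mathbb{E}_\theta\|T(X)-\psi(\theta)\|^2\,dQ(\theta)\le\mathfrak{M}$ and, by the substitution $s=\theta+h$, $\rho_h:=\mathbb{E}_{\mathbb{P}_h}\|D\|^2=\int\mathbb{E}_{X\sim P_s}\|T(X)-\psi(s-h)\|^2\,dQ(s)$. Applying Minkowski's inequality to the decomposition $T(X)-\psi(s-h)=(T(X)-\psi(s))+(\psi(s)-\psi(s-h))$ inside this integral --- the same reverse-triangle mechanism underlying Lemma~\ref{thm:L2-approximation} --- gives $\rho_h^{1/2}\le\rho_0^{1/2}+B_{\psi,Q,h}^{1/2}\le\mathfrak{M}^{1/2}+B_{\psi,Q,h}^{1/2}$. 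Substituting, $(1-\lambda)^2A_{\lambda,\psi,Q,h}\le\lambda(\mathfrak{M}^{1/2}+B_{\psi,Q,h}^{1/2})^2+(1-\lambda)\mathfrak{M}$; using $2\lambda\,\mathfrak{M}^{1/2}B_{\psi,Q,h}^{1/2}=2\sqrt\lambda\,\mathfrak{M}^{1/2}(\lambda B_{\psi,Q,h})^{1/2}\le2\,\mathfrak{M}^{1/2}(\lambda B_{\psi,Q,h})^{1/2}$ and $\lambda B_{\psi,Q,h}=\big(\sqrt{\lambda B_{\psi,Q,h}}\big)^2$, the right-hand side is at most $\big(\mathfrak{M}^{1/2}+\sqrt{\lambda B_{\psi,Q,h}}\big)^2$. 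Taking square roots and rearranging gives $\mathfrak{M}\ge[\sqrt{(1-\lambda)^2A_{\lambda,\psi,Q,h}}-\sqrt{\lambda B_{\psi,Q,h}}]_+^2$, and taking the supremum over $Q$, $\lambda$ and $h$ completes the argument.

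I expect the HCR step to be the crux: one must use the second-moment form so that it applies to biased $T$ and to a general norm via duality, and one must arrange the three measures so that the mean gap is exactly $(1-\lambda)\Delta$ and the divergence in the denominator is $\chi^2(\mathbb{P}_h\|\mathbb{P}_\lambda)$ rather than, say, $\chi^2(\mathbb{P}_h\|\mathbb{P}_0)$ or $1+\chi^2$. The two change-of-variables identities and the final $\sqrt\lambda$ bookkeeping are routine.
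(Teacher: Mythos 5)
Your proof is correct and follows the paper's core strategy: the change of variables identifying $\mathbb{E}_{\mathbb{P}_h}D - \mathbb{E}_{\mathbb{P}_0}D = \Delta$, the Cauchy--Schwarz / HCR-style bound against the mixture $\mathbb{P}_\lambda := \lambda\mathbb{P}_h + (1-\lambda)\mathbb{P}_0$ to obtain $(1-\lambda)^2\|\Delta\|^2 \le \chi^2(\mathbb{P}_h\|\mathbb{P}_\lambda)\,\mathbb{E}_{\mathbb{P}_\lambda}\|D\|^2$, and the unwinding of $\mathbb{E}_{\mathbb{P}_\lambda}\|D\|^2$ into Bayes risks under $\mathbb{P}_0$ and $\mathbb{P}_h$. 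One small remark on your HCR lemma: the centering by $\mathbb{E}_P D$ is unnecessary --- since $\mathbb{E}_P[dP'/dP-1]=0$ you can skip it, bound $\mathbb{E}_P\langle u,D\rangle^2 \le \mathbb{E}_P\|D\|^2$ directly via the dual-norm inequality, and arrive at the same second-moment HCR bound; this is exactly what the paper's proof does, and it sidesteps the variance-versus-second-moment subtlety you flagged. Where your route genuinely differs is the final rearrangement. The paper controls $\mathbb{E}_{\mathbb{P}_h}\|D\|^2$ via the parameterized inequality $(a+b)^2 \le (1+L)a^2 + (1+1/L)b^2$ and then explicitly optimizes over $L$, obtaining a slightly sharper intermediate bound $\bigl(\sqrt{(1-\lambda)\{A-\lambda(A+B)\}} - \lambda\sqrt{B}\bigr)^2$ before presenting the stated form. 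You instead apply Minkowski's inequality to get $\rho_h^{1/2}\le\rho_0^{1/2}+B_{\psi,Q,h}^{1/2}$, substitute into $(1-\lambda)^2 A_{\lambda,\psi,Q,h} \le \lambda\rho_h + (1-\lambda)\rho_0$, and complete the square using $\sqrt{\lambda}\le 1$. Your version is shorter and avoids the Lagrangian optimization over $L$; the trade-off is that it lands exactly on the stated bound rather than the sharper intermediate one, but since the theorem asserts only the weaker form this is perfectly adequate.
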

The parameter $\lambda$ is introduced to prevent a trivial lower bound. This might happen, for instance, when the statistical model $\mathcal{P}$ exhibits some irregularity, resulting in the divergent denominator of $A_{\lambda, \psi, Q, h}$ when $\lambda = 0$. 
\begin{corollary}\label{cor:lambda-zero}
	When $\lambda =0$, Theorem~\ref{thm:crvtBound} implies that 
\begin{align}~\label{eq:mixture_HCR}
    &\inf_{T}\, \sup_{\theta \in \mathbb{R}^d}\, \mathbb{E}_\theta\|T(X)-\psi(\theta)\|^2~\ge~ \sup_{Q,\, h\in\mathbb{R}^d}\,\frac{\|\int_{\mathbb{R}^d} \psi(t)-\psi(t-h)\,dQ(t)\|^2}{\chi^2(\mathbb{P}_h \| \mathbb{P}_0)}.
\end{align}
\end{corollary}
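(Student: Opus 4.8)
The plan is to obtain the corollary by simply specializing Theorem~\ref{thm:crvtBound} to the single value $\lambda = 0$ in the inner supremum. Since
\begin{align*}
\sup_{Q,\, \lambda\in[0,1],\, h\in\mathbb{R}^d}\,\left[\sqrt{(1-\lambda)^2 A_{\lambda, \psi, Q, h}} - \sqrt{\lambda B_{\psi, Q, h}}\right]_+^2 \ge \sup_{Q,\, h\in\mathbb{R}^d}\,\left[\sqrt{(1-\lambda)^2 A_{\lambda, \psi, Q, h}} - \sqrt{\lambda B_{\psi, Q, h}}\right]_+^2\bigg|_{\lambda = 0},
\end{align*}
it suffices to evaluate the right-hand side at $\lambda = 0$ and simplify. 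First I would note that the mixture in the denominator of $A_{\lambda,\psi,Q,h}$ collapses: $\lambda\mathbb{P}_h + (1-\lambda)\mathbb{P}_0 = \mathbb{P}_0$ when $\lambda = 0$, so that $A_{0,\psi,Q,h} = \|\int_{\mathbb{R}^d}\psi(t)-\psi(t-h)\,dQ(t)\|^2 / \chi^2(\mathbb{P}_h\|\mathbb{P}_0)$. Next, the prefactor satisfies $(1-\lambda)^2 = 1$, and the subtracted term vanishes since $\sqrt{\lambda B_{\psi,Q,h}} = 0$.

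It then remains to observe that $A_{0,\psi,Q,h} \ge 0$ — it is a ratio of a squared norm and a (nonnegative, possibly infinite) chi-squared divergence, with the convention that the ratio is $0$ when the denominator is $+\infty$ — so that $\big[\sqrt{A_{0,\psi,Q,h}} - 0\big]_+^2 = A_{0,\psi,Q,h}$. Taking the supremum over $Q$ and $h \in \mathbb{R}^d$ on both sides yields
\begin{align*}
\inf_{T}\, \sup_{\theta \in \mathbb{R}^d}\, \mathbb{E}_\theta\|T(X)-\psi(\theta)\|^2 \ge \sup_{Q,\, h\in\mathbb{R}^d}\, \frac{\|\int_{\mathbb{R}^d} \psi(t)-\psi(t-h)\,dQ(t)\|^2}{\chi^2(\mathbb{P}_h \| \mathbb{P}_0)},
\end{align*}
which is the claimed bound.

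There is no real obstacle here: the corollary is a one-line specialization, and the only point requiring a moment's care is the bookkeeping of the degenerate cases (when $\chi^2(\mathbb{P}_h\|\mathbb{P}_0) = +\infty$, the corresponding term in the supremum contributes $0$ and the bound is vacuous for that choice of $Q, h$; when it is finite, the argument above applies verbatim). It may be worth remarking, as the text surrounding Theorem~\ref{thm:crvtBound} already does, that this $\lambda = 0$ specialization is exactly the ``mixture'' analogue of the Hammersley--Chapman--Robbins bound (Theorem~\ref{theorem:HCR-bound}) but now valid for arbitrary — in particular non-differentiable — functionals $\psi$ and without any unbiasedness restriction on $T$, at the cost of being trivial whenever the chi-squared divergence between the two mixtures diverges (e.g., under irregular models), which is precisely the deficiency the $\lambda > 0$ version and the Hellinger-based Theorem~\ref{thm:crvtBound_hellinger} are designed to repair.
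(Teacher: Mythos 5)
Your proof is correct and is exactly the intended argument: the corollary is the specialization of Theorem~\ref{thm:crvtBound} to $\lambda = 0$, and the paper treats it as such without a separate proof. The bookkeeping you note (collapse of the mixture to $\mathbb{P}_0$, vanishing of the $\sqrt{\lambda B}$ term, and the $\chi^2 = +\infty$ degenerate case giving a trivial contribution) is exactly right.
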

This corollary is only useful when the involved chi-squared divergence is finite. This result extends the classical HCR bound from a two-point risk (Theorem~\ref{theorem:HCR-bound}) to mixture distributions $\mathbb{P}_h$ and $\mathbb{P}_0$. Additionally, the lower bound no longer requires the infimum to be taken over unbiased estimators.

\subsection{Lower bound based on the Hellinger distance}\label{sec:vt-hellinger}
We present another mixture extension of the HCR bound. This extension is stated under the Hellinger distance, which is desirable since it does not require additional regularity conditions. The corresponding result has connections to the classical minimax lower bound in terms of the Hellinger distance by \cite{donoho1987geometrizing}, which has been frequently considered for non-differentiable functionals, irregular estimation, or non-asymptotic minimax lower bound. See Section~\ref{supp:modulus} of the Supplementary Material for discussion. 
\begin{theorem}
\label{thm:crvtBound_hellinger}
For any probability measure $Q$ on $\mathbb{R}^d$, define 
\begin{align*}
	A_{\psi, Q, h} := \frac{\|\int_{\mathbb{R}^d} \psi(t)-\psi(t-h)\,dQ(t)\|^2}{4H^2(\mathbb{P}_0, \mathbb{P}_h)} \quad\textrm{and}\quad
	B_{\psi, Q, h}:= \int_{\mathbb{R}^d}\|\psi(t)-\psi(t-h)\|^2\,dQ(t).
\end{align*}
Then for any measurable function $T : \mathcal{X} \mapsto \mathbb{R}^k$ and vector norm $\|\cdot\| : \mathbb{R}^k \mapsto \mathbb{R}_+$,
\begin{align*}
    &\inf_{T} \, \sup_{\theta \in\mathbb{R}^d}\, \mathbb{E}_{\theta}\|T(X)-\psi(\theta)\|^2\ge \sup_{h\in\mathbb{R}^d}\,\left[\sqrt{A_{\psi, Q, h}}-\sqrt{B_{\psi, Q, h}}\right]^2_+.
\end{align*}
\end{theorem}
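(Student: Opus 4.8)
The plan is to combine two ingredients: the reverse--triangle--inequality reduction of Lemma~\ref{thm:L2-approximation}, applied to the \emph{translated} functional $\phi(\theta):=\psi(\theta-h)$, and a constrained--risk (two--point--type) comparison between the mixtures $\mathbb{P}_0$ and $\mathbb{P}_h$ of \eqref{eq:mixture-def}, carried out through the Hellinger distance rather than the chi--squared divergence used in Theorem~\ref{thm:crvtBound}. Throughout I would fix $h\in\mathbb{R}^d$ and a measurable $T$, write $R:=\sup_{\theta}\mathbb{E}_\theta\|T(X)-\psi(\theta)\|^2$ and $v:=\int_{\mathbb{R}^d}(\psi(t)-\psi(t-h))\,dQ(t)$ (so $\|v\|^2$ is the numerator of $A_{\psi,Q,h}$), let $\|\cdot\|_*$ denote the dual norm, and dispose at the outset of the degenerate cases $v=0$, $H^2(\mathbb{P}_0,\mathbb{P}_h)\in\{0,\infty\}$, $R=\infty$, $B_{\psi,Q,h}=\infty$, in each of which the asserted bound is either $0$ or immediate (and is in any case dominated inside the supremum by $h=0$). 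I may thus assume all quantities below are finite, together with square--integrability of $\psi$ and of $T(X)$ under the relevant mixtures, the general case following by truncation.

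First I would extract two lower bounds on $R$. Replacing the supremum over $\theta$ by the $Q$--average gives $R\ge r_0:=\int\mathbb{E}_\theta\|T(X)-\psi(\theta)\|^2\,dQ(\theta)=\mathbb{E}_{\mathbb{P}_0}\|T(X)-\psi(\theta)\|^2$, where $\theta$ now denotes the parameter coordinate of the pair. Applying Lemma~\ref{thm:L2-approximation} (its proof is a reverse triangle inequality and goes through verbatim for a function $\phi$ of the parameter), with $\phi(\theta):=\psi(\theta-h)$ and prior $Q$, whose approximation--error term is exactly $B_{\psi,Q,h}$, gives $R\ge[\sqrt{r_h}-\sqrt{B_{\psi,Q,h}}]_+^2$ with $r_h:=\int\mathbb{E}_\theta\|T(X)-\psi(\theta-h)\|^2\,dQ(\theta)$; and the substitution $\theta\mapsto\theta+h$ identifies $r_h=\mathbb{E}_{\mathbb{P}_h}\|T(X)-\psi(\theta)\|^2$, the Bayes risk of the pair under $\mathbb{P}_h$. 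Hence $\sqrt R\ge\max(\sqrt{r_0},\,\sqrt{r_h}-\sqrt{B_{\psi,Q,h}})\ge\sqrt{\max(r_0,r_h)}-\sqrt{B_{\psi,Q,h}}$, so it suffices to show $\max(r_0,r_h)\ge A_{\psi,Q,h}$; for this it is enough that $r_0+r_h\ge\|v\|^2/(2H^2(\mathbb{P}_0,\mathbb{P}_h))$, since $\max(r_0,r_h)\ge(r_0+r_h)/2$.

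This last inequality is the Hellinger two--point step, and its engine is the observation that $\mathbb{P}_0$ and $\mathbb{P}_h$ share the same $X$--marginal $m:=\int P_\theta\,dQ(\theta)$: marginalizing $\theta$ out of $dP_{\theta+h}(x)\,dQ(\theta+h)$ and substituting $s=\theta+h$ returns $\int dP_s(x)\,dQ(s)$. Consequently $\mathbb{E}_{\mathbb{P}_0}T(X)=\mathbb{E}_{\mathbb{P}_h}T(X)$. I would then choose $u$ with $\|u\|_*=1$ and $\langle u,v\rangle=\|v\|$, set $g(x,\theta):=\langle u,\,T(x)-\psi(\theta)\rangle$, note that the $T(X)$--contributions cancel while $\mathbb{E}_{\mathbb{P}_0}\psi(\theta)-\mathbb{E}_{\mathbb{P}_h}\psi(\theta)=\int(\psi(t)-\psi(t-h))\,dQ(t)=v$, so that $\mathbb{E}_{\mathbb{P}_0}g-\mathbb{E}_{\mathbb{P}_h}g=-\|v\|$. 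Factoring $d\mathbb{P}_0-d\mathbb{P}_h=(\sqrt{d\mathbb{P}_0}-\sqrt{d\mathbb{P}_h})(\sqrt{d\mathbb{P}_0}+\sqrt{d\mathbb{P}_h})$ and applying Cauchy--Schwarz together with $(\sqrt a+\sqrt b)^2\le 2(a+b)$ and the pointwise bound $|g(x,\theta)|\le\|T(x)-\psi(\theta)\|$ (from $\|u\|_*=1$) yields $\|v\|\le\sqrt{2(r_0+r_h)}\,H(\mathbb{P}_0,\mathbb{P}_h)$, which rearranges to $r_0+r_h\ge\|v\|^2/(2H^2(\mathbb{P}_0,\mathbb{P}_h))$. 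Combined with the previous paragraph this gives $\sqrt R\ge\sqrt{A_{\psi,Q,h}}-\sqrt{B_{\psi,Q,h}}$ and hence $R\ge[\sqrt{A_{\psi,Q,h}}-\sqrt{B_{\psi,Q,h}}]_+^2$; taking the supremum over $h\in\mathbb{R}^d$ finishes the argument.

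I expect the main obstacle to be the bookkeeping that glues the two halves together rather than any single estimate: recognizing the ``surrogate efficiency'' term $\int\mathbb{E}_\theta\|T(X)-\psi(\theta-h)\|^2\,dQ(\theta)$ produced by Lemma~\ref{thm:L2-approximation} as the $\mathbb{P}_h$--Bayes risk, and --- hand in hand with it --- verifying that the change of variables makes $\mathbb{P}_0$ and $\mathbb{P}_h$ share an $X$--marginal, since it is precisely this invariance that cancels the estimator terms and leaves the clean gap $\|v\|$ driving the bound. A lesser, routine nuisance is integrability (making sense of $\mathbb{E}_{\mathbb{P}_0}T(X)$, $\mathbb{E}_{\mathbb{P}_0}\psi(\theta)$, and the square--root manipulations), handled by the finiteness reduction above or by truncation; the Hellinger estimate itself, including the origin of the factor $4$ in the denominator of $A_{\psi,Q,h}$, is standard once the setup is in place.
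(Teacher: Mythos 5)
Your proof is correct and is essentially the paper's argument repackaged: the Hellinger factorization $d\mathbb{P}_0-d\mathbb{P}_h=(\sqrt{d\mathbb{P}_0}-\sqrt{d\mathbb{P}_h})(\sqrt{d\mathbb{P}_0}+\sqrt{d\mathbb{P}_h})$ followed by Cauchy--Schwarz and $(a+b)^2\le 2(a^2+b^2)$ gives $\|v\|^2\le 2(r_0+r_h)\,H^2(\mathbb{P}_0,\mathbb{P}_h)$ in both, and invoking Lemma~\ref{thm:L2-approximation} with $\phi=\psi(\cdot-h)$ is precisely the reverse-triangle bound $\sqrt{r_h}\le\sqrt{R}+\sqrt{B}$ that the paper derives inline. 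Your $\max(r_0,r_h)\ge(r_0+r_h)/2$ step and the explicit remark that $\mathbb{P}_0$ and $\mathbb{P}_h$ share the same $X$-marginal (which makes the estimator terms cancel) are clean streamlinings of the same bookkeeping and yield the same constant.
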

Theorem~\ref{thm:crvtBound_hellinger} can be compared to Lemma 1 of~\cite{simons1983cramer} which restricts to unbiased estimators and does not consider mixtures. The local behavior of the Hellinger distance has been a frequent tool in analyzing irregular statistical models \citep{ibragimov1981statistical,donoho1987geometrizing, shemyakin2014hellinger, duchi2018right, lin2019optimal}. One of the fundamental two-point risk bounds in terms of the Hellinger distance is provided by Theorem 6.1 of \cite{ibragimov1981statistical}; however, the original proof does not provide the optimal constant, leaving room for a minor improvement. Here, we present a refined two-point risk lower bound with an optimal constant. This implies the improved constants for many applications of Theorem 6.1 of \cite{ibragimov1981statistical}, such as Theorem 1 of \citet{lin2019optimal}. Although this improvement still fails to recover the asymptotic constant of the LAM theorem as we later demonstrate, it may still be of independent interest for its simplicity.
\begin{lemma}[Refinement of Theorem 6.1 of \cite{ibragimov1981statistical} for real-valued functionals]\label{lemma:sharper_ihbound}
For any real-valued functional $\psi : \Theta \mapsto \mathbb{R}$, a measurable function $T$ and $\theta_1, \theta_2 \in \Theta$, we have
\begin{align}
     &\frac{1}{2}\left\{\mathbb{E}_{\theta_1}|T(X) - \psi(\theta_1)|^2 + \mathbb{E}_{\theta_2}|T(X) - \psi(\theta_2)|^2\right\} \ge \left[\frac{1 - H^2(P_{\theta_1}, P_{\theta_2})}{4}\right]_+|\psi(\theta_1) - \psi(\theta_2)|^2.\nonumber
\end{align}
This implies that 
\begin{align}
     &\inf_{T}\, \sup_{\theta \in\Theta}\, \mathbb{E}_{\theta}|T(X)-\psi(\theta)|^2 \ge \sup_{\theta_1, \theta_2 \in \Theta}\left[\frac{1 - H^2(P_{\theta_1}, P_{\theta_2})}{4}\right]_+|\psi(\theta_1) - \psi(\theta_2)|^2.\nonumber
\end{align}
\end{lemma}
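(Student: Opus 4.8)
The plan is to prove the two-point (pointwise) inequality first and then deduce the minimax statement by a straightforward supremum-and-max argument. For the two-point bound, fix $\theta_1,\theta_2\in\Theta$ and abbreviate $a_i:=T(X)-\psi(\theta_i)$, $\delta:=\psi(\theta_1)-\psi(\theta_2)$, $p_i:=dP_{\theta_i}/d\nu$, so that $\delta=a_2-a_1$ (as functions of $X$). The quantity to lower-bound is $R:=\tfrac12\{\E_{\theta_1}|a_1|^2+\E_{\theta_2}|a_2|^2\}=\tfrac12\int (a_1^2 p_1 + a_2^2 p_2)\,d\nu$. The key algebraic identity I would use is a completion-of-square / Cauchy--Schwarz step in the spirit of Simons (1983): write $\delta = a_2 - a_1$ pointwise, multiply by $\sqrt{p_1p_2}$, integrate, and bound $|\delta|\int \sqrt{p_1p_2}\,d\nu = \big|\int(a_2\sqrt{p_2})(\sqrt{p_1}) - (a_1\sqrt{p_1})(\sqrt{p_2})\,d\nu\big|$ using Cauchy--Schwarz on each term. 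This yields $|\delta|\cdot(1-\tfrac12 H^2(P_{\theta_1},P_{\theta_2})) \le \big(\int a_1^2 p_1\big)^{1/2} + \big(\int a_2^2 p_2\big)^{1/2}$, after using $\int\sqrt{p_1p_2}\,d\nu = 1-\tfrac12 H^2$. Squaring and applying $(\sqrt{u}+\sqrt{v})^2\le 2(u+v)$ gives $|\delta|^2(1-\tfrac12 H^2)^2 \le 2\cdot 2R = 4R$; dividing, $R \ge \tfrac14(1-\tfrac12 H^2)^2|\delta|^2$.

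There is a discrepancy to resolve: the target exponent on $(1-H^2/4)$ is $1$, not $2$, and the coefficient inside is $1/4$, not $1/2$. So the naive Cauchy--Schwarz bound above is not quite the claimed form — I would need the sharper step that converts $(1-\tfrac12H^2)^2$ into $1-H^2(\,\cdot\,)$-type linear factor. The natural fix is to be more careful: instead of Cauchy--Schwarz with weights $\sqrt{p_1},\sqrt{p_2}$, split $\delta$ as a convex-type combination and optimize. Concretely, for any $t\in[0,1]$ write $t\,a_2 - (1-t)(-a_1)$ type decompositions, or — more cleanly — apply Cauchy--Schwarz to $\int \delta\,\sqrt{p_1 p_2}\,d\nu = \int a_2\sqrt{p_2}\cdot\sqrt{p_1}\,d\nu - \int a_1\sqrt{p_1}\cdot\sqrt{p_2}\,d\nu$ but then bound $\int p_1\,d\nu$ and $\int p_2\,d\nu$ by $1$ only after accounting for the overlap $\int\sqrt{p_1p_2}$ more tightly, e.g. via the identity $\int\sqrt{p_1 p_2}\,d\nu = 1 - \tfrac12 H^2$ together with $\int(\sqrt{p_1}-\sqrt{p_2})^2 = H^2$. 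I expect the clean route is: let $\rho := \int\sqrt{p_1p_2}\,d\nu$; then $|\delta|\rho \le (\E_{\theta_1}|a_1|^2)^{1/2}\rho^{1/2} + (\E_{\theta_2}|a_2|^2)^{1/2}\rho^{1/2}$ is false in general, so instead bound each cross term as $|\int a_2\sqrt{p_1p_2}| \le (\int a_2^2 p_2)^{1/2}(\int p_1)^{1/2} = (\E_{\theta_2}|a_2|^2)^{1/2}$ and symmetrically, giving $|\delta|\rho \le (\E_{\theta_1}|a_1|^2)^{1/2} + (\E_{\theta_2}|a_2|^2)^{1/2} \le \sqrt{2}\,(2R)^{1/2}=2R^{1/2}$; hence $R \ge \tfrac14\rho^2|\delta|^2 = \tfrac14(1-\tfrac12 H^2)^2|\delta|^2$. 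To land exactly on $\tfrac14(1-H^2/4)|\delta|^2$, note $(1-\tfrac12 H^2)^2 \ge 1 - H^2 \ge (1-H^2/4)$ fails for large $H^2$; rather $(1-H^2/2)^2_+ \ge [1-H^2]_+ $ and one checks $[1-H^2/2]_+^2$ versus $[1-H^2/4]_+$ — the former is actually the \emph{stronger} (larger) bound on $[0, \sqrt{2}]$ in the relevant range, so the stated lemma is implied. I would verify this elementary inequality $(1-s/2)^2 \ge (1-s/4)_+$ for $s=H^2\in[0,2]$ directly (it reduces to $s(s-1)\ge 0$ after clearing, wait — needs the $[\cdot]_+$ truncation handled at $s=4$), and present whichever truncated form is cleanest; the paper's stated form with $[\,(1-H^2/4)/4\,]_+$ then follows.

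The main obstacle, then, is not the minimax reduction (which is immediate: for any $T$, $\sup_{\theta}\E_\theta|T-\psi(\theta)|^2 \ge \tfrac12\{\E_{\theta_1}|\cdot|^2+\E_{\theta_2}|\cdot|^2\} \ge [\text{RHS}]$, then take $\inf_T$ on the left and $\sup_{\theta_1,\theta_2}$ on the right since the right side is $T$-free), but rather getting the \emph{sharp} constant — matching the $1/4$ coefficient and the linear-in-$H^2$ factor rather than the crude squared factor, and correctly tracking where equality-type Cauchy--Schwarz is tight (the improvement over Ibragimov--Has'minskii Theorem 6.1 lives precisely here). I would handle this by doing the Cauchy--Schwarz step with a free parameter $t\in(0,1)$ — bounding $|\delta|\int\sqrt{p_1p_2} \le \tfrac1{\sqrt t}(\E_{\theta_1}|a_1|^2)^{1/2}\big(\int t\, p_2\big)^{1/2}$-style splits — and optimizing $t$ at the end, which is the standard device for squeezing the best constant out of such a two-point argument.
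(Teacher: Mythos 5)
Your argument is correct, and it takes a genuinely different --- and in fact sharper --- route than the paper. The paper's proof first bounds $|\E_{\theta_1}T - \E_{\theta_2}T|^2$ via a Hellinger--Cauchy--Schwarz step, introduces a free centering constant $C$, optimizes over $C$, and then minimizes the resulting quadratic in the two biases $d(\theta_i) := \E_{\theta_i}T - \psi(\theta_i)$; this produces the linear-in-Hellinger factor $[1-H^2]_+/4$. Your route is the symmetric two-sided Cauchy--Schwarz argument in the spirit of Simons (1983), applied without any unbiasedness restriction. Writing $a_i := T - \psi(\theta_i)$, $\delta := \psi(\theta_1) - \psi(\theta_2) = a_2 - a_1$ pointwise, and $\rho := \int\sqrt{p_1 p_2}\,d\nu = 1 - H^2/2$, you obtain $|\delta|\rho \le \bigl(\E_{\theta_1}a_1^2\bigr)^{1/2} + \bigl(\E_{\theta_2}a_2^2\bigr)^{1/2}$, and then $(u+v)^2 \le 2(u^2+v^2)$ gives
\begin{equation*}
\frac{1}{2}\bigl(\E_{\theta_1}a_1^2 + \E_{\theta_2}a_2^2\bigr) \ge \frac{\rho^2}{4}\,|\delta|^2 = \frac{(1-H^2/2)^2}{4}\,|\delta|^2.
\end{equation*}
This dominates $\frac{[1-H^2]_+}{4}|\delta|^2$ for every $H^2 \in [0,2]$, since $(1-s/2)^2 = 1 - s + s^2/4 \ge (1-s)_+$; your bound is strictly larger whenever $0 < H^2 < 2$, and in particular remains positive on $[1,2)$ where the paper's truncated factor vanishes. (Run through the asymptotic calculation in Proposition~\ref{prop:LAMsemiparametric}(iv), your two-point inequality would yield the constant $e^{-1} \approx 0.368$ in place of $\approx 0.290$ --- still below $1$, so the paper's conclusion that two-point risk bounds cannot recover the full LAM/SEB constant stands, but your constant is an improvement.)

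Two small clean-ups. First, the bracket $\left[\frac{1-H^2}{4}\right]_+$ in the lemma means $\max\left\{\frac{1-H^2}{4},\,0\right\} = \frac{[1-H^2]_+}{4}$, not $\bigl[1-\tfrac{H^2}{4}\bigr]_+$; you waver between the two readings, but the inequality you actually state and need, $(1-H^2/2)^2 \ge [1-H^2]_+$, is the right one and is elementary. Second, the free-parameter optimization you float at the end is unnecessary: the direct bound already implies the lemma without further tuning, and there is no need to reproduce the paper's linear-in-$H^2$ form exactly since your quadratic form is uniformly at least as large.
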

Lemma~\ref{lemma:sharper_ihbound} can be compared to Lemma~1 of~\cite{simons1983cramer}.
It should be noted that analogous results as Lemma~\ref{lemma:sharper_ihbound} have been reported in the literature as early as \citet{donoho1987geometrizing}. This result is presented to underscore the insufficiency of the two-point risk inequality in recovering the asymptotic constant---a conclusion also reached by \citet{donoho1987geometrizing}. Although Lemma~\ref{lemma:sharper_ihbound} is stated for a real-valued functional, these results can be extended to any vector-valued functional, with the leading constant naturally depending on the vector norm $\|\cdot
\|$. 
\begin{remark}[Generalized van Trees inequality]
Theorems~\ref{cor:vt-approximation}--\ref{thm:crvtBound_hellinger} satisfy the following:
\begin{align*}
    \mbox{Theorems~\ref{cor:vt-approximation}--\ref{thm:crvtBound_hellinger}}\,  \Longrightarrow \, \mbox{the van Trees inequality}  \, \Longrightarrow \, \mbox{LAM theorem}
\end{align*}
with progressively stringent assumptions. Hence, all new minimax lower bounds in this manuscript can be viewed as the generalization of the van Trees inequality. While Theorems~\ref{cor:vt-approximation} and \ref{thm:crvtBound_hellinger} imply the LAM theorem under the weakest known assumptions, Theorem~\ref{thm:crvtBound} requires additional regularity conditions. The proof of the LAM theorem based on the van Trees inequality is due to \cite{gassiat2013revisiting}. Here, we provide heuristic arguments that Theorems~\ref{cor:vt-approximation}--\ref{thm:crvtBound_hellinger} imply the van Trees inequality, deferring details on regularity conditions to the Supplementary Material. Theorem~\ref{cor:vt-approximation} trivially implies the van Trees inequality as the second approximation error term becomes zero for absolutely continuous functionals. Both Theorems~\ref{thm:crvtBound} and \ref{thm:crvtBound_hellinger} imply the van Trees inequality by taking the limit as $\|h\|_2 \longrightarrow 0$. 
Under additional regularity conditions on the statistical model, it follows
\begin{align}
    \chi^2(\mathbb{P}_h \| \mathbb{P}_0)= \chi^2(Q_h\|Q)+\int_{\mathbb{R}^d} \chi^2(P_{t+h}\|P_t)\,\frac{dQ_h^2}{dQ} = h^{\top}\left(\mathcal{I}(Q) + \int_{\mathbb{R}^d} \mathcal{I}(t) \,dQ\right) h+ o(\|h\|_2^2)\label{eq:local_chi2}\nonumber
\end{align}
provided that we can exchange the limit and the integral under the dominated convergence theorem. If $\psi$ is differentiable almost everywhere under the integral, the inequality~\eqref{eq:mixture_HCR} implies the van Trees inequality as $\|h\|_2 \longrightarrow 0$. 
On the other hand, the following expansion holds under weaker conditions:
\begin{align*}
H^2(\mathbb{P}_0, \mathbb{P}_h)&=H^2(Q_h, Q) +\int_{\mathbb{R}^d} H^2(P_{t+h}, P_{t})\,dQ_h^{1/2} \, dQ^{1/2} \\
&= \frac{1}{4}h^{\top}\left(\mathcal{I}(Q) +\int_{\mathbb{R}^d} \mathcal{I}(t) \, dQ \right)h + o(\|h\|^2)
\end{align*}
as $\|h\|_2 \longrightarrow 0$. Therefore, Theorem~\ref{thm:crvtBound_hellinger} implies the van Trees inequality under weaker conditions than Theorem~\ref{thm:crvtBound}. 
\end{remark}

\subsection{The extension from $\mathbb{R}^d$ to a general parameter space}\label{section:diffeomorphism}
Thus far, Theorems~\ref{thm:crvtBound} and \ref{thm:crvtBound_hellinger} are restricted to the case of $\Theta_0 = \mathbb{R}^d$. These results may still be useful for deriving a \textit{global} minimax risk. However, it cannot be applied directly to a \textit{local} minimax risk, for instance, when $\Theta_0 \subset \Theta$ (a strict subset) is a shrinking neighborhood of the parameter space around a particular value $\theta_0$. In this section, Theorems~\ref{thm:crvtBound} and \ref{thm:crvtBound_hellinger} are extended to the general space $\Theta_0 \subseteq \mathbb{R}^d$ by constructing a diffeomorphism\footnote{A diffeomorphism is an isomorphism of differentiable manifolds. It is an invertible function that maps one differentiable manifold to another such that both the function and its inverse are differentiable.} $\varphi : \mathbb{R}^d \mapsto \Theta_0$. Although we only illustrate the extension using Theorem~\ref{thm:crvtBound_hellinger}, an identical argument applies to Theorem~\ref{thm:crvtBound}.

Given a functional $\psi: \Theta_0 \mapsto \mathbb{R}^k$ and a diffeomorphism $\varphi : \mathbb{R}^d \mapsto \Theta_0$, we define $\widetilde{\psi} : \mathbb{R}^d\mapsto\mathbb{R}^k$, a composite function $(\psi \circ \varphi)(t)$ for $t\in\mathbb{R}^d$ and also define
\begin{align*}
    d\widetilde{\mathbb{P}}_0(x,t) := dP_{\varphi(t)}(x)\,dQ(t) \quad \mbox{and} \quad  d\widetilde{\mathbb{P}}_h(x,t) := dP_{\varphi(t+h)}(x)\, dQ(t+h),
\end{align*}
where $Q$ is a probability measure on $\mathbb{R}^d$. The direct application of Theorem~\ref{thm:crvtBound_hellinger} leads to the following corollary:
\begin{corollary}
    For any probability measure $Q$ on $\mathbb{R}^d$ and a diffeomorphism $\varphi : \mathbb{R}^d \mapsto \Theta_0$, define a composite function $\widetilde{\psi} := (\psi \circ \varphi)$,  
\begin{align*}
	A_{\widetilde{\psi}, Q, h} := \frac{\|\int_{\mathbb{R}^d} \widetilde \psi(t)-\widetilde\psi(t-h)\,dQ(t)\|^2}{4H^2(\widetilde{\mathbb{P}}_0, \widetilde{\mathbb{P}}_h)} \quad\textrm{and}\quad
	B_{\widetilde{\psi}, Q, h}:= \int_{\mathbb{R}^d}\|\widetilde\psi(t)-\widetilde\psi(t-h)\|^2\,dQ(t).
\end{align*}
Then for any measurable function $T : \mathcal{X} \mapsto \mathbb{R}^k$ and vector norm $\|\cdot\| : \mathbb{R}^k \mapsto \mathbb{R}_+$,
\begin{align*}
    &\inf_{T}\, \sup_{\theta \in\Theta_0}\, \mathbb{E}_{\theta}\|T(X)-\psi(\theta)\|^2\ge \sup_{\varphi}\, \sup_{h\in\mathbb{R}^d}\,\left[\sqrt{A_{\widetilde{\psi}, Q, h}}-\sqrt{B_{\widetilde{\psi}, Q, h}}\right]^2_+
\end{align*}
where the first supremum is over the diffeomorphism $\varphi$ between $\mathbb{R}^d$ and $\Theta_0$.
\end{corollary}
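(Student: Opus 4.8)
The plan is to reduce the statement to Theorem~\ref{thm:crvtBound_hellinger} by a change of parametrization. Fix any diffeomorphism $\varphi : \mathbb{R}^d \mapsto \Theta_0$; if no such map exists the right-hand side is a supremum over the empty set and there is nothing to prove. Pass to the statistical model $\{P_{\varphi(t)} : t \in \mathbb{R}^d\}$, now indexed by all of $\mathbb{R}^d$, together with the functional $\widetilde\psi := \psi \circ \varphi$. Because $\varphi$ is a bijection of $\mathbb{R}^d$ onto $\Theta_0$, the substitution $\theta = \varphi(t)$ identifies the family of pairs $\{(P_\theta, \psi(\theta)) : \theta \in \Theta_0\}$ with $\{(P_{\varphi(t)}, \widetilde\psi(t)) : t \in \mathbb{R}^d\}$, so for every measurable $T : \mathcal{X} \mapsto \mathbb{R}^k$ the risk function $\theta \mapsto \mathbb{E}_\theta\|T(X)-\psi(\theta)\|^2$ and the risk function $t \mapsto \mathbb{E}_{\varphi(t)}\|T(X)-\widetilde\psi(t)\|^2$ have the same range, hence the same supremum. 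Taking the infimum over $T$ on both sides yields
\begin{align*}
\inf_T \sup_{\theta\in\Theta_0}\mathbb{E}_\theta\|T(X)-\psi(\theta)\|^2 = \inf_T \sup_{t\in\mathbb{R}^d}\mathbb{E}_{\varphi(t)}\|T(X)-\widetilde\psi(t)\|^2 .
\end{align*}

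Next I would verify that the reparametrized problem meets the (minimal) hypotheses under which Theorem~\ref{thm:crvtBound_hellinger} is stated. Joint measurability of $(x,t)\mapsto dP_{\varphi(t)}(x)$ follows from joint measurability of $(x,\theta)\mapsto dP_\theta(x)$ composed with the continuous map $\varphi$, so the product measures $d\widetilde{\mathbb{P}}_0(x,t)=dP_{\varphi(t)}(x)\,dQ(t)$ and $d\widetilde{\mathbb{P}}_h(x,t)=dP_{\varphi(t+h)}(x)\,dQ(t+h)$ are well defined and are exactly the mixtures of~\eqref{eq:mixture-def} attached to the model $\{P_{\varphi(t)}\}$; $Q$-integrability of $\widetilde\psi$ is the hypothesis already implicit in writing $A_{\widetilde\psi,Q,h}$ and $B_{\widetilde\psi,Q,h}$. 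Applying Theorem~\ref{thm:crvtBound_hellinger} to the model $\{P_{\varphi(t)} : t\in\mathbb{R}^d\}$, the functional $\widetilde\psi$, and the prior $Q$ on $\mathbb{R}^d$ gives
\begin{align*}
\inf_T \sup_{t\in\mathbb{R}^d}\mathbb{E}_{\varphi(t)}\|T(X)-\widetilde\psi(t)\|^2 \ge \sup_{h\in\mathbb{R}^d}\left[\sqrt{A_{\widetilde\psi,Q,h}}-\sqrt{B_{\widetilde\psi,Q,h}}\right]_+^2 .
\end{align*}
Chaining this with the displayed equality proves the bound for the fixed $\varphi$, and taking the supremum over all diffeomorphisms $\varphi : \mathbb{R}^d \mapsto \Theta_0$ yields the corollary.

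There is no genuine obstacle here; the only step that needs a written justification is the equality of the two minimax risks, which rests on the fact that $\varphi$ maps onto $\Theta_0$ (surjectivity alone suffices for this identity), together with the bookkeeping that the reparametrized mixtures coincide with the objects $\widetilde{\mathbb{P}}_0,\widetilde{\mathbb{P}}_h$ named in the statement. The differentiability carried by $\varphi$ plays no role in this finite-sample inequality; it is retained only so that later specializations — e.g.\ extracting Hellinger or Fisher-information expansions of $\widetilde{\mathbb{P}}_0$ and $\widetilde{\mathbb{P}}_h$, or applying the chain rule to $\nabla\widetilde\psi$ — remain available on $\Theta_0$.
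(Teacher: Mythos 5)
Your proposal is correct and is exactly what the paper intends by calling the corollary ``the direct application of Theorem~\ref{thm:crvtBound_hellinger}'': reparametrize by $\varphi$, note that surjectivity of $\varphi$ onto $\Theta_0$ equates the two minimax risks, apply the theorem to the model $\{P_{\varphi(t)}: t\in\mathbb{R}^d\}$ with functional $\widetilde\psi$, and take the outer supremum over $\varphi$. Your observation that differentiability of $\varphi$ is not used in this finite-sample step (only later, for the asymptotic expansions of $H^2$ and $\nabla\widetilde\psi$) is also an accurate reading of the paper.
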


We provide the intuition behind the role of the diffeomorphism with an example. Suppose, for instance, the local parameter space $\Theta_0$ is an open $\|\cdot\|$-ball around $\theta_0$ whose radius shrinks at the rate $n^{-r}$ for $r > 0$. For such $\Theta_0$, we may consider diffeomorphism $\varphi$ in the form of 
\begin{equation}
    \varphi(t) := \theta_0 + n^{-r} \varphi_0(t)\nonumber
\end{equation}
where $\varphi_0 : \mathbb{R}^d \mapsto B([0],1)$ and $B([0],1)$ is an open unit ball in $\mathbb{R}^d$. Without loss of generality, we assume that $\varphi_0(0)=0$ so $\varphi(0)=\theta_0$. Then $\varphi$ admits the following Taylor expansion:
\begin{align}
	\varphi(\delta)-\varphi(0) = n^{-r} \varphi_0(\delta) \approx n^{-r} \nabla\varphi_0(\delta)\delta.
\end{align}
Hence as $\|\delta\|\longrightarrow 0$, the functional $\psi(\varphi(\delta))$ approaches $\psi(\theta_0)$ through a nonlinear path uniquely defined by the gradient of $\varphi_0$. 

\section{Asymptotic properties}\label{section:asympt_const}
This section examines the asymptotic properties of new minimax lower bounds from Section~\ref{section:van_trees}. Specifically, we investigate whether these bounds can recover established asymptotic constants, such as the semiparametric efficiency bound as well as the local minimax rates for irregular estimation. These findings further reinforce our general understanding that (1) the local behavior of the Hellinger distance is easier to assert than that of the chi-squared divergence and (2) refined constants may be obtained by the mixture-based method instead of the two-point risk method.

\subsection{Semiparametric efficiency bound}\label{sec:semiparametric_minimax}
We first demonstrate that Theorems~\ref{cor:vt-approximation}--\ref{thm:crvtBound_hellinger} imply the LAM theorem for nonparametric models, commonly referred to as the \emph{semiparametric efficiency bound}. Interestingly, they can imply a whole spectrum of such results; in classical LAM results, the neighborhood around a parameter shrinks at an $n^{-1/2}$ rate while we can let that neighborhood shrink at an arbitrary rate. This includes the superefficiency phenomenon as well. For example, if the neighborhood is a singleton, then the lower bound should be exactly zero. See Section~\ref{section:estimators} for more details. 

To begin, we introduce additional notation. We consider the estimation of nonparametric functional $\psi : \mathcal{P} \mapsto \mathbb{R}$, where $\mathcal{P}$ is a collection of probability measures belonging to an infinite-dimensional set. We reduce the study of nonparametric functionals to their behavior along parametric paths $P_t$. Although a univariate path suffices for our purpose, all statements in this section can be extended to a multivariate path. In what follows, we define the notion of the smoothness of infinite-dimensional functionals along the QMD path, which is defined as \eqref{eq:QMD}. We observe that the theory remains intact under the Hellinger differentiability, and we only use the QMD for consistent use of classical terminology. A functional is called \textit{pathwise differentiable} given a QMD path with the score function $g_0$ if there exists a real-valued measurable function $\dot \psi_{0} : \mathcal{X}\mapsto \mathbb{R}$ such that
\begin{equation}\label{eq:pathwise_differentiable}
    \left|\psi(P_t) - \psi(P_0) + t  \int \dot \psi_{0}\, g_0 \, dP_0\right| = o(t),\quad\mbox{as}\quad t\longrightarrow0.
\end{equation}
The definition of pathwise differentiability implies that the local behavior of $\psi$ only depends on each path through the linear functional of $\dot \psi_{0}$ and $g_0$. Below, we denote each path by $P_{t, g} \in \mathcal{P}$ for a given generic score function $g$ to make the dependence explicit.  Then, we observe that the minimax risk of functional estimation in a nonparametric model must be at least larger than the supremum of the minimax risk along any parametric paths indexed by $g$. Here, the supremum is taken over the entire space of $g$, called the \textit{tangent set} $\mathcal{T}_{P_0}$ of $\mathcal{P}$ at $P_0$. A functional is pathwise differentiable \textit{relative to} $\mathcal{T}_{P_0}$ if equation \eqref{eq:pathwise_differentiable} holds for all $g \in \mathcal{T}_{P_0}$. Although the function $\dot \psi_{0}$ is not generally unique, there is a unique projection of $\dot \psi_{0}$ onto the \textit{tangent space}, or the closed linear span of the tangent set. The projected function is called the \textit{efficient influence function} and plays an important role in the semiparametric efficiency theory. We consider the nonparametric model, containing all distributions on the shared measurable space. The tangent space associated with this model at $P_0$ corresponds to the collection of mean-zero functions in $L_2(P_0)$, i.e., the entire Hilbert space of mean-zero, finite-variance functions. This is the maximal tangent space and we denote by $L_2^0(P_0)$. With this terminology in place, we present the following nonparametric analog of the LAM theorem.
\begin{theorem}[Local asymptotic minimax theorem II (Theorem 5.2 of \citet{van2002semiparametric})]\label{theorem:LAM-nonparametric}
Let the functional $\psi : \mathcal{P}\mapsto \mathbb{R}$ be pathwise differentiable at $P_0$ relative to the maximal tangent space $L_2^0(P_0)$ with an efficient influence function $\dot \psi_{0}$. Then for any measurable function $T$ of the $n$ IID observations from $P_{t,g}$,
\begin{align}
    \sup_{g\in L_2^0(P_0)}\,\liminf_{c \longrightarrow \infty}\,\liminf_{n \longrightarrow \infty}\, \inf_{T}\, \sup_{|t| < cn^{-1/2}} \, n \mathbb{E}_{P^n_{t, g}}|T(X)-\psi(P_{t, g})|^2\ge\int \dot \psi^2_{0}\, dP_0.
\end{align}
\end{theorem}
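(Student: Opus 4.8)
The plan is to deduce this nonparametric statement from the non-asymptotic inequality of Theorem~\ref{cor:vt-approximation} (one could equally use Theorem~\ref{thm:crvtBound_hellinger}) applied to carefully chosen one-dimensional submodels. Fix a \emph{bounded} $g\in L_2^0(P_0)$ and take $P_{t,g}$ to be the QMD path through $P_0$ with score $g$ given by $dP_{t,g}=(1+tg)\,dP_0$; for $|t|<\|g\|_\infty^{-1}$ this is Hellinger differentiable at every point of the interval, with Fisher information $\mathcal I(P_{t,g})=\int g^2(1+tg)^{-1}\,dP_0$. Because the supremum over such $g$, followed by Cauchy--Schwarz along a bounded mean-zero approximating sequence of $\dot\psi_0$ in $L_2(P_0)$, recovers the constant $\int\dot\psi_0^2\,dP_0$, it suffices to prove, for each such $g$,
\[
\liminf_{c\to\infty}\liminf_{n\to\infty}\inf_T\sup_{|t|<cn^{-1/2}} n\,\mathbb E_{P^n_{t,g}}|T(X)-\psi(P_{t,g})|^2\;\ge\;\frac{\bigl(\int\dot\psi_0 g\,dP_0\bigr)^2}{\int g^2\,dP_0}.
\]

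For fixed $g,c,n$ I would apply Theorem~\ref{cor:vt-approximation} (in its $n$-observation form) on the open set $\Theta_0=(-cn^{-1/2},cn^{-1/2})$ with two ingredients: the \emph{linear} surrogate $\phi(t):=\psi(P_0)-\gamma t$, where $\gamma:=\int\dot\psi_0 g\,dP_0$, which is absolutely continuous with $\nabla\phi\equiv-\gamma$; and the rescaled prior $Q_n$ with density $q_n(t)=c^{-1}n^{1/2}q_0(c^{-1}n^{1/2}t)$, for a fixed smooth density $q_0$ on $(-1,1)$ with $q_0\to0$ at $\pm1$. One checks $Q_n$ is ``nice'' for $\phi$ in the sense of Definition~\ref{as:regularprior-vt} --- condition~(3) being exactly what forces $q_0\to 0$ at $\pm1$ --- and that $\mathcal I(Q_n)=c^{-2}n\,\mathcal I(q_0)$ with $\mathcal I(q_0):=\int q_0'(s)^2/q_0(s)\,ds<\infty$. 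Writing $\Gamma_n$ for the surrogate-efficiency quantity (with the $n$-fold information $n\int\mathcal I(P_{t,g})\,dQ_n$ in the denominator) and $B_n:=\int|\psi(P_{t,g})-\phi(t)|^2\,dQ_n$, one gets
\[
n\,\Gamma_n=\frac{\gamma^2}{\,c^{-2}\mathcal I(q_0)+\int\mathcal I(P_{t,g})\,dQ_n(t)\,}\;\xrightarrow[n\to\infty]{}\;\frac{\gamma^2}{\,c^{-2}\mathcal I(q_0)+\int g^2\,dP_0\,},
\]
since $\sup_{|t|<cn^{-1/2}}\bigl|\mathcal I(P_{t,g})-\int g^2\,dP_0\bigr|\to0$ for bounded $g$; and pathwise differentiability gives $|\psi(P_{t,g})-\phi(t)|=o(|t|)$, so on the support $|t|\le cn^{-1/2}$ one has $B_n=o(n^{-1})$, i.e.\ $nB_n\to0$. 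Since Theorem~\ref{cor:vt-approximation} with $k=1$ (the $\sup_{\|u\|_*\le1}$ is trivial here) yields $n\inf_T\sup_{\Theta_0}\mathbb E_{P^n_{t,g}}|T-\psi(P_{t,g})|^2\ge\bigl[\sqrt{n\,\Gamma_n}-\sqrt{n B_n}\bigr]_+^2$, letting $n\to\infty$ and then $c\to\infty$ produces the displayed lower bound, and the supremum over $g$ finishes the argument.

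The step I expect to be the main obstacle is the limit in the second display: the lower bound degrades if $\int\mathcal I(P_{t,g})\,dQ_n$ overshoots $\int g^2\,dP_0$ in the limit, so one must control the Fisher information of the submodel \emph{throughout} the shrinking localization window, not just at $t=0$, and for a general QMD path $t\mapsto\mathcal I(P_{t,g})$ can be badly behaved. This is precisely why the argument should be organized around bounded scores and the explicit path $dP_{t,g}=(1+tg)\,dP_0$, for which continuity of the Fisher information is immediate, with the passage to the (possibly unbounded) efficient influence function $\dot\psi_0$ deferred to a routine $L_2(P_0)$-truncation at the end. A secondary, more mechanical task is verifying all three conditions of Definition~\ref{as:regularprior-vt} for $Q_n$ on the interval $\Theta_0$ and confirming the elementary scaling identities for $\mathcal I(Q_n)$ and for the $n$-fold information term.
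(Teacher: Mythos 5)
Your argument is correct and proves the stated inequality. It follows the same overall strategy the paper uses when it re-derives the semiparametric efficiency bound from Theorem~\ref{cor:vt-approximation} in Proposition~\ref{prop:LAMsemiparametric}(i) --- reduce to one-dimensional submodels, place a shrinking prior on the localization window, apply the van Trees-type bound, then optimize over the direction and over $c$ --- but it differs in one genuinely meaningful choice. The paper takes the surrogate class $\Phi$ to contain $\psi$ itself (so the approximation-bias term of Lemma~\ref{thm:L2-approximation} vanishes identically) and then cites \citet{gassiat2013revisiting} for the limit calculation. That choice silently requires $t\mapsto\psi(P_{t,g})$ to be absolutely continuous with $Q$-integrable derivative across the entire localization window, which pathwise differentiability of $\psi$ at $P_0$ does \emph{not} supply. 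You instead take $\phi$ to be the linear Taylor surrogate $\psi(P_0)-\gamma t$, accept a nonzero bias $B_n$, and show $nB_n\to 0$ using pathwise differentiability at the base point alone; $\phi$ is then trivially absolutely continuous and Definition~\ref{as:regularprior-vt} is easy to verify for the rescaled cosine-type prior. This is a more careful use of the bias--variance structure of Theorem~\ref{cor:vt-approximation} and avoids an unstated regularity assumption in the paper's own (very terse) proof of part~(i). Your other devices --- restricting to bounded $g$ along the explicit linear path $dP_{t,g}=(1+tg)\,dP_0$ so that $t\mapsto\mathcal{I}(P_{t,g})$ is continuous over the shrinking window, then passing to a general $\dot\psi_0$ by $L_2(P_0)$-truncation --- are standard and correct, and parallel the path-management the paper does in the proofs of parts (ii) and (iii) of the same proposition. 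One small expository note: pathwise differentiability gives $|\psi(P_{t,g})-\phi(t)|\le\varepsilon|t|$ only for $|t|$ below an $\varepsilon$-dependent threshold, so the conclusion $nB_n\to 0$ should be read as: fix $\varepsilon$, take $n\to\infty$ to get $\limsup_n nB_n\le\varepsilon^2 c^2$, then let $\varepsilon\to 0$; this is consistent with the order of limits you use.
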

The resulting lower bound is often called the \textit{semiparametric efficiency bound} and it extends the LAM theorem to nonparametric contexts. We now demonstrate the application of Theorems~\ref{cor:vt-approximation}--\ref{thm:crvtBound_hellinger} and Lemma~\ref{lemma:sharper_ihbound} to derive the semiparametric efficiency bound. Before applying Theorem~\ref{thm:crvtBound} in particular, we introduce an additional condition. Following Example 25.16 of \cite{van2000asymptotic}, we define smooth and bounded parametric paths as follows:
\begin{equation}\label{eq:differentiable_path}
    dP_{t,g}(x) = \frac{1}{C_t}\kappa(t g(x))\, dP_0(x)
\end{equation}
where $C_t := \int \kappa(t g(x))\, dP_0$. We assume that $\kappa(0)=\kappa'(0)=1$ and $\|\kappa'\|_\infty \le K$ and $\|\kappa''\|_\infty \le K$ for some constant $K$. For instance, the function $\kappa(t) := 2/(1+\exp(-2t))$ satisfies this condition. This choice of parametric paths allows the score function to be unbounded but the paths are bounded themselves. Crucially, this path asserts the necessary regularity conditions for the convergence of the probability measure in the chi-squared divergence, uniformly over $g \in \mathcal{T}_{P_0}\subseteq L^0_2(P_0)$ (See Lemma 1 of \citet{duchi2021constrained}). We then state the following result related to the semiparametric efficiency bound (SEB).
\begin{proposition}\label{prop:LAMsemiparametric}
Assuming the identical setting as Theorem~\ref{theorem:LAM-nonparametric}, the following statements hold:
\begin{enumerate}
	\item[(i)] Theorem~\ref{cor:vt-approximation} implies the SEB for any QMD parametric path,
    \item[(ii)] if the parametric path is defined as \eqref{eq:differentiable_path}, then Theorem~\ref{thm:crvtBound} implies the SEB, 
    \item[(iii)] Theorem~\ref{thm:crvtBound_hellinger} implies the SEB for any QMD parametric path, and 
    \item[(iv)] Lemma~\ref{lemma:sharper_ihbound} implies the SEB for any QMD parametric path with the lower bound multiplied by the constant $C \approx 0.28953$. 
\end{enumerate}
\end{proposition}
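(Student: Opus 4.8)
The plan is to reduce the infinite-dimensional problem to a single parametric submodel and then feed that submodel into each of Theorems~\ref{cor:vt-approximation}--\ref{thm:crvtBound_hellinger} and Lemma~\ref{lemma:sharper_ihbound} with a rescaled prior (respectively, two rescaled points). Fix a score $g\in L_2^0(P_0)$ and work along $t\mapsto P_{t,g}$; whenever a result needs regularity of the model throughout a neighborhood we take the bounded path \eqref{eq:differentiable_path}, whose score at $t=0$ is $g$, whose Fisher information at $0$ is $\sigma_g^2:=\int g^2\,dP_0$, and for which $t\mapsto\mathcal I(t)$ is continuous (this costs nothing, since for any QMD path the companion path \eqref{eq:differentiable_path} has the same $\sigma_g^2$ and the same directional derivative $D_g:=\int\dot\psi_0\,g\,dP_0$). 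By pathwise differentiability \eqref{eq:pathwise_differentiable}, $\psi(P_{t,g})=\psi(P_0)+D_g\,t+o(t)$. Since the nonparametric minimax risk dominates the local minimax risk over $\{P_{t,g}:|t|<cn^{-1/2}\}$, it suffices to show that each bound on this $n$-fold submodel, multiplied by $n$, converges as $n\to\infty$ then $c\to\infty$ to $D_g^2/\sigma_g^2$ --- and to $C\,D_g^2/\sigma_g^2$ for Lemma~\ref{lemma:sharper_ihbound}. Then $\sup_g$ together with Cauchy--Schwarz in $L_2(P_0)$ gives $\sup_g D_g^2/\sigma_g^2=\|\dot\psi_0\|_{L_2(P_0)}^2=\int\dot\psi_0^2\,dP_0$, which is the semiparametric efficiency bound (times $C$ in case (iv)).

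For (i)--(iii) the prior is $Q_n$ with density $q_n(t)=c_n^{-1}q_0(t/c_n)$, $c_n=cn^{-1/2}$, for a fixed ``nice'' (Definition~\ref{as:regularprior-vt}) density $q_0$ supported near $0$, so that $\mathcal I(Q_n)=nc^{-2}\mathcal I(q_0)$ and $\supp(Q_n)\subset\{|t|<cn^{-1/2}\}$. In (i) I would apply Theorem~\ref{cor:vt-approximation} with the affine surrogate $\phi(t):=\psi(P_0)+D_g\,t$ (absolutely continuous, $\nabla\phi\equiv D_g$): since $\int\mathcal I(t)\,dQ_n\to\mathcal I(0)=\sigma_g^2$, the $n$-sample $\Gamma_{Q_n,\phi}$ obeys $n\Gamma_{Q_n,\phi}\to D_g^2/(c^{-2}\mathcal I(q_0)+\sigma_g^2)$, while $n\int|\psi(P_{\cdot,g})-\phi|^2\,dQ_n\to0$ because the integrand is $o(|t|)$ uniformly on $\supp(Q_n)\downarrow\{0\}$ and $n\int t^2\,dQ_n=O(1)$; letting $c\to\infty$ gives $D_g^2/\sigma_g^2$. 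For (iii) I would apply Theorem~\ref{thm:crvtBound_hellinger} to the same $Q_n$ with a perturbation $h_n=o(n^{-1/2})$, using the mixture identity $H^2(\mathbb P_0,\mathbb P_h)=H^2(Q_h,Q)+\int H^2(P^n_{t+h},P^n_t)\,dQ_h^{1/2}\,dQ^{1/2}$ and the uniform expansion $H^2(P_t,P_{t+h})=\tfrac14 h^2\sigma_g^2(1+o(1))$ on the shrinking support --- an expansion that follows from Hellinger differentiability at $0$ alone by writing $\sqrt{dP_{t+h}}-\sqrt{dP_t}=h\,\dot\xi_0+(r_{t+h}-r_t)$ with $\|r_s\|_{L_2(\nu)}=o(|s|)$. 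Calibrating $h_n$ to be slightly larger than $c_n$ times this remainder rate yields $nA_{\psi,Q_n,h_n}\to D_g^2/(c^{-2}\mathcal I(q_0)+\sigma_g^2)$ and $nB_{\psi,Q_n,h_n}=nD_g^2h_n^2(1+o(1))\to0$, so the bound tends to $D_g^2/\sigma_g^2$ after $c\to\infty$. Part (ii) runs identically using Corollary~\ref{cor:lambda-zero} ($\lambda=0$) with $\chi^2$ replacing $H^2$ and $q_0$ taken of full support so that $\chi^2(\mathbb P_{h_n}\|\mathbb P_0)<\infty$; the only genuinely new input is the uniform expansion $\chi^2(P_{t+h}\|P_t)=h^2\sigma_g^2(1+o(1))$, for which the bounded-path structure \eqref{eq:differentiable_path} and the uniform-convergence lemma (Lemma~1 of \citet{duchi2021constrained}) are used.

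For (iv) I would apply Lemma~\ref{lemma:sharper_ihbound} on the $n$-fold submodel with $\theta_1=0$ and $\theta_2=an^{-1/2}$ (legitimate once $c>|a|$). Then $n|\psi(P_{an^{-1/2},g})-\psi(P_0)|^2\to D_g^2a^2$, and since $H^2(P^{\otimes n},Q^{\otimes n})=2-2(1-\tfrac12H^2(P,Q))^n$ with $H^2(P_{an^{-1/2},g},P_0)=\tfrac14a^2\sigma_g^2n^{-1}(1+o(1))$, one gets $H^2(P^n_{an^{-1/2},g},P^n_0)\to2(1-e^{-a^2\sigma_g^2/8})$, so the $n$-scaled two-point bound converges to $\big[\tfrac14(2e^{-a^2\sigma_g^2/8}-1)\big]_+D_g^2a^2$. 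Substituting $u=a^2\sigma_g^2/8$ turns $\sup_a$ of this into $(D_g^2/\sigma_g^2)\cdot 2\max_{u>0}u(2e^{-u}-1)$; writing $C:=2\max_{u>0}u(2e^{-u}-1)$ (the maximizer solves $2e^{-u}(1-u)=1$, giving $C\approx0.28953$) and then taking $\sup_g$ recovers $C\int\dot\psi_0^2\,dP_0$. Because $C<1$, the two-point construction provably falls short of the exact constant, which is the intended contrast with (i)--(iii).

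The main obstacle is the uniform-in-$t$ control of the divergence and of the product-measure remainder over the shrinking support of $Q_n$. For the Hellinger distance this uniformity comes essentially for free from differentiability at a single point, which is why (iii) (and (i)) hold under the weakest hypotheses; for the $\chi^2$-divergence it forces the bounded-path hypothesis and an external uniform-convergence result and can fail without regularity --- exactly the phenomenon the paper emphasizes. A secondary, bookkeeping difficulty is the joint calibration of the scales $c_n$ and $h_n$ --- with $h_n=o(c_n)$ but $h_n$ not too small relative to the remainder rate --- so that the approximation-bias term $B_{\psi,Q_n,h_n}$ is killed by the factor $n$ while $A_{\psi,Q_n,h_n}$ still recovers the exact constant, together with verifying that each rescaled $Q_n$ is ``nice'' in the sense of Definition~\ref{as:regularprior-vt}.
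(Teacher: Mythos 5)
Your overall strategy---reduce to a univariate parametric submodel along each score $g$, apply one of the four bounds there, then take $\sup_g$ and invoke $\sup_g D_g^2/\sigma_g^2 = \int \dot\psi_0^2\,dP_0$---is the same as the paper's. Parts (i) and (iv) are essentially the paper's argument: for (i) the paper simply takes $\phi=\psi$ in $\Phi$ (zero bias) whereas you use the affine surrogate $\phi(t)=\psi(P_0)+D_g t$ (asymptotically vanishing bias), a harmless variant; for (iv) the two-point construction, the tensorization of the Hellinger distance, and the optimization constant $2\max_{u>0}u(2e^{-u}-1)\approx 0.28953$ all match. For (iii) you reach the same limit but by a genuinely different route: you shrink the prior ($q_n(t)=c_n^{-1}q_0(t/c_n)$) and calibrate $h_n$ with $n$ ($c_n\varepsilon_n'\ll h_n \ll n^{-1/2}$), whereas the paper keeps the prior fixed on all of $\mathbb{R}$, routes the shrinkage through the diffeomorphism $\varphi(t)=cn^{-1/2}\varphi_0(t)$ (Section~\ref{section:diffeomorphism}), and uses \emph{iterated} limits $n\to\infty$ then $h\to 0$ then $c\to\infty$. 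Your simultaneous-limit version works---the uniform Hellinger expansion $\|r_{t+h}-r_t\|_{L_2(\nu)}=o(c_n)$ does indeed follow from differentiability at $0$ alone, and the window $(c_n\varepsilon_n', n^{-1/2})$ is nonempty---but it implicitly re-derives a local-parameter-space version of Theorem~\ref{thm:crvtBound_hellinger} (the theorem as stated takes $\sup_{\theta\in\mathbb{R}^d}$), which the paper instead obtains via the diffeomorphism. The iterated-limit form is also noticeably easier to bookkeep because $h$ is fixed while $n\to\infty$.

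The gap is in (ii). You switch to a full-support $q_0$ so that $\chi^2(\mathbb{P}_{h_n}\|\mathbb{P}_0)<\infty$, but then $\supp(Q_n)=\mathbb{R}$, and the Bayes risk with $Q_n$ bounds the \emph{global} minimax $\sup_{t\in\mathbb{R}}\mathbb{E}_t|T-\psi|^2$, not the local minimax $\sup_{|t|<cn^{-1/2}}$; the dilated $Q_n$ places a nonvanishing fraction of its mass outside $\{|t|<cn^{-1/2}\}$ for every $n$, so there is no way to localize the Bayes-risk inequality. Conversely, if you keep $q_0$ compactly supported to stay inside the local ball, then $\chi^2(Q_{n,h}\|Q_n)=\infty$ for every $h\neq 0$ because the shift of a compactly supported law is not absolutely continuous with respect to the original, and $\lambda=0$ yields a trivial bound. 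This is exactly the tension the diffeomorphism construction is designed to resolve: compose with $\varphi:\mathbb{R}\to(-cn^{-1/2},cn^{-1/2})$ so the effective parameter space becomes all of $\mathbb{R}$, keep a full-support (e.g.\ Gaussian) prior with finite $\chi^2$ shift-divergence there, and let the compression of the image do the localization. You would also need the bounded path \eqref{eq:differentiable_path} and Lemma~1 of \citet{duchi2021constrained} to get the uniform $\chi^2$ expansion, which you correctly identify, but the localization issue must be fixed first.
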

Proofs for (i)--(iv) are available in Supplementary Material. We also provide an analogous statement for the parametric version of the LAM theorem (Theorem~\ref{theorem:LAM_parametric}) in Supplementary Material.
The statement (iv) of Proposition~\ref{prop:LAMsemiparametric} indicates that two-point risk lower bounds may be insufficient for providing the optimal constant. The statement (ii) of Proposition~\ref{prop:LAMsemiparametric} shows that the lower bound based on the chi-squared divergence restricts the choice of parametric paths beyond QMD. Such restriction can be undesirable for certain cases, as we discuss in the following remark.

\begin{remark}[The implication from restricting parametric paths]
When the linear closure of the tangent set spans the entire $L_2^0(P_0)$, the choice of parametric path does not impact the lower bound. Therefore, there is no loss in selecting a specific path such as the one given by \eqref{eq:differentiable_path}. However, if the tangent set is a strict subset of $L_2^0(P_0)$, such as a tangent cone, the constraint on the path becomes undesirable. For instance, \cite{kuchibhotla2021semiparametric} considers the projection of arbitrary QMD parametric paths onto a working statistical model in order to satisfy certain shape constraints. The statement (ii) of Proposition~\ref{prop:LAMsemiparametric} may not be generally applicable in such cases.
\end{remark}



\subsection{Local minimax rate for irregular estimation}
As motivated earlier, Theorem~\ref{thm:crvtBound_hellinger} is also attractive in the context of irregular problems. Recently, \citet{lin2019optimal} has proposed the use of the H\"{o}lder smoothness of the local Hellinger distance, relative to $P_0$, as the degrees of irregularity for a given estimation problem. This proposal is motivated by the classical result in the literature, namely, Theorem 6.1 of \cite{ibragimov1981statistical}. This result provides a two-point risk inequality in terms of the H\"{o}lder smoothness of the functional and that of the local Hellinger distance between two distributions in a model. In what follows, we demonstrate that Theorem~\ref{thm:crvtBound_hellinger} also recovers the asymptotic minimax rate given by Theorem 6.1 of \cite{ibragimov1981statistical}. 
\begin{lemma}[Theorem 6.1 of \citet{ibragimov1981statistical}]\label{lemma:alpha-beta} Consider the estimation of a real-valued functional $\psi(\theta_0)$ based on the $n$ IID observations from $P_{\theta_0}$. We define $\mathrm{sign}(x)  := x/\|x\|$ for $x \in \mathbb{R}^d$. We assume that there exists a constant $\delta > 0$ such that for all $t, t+h \in \{\theta: \|\theta_0-\theta\| < \delta \}$ and as $\|h\| \longrightarrow 0$, 
\begin{align*}
    H^2(P_t, P_{t+h}) &= C_{1}(t, \mathrm{sign}(h))\|h\|^\alpha + o(\|h\|^\alpha),\quad\alpha\in(0,2],\\
    \psi(t+h)-\psi(t) &= C_{2}(t, \mathrm{sign}(h))\|h\|^\beta  + o(\|h\|^\beta),\quad\beta > 0,
\end{align*}
where $C_1$ and  $C_2$ are real-valued functions of $t$ of $\mathrm{sign}(h)$. Furthermore, these constants are assumed continuous in $t$, uniformly bounded such that $\underline{c}_1 < \|C_{1}\|_\infty < \overline{C}_1$ and $\underline{c}_2 < \|C_{2}\|_\infty < \overline{C}_2$ for some $\underline{c}_1, \overline{C}_1, \underline{c}_2, \overline{C}_2\in(0,\infty)$. Theorem~\ref{thm:crvtBound_hellinger} then implies that there exists a constant $C$, depending on $\underline{c}_1, \overline{C}_1, \underline{c}_2, \overline{C}_2, \alpha$, and $\beta$, satisfying 
\begin{align*}
    \liminf_{c \longrightarrow \infty}\,\liminf_{n \longrightarrow \infty}\,\inf_{T}\,\sup_{\|\theta-\theta_0\| < cn^{-1/\alpha}}\, n^{2\beta/\alpha}\mathbb{E}_{P_\theta^n}|T(X)-\psi(\theta)|^2 \ge C > 0.
\end{align*}
\end{lemma}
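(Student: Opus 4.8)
The plan is to apply Theorem~\ref{thm:crvtBound_hellinger} with a carefully rescaled prior $Q$ and a perturbation $h$ tied to the critical rate $n^{-1/\alpha}$, and then to show that both the numerator term $A_{\psi,Q,h}$ and the approximation-bias term $B_{\psi,Q,h}$ are of the same order $n^{-2\beta/\alpha}$, so that their difference (after taking square roots and squaring) is a strictly positive constant of that order. First, since we have $n$ IID observations, the relevant mixture distances are those of the $n$-fold products: recall that for a QMD-type expansion $H^2(P_t^n, P_{t+h}^n) = \tfrac{n}{2}\cdot\tfrac{1}{2}\cdot(\text{local quadratic form}) + o(n\|h\|^\alpha)$ — more precisely, using $1 - (1-a)^n \le na$ and the tensorization of Hellinger affinity, $H^2(\mathbb{P}_0^{(n)}, \mathbb{P}_h^{(n)})$ is controlled by $n$ times the single-observation local Hellinger distance plus the prior-translation term $H^2(Q_h,Q)$. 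I would choose $Q$ to have a smooth compactly-supported density on the ball $\{\|\theta-\theta_0\|<\delta\}$, rescaled so that its effective width is of order $n^{-1/\alpha}$; then $H^2(Q_h, Q)$ for $h = c_0 n^{-1/\alpha} e$ (a fixed direction $e\in\mathbb{S}^{d-1}$) is $O(c_0^2)$ uniformly in $n$, and $n\int H^2(P_{t+h},P_t)\,dQ_h^{1/2}dQ^{1/2}$ is, by the Hölder hypothesis on $C_1$ and dominated convergence, asymptotically $n\cdot C_1\cdot(c_0 n^{-1/\alpha})^\alpha\int(\cdots) = O(c_0^\alpha)$, using $\underline c_1 < \|C_1\|_\infty < \overline C_1$ to bound it above and below by constants times $c_0^\alpha$.

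Second, for the numerator: by the Hölder hypothesis on $\psi$, $\psi(t) - \psi(t-h) = C_2(t,\mathrm{sign}(h))\|h\|^\beta + o(\|h\|^\beta) = C_2\,(c_0 n^{-1/\alpha})^\beta + o(n^{-\beta/\alpha})$, so $\int[\psi(t)-\psi(t-h)]\,dQ(t) \asymp c_0^\beta n^{-\beta/\alpha}\int C_2(t,e)\,dQ(t)$. Here I must be a little careful: $C_2(t,\mathrm{sign}(h))$ need not have constant sign in $t$, so the integral against $Q$ could conceivably cancel. This is where I would exploit the freedom in choosing $Q$ — localize $Q$'s mass near a point $t^*$ where $|C_2(t^*,e)| \ge \underline c_2/2 > 0$ (such a point exists since $\|C_2\|_\infty > \underline c_2$ and $C_2$ is continuous in $t$), so that by continuity $\int C_2(t,e)\,dQ$ stays bounded away from zero uniformly. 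Then the numerator squared is $\asymp c_0^{2\beta} n^{-2\beta/\alpha}(\text{const})^2$, the denominator $4H^2(\mathbb{P}_0^{(n)},\mathbb{P}_h^{(n)})$ is $\asymp (c_0^2 + c_0^\alpha)$, so $A_{\psi,Q,h} \asymp \dfrac{c_0^{2\beta} n^{-2\beta/\alpha}}{c_0^2 + c_0^\alpha}$; likewise $B_{\psi,Q,h} = \int\|\psi(t)-\psi(t-h)\|^2\,dQ \asymp c_0^{2\beta} n^{-2\beta/\alpha}(\overline C_2)^2$, with the implicit constant bounded above using $\|C_2\|_\infty < \overline C_2$.

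Third, combine: multiplying through by $n^{2\beta/\alpha}$,
\begin{align*}
n^{2\beta/\alpha}\Big[\sqrt{A_{\psi,Q,h}} - \sqrt{B_{\psi,Q,h}}\Big]_+^2 \;\gtrsim\; \Big[\sqrt{\tfrac{c_0^{2\beta}\,\underline c_3}{c_0^2 + c_0^\alpha}} - \sqrt{c_0^{2\beta}\,\overline c_4}\Big]_+^2
\end{align*}
for constants $\underline c_3,\overline c_4$ depending only on $\underline c_1,\overline C_1,\underline c_2,\overline C_2$ and the fixed profile of $Q$. Now pick $c_0$ small enough that the first bracketed term dominates — as $c_0\to 0$, $\dfrac{c_0^{2\beta}}{c_0^2+c_0^\alpha}$ behaves like $c_0^{2\beta-\min(2,\alpha)}$ while the subtracted term is $c_0^{2\beta}$; since $\min(2,\alpha)>0$ we have $c_0^{2\beta-\min(2,\alpha)} \gg c_0^{2\beta}$, so for all sufficiently small fixed $c_0$ the bracket is a strictly positive constant, call it $2C$. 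Because this $c_0$ is fixed and independent of $n$, the perturbation $h = c_0 n^{-1/\alpha}e$ keeps $t$ and $t-h$ inside $\{\|\theta-\theta_0\|<cn^{-1/\alpha}\}$ for all large $c$, so taking $\liminf_n$ then $\liminf_c$ of the left side of the claim is bounded below by this $C > 0$, which is exactly the assertion.

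\medskip

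The main obstacle I anticipate is the uniform-in-$n$ control of $H^2(\mathbb{P}_0^{(n)}, \mathbb{P}_h^{(n)})$ from below and above: one needs the $o(\|h\|^\alpha)$ remainder in the Hölder expansion of the single-observation Hellinger distance to be integrable against $Q_h^{1/2}dQ^{1/2}$ and to vanish after multiplication by $n$ and the scaling $\|h\| = c_0 n^{-1/\alpha}$, which requires a dominated-convergence argument using continuity and uniform boundedness of $C_1$ on the shrinking support — essentially the same mechanism as in the remark following Theorem~\ref{thm:crvtBound_hellinger} that derives the local quadratic expansion of $H^2(\mathbb{P}_0,\mathbb{P}_h)$, but now bookkeeping the $\alpha$-th power rate rather than the quadratic one, and also tracking the lower bound $\underline c_1$ so the denominator does not blow up. A secondary subtlety, already flagged above, is ensuring the numerator does not vanish through sign cancellation of $C_2$ across the support of $Q$, which I handle by localizing $Q$; the proof should remark that this is exactly why the mixture/prior formulation (rather than a raw two-point bound) is flexible enough here, though in this particular rate-only statement a two-point bound via Lemma~\ref{lemma:sharper_ihbound} would also suffice up to the constant $C$.
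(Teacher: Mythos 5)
Your proposal is correct in outline but takes a genuinely different technical route than the paper. The paper handles the restriction from $\mathbb{R}^d$ to the shrinking ball $\{\|\theta-\theta_0\|<cn^{-1/\alpha}\}$ via the diffeomorphism machinery of Section~\ref{section:diffeomorphism}: it sets $\varphi(s)=\theta_0+cn^{-1/\alpha}\varphi_0(s)$ with $\varphi_0:\mathbb{R}^d\to B(0,1)$, applies Theorem~\ref{thm:crvtBound_hellinger} to $\widetilde\psi=\psi\circ\varphi$ over $\mathbb{R}^d$, and then takes $n\to\infty$, reparametrizes $\bar h=hc$, takes $c\to\infty$, and finally optimizes over a one-parameter family of $\varphi_0$ so that $\|\nabla\varphi_0\bar h\|\to\gamma$; choosing $\gamma$ small makes the bracket positive. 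You instead keep the prior $Q$ compactly supported on a ball of width $\asymp n^{-1/\alpha}$ around $\theta_0$, set $h=c_0n^{-1/\alpha}e$, control the product Hellinger distance by the inequality $H^2(P^n,Q^n)\le nH^2(P,Q)$ rather than by the exact tensorization limit $2-2\exp(\cdot)$, and make the bracket positive by sending the dimensionless shift $c_0$ to zero. Both routes reach the claimed conclusion, but the diffeomorphism buys you automatic compatibility with the shrinking neighbourhood (translations in the preimage $\mathbb{R}^d$ never leave the local ball), whereas your rescaled-prior approach needs an explicit check that both $\mathrm{supp}(Q)$ and $\mathrm{supp}(Q)-h$ sit inside $\{\|\theta-\theta_0\|<cn^{-1/\alpha}\}$ so that the Bayes-risk-$\le$-local-minimax-risk step in the proof of Theorem~\ref{thm:crvtBound_hellinger} remains valid with the localized supremum; this is a small but real gap that should be flagged. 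Your inequality $H^2(P^n,Q^n)\le nH^2(P,Q)$ gives an upper bound on the denominator of $A_{\psi,Q,h}$, which is the right direction, and is cleaner than the exact exponential limit when one only wants positivity of the constant; the paper's exact expression becomes useful when one actually wants to optimize the constant. One further caveat that applies to both arguments: the assumption $\underline{c}_2<\|C_2\|_\infty$ guarantees that $C_2$ is not identically zero somewhere in the local neighbourhood, but the prior must be localized near $\theta_0$ (not an arbitrary $t^*$), so what is really needed is that $C_2(\theta_0,e)\ne 0$ for some direction $e$; the paper's proof relies on the same tacit reading of the hypothesis, so you are not worse off, but the phrase ``localize $Q$'s mass near a point $t^*$'' should be replaced by ``near $\theta_0$ in a direction $e$ with $C_2(\theta_0,e)\ne 0$.'' Your closing remark that Lemma~\ref{lemma:sharper_ihbound} would already suffice for a rate-only statement is accurate and is in fact what the original Theorem 6.1 of Ibragimov--Has'minskii does.
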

The proof of this lemma can be found in Supplementary Material. We note that $\alpha$ is defined on the interval $(0, 2]$ with $\alpha=2$ corresponding to the standard Hellinger differentiable models where the Fisher information is well-defined. Hence the most regular setting corresponds to $\alpha=2$ and $\beta=1$. Although the Hellinger differentiability fails for $\alpha < 2$, the Hellinger distance is always well-defined. For instance, a set of uniform distributions $\text{Unif}(0, \theta)$ indexed by $\theta > 0$ corresponds to $\alpha = 1$. We refer to \cite{lin2019optimal} for more examples of irregular models. While Theorem~\ref{thm:crvtBound_hellinger} can account for this type of irregularity, Theorems~\ref{cor:vt-approximation} and~\ref{thm:crvtBound} yield trivial lower bounds of zero; Theorem~\ref{cor:vt-approximation} fails due to the lack of the Fisher information and Theorem~\ref{thm:crvtBound} fails as the chi-squared divergence for an irregular model is often infinite.

\section{Applications: Local minimax lower bounds}\label{section:nonasymptotic_lower_bound}
In this section, new minimax lower bounds are applied to the functional estimation problems involving points of non-differentiability. 
\subsection{Nonparametric density estimation}
\begin{example}[Nonparametric density estimation]
    Let $X_1, \ldots, X_n \in \mathcal{X} \subseteq \mathbb{R}$ be $n$ IID observations from an unknown density function $f_0$. The functional of interest is the density value at a pre-specified point $x_0 \in \mathcal{X}$, that is, $\psi(f):= f(x_0)$.
\end{example}

We assume that the true density $f_0$ is $s$-times continuously differentiable at $x_0$. We then analyze the following class of density functions:
\begin{align}
	\mathcal{F}(s, f_0, x_0, M) := \left\{f \textrm{ is $s$-times differentiable at $x_0$, satisfying } 
	|f^{(s)}(x_0)| \le (1+M)|f^{(s)}_0(x_0)|\nonumber
\right\}
\end{align}
for fixed $M>0$. Furthermore, we define the following localized set of density functions:
\begin{align}
	U(\delta; \varepsilon) := \left\{f\in\mathcal{F}(s, f_0, x_0, M)\,:\, \int_{|x-x_0|\le \varepsilon}\, |f^{(k)}(x)-f^{(k)}_0(x)|\, dx\le \delta \textrm{ for all } k \in \{0, 1, \ldots, s\}\right\}\nonumber
\end{align}
for fixed $\varepsilon > 0$. This model $U(\delta; \varepsilon)$ generalizes the localized set considered by \citet{anevski2011monotone}. The parameter $M$ is introduced to prevent the true density $f_0$ from lying on the boundary of the local model $U(\delta; \varepsilon)$. As $\psi(f)$ is non-differentiable functional, we consider the following approximation via convolution:
\begin{align}
	\phi(f) = \phi(f; K, h) := \int h^{-1}K\left(\frac{x-x_0}{h}\right) \, f(x)\, dx\nonumber
\end{align}
where $K$ is a kernel function and $h>0$ is a bandwidth parameter. The collection of approximation functionals $\Phi := \{f \mapsto \phi(f; K, h)\, : \, \forall K, h>0\}$ is indexed by the choice of $K$ and the values of $h > 0$. We consider a kernel function that satisfies the following conditions: 
\begin{enumerate}[label=\textbf{(A\arabic*)}]
\item A function $K$ is assumed to satisfy the following conditions:\label{as:kernel}
\begin{enumerate}
    \item[(i)] it is uniformly bounded,
    \item[(ii)] it is $s$-times differentiable with the uniformly bounded $s$th derivative, 
    \item[(iii)] it integrates to one over its support, and 
    \item[(iv)] for all integer $k$ where $0 < k < s$, $\int_{-1}^1 u^k K(u)\, du = 0$ and $\int_{-1}^1 u^s K(u)\, du < \infty$.
\end{enumerate}
\end{enumerate}

We define the following class of approximation functionals:
\begin{align*}
    \Phi := \left\{\phi(f; K, h) :  \textrm{ for all $h > 0$ and $K$ satisfying \ref{as:kernel}}\right\}.
\end{align*}
We now present the application of Theorem~\ref{cor:vt-approximation} to the estimation of $\psi(f)$ approximated with $\phi(f)$:
\begin{lemma}\label{lemma:density-lam}
Let $\delta_n := c_0 n^{-r}$ for $r \in [0,(2s+1)^{-1})$. Then for any $\varepsilon>0$, Theorem~\ref{cor:vt-approximation} implies,
\begin{align}
	&\liminf_{n\longrightarrow \infty}\, \inf_T \sup_{f\in U(\delta_n; \varepsilon)} n^{2s/(2s+1)}\mathbb{E}_{f}|T(X)-f(x_0)|^2 \nonumber\\
	&\qquad \ge \sup_K C(s, M, K)\, f_0(x_0)^{2s/(2s+1)}|f_0^{(s)}(x_0)|^{2/(2s+1)}\nonumber
\end{align}
where $C(s,M,K)$ is a constant only depending on $s, M$ and $K$. 
\end{lemma}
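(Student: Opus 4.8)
The plan is to apply Theorem~\ref{cor:vt-approximation} with the functional $\psi(f) = f(x_0)$ approximated by the kernel-smoothed functional $\phi(f) = \phi(f;K,h)$, and then optimize the bandwidth $h = h_n$ to balance the surrogate efficiency (the $\Gamma_{Q,\phi}$ term coming from the van Trees inequality) against the approximation bias $\int |\psi - \phi|^2\,dQ$. Since the problem is genuinely nonparametric, the first step is to reduce to a one-dimensional (or finite-dimensional) parametric submodel through $U(\delta_n;\varepsilon)$: I would construct a QMD path $t \mapsto f_{t}$ inside $U(\delta_n;\varepsilon)$ passing through $f_0$ at $t=0$, whose score is localized near $x_0$ at scale $h_n$ and whose direction is chosen to make $\nabla\phi$ along the path as large as possible relative to the path's Fisher information. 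A natural choice is $f_t(x) = f_0(x)\bigl(1 + t\, g(x)\bigr)$ with $g$ supported in $|x-x_0|\le \varepsilon$ and built from a rescaled bump $g(x) \propto h_n^{-1/2} G((x-x_0)/h_n)$ for a fixed profile $G$ with $\int G f_0 \approx 0$; the constraint $|f^{(s)}(x_0)| \le (1+M)|f_0^{(s)}(x_0)|$ and the $L_1$-localization constraints with radius $\delta_n$ bound how large $t$ and the perturbation amplitude may be, and this is exactly where the factor $\delta_n = c_0 n^{-r}$ with $r < (2s+1)^{-1}$ enters — it must be large enough not to shrink the feasible perturbation faster than the $n^{-1/2}$ statistical scale, yet the constraint on $f^{(s)}$ keeps $f_0$ interior.

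Next I would compute the three ingredients of $\Gamma_{Q,\phi}$ along this path: the path Fisher information $\mathcal{I}(t) = \int g^2 f_0\,dx \asymp 1$ (by the normalization of $g$), the prior Fisher information $\mathcal{I}(Q)$ for a prior $Q$ on the one-dimensional local parameter concentrating at scale $n^{-1/2}$ so that $\mathcal{I}(Q) \asymp n$, and the derivative $\nabla\phi$ of the smoothed functional along the path, $\nabla\phi = \frac{d}{dt}\int h_n^{-1}K((x-x_0)/h_n) f_t(x)\,dx\big|_{t=0} = \int h_n^{-1}K((x-x_0)/h_n) g(x) f_0(x)\,dx$. With $g$ a bump at scale $h_n$ this inner product is of order $h_n^{-1/2}$ times a constant depending on $K$ and $G$ and on $f_0(x_0)$. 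Combining, the van Trees term is of order $\Gamma_{Q,\phi} \asymp (\nabla\phi)^2 / (\mathcal{I}(Q) + n\mathcal{I}(t)) \asymp h_n^{-1}/n$. For the approximation bias I would use the smoothness: $\phi(f) - \psi(f) = \int h_n^{-1}K(\cdot)(f(x)-f(x_0))\,dx$, which by the $s$th-order vanishing-moment property \ref{as:kernel}(iv) and an $s$-term Taylor expansion is $O(h_n^{s} f^{(s)}(x_0))$, so $\int |\psi-\phi|^2\,dQ \asymp h_n^{2s} |f_0^{(s)}(x_0)|^2$ up to the $(1+M)$ slack on the $s$th derivative over $U(\delta_n;\varepsilon)$. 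The lower bound of Theorem~\ref{cor:vt-approximation} then reads, after multiplying by $n^{2s/(2s+1)}$, as $\bigl[\, (n^{2s/(2s+1)} h_n^{-1}/n)^{1/2} \cdot c - (n^{2s/(2s+1)} h_n^{2s})^{1/2}\cdot c'\,\bigr]_+^2$ roughly, and choosing $h_n \asymp n^{-1/(2s+1)}$ makes both terms of constant order, yielding the claimed rate with an explicit constant $C(s,M,K) f_0(x_0)^{2s/(2s+1)} |f_0^{(s)}(x_0)|^{2/(2s+1)}$; the $f_0(x_0)$ power arises because the variance of the localized score relative to $f_0$, and hence the Fisher information normalization, carries a factor $f_0(x_0)$, while the $f_0^{(s)}(x_0)$ power comes from the bias. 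The supremum over $K$ is kept because different admissible kernels give different constants, and $C$ is defined as the resulting optimal constant for that $K$.

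The main obstacle I anticipate is making the reduction to a parametric path fully rigorous while simultaneously respecting all the constraints defining $U(\delta_n;\varepsilon)$ and the interiority condition encoded by $M$. Concretely, one must exhibit a prior $Q$ supported on the local parameter interval such that the perturbed densities $f_t$ remain genuine densities (nonnegative, integrating to one), remain $s$-times differentiable at $x_0$ with $|f_t^{(s)}(x_0)| \le (1+M)|f_0^{(s)}(x_0)|$, and satisfy the $L_1$-localization bounds by $\delta_n$ for all derivatives $k \le s$ — this forces careful control of the amplitude of the bump profile $G$ and its first $s$ derivatives at scale $h_n$, since differentiating $h_n^{-1/2}G((x-x_0)/h_n)$ $k$ times produces factors $h_n^{-k}$, and the $L_1$-norm over $|x-x_0|\le\varepsilon$ of the $k$th derivative of the perturbation scales like (amplitude)$\cdot h_n^{1-k}$; one needs $r < (2s+1)^{-1}$ precisely so that with $h_n \asymp n^{-1/(2s+1)}$ and amplitude $\asymp n^{-1/2}$ (forced by the prior scale), the worst constraint $k=s$ gives perturbation size $\asymp n^{-1/2} h_n^{1-s}\cdot(\text{something})$ which must stay below $\delta_n$. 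Verifying this chain of inequalities, together with the uniformity of the QMD expansion of the Hellinger distance along the path (so that the van Trees inequality applies with the stated Fisher information), and checking that $Q$ is "nice" in the sense of Definition~\ref{as:regularprior-vt} (density vanishing at the boundary, integrable $\|\nabla\phi\|$, finite $\int \mathrm{tr}\,\mathcal{I}(t)\,dQ$), is the technical heart; the optimization over $h_n$ and the extraction of the constant are then routine.
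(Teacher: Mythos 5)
Your proposal matches the paper's proof in all essentials: a one-dimensional linear QMD path $f_t = (1+tg)f_0$ with a localized score $g$ proportional to the recentered kernel $K(\cdot/h_n)$ (the efficient influence function of $\phi$), a compactly supported ``nice'' prior on the path parameter so that Theorem~\ref{cor:vt-approximation} applies along the submodel, and the resulting van Trees term balanced against the $O(h_n^s)$ approximation bias by choosing $h_n\asymp (f_0(x_0)/(n|f_0^{(s)}(x_0)|^2))^{1/(2s+1)}$. Your normalization ($g\propto h^{-1/2}G$, prior at scale $n^{-1/2}$) differs only cosmetically from the paper's ($g\propto h^{-1}K$, prior at scale $h^{s+1}$) via the reparametrization $t\mapsto t h^{1/2}$, and your accounting of why $r<(2s+1)^{-1}$ keeps the class constraints from binding and of the resulting $f_0(x_0)^{2s/(2s+1)}|f_0^{(s)}(x_0)|^{2/(2s+1)}$ dependence of the constant are both correct.
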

The proof of this lemma is provided in Supplementary Material. To the best of our knowledge, this is a new local asymptotic minimax constant in the context of nonparametric density estimation. A similar object to the term in the lower bound has appeared in the classical literature on kernel density estimation \citep{Devroye1985nonparametric}; Letting $f_0$ be any density on $[0, 1]$ with continuous $s$th derivative, Theorem 11 (Chapter 4, page 49) of \citet{Devroye1985nonparametric} states that 
\begin{align*}
    \liminf_{n \longrightarrow \infty}\, \sup_{f \in H(f_0)}\,n^{2s/(2s+1)} \frac{\mathbb{E}_{f}\int |T-f|^2}{(\int f)^{2s/(2s+1)}(\int |f^{(s)}|^2)^{1/(2s+1)}} > 0
\end{align*}
for any estimator $T$ where $H(f_0)$ is a set of all densities of the form $\sum_{i=1}^\infty \pi_i f_0(x+x_i)$, $\pi_i$ is any probability vector and $\{x_i\}$ is an increasing sequence of real numbers such that $x_{i+1}-x_i > 1$\footnote{The term in the denominator $(\int f)^{2s/(2s+1)}$ may seem redundant since $\int f=1$. This term is generally not trivial for the lower bound of $\mathbb{E}_{f}\int |T-f|^p$ when $p \neq 2$. We include this term for the better comparison with Lemma~\ref{lemma:density-lam}.}. Lemma~\ref{lemma:density-lam} provides a similar result for a pointwise squared risk. Simple algebra also suggests that the kernel density estimator, defined as 
\begin{align*}
    \widehat f := \frac{1}{nh^*}\sum_{i=1}^n K\left(\frac{X_i-x_0}{h^*}\right) \quad \mbox{where}\quad  h^* \propto \left(\frac{f_0(x_0)}{n|f_0^{(s)}(x_0)|^2}\right)^{1/(2s+1)}
\end{align*}
attains the matching upper bound including the dependency on $f_0$ \citep{abramson1982bandwidth, woodroofe1970choosing, hall1993plug, brown1997superefficiency}. Such an estimator is often called \textit{adaptive} in the nonparametric density estimation literature\footnote{This notion of adaptivity is different from the adaptation to unknown smoothness $s$.}.  A reasonable estimate of $h^*$ is obtained by the ``plug-in" rule where $f_0$ and $f_0^{(s)}$ in $h^*$ are replaced with pilot estimators $\widetilde f(x_0)$ and $\widetilde f^{(s)}(x_0)$, constructed from a separate data. The final estimator is defined as
\begin{align*}
    \check{f} := \frac{1}{nh_{\textrm{plug-in}}}\sum_{i=1}^n K\left(\frac{X_i-x_0}{h_{\textrm{plug-in}}}\right) \quad \text{where}\quad h_{\textrm{plug-in}} \propto \left(\frac{\widetilde f(x_0)}{n|\widetilde f^{(s)}(x_0)|^2}\right)^{1/(2s+1)}.
\end{align*}
Assuming these pilot estimators converge in quadratic mean, \citet{brown1997superefficiency} states that the estimator $\check{f}$ asymptotically achieves the minimum pointwise squared risk, i.e., the lowest squared risk under fixed $f_0$, among all choice of bandwidth parameters (See equation (2.5) of \citet{brown1997superefficiency}). \citet{brown1997superefficiency} however argues that such a pointwise assessment of the nonparametric estimator under fixed $f_0$ is often misleading in view of superefficiency, necessitating the evaluation under ``uniform" risk. Lemma~\ref{lemma:density-lam} does not contradict the message by \citet{brown1997superefficiency}. The pointwise risk of the estimator is equivalent to the local minimax risk over a singleton $\{f_0\}$. Lemma~\ref{lemma:density-lam} does not allow the neighborhood $U(n^{-r}; \varepsilon)$ to shrink too fast, which prevents the issue of superefficiency. Hence, the adaptivity to the unknown $f_0$ is not purely an artifact of superefficiency. 

The van Trees inequality has been previously applied to analyze minimax lower bounds in nonparametric problems. See, for instance, Theorem 4 of \citet{anevski2011monotone} and Example 2.3 of \citet{tsybakov2008}. In order to apply the classical van Trees inequality, these approaches typically reduce the original nonparametric problem to parameter estimation along a parametric submodel. Consequently, existing results do not account for risk over a shrinking neighborhood around $f_0$, and do not provide the local dependency on $f_0$. This new result is obtained by a refined application of the van Trees inequality. 

\subsection{Simple directionally differentiable functionals}
We now apply Theorem~\ref{cor:vt-approximation} to the estimation of $\psi(P_\theta):=\max(\theta,0)$ for $\theta \in \mathbb{R}$. This is one of the canonical examples of directionally differentiable functionals. More complex problems such as interval regression \citep{fang2014optimal} and testing the shape of regression \citep{juditsky2002nonparametric}, can be reduced to this form. 

\begin{example}[Estimating $\max(\theta, 0)$ when $\theta=0$]\label{main-example2} Suppose $X_1, \ldots, X_n$ are $n$ IID observations drawn from $P_0$ that belongs to a local model $\{P_\theta : |\theta| < \delta, \theta\in\mathbb{R}\}$ for fixed $\delta > 0$. We assume that this model is Hellinger differentiable with the Fisher information $\mathcal{I}(t)$ for $t \in (-\delta, \delta)$. The functional of interest is $\psi(P_\theta) = \max(0, \theta)$.
\end{example}
\begin{lemma}\label{lemma:max-vt}
Under Example~\ref{main-example2}, Theorem~\ref{cor:vt-approximation} implies that 
    \begin{align}
	\inf_T\, \sup_{|\theta|< \delta}\, \mathbb{E}_\theta |T(X)-\psi(P_\theta)|^2
 &\ge \sup_{a\in[0,1]}\, \frac{a^2}{4\pi^2\delta^{-2}w_a^{-2} + n\sup_{|t|<\delta}\mathcal{I}(t)}\label{eq:kepler-main}
\end{align}
where 
$w_a := 2/(y_a+1)$ and $y_a$ is the inverse of the following equation:
\begin{align}
    y_a - \sin(-\pi y_a)/\pi = |2a-1|.\nonumber
\end{align}
\end{lemma}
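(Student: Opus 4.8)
The plan is to invoke Theorem~\ref{cor:vt-approximation} with a carefully chosen approximating functional $\phi$ and prior $Q$, then compute the two competing terms. Since $\psi(\theta)=\max(\theta,0)$ is directionally differentiable but not differentiable at $0$, the natural surrogate is a shifted version $\phi(\theta):=\psi(\theta-h)=\max(\theta-h,0)$ for some offset $h$, which \emph{is} absolutely continuous on the interval with $\nabla\phi(\theta)=\mathbf{1}\{\theta>h\}$. Actually, re-reading the statement, the parametrization is through a scalar $a\in[0,1]$, so I expect the cleaner route is: fix the prior $Q$ to be supported on $(-\delta,\delta)$ with a density $q$ that vanishes at $\pm\delta$ (to satisfy the ``nice prior'' boundary condition in Definition~\ref{as:regularprior-vt}), and choose $\phi$ so that $\int\nabla\phi\,dQ = Q(\{\theta>h\})$ equals $a$ while controlling the approximation error $\int|\psi-\phi|^2\,dQ$. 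The appearance of $w_a$ and the transcendental equation $y_a-\sin(-\pi y_a)/\pi=|2a-1|$ strongly suggests the least-favorable density is the raised-cosine (Epanechnikov-type trigonometric) density $q(\theta)\propto 1+\cos(\pi\theta/\delta)$ on $(-\delta,\delta)$, whose Fisher information $\mathcal{I}(Q)$ is the classical value $\pi^2/\delta^2$ times a correction — matching the $4\pi^2\delta^{-2}w_a^{-2}$ term once rescaled by the truncation induced by $a$.

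The key steps, in order, are as follows. First I would set up the scaled raised-cosine prior on $(-\delta,\delta)$ and verify it is ``nice'': absolute continuity of $q$, positive-definiteness and finiteness of $\mathcal{I}(Q)$, $Q$-integrability of $\|\phi\|$ and $\|\nabla\phi\|$ (both trivial here since $\phi$ is bounded with bounded derivative on a bounded set), finiteness of $\int\mathcal{I}(t)\,dQ$ (which holds because $\mathcal{I}$ is bounded on the compact interval by Hellinger differentiability plus continuity), and the boundary decay $q(\theta)\to0$ at $\pm\delta$. Second, I would pick $h=h(a)$ so that the mass $Q((h,\delta))$ equals $a$; inverting the cosine CDF is exactly where the equation $y_a-\sin(-\pi y_a)/\pi=|2a-1|$ comes from, with $y_a\in[-1,1]$ the normalized location and $w_a=2/(y_a+1)$ the width factor of the effective support where $\nabla\phi$ is nonzero. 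Third, I would compute $\Gamma_{Q,\phi}$: the numerator $\left(\int\nabla\phi\,dQ\right)^2 = a^2$, and the denominator $\mathcal{I}(Q)+n\int\mathcal{I}(t)\,dQ$, where the first piece evaluates to $4\pi^2\delta^{-2}w_a^{-2}$ after the raised-cosine Fisher-information computation restricted/rescaled appropriately, and the second is bounded above by $n\sup_{|t|<\delta}\mathcal{I}(t)$ — yielding the claimed expression as a lower bound. Fourth, I would check that the approximation-bias term $\int|\psi(\theta)-\phi(\theta)|^2\,dQ = \int|\max(\theta,0)-\max(\theta-h,0)|^2\,dQ$ is dominated so that the $[\cdot]_+^2$ does not collapse; since $|\max(\theta,0)-\max(\theta-h,0)|\le|h|$ and one can take $h\to 0$ (hence $a\to$ the appropriate endpoint), or more carefully track the trade-off, the surrogate-efficiency term dominates in the relevant regime, and taking the supremum over $a\in[0,1]$ gives the stated bound. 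Finally, I would pass from the dual-norm formulation $\sup_{\|u\|_*\le1}\|\Gamma^{1/2}_{Q,\phi}u\|$ to the scalar expression, which for $k=1$ is simply $\Gamma^{1/2}_{Q,\phi}$.

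The main obstacle I anticipate is the exact Fisher-information calculation for the truncated/shifted raised-cosine prior and showing it equals $4\pi^2\delta^{-2}w_a^{-2}$: one must be careful that shifting $\phi$ by $h$ effectively restricts the gradient to a sub-interval of width $\delta w_a$-ish, and the relevant quantity is not $\mathcal{I}(Q)$ of the full prior but the interaction $\left(\int\nabla\phi\,dQ\right)^\top\mathcal{I}(Q)^{-1}\left(\int\nabla\phi\,dQ\right)$-type term after the matrix inverse — so the bookkeeping linking $a$, $h$, $y_a$, and $w_a$ through the cosine integrals $\int_{y_a}^1(1+\cos\pi u)\,du$ and its derivative-squared-over-density analog is the delicate part. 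A secondary subtlety is ensuring the boundary condition (3) of Definition~\ref{as:regularprior-vt} genuinely holds and that one does not need $q$ to also have vanishing derivative at the boundary; the raised cosine has $q(\pm\delta)=0$ but $q'(\pm\delta)=0$ as well, so this should be fine, but it is worth confirming against the precise wording ``along some canonical direction.'' Everything else — the reverse-triangle/bias-variance structure, the van Trees inequality itself, the reduction from vector to scalar — is supplied directly by Theorem~\ref{cor:vt-approximation} and Lemma~\ref{thm:L2-approximation}.
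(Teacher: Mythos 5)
Your plan starts from a misconception that sends the whole argument in the wrong direction. You treat the target $\psi(\theta)=\max(\theta,0)$ as something requiring a surrogate, writing ``the natural surrogate is a shifted version $\phi(\theta)=\max(\theta-h,0)$, which \emph{is} absolutely continuous.'' But $\psi$ itself is already absolutely continuous---it is $1$-Lipschitz, with a.e.\ derivative $\nabla\psi(t)=I(t>0)$---so Theorem~\ref{cor:vt-approximation} can be applied with $\phi=\psi$ and the approximation-bias term is identically zero. The paper does exactly that: the lower bound reduces to
$\left(\int_{-\delta}^{\delta}I(t>0)\,q(t)\,dt\right)^{2}\big/\left(\mathcal{I}(Q)+n\int\mathcal{I}(t)\,q(t)\,dt\right)$,
and the only free object is the prior $Q$, not a shift of $\phi$. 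The parameter $a\in[0,1]$ is $a=Q((0,\delta))$, and the Kepler equation arises from a variational problem \emph{over priors}: minimize $\mathcal{I}(Q)$ subject to the mass constraint $\int_0^\delta dQ=a$ and boundary decay, solved by squared-cosine densities on a sub-interval. The quantity $w_a=2/(y_a+1)$ is the width of the support of that constrained minimum-Fisher-information prior; it has nothing to do with the effective support of $\nabla\phi$, as you posited.

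Because you hold $Q$ fixed (full raised cosine on $(-\delta,\delta)$) and vary a shift $h$ in $\phi$ instead, two things go wrong. First, your $\mathcal{I}(Q)$ stays at $\pi^2/\delta^{2}$ for every $a$, whereas the stated bound needs $\mathcal{I}(Q)=4\pi^2\delta^{-2}w_a^{-2}$, which varies from $\pi^2/\delta^2$ at $a=1/2$ up to $4\pi^2/\delta^2$ at $a=1$; there is no way to recover that $a$-dependence from a fixed prior. Second, shifting $\phi$ away from $\psi$ introduces a genuinely nonzero bias term $\int|\psi-\phi|^2\,dQ$ (of order $h^2$ plus contributions from the sub-interval between $0$ and $h$), and letting $h\to 0$ to kill it pins $a$ at $Q((0,\delta))=1/2$, so you cannot sweep $a$ over $[0,1]$ as the claim requires. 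There is no ``matrix inverse interaction'' to rescue this in the scalar case either; $\Gamma_{Q,\phi}$ is exactly $\left(\int\nabla\phi\,dQ\right)^2/\left(\mathcal{I}(Q)+n\int\mathcal{I}\,dQ\right)$, so the denominator really is the raw Fisher information of $Q$. To close the gap: drop the surrogate, take $\phi=\psi$, and solve the constrained Fisher-information minimization over priors supported in $(-\delta,\delta)$ with $\int_0^\delta dQ=a$; the Euler--Lagrange equation (as in~\citet{ernst2017minimizing}) gives the piecewise squared-cosine solution whose support width is encoded by the Kepler inverse, and that is where $w_a$ and the claimed constant come from.
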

The lower bound, given by \eqref{eq:kepler-main}, investigates the least-favorable prior using variational calculus and optimization under absolute moment constraints \citep{ernst2017minimizing}. The corresponding derivation can be found in Supplementary Material. The lower bound involves the inverse of the \textit{Kepler equation} from Celestial Mechanics, which does not have a closed-form solution \citep{kepler}. Hence, this lower bound relies on a computational method to approximate $y_a$. This technique seems to be less known in the literature but has been mentioned in the context of non-asymptotic minimax lower bound for Gaussian mean estimation under bounded constraints \citep{levit2010minimax}. 

Next, we provide a non-asymptotic lower bound for estimating $\max(\theta^\alpha, 0)$ for $0 < \alpha \le 1$ and $\theta\in\mathbb{R}$ based on Theorem~\ref{thm:crvtBound_hellinger}. This function is also non-differentiable at $\theta=0$ and the corresponding lower bound behaves differently depending on the location of the true parameter.

\begin{example}[Estimating $\max(\theta^\alpha, 0)$ for $\alpha > 0$]\label{example2}
Consider the identical settings as Example~\ref{main-example2}, except the functional of interest is now $\psi(P_\theta) = \max(0, \theta^\alpha)$ for $\alpha > 0$. 
\end{example}
\begin{lemma}\label{lemma:max-power}
    Under Example~\ref{example2}, Theorem~\ref{thm:crvtBound_hellinger} implies the following:
\begin{enumerate}
	\item[(i)]When $\theta_0 < 0$, it holds that 
	\begin{align}
	&\inf_{T}\, \sup_{|\theta-\theta_0| < \delta} \, \mathbb{E}_{\theta}|T(X)-\psi(P_\theta)  |^2 \ge  \sup_{Q, \varphi_0}\,\frac{\delta^2\alpha^2 |\mathbb{E}_Q \{\theta_0 +\delta\varphi_0(t)\}^{\alpha-1}\varphi'_0(t)\, I\{t: |\theta_0|/  \delta <\varphi_0(t)\}|^2}{\mathcal{I}(Q) +n\delta^2\mathbb{E}_Q [ \{\varphi_0'(t)\}^2\, \mathcal{I}(\theta_0 + \delta\varphi_0(t))]}\nonumber
\end{align}
\item[(ii)]When $\theta_0 > 0$, it holds that 
\begin{align}
	\inf_{T}\, \sup_{|\theta-\theta_0| < \delta} \, \mathbb{E}_{\theta}|T(X)-\psi(P_\theta)  |^2 \ge 
    \sup_{Q, \varphi_0}\,\frac{\delta^2\alpha^2 |\mathbb{E}_Q \{\theta_0 +\delta\varphi_0(t)\}^{\alpha-1}\varphi'_0(t)\, I\{t: \varphi_0(t) < \theta_0/  \delta \}|^2}{\mathcal{I}(Q) +n\delta^2\mathbb{E}_{Q}[  \{\varphi_0'(t)\}^2\, \mathcal{I}(\theta_0 + \delta\varphi_0(t))]}.\nonumber
\end{align}
\item[(iii)]When $\theta_0 = 0$, it holds that 
\begin{align}
	\inf_{T}\, \sup_{|\theta| < \delta} \, \mathbb{E}_{\theta}|T(X)-\psi(P_\theta)  |^2 \ge\sup_{Q, \varphi_0}\, \frac{\delta^{2\alpha}\alpha^2 |\mathbb{E}_Q \varphi_0(t)^{\alpha-1}\varphi'_0(t)\, I\{t:\varphi_0(t) >0 \}|^2}{\mathcal{I}(Q) +n\delta^2\mathbb{E}_Q  [\{\varphi_0'(t)\}^2\, \mathcal{I}(\delta\varphi_0(t))]}.\nonumber
\end{align}
\end{enumerate}
The supremums are taken over any probability measure $Q$ on $\mathbb{R}$ and any increasing deffeomorphism $\varphi_0 : \mathbb{R} \mapsto (-1, 1)$ such that $\varphi_0(0)=0$ and $\|\varphi'_0\|_\infty < C$ for some constant.
\end{lemma}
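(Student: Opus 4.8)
The plan is to derive all three bounds by passing to the limit $h\to 0$ in the diffeomorphism extension of Theorem~\ref{thm:crvtBound_hellinger} (the corollary of Section~\ref{section:diffeomorphism}), after upgrading the single-observation Fisher information $\mathcal I(t)$ to $n\mathcal I(t)$ to reflect the $n$ IID observations of Example~\ref{example2}. Fix a ``nice'' probability measure $Q$ on $\mathbb R$ and an increasing diffeomorphism $\varphi_0:\mathbb R\to(-1,1)$ with $\varphi_0(0)=0$ and $\|\varphi_0'\|_\infty<C$, and set $\varphi(t):=\theta_0+\delta\varphi_0(t)$, a diffeomorphism of $\mathbb R$ onto the local parameter interval $(\theta_0-\delta,\theta_0+\delta)$. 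Writing $\widetilde\psi:=\psi\circ\varphi$, so that $\widetilde\psi(t)=\bigl(\theta_0+\delta\varphi_0(t)\bigr)_+^\alpha$, the corollary yields, for every $h\in\mathbb R$,
\[
\inf_T\,\sup_{|\theta-\theta_0|<\delta}\,\mathbb E_\theta|T(X)-\psi(P_\theta)|^2\ \ge\ \Bigl[\sqrt{A_{\widetilde\psi,Q,h}}-\sqrt{B_{\widetilde\psi,Q,h}}\Bigr]_+^2,
\]
so it remains to evaluate the limit of the right-hand side as $h\to0$ and then take a supremum over $(Q,\varphi_0)$.

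\textbf{Limits of the two terms.} Because $\widetilde\psi$ is continuous and bounded on $\mathbb R$, dominated convergence gives $B_{\widetilde\psi,Q,h}\to 0$. For the numerator of $A_{\widetilde\psi,Q,h}$, one first checks that $\widetilde\psi$ is absolutely continuous: its only possible kink is the point $t^\star$ with $\varphi(t^\star)=0$ (absent when $\theta_0>\delta$), near which $\widetilde\psi'$ is a bounded multiple of $|t-t^\star|^{\alpha-1}$—locally integrable exactly because $\alpha>0$—while away from $t^\star$ one has $|\widetilde\psi'|\le M\varphi_0'$ with $\varphi_0'\in L_1(\mathbb R)$, so $\widetilde\psi'\in L_1(\mathbb R)$. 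Writing $\widetilde\psi(t)-\widetilde\psi(t-h)=\int_{t-h}^t\widetilde\psi'(s)\,ds$, a Fubini interchange and the Lebesgue differentiation theorem (using boundedness of the density of $Q$) give $h^{-1}\int(\widetilde\psi(t)-\widetilde\psi(t-h))\,dQ\to\int\widetilde\psi'\,dQ$. For the denominator we split $H^2(\widetilde{\mathbb P}_0,\widetilde{\mathbb P}_h)$ into a prior part and a likelihood part as in the local Hellinger expansion following Theorem~\ref{thm:crvtBound_hellinger}, and use the chain rule for Hellinger-differentiable paths: the path $h\mapsto P_{\varphi(t+h)}$ has Fisher information $(\varphi'(t))^2\mathcal I(\varphi(t))=\delta^2(\varphi_0'(t))^2\mathcal I(\theta_0+\delta\varphi_0(t))$, so
\[
4H^2(\widetilde{\mathbb P}_0,\widetilde{\mathbb P}_h)=h^2\Bigl(\mathcal I(Q)+n\delta^2\!\int(\varphi_0'(t))^2\,\mathcal I(\theta_0+\delta\varphi_0(t))\,dQ\Bigr)+o(h^2).
\]
Consequently $A_{\widetilde\psi,Q,h}$ converges to $\dfrac{\bigl(\int\widetilde\psi'\,dQ\bigr)^2}{\mathcal I(Q)+n\delta^2\,\mathbb E_Q[(\varphi_0')^2\,\mathcal I(\theta_0+\delta\varphi_0)]}$, and since $B_{\widetilde\psi,Q,h}\to0$ the whole expression $[\sqrt A-\sqrt B]_+^2$ converges to this same (finite, since the denominator is at least $\mathcal I(Q)>0$) quantity; taking $\liminf_{h\to0}$ of the displayed inequality passes this to the left-hand side.

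\textbf{The three regimes.} Off the $Q$-null point $t^\star$ we have $\widetilde\psi'(t)=\alpha(\theta_0+\delta\varphi_0(t))^{\alpha-1}\,\delta\varphi_0'(t)\,I\{\theta_0+\delta\varphi_0(t)>0\}$. When $\theta_0<0$ the set $\{\theta_0+\delta\varphi_0(t)>0\}$ is $\{\varphi_0(t)>|\theta_0|/\delta\}$, so $\int\widetilde\psi'\,dQ=\alpha\delta\,\mathbb E_Q[(\theta_0+\delta\varphi_0)^{\alpha-1}\varphi_0'\,I\{\varphi_0>|\theta_0|/\delta\}]$; squaring (the prefactor $\delta^2$ being the square of the lone $\delta$) yields part (i), and the case $\theta_0>0$ is analogous and yields part (ii). When $\theta_0=0$ one has instead $\widetilde\psi(t)=\delta^\alpha(\varphi_0(t))_+^\alpha$ and $\widetilde\psi'(t)=\delta^\alpha\alpha\,\varphi_0(t)^{\alpha-1}\varphi_0'(t)\,I\{\varphi_0(t)>0\}$, so the squared $\delta^\alpha$ produces the $\delta^{2\alpha}$ prefactor of part (iii). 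Optimizing over $(Q,\varphi_0)$ concludes each case, with the product structure $\mathcal I(t)\mapsto n\mathcal I(t)$ explaining the factor $n$ in the likelihood part of every denominator.

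\textbf{Main obstacle.} The substantive work is the two limit/integral interchanges. In the numerator the delicate regime is $\alpha<1$, where $\widetilde\psi'$ carries the $|t-t^\star|^{\alpha-1}$ (or $|t|^{\alpha-1}$ when $\theta_0=0$) singularity; it is integrable only because $\alpha>0$ and the prior density is bounded, and the Fubini route above is needed since averaging operators are not $L_1$-bounded. In the denominator one needs the local quadratic Hellinger expansion for the mixtures to hold with the correct constant and with the limit passing inside the $Q$-integral; this is where the ``nice prior'' conditions and $\|\varphi_0'\|_\infty<C$ are used—the latter, together with local boundedness of $\mathcal I$ on $(\theta_0-\delta,\theta_0+\delta)$, dominates $(\varphi_0')^2\mathcal I(\theta_0+\delta\varphi_0)$—and it is precisely the Hellinger-expansion step that, in the remark on the generalized van Trees inequality, lets Theorem~\ref{thm:crvtBound_hellinger} recover the van Trees bound. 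The remaining ingredients—the chain rule for the reparametrized Fisher information, the $n$-fold product scaling, and the case split on $\mathrm{sign}(\theta_0)$—are routine.
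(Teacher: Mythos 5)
Your proof follows the same route as the paper: reparametrize by $\varphi(t)=\theta_0+\delta\varphi_0(t)$, apply the diffeomorphism extension of Theorem~\ref{thm:crvtBound_hellinger}, use Lemma~\ref{lemma:hellinger-decomposition} (with the $n$-fold scaling) for the denominator, take $h\to0$, and split on $\mathrm{sign}(\theta_0)$ to evaluate $\int\widetilde\psi'\,dQ$. The only genuine difference is how you justify the numerator limit: the paper applies the mean value theorem pointwise to $\varphi(t)^\alpha-\varphi(t-h)^\alpha$ (and explicitly shows the boundary-indicator contribution vanishes for small $h$) before invoking dominated convergence, whereas you establish absolute continuity of $\widetilde\psi$, check $\widetilde\psi'\in L_1$ despite the $|t-t^\star|^{\alpha-1}$ singularity, and pass through Fubini with the Lebesgue differentiation theorem. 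Your variant is arguably the cleaner justification when $\alpha<1$, since the pointwise MVT representation involves $t$-dependent intermediate points whose contributions near the kink are not obviously dominated; but the overall structure, key lemmas, and case analysis are the same, so this is a technical refinement rather than a different argument.
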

Here, the choice of priors no longer needs to satisfy Definition~\ref{as:regularprior-vt}. Instead, Lemma~\ref{lemma:max-power} posits certain requirements over the choice of diffeomorphism.

\begin{remark}[Minimax rates of convergence under IID observations]\label{remark:minimax-rate}
	From the expressions above, we can deduce the local minimax rates of convergence. When $\theta_0 > 0$, the lower bound (ii) of Lemma~\ref{lemma:max-power} above implies that 
	\begin{align}
		\frac{\delta^2O(1)}{\mathcal{I}(Q) + n\delta^2O(1)} = \frac{O(1)}{\delta^{-2}\mathcal{I}(Q) + nO(1)}.\nonumber
	\end{align} 
	Balancing two terms in the denominator, we choose $\delta = O(n^{-1/2})$; the overall rate of convergence is $n^{-1}$, or so-called parametric rate, provided $\theta_0 > 0$. This is reasonable given that $\psi(\theta_0)$ is a differentiable parameter when $\theta_0$ is bounded away from zero.
 
 Similarly when $\theta_0 = 0$, the lower bound (iii)  of Lemma~\ref{lemma:max-power} implies that 
	\begin{align}
		\frac{\delta^{2\alpha}O(1)}{\mathcal{I}(Q) + n\delta^2O(1)} = \frac{O(1)}{\delta^{-2\alpha}\mathcal{I}(Q) + n\delta^{2-2\alpha}O(1)}.\nonumber
	\end{align} 
	Balancing two terms in the denominator, we choose $\delta = O(n^{-1/2})$ again; the overall rate of convergence is now $n^{-\alpha}$ when $\theta_0 =0$, which is strictly slower than the parametric rate when $\alpha < 1$. We thus conclude that the minimax rates of convergence for estimating $\max(\theta^\alpha, 0)$ remain the same as the rates for $\theta^\alpha$ discussed in \citet{gill1995applications}.
\end{remark}
\begin{remark}[Local asymptotic minimax constants] Lemma~\ref{lemma:max-power} also recovers the precise constants for the local asymptotic minimax lower bound. In addition to the setting of Lemma~\ref{lemma:max-power}, we further assume that $\mathcal{I}(t)$ is continuous at $\theta_0$. Given the observation from Remark~\ref{remark:minimax-rate}, we replace $\delta$ with $cn^{-1/2}$ and analyze their limits. When $\theta_0 < 0$, there exists $n$ large enough that $|\theta_0|\ge cn^{-1/2}$, and hence we have
\begin{align}
	\liminf_{c\longrightarrow \infty}\, \liminf_{n \longrightarrow \infty}\, \inf_T \, \sup_{|\theta-\theta_0|<cn^{-1/2}}n\mathbb{E}_{P^n_\theta} |T(X)-\psi(P_\theta)|^2 = 0\nonumber.
\end{align}
When $\theta_0 > 0$, there exists $n$ large enough such that $\theta_0\ge cn^{-1/2}$, and thus $I\{t: \varphi_0(t) < \theta_0/(cn^{-1/2})\} = 1$ for all $t \in \mathbb{R}$. We then have, 
\begin{align}
	&\liminf_{c\longrightarrow \infty}\, \liminf_{n \longrightarrow \infty}\, \inf_T \, \sup_{|\theta-\theta_0|<cn^{-1/2}}n\mathbb{E}_{P^n_\theta} |T(X)-\psi(P_\theta)|^2 \ge \frac{\alpha^2\theta_0^{2\alpha-2}}{\mathcal{I}(\theta_0)}\frac{|\mathbb{E}_Q  \varphi_0'(t)|^2}{\mathbb{E}_Q  \{\varphi_0'(t)\}^2}\nonumber
\end{align}
where we invoke the dominated convergence theorem in view of the continuity of $\mathcal{I}(t)$ at $t=\theta_0$ and the uniform boundedness of $\varphi'_0$. The proof in Supplementary Material claims that the supremum of the last quantity involving $\varphi_0$ is $1$. Hence the local asymptotic minimax constant is $\alpha^2\theta_0^{2\alpha-2}\mathcal{I}(\theta_0)^{-1}$. The case with $\theta_0=0$ is more involved. By the analogous argument, 
\begin{align}
	&\liminf_{c\longrightarrow \infty}\, \liminf_{n \longrightarrow \infty}\, \inf_T \, \sup_{|\theta-\theta_0|<cn^{-1/2}}n^\alpha \mathbb{E}_{P^n_\theta} |T(X)-\psi(P_\theta)|^2 \nonumber\\
	&\qquad\ge \liminf_{c\longrightarrow \infty}\, \frac{c^{2\alpha}\alpha^2 |\mathbb{E}_Q \varphi_0(t)^{\alpha-1}\varphi'_0(t)\, I\{t >0 \}|^2}{\mathcal{I}(Q) +c^{2}\mathcal{I}(\theta_0)\mathbb{E}_Q  \{\varphi_0'(t)\}^2}.\nonumber
\end{align}
The $\liminf$ may not be achieved by the extreme values of $c$, and it depends on specific choices of $Q$ and $\varphi_0$. Taking the supremum of this object over the choice of $Q$ and $\varphi_0$ requires more involved analysis, which we do not pursue in this manuscript.    
\end{remark}

\subsection{Parameter estimation under an irregular model}
The final example provides a local asymptotic minimax result when the Hellinger differentiability fails. 
We demonstrate that Theorem~\ref{thm:crvtBound_hellinger} recovers a local minimax rate as well as correct parameter dependence. 
\begin{example}[Estimating $\theta_0$ from $\mathrm{Unif}(0, \theta_0)$ for $\theta_0 > 0$]\label{example3}
Suppose $X_1, \dots, X_n$ are $n$ IID observations from $\mathrm{Unif}(0, \theta_0)$, which belong to a statistical model $\{\mathrm{Unif}(0, \theta) : 0 < \theta\}$. The functional of interest is $\psi(P_\theta) = \theta$. 
\end{example}

\begin{lemma}\label{lemma:uniform}
Under Example~\ref{example3}, Theorem~\ref{thm:crvtBound_hellinger} implies the following local asymptotic minimax lower bound:
\begin{align*}
    &\liminf_{c\longrightarrow \infty}\, \liminf_{n\longrightarrow \infty}\, \inf_{T}\,\sup_{|\theta -\theta_0|< cn^{-1}}\mathbb{E}_{P_\theta^n}n^2|T(X)-\theta|^2\\
    &\qquad \ge \theta_0^2 \sup_{\eta,\,Q,\,\varphi_0}\, \left[\frac{\left|\int \eta\varphi_0'(t)\, dQ(t)\right|}{2\left(2 -2\int  \exp \left(- \eta\varphi'_0( t)/2\right)\, dQ(t)\right)^{1/2}} -\left(\int \{\eta\varphi_0'(t)\}^2\, dQ(t)\right)^{1/2}\right]^2_+ = C\theta_0^2
\end{align*}
for $C>0$ where the supremum is over $\eta\in\mathbb{R}$, diffeomorphism $\varphi_0$ from $\mathbb{R}$ to $(-1,1)$ and any prior distributions $Q$ over $\mathbb{R}$.
\end{lemma}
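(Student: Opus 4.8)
\textbf{Proof proposal for Lemma~\ref{lemma:uniform}.}

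The plan is to instantiate Theorem~\ref{thm:crvtBound_hellinger} (in its diffeomorphism-extended form from Section~\ref{section:diffeomorphism}) for the irregular family $\{\mathrm{Unif}(0,\theta):\theta>0\}$ with $\psi(\theta)=\theta$, and then carry out the asymptotic analysis with $\delta = cn^{-1}$. First I would recall the exact Hellinger distance for two uniform densities on $(0,\theta)$ and $(0,\theta')$ with $\theta<\theta'$: a direct computation gives $H^2(\mathrm{Unif}(0,\theta),\mathrm{Unif}(0,\theta')) = 2 - 2\sqrt{\theta/\theta'}$, so that for the local perturbation $\theta' = \theta + s$ with small $s>0$ one has $H^2 = 2 - 2(1 + s/\theta)^{-1/2} = (s/\theta) + o(s)$; more precisely, along the reparametrized path this will contribute the exponential term after the mixture is formed. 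Next I would set up the diffeomorphism $\varphi(t) = \theta_0 + n^{-1}\varphi_0(t)$, where $\varphi_0:\mathbb{R}\to(-1,1)$ is increasing with $\varphi_0(0)=0$ and bounded derivative, following exactly the recipe in Section~\ref{section:diffeomorphism}, and take $h = n^{-1}\eta$ (i.e.\ the local shift parameter scaled by $n^{-1}$). Then $\widetilde\psi(t) = \psi(\varphi(t)) = \theta_0 + n^{-1}\varphi_0(t)$, so $\widetilde\psi(t)-\widetilde\psi(t-h) = n^{-1}(\varphi_0(t)-\varphi_0(t-n^{-1}\eta)) \approx n^{-2}\eta\varphi_0'(t)$ for large $n$.

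The core computation is the Hellinger distance between the two mixture measures $\widetilde{\mathbb{P}}_0$ and $\widetilde{\mathbb{P}}_h$ on $\mathcal{X}^n \times \mathbb{R}$. I would use the tensorization/expansion indicated in the Remark on the generalized van Trees inequality: $H^2(\widetilde{\mathbb{P}}_0,\widetilde{\mathbb{P}}_h) = H^2(Q_h,Q) + \int H^2(P^n_{\varphi(t+h)}, P^n_{\varphi(t)})\, dQ_h^{1/2}dQ^{1/2}$, where $Q_h$ is $Q$ shifted by $h$. With $h=n^{-1}\eta\to 0$ the prior term $H^2(Q_h,Q)$ is $O(n^{-2})$ and negligible against the data term. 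For the data term, the $n$-fold Hellinger affinity between two uniforms satisfies $\bigl(1 - \tfrac12 H^2(P_{\varphi(t)},P_{\varphi(t+h)})\bigr)^n \to \exp(-\tfrac12 \lim_n n H^2(P_{\varphi(t)},P_{\varphi(t+h)}))$; since $\varphi(t+h)-\varphi(t) = n^{-1}\varphi_0(t+h)-n^{-1}\varphi_0(t) \approx n^{-2}\eta\varphi_0'(t)$ while the base value is $\theta_0 + O(n^{-1})$, we get $n H^2(P_{\varphi(t)},P_{\varphi(t+h)}) \to \eta\varphi_0'(t)/\theta_0$ wherever $\eta\varphi_0'(t)>0$ (and the affinity is $1$ where the overlap is total). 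This produces the affinity $\int \exp(-\eta\varphi_0'(t)/(2\theta_0))\,dQ(t)$ after carefully checking one-sided versus two-sided overlap of the supports and applying dominated convergence (justified by $\|\varphi_0'\|_\infty < C$). Rescaling $\eta\mapsto\theta_0\eta$ then matches the stated expression: the numerator $\|\int(\widetilde\psi(t)-\widetilde\psi(t-h))dQ\|^2 \approx n^{-4}|\int\eta\varphi_0'(t)dQ|^2$, the $4H^2$ denominator $\approx n^{-2}\cdot 4(2 - 2\int\exp(-\eta\varphi_0'(t)/2)dQ)$ after the $\theta_0$-rescaling, giving $A \approx n^{-2}\theta_0^2|\int\eta\varphi_0'dQ|^2/(4(2-2\int\exp(-\eta\varphi_0'/2)dQ))$, while the bias term $B \approx n^{-2}\theta_0^2\int(\eta\varphi_0')^2 dQ$; multiplying through by $n^2$ recovers the displayed lower bound, and taking $\liminf_{c\to\infty}\liminf_{n\to\infty}$ together with the supremum over $\eta, Q, \varphi_0$ gives the claimed constant $C\theta_0^2$, with $C>0$ exhibited by any admissible concrete choice (e.g.\ a symmetric $Q$, a fixed $\varphi_0$, and a small but positive $\eta$ so that the bracketed quantity is strictly positive).

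The main obstacle I anticipate is the precise asymptotic evaluation of the mixture Hellinger distance for this \emph{irregular} model: unlike the QMD case, one cannot invoke a clean local quadratic expansion, and the support of $\mathrm{Unif}(0,\varphi(t))$ varies with $t$, so the integrand $H^2(P^n_{\varphi(t)},P^n_{\varphi(t+h)})$ has a genuinely one-sided character and its $n\to\infty$ limit is an exponential rather than a quadratic form. Handling the interchange of limit and integral, and correctly bookkeeping which region of $t$-space contributes (the set where $\varphi_0'(t)$ has a given sign determines whether the two uniforms' supports are nested, hence whether the affinity is $1$ or the nontrivial exponential), will require care; the uniform bound $\|\varphi_0'\|_\infty < C$ is exactly what makes the dominated convergence argument go through. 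A secondary, purely technical point is confirming the strict positivity $C>0$: one must verify that the outer $[\,\cdot\,]_+^2$ is not identically zero, which follows by a first-order expansion of both terms in $\eta$ near $\eta=0$, noting that the ``variance'' term $B$ scales like $\eta^2$ while the ``efficiency'' numerator scales like $\eta$ and the Hellinger denominator also scales like $\eta$ to leading order, so for small $\eta$ the first term dominates.
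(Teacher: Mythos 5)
Your high-level plan is the right one: build the diffeomorphism $\varphi(t)=\theta_0+cn^{-1}\varphi_0(t)$, compute the one-observation Hellinger distance exactly for the uniform family, tensorize, and track how the mixture Hellinger distance and the two functional-difference integrals scale. But there is a genuine scaling error in the way you couple $h$ and $n$, and it is fatal: with your choice $h=n^{-1}\eta$ the lower bound you end up with is identically zero in the limit, not the claimed constant.

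Concretely, you write $\varphi(t)=\theta_0+n^{-1}\varphi_0(t)$ and $h=n^{-1}\eta$, and correctly deduce $\varphi(t+h)-\varphi(t)\approx n^{-2}\eta\varphi_0'(t)$. Since $H^2(P_{\varphi(t)},P_{\varphi(t+h)})\approx (\varphi(t+h)-\varphi(t))/\theta_0$, this gives $H^2\sim n^{-2}$, hence $nH^2\to 0$ --- not $nH^2\to\eta\varphi_0'(t)/\theta_0$ as you claim. The consequence is that the tensorized data term $H^2(P^n_{\varphi(t+h)},P^n_{\varphi(t)})\sim n^{-1}$, the prior term $H^2(Q_h,Q)\sim n^{-2}$, so $H^2(\widetilde{\mathbb{P}}_0,\widetilde{\mathbb{P}}_h)\sim n^{-1}$, while your numerator $\bigl|\int(\widetilde\psi(t)-\widetilde\psi(t-h))\,dQ\bigr|^2\sim n^{-4}$, giving $A_{\psi,Q,h}\sim n^{-3}$ and $n^2A\to 0$. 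Likewise $n^2B\to 0$. The two asymptotic orders you wrote down for $A$ and for $H^2$ ($n^{-2}$ numerator against an $n^{-2}$-scaled Hellinger denominator with the exponential already appearing) are not simultaneously achievable with $h=n^{-1}\eta$; they require $\widetilde\psi(t)-\widetilde\psi(t-h)\sim n^{-2}$ and $H^2\sim n^{-1}$, which contradict each other because both quantities are controlled by the same increment $\varphi(t+h)-\varphi(t)$.

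The paper avoids this by decoupling the two limits: it keeps $h$ as a fixed constant while $n\to\infty$, which makes $\varphi(t+h)-\varphi(t)\sim n^{-1}$, hence $nH^2\to c\varphi_0'(t+\lambda h)h/\theta_0$ and the tensorized Hellinger distance converges to the nontrivial exponential $2-2\exp(-c\varphi_0'(t+\lambda h)h/(2\theta_0))$. Simultaneously $n\,|\widetilde\psi(t)-\widetilde\psi(t-h)|\to ch\varphi_0'(t)$, so both $n^2A$ and $n^2B$ converge to finite positive constants. Only then does one reparametrize $h=(\theta_0/c)\eta$ and take $c\to\infty$, which cleans up the $\lambda h$-shifted arguments and the $Q_h^{1/2}Q^{1/2}$ mass shift and produces the final display exactly as stated. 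Two smaller points: the diffeomorphism must carry the $cn^{-1}$ factor (you dropped the $c$), since otherwise $\varphi$ only covers a radius-$n^{-1}$ subset of the local ball and the $\liminf_{c\to\infty}$ does no work; and your discussion of ``one-sided versus two-sided overlap'' is moot here --- the supports of $\mathrm{Unif}(0,a)$ and $\mathrm{Unif}(0,b)$ are always nested for any $a,b>0$, so the Hellinger affinity is always $\sqrt{a\wedge b/(a\vee b)}$ with no case distinctions.
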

The preceding display shows that the local minimax risk for parameter estimation under uniform distribution behaves as $O(\theta_0^2/n^2)$. Without taking into account the constant, this is the correct known dependency. Theorem 4.9 and Proposition 4.5 of \cite{korostelev2011mathematical} further prove that the sharpest constant is $1$ based on the Bayes risk with a uniform prior. This implies that Lemma~\ref{thm:L2-approximation}, which is also based on the worst Bayes risk, can recover the correct constant for irregular problems. However, Theorem~\ref{thm:crvtBound_hellinger} does not readily accommodate a prior with compact support, and it seems to require more involved analysis around the choice of diffeomorphism. We provide additional discussions in Supplementary Material. 


\section{Visual illustration: Upper bounds attained by plug-in estimators}\label{section:estimators}
In this section, we investigate the precise gaps between the risk of estimators and the non-asymptotic local minimax lower bounds for fixed $\delta > 0$ and $n \ge 1$ defined in Examples~\ref{main-example2} and~\ref{example2}. The results in this section are provided mostly for illustration, suggesting non-asymptotic assessment beyond the asymptotic efficiency bound. Consider the following estimation problem:
\begin{align}
	\sup_{|\theta| < \delta}\, n\mathbb{E}_\theta |T(X) - \max(\theta, 0)|^2 \label{eq:max-parameter}
\end{align}
where we observe $n$ IID observations $X:=(X_1, \ldots, X_n)$ from $N(\theta_0, 1)$. We define the local model as $\{N(\theta,1) : |\theta| < \delta\}$. We construct lower bounds to \eqref{eq:max-parameter} using two methods. The first lower bound~\eqref{eq:vT} is due to equation \eqref{eq:kepler-main} of Lemma~\ref{lemma:max-vt}, which is given by
\begin{equation}\label{eq:vT}
		\inf_T\, \sup_{|\theta|< \delta}\, n\mathbb{E}_\theta |T(X)-\psi(P_\theta)|^2\ge \sup_{a\in[0,1]}\, \frac{na^2}{4\pi^2\delta^{-2}w_a^{-2} + n},\tag{\texttt{vT}}
	\end{equation}
	where $w_a$ is defined in Lemma~\ref{lemma:max-vt}. The second lower bound~\eqref{eq:diffeo} is based on Lemma~\ref{lemma:max-power} (iii) with $\alpha=1$. As computing the supremum over $\varphi_0$ and $Q$ is challenging, we simplify the problem by focusing on the following choices:
	\begin{align}\label{eq:diffeo}
	Q \in \{N(\mu, \sigma^2) : (\mu, \sigma)\in \mathbb{R}\times (0,\infty)\}\quad \text{and} \quad  \varphi_0 \in \{t \mapsto \pi/2 \arctan(t/\eta) : \eta > 0\}\nonumber
\end{align}
and optimize the parameters $(\eta, \mu, \sigma)$. We show in the Supplementary Material that this reduction leads to the following simpler optimization:
\begin{align}
	\inf_T \, \sup_{|\theta| < \delta}\, n\mathbb{E}_\theta |T(X) - \max(\theta, 0)|^2 \ge \sup_{\xi_1,\xi_2}\, \frac{4 n\xi_2^2|\E[ (1+(\xi_1+Z\xi_2)^2)^{-1}I(Z >-\xi_1/\xi_2)]|^2}{\pi^2\delta^{-2} + 4n\E[(1+(\xi_1+Z\xi_2)^2)^{-2}] }\tag{\texttt{diffeo}}
\end{align} 
where $Z \overset{d}{=}N(0,1)$ and $\xi_1,\xi_2 \in \mathbb{R}\times(0,\infty)$.

The exact local minimax risk of three estimators is considered: the constant estimator, the plug-in maximum likelihood estimator (MLE), and the plug-in preliminary-test (pre-test) estimator. Specifically, the plug-in pre-test estimator is an example of an irregular estimator, which may exhibit superefficiency at a Lebesgue measure zero set. We now define the estimators. The constant estimator returns a predetermined value regardless of the observation, and we consider the case where $S_n^{\text{const}}:=\delta/2$. This is the best constant estimator that minimizes the local minimax risk, which is given by 
\begin{align}
	\sup_{|\theta| < \delta}\, n\mathbb{E}_\theta |S_n^{\text{const}} - \max(\theta, 0)|^2 = n\delta^2/4 \nonumber.
\end{align}

The remaining two estimators are defined as follows:
\begin{align}
    S_n^{\text{plug-in}} &:=  \max(\widehat \theta_{\text{MLE}}, 0) \quad \text{where} \quad 
    \widehat \theta_{\text{MLE}} := \overline{X}_n = \frac{1}{n}\sum_{i=1}^n X_i, \quad \text{and}\label{eq:plugin_mle}\\
    S_n^{\text{pre-test}} &:=  \max(\widehat \theta_{\text{pre-test}}, 0)\quad \text{where} \quad 
    \widehat \theta_{\text{pre-test}} := \begin{cases}
    \overline{X}_n & \text{If}\quad  |\overline{X}_n| \ge C_n\\
    0 &\text{otherwise}.
    \end{cases}\label{eq:plugin_pretest}
\end{align}
This family of preliminary-test estimators \citep{sclove1972non} is known to be superefficient when $\theta_0=0$. For example, it becomes identical to the Hodges' estimator when $C_n = n^{-1/4}$. The following proposition provides the exact local minimax risks of two estimators.
\begin{proposition}\label{prop:risk-mle}
Assuming $X_1,\dots,X_n$ are $n$ IID observation from $N(0, 1)$ and the plug-in MLE is defined as \eqref{eq:plugin_mle}, the local minimax risk of this estimator is given by 
\begin{align}
   & \sup_{|\theta|<\delta}\, n\mathbb{E}_{\theta} |S_n^{\mathrm{plug-in}} - \max(\theta, 0)|^2 = \sup_{0 \le \theta < \delta}\,\left\{\E [Z^2 I(Z \ge -n^{1/2}\theta)] + n\theta^2P (Z < -n^{1/2}\theta)\right\}\nonumber.
\end{align}
where $Z \overset{d}{=} N(0,1)$. Similarly for the pre-test estimator defined as \eqref{eq:plugin_pretest}, the local minimax risk of this estimator is given by 
\begin{align}
   & \sup_{|\theta|<\delta}\, n\mathbb{E}_{\theta} |S_n^{\mathrm{pre-test}} - \max(\theta, 0)|^2  = \sup_{0 \le \theta < \delta}\,\left\{\E [Z^2 I(Z \ge n^{1/4}-n^{1/2}\theta)] + n\theta^2P (Z <  n^{1/4}-n^{1/2}\theta)\right\}\nonumber.
\end{align}
\end{proposition}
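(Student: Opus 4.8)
The plan is to evaluate each risk exactly by rewriting the estimator as an explicit function of a single standard normal variable, conditioning on the event where the estimator is ``active,'' and then showing that the supremum over $|\theta|<\delta$ is already attained on $[0,\delta)$. First I would write, under $P_\theta$, $\overline{X}_n = \theta + n^{-1/2}Z$ with $Z\overset{d}{=}N(0,1)$, so that $S_n^{\text{plug-in}}=\overline{X}_n$ on $\{\overline{X}_n\ge 0\}=\{Z\ge -n^{1/2}\theta\}$ and $S_n^{\text{plug-in}}=0$ otherwise, while for the pre-test estimator with $C_n=n^{-1/4}$ a short case check gives $S_n^{\text{pre-test}}=\overline{X}_n$ on $\{\overline{X}_n\ge C_n\}=\{Z\ge n^{1/4}-n^{1/2}\theta\}$ and $S_n^{\text{pre-test}}=0$ otherwise --- here one uses $C_n>0$ together with the fact that both $\{|\overline{X}_n|<C_n\}$ and $\{\overline{X}_n\le -C_n\}$ force $\max(\widehat{\theta}_{\text{pre-test}},0)=0$. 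Thus, writing $c_n:=0$ for the MLE and $c_n:=n^{1/4}$ for the pre-test, in both cases the estimator equals $\overline{X}_n\cdot I(Z\ge c_n-n^{1/2}\theta)$.

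Next, for $\theta\in[0,\delta)$ one has $\max(\theta,0)=\theta$, so on the active event the error equals $\overline{X}_n-\theta=n^{-1/2}Z$ and off it the error equals $-\theta$; multiplying by $n$ and taking expectations yields $\mathbb{E}[Z^2 I(Z\ge c_n-n^{1/2}\theta)]+n\theta^2 P(Z<c_n-n^{1/2}\theta)$, which is exactly the claimed right-hand side. It then remains to rule out the negative values of $\theta$: for $\theta\in(-\delta,0)$ one has $\max(\theta,0)=0$, and writing $u:=-n^{1/2}\theta>0$, on $\{Z\ge c_n+u\}$ the error is $\overline{X}_n=n^{-1/2}(Z-u)$ and off it the error is $0$, so $n\mathbb{E}_\theta|S_n-\max(\theta,0)|^2=\mathbb{E}[(Z-u)^2 I(Z\ge c_n+u)]$. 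Because $c_n\ge 0$, on this event $Z\ge u\ge u/2$, hence $(Z-u)^2\le Z^2$, and $I(Z\ge c_n+u)\le I(Z\ge c_n)$; therefore the risk is at most $\mathbb{E}[Z^2 I(Z\ge c_n)]$, which is precisely the value of the displayed expression at $\theta=0$. This forces $\sup_{|\theta|<\delta}=\sup_{0\le\theta<\delta}$ in both cases and completes the argument.

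I expect the only spots requiring care to be the bookkeeping for the pre-test estimator --- verifying that thresholding followed by $\max(\cdot,0)$ collapses to the single event $\{\overline{X}_n\ge C_n\}$ --- and the elementary domination step ruling out $\theta<0$; neither looks like a genuine obstacle, so the main content of the statement is the clean closed form it supplies for the numerical comparison carried out in Section~\ref{section:estimators}.
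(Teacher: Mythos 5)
Your proof is correct, and it follows the same overall structure as the paper's: decompose over $\theta\ge 0$ and $\theta<0$, compute the risk directly on $[0,\delta)$ by expressing the estimator through $\overline{X}_n=\theta+n^{-1/2}Z$, and then show that the negative half of the parameter range cannot increase the supremum. The only substantive difference is in how you rule out $\theta<0$. The paper (Supplement H.2) compares the standard normal density to its shift, bounding $\int_{-\eta}^{\infty}(z+\eta)^2\phi(z)\,dz$ by $\int_{-\eta}^{\infty}(z+\eta)^2\phi(z+\eta)\,dz$ using monotonicity of $\phi$ on $[0,\infty)$ and then changing variables; you instead dominate the integrand pointwise, observing that on the active event $\{Z\ge c_n+u\}$ one has $0\le Z-u\le Z$ so $(Z-u)^2\le Z^2$, and that $I(Z\ge c_n+u)\le I(Z\ge c_n)$. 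Your version is arguably more transparent, and since you parameterize both estimators by $c_n\in\{0,n^{1/4}\}$, it covers the pre-test case explicitly — the paper only states the domination inequality for the MLE and writes ``the remaining derivation is omitted'' for the pre-test estimator. One cosmetic remark: the clause ``$Z\ge u\ge u/2$'' is a stray fragment; what the argument needs is only $Z\ge u>0$, which already gives $(Z-u)^2\le Z^2$. You could also note explicitly that your chain of inequalities lands exactly on the displayed expression evaluated at $\theta=0$, which makes it evident that $\theta=0$ dominates all $\theta<0$ and hence that $\sup_{|\theta|<\delta}$ reduces to $\sup_{0\le\theta<\delta}$.
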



Figure~\ref{fig:estimators} shows that for any fixed sample size $n$ as $\delta$ increases, the lower bounds tend to $\sigma^2 = 1$, and the best among the estimators considered also has risk tending to $\sigma^2=1$. For any fixed sample size $n$ and ``small'' $\delta$, the lower bound is close to zero and the best lowest risk among the estimators considered also tends to zero. This is expected since the constant estimator at zero will have zero risk at $\theta_0 = 0$. Finally, Figure~\ref{fig:estimators} also shows an interesting comparison between different lower bounds. Neither~\eqref{eq:vT} nor~\eqref{eq:diffeo} is a clear winner. 

\begin{figure*}
    \centering
        \centering
        \includegraphics[width=5in]{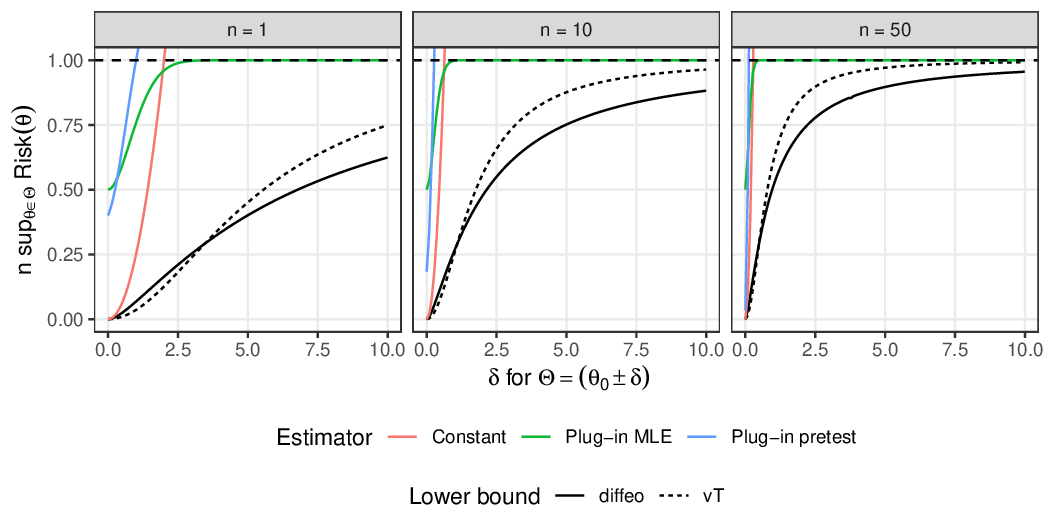}
    \caption{The non-asymptotic local minimax lower bounds and the risk given by different estimators: For fixed $n \ge 1$ with varying size of local neighborhood.}
    \label{fig:estimators}
\end{figure*}

\section{Concluding remarks}\label{section:conclusion}
This manuscript presents new minimax lower bounds for functional estimation without requiring the differentiability of functionals or the regularity of statistical models. We provide three new approaches: the approximation via absolutely continuous functionals (Theorem~\ref{cor:vt-approximation}), the mixture HCR bound based on the chi-squared divergence (Theorem~\ref{thm:crvtBound}), and the Hellinger distance (Theorem~\ref{thm:crvtBound_hellinger}). Only Theorem~\ref{thm:crvtBound_hellinger} provides a non-trivial result for irregular estimation problems. Unlike standard minimax bound for non-differentiable functionals based on testing reduction including the \textit{fuzzy hypothesis} approach, this manuscript focuses on preserving a precise asymptotic constant. All results generalize the van Trees inequality, and imply the well-known semiparametric efficiency bound (Proposition~\ref{prop:LAMsemiparametric}). Theorem~\ref{thm:crvtBound_hellinger} also implies local minimax results for irregular models (Lemma~\ref{lemma:alpha-beta}).

The flexibility of the proposed lower bounds offers many potential applications, especially for non-differentiable functionals or estimation under irregularity. For example, one may consider the minimax lower bound for non-pathwise differentiable functionals or the irregularity that arises on the boundary of projection operations. See a short remark in Section~\ref{supp:extensions} of the Supplementary Material. A similar local asymptotic minimax lower bound was recently derived in the context of plug-in estimators \citep{fang2014optimal}, but there remain many open problems for general estimators. Finally, as none of Theorem~\ref{cor:vt-approximation}--\ref{thm:crvtBound_hellinger} requires the IID observation, the potential application to the sharp minimax lower bound under non-IID observations would also be of interest.


As we discussed in Section~\ref{section:lit}, a precise minimax lower bound for non-differentiable functional estimation was also discovered by \cite{cai2011testing}, who studied the mixture extension of the constrained risk inequality \citep{brown1996constrained}. 
Their method is also attractive as two mixture distributions are constructed based on moment-matching priors, enabling various analytic tools from approximation theory \citep{wu2020polynomial}. The current result only considers a single prior over a parametric path, and it would be interesting to explore a technique based on moment-matching priors.

Finally, \citet{levit2010minimax} discusses the importance of non-asymptotic constants in the minimax paradigm. In particular, \citet{levit2010minimax} observes that different lower bounding methods provide sharp constants according to small, moderate, or large sample sizes. Although the results in this manuscript are valid non-asymptotically and converge to the sharp constant in the limit, we did not study the sharpness of the derived constants for different sample sizes. While \citet{levit2010minimax} focuses on bounded Gaussian mean estimations, the proposed lower bounds can be extended to non-asymptotic analysis for more complex functional estimation problems, which is an important direction for future work.

\bibliographystyle{apalike}
\bibliography{ref.bib}

\newpage
\begin{appendices}
\section{Proof of the lower bound based on the approximation}\label{supp:approx}
We briefly review the setting and notation to which we frequently refer. Suppose we observe a random variable $X$ from an unknown distribution $P_{\theta}$, which belongs to a statistical model $\mathcal{P} := \{P_t : t \in\Theta\}$ defined on a measurable space $(\mathcal{X}, \mathcal{A})$. Here, $\mathcal{A}$ denotes $\sigma$-algebra on $\mathcal{X}$. Let $\psi: \mathcal{P} \mapsto \mathbb{R}^k$ denote the vector-valued function of interest. In other words, the target estimand is described as the evaluation of the functional at the population parameter $\psi(P_{\theta})$.
\subsection{Proof of Lemma~\ref{thm:L2-approximation}}
Let $(\Theta, \mathcal{T})$ be a measurable space with respect to the base measure $\nu$ where $\mathcal{T}$ is a $\sigma$-algebra on $\Theta$. We denote by $\mathcal{Q}= \mathcal{Q}(\Theta)$ be a collection of probability measures on $\Theta$ equipped with a density function with respect to the base measure on $\Theta$. We define $\Phi := \{\phi : \mathcal{P} \mapsto \mathbb{R}^k\}$ to be a collection of arbitrary functionals that maps from $\mathcal{P}$ to $\mathbb{R}^k$. For each $Q \in \mathcal{Q}$ and $\phi \in \Phi$, it follows that  
\begin{align*}
	&\left(\int \mathbb{E}_{\theta}\|T(X)-\psi(P_\theta)\|^2 \, dQ\right)^{1/2}
    \\
    &\qquad= \left(\int \mathbb{E}_{\theta}\|T(X)-\phi(P_\theta)+\phi(P_\theta)-\psi(P_\theta)\|^2\, dQ\right)^{1/2}\\
    &\qquad\ge  \left(\int \mathbb{E}_{\theta}\|T(X)-\phi(P_\theta)\|^{2}\, dQ\right)^{1/2}-\left(\int\|\phi(P_\theta)-\psi(P_\theta)\|^2\, dQ\right)^{1/2}
\end{align*}
by the reverse triangle inequality. When the lower bound is negative, we replace it with the trivial lower bound of zero. 
As the choice of $\phi$ was arbitrary, we conclude by taking the supremum over $\phi$ such that
\begin{align*}
&\sup_{\theta\in\Theta}\,\mathbb{E}_{\theta}\|T(X)-\psi(P_\theta)\|^2 \\
&\qquad \ge \sup_{\phi \in \Phi,\, Q\in \mathcal{Q}}\, \left[\left(\int \,\mathbb{E}_{\theta}\left\|T(X)-\phi(P_\theta)\right\|^2\, dQ\right)^{1/2} - \left(\int \|\psi(\theta)-\phi(\theta)\|^2dQ(\theta)\right)^{1/2}\right]_+^2.
\end{align*}
\subsection{Proof of Theorem~\ref{cor:vt-approximation}}\label{supp:vt-approx}
Assume that a collection of probability measures $\mathcal{Q}^\dag$ supported on $\Theta_0$ satisfies Definition \ref{as:regularprior-vt}. Note that $\mathcal{Q}^\dag$ depends on $\phi$. 
For any absolutely continuous function $\phi$ and $Q \in \mathcal{Q}^\dag$, we evoke the multivariate van Trees inequality (Theorem 12 of \citet{gassiat2013revisiting}) under the Hellinger differentiability of the statistical model $P_t$ for all $t \in \Theta_0$. For any vector norm $\|\cdot\|$, let $\|\cdot\|_*$ be its dual norm. Then, for any vector $u$ such that $\|u\|_*\le  1$, we have
	\begin{align}
		\int \,\mathbb{E}_{\theta}\left\|T(X)-\phi(\theta)\right\|^2\, dQ &\ge \int_{\Theta_0}\,\mathbb{E}_{\theta}|u^{\top}\big(T(X)-\phi(P_\theta)\big)|^2\, dQ\\
		&\ge u^\top \left(\int_{\Theta_0}\nabla\phi(t)\, dQ\right)^\top \left(\mathcal{I}(Q)+\int_{\Theta_0}\mathcal{I}(t)\, dQ\right)^{-1}\left(\int_{\Theta_0}\nabla\phi(t)\, dQ\right) u
	\end{align}
	by the van Trees inequality. Combining this result with Lemma~\ref{thm:L2-approximation}, we have 
	\begin{align*}
	\sup_{\theta\in\Theta_0}\mathbb{E}_{\theta}\|T(X)-\psi(P_\theta)\|^2 &
    \ge  \left[\left(u^\top \Gamma_{Q, \phi}u\right)^{1/2}-\left(\int\|\phi(P_\theta)-\psi(P_\theta)\|^2\, dQ\right)^{1/2}\right]_+^2
\end{align*}
where 
\begin{align}
	\Gamma_{Q, \phi}:=\left(\int_{\Theta_0} \nabla \phi(t)\, dQ\right)^{\top}\left(\mathcal{I}(Q) + \int_{\Theta_0} \mathcal{I}(t)\, dQ\right)^{-1}\left(\int_{\Theta_0}  \nabla \phi(t)\, dQ\right).
\end{align}
This holds for any $\phi \in \Phi_{\textrm{ac}}$, $Q\in\mathcal{Q}^\dag$ and $u$ such that $\|u\|_*\le 1$. We thus conclude the claim by taking supremum over them.

\clearpage
\section{Proofs of the mixture extensions of the HCR bound}
Let $\psi: \mathbb{R}^d \mapsto \mathbb{R}^k$ be the vector-valued function of interest, and let $Q$ be a prior distribution on $\mathbb{R}^d$ equipped with a density function. We define two probability measures on a product space $(\mathcal{X} \times \mathbb{R}^d)$:
 \[d\mathbb{P}_0(x, \theta) := dP_\theta(x)\, dQ(\theta)\quad\textrm{and}\quad  d\mathbb{P}_h(x,\theta) := dP_{\theta+h}(x)\, dQ(\theta+h).\]
\subsection{Proof of Theorem~\ref{thm:crvtBound}}
For each $\lambda \in [0,1]$, we define a mixture distribution as $\mathbb{P}_h^\lambda := (1-\lambda)\mathbb{P}_0 + \lambda \mathbb{P}_h$ with the corresponding density function given by 
\begin{align*}
d\mathbb{P}_h^\lambda(x, \theta) := (1-\lambda)d\mathbb{P}_0 (x,\theta) + \lambda d\mathbb{P}_h (x,\theta).
\end{align*}

Throughout the proof, we denote by $\mathbb{E}_\theta$ the expectation under $P_\theta$ with a fixed parameter $\theta$, by $\mathbb{E}_{\mathbb{P}_h}$ the expectation under the joint probability measure $\mathbb{P}_h$, and by $\mathbb{E}_{\mathbb{P}_h^\lambda}$ the expectation under the joint mixture probability measure $\mathbb{P}_h^\lambda$.
For any measurable functions $\psi : \mathbb{R}^d \mapsto\mathbb{R}^k$, $T : \mathcal{X}\mapsto\mathbb{R}^k$ and $h \in \mathbb{R}^d$, we have
\begin{align}
    \mathbb{E}_{\mathbb{P}_h} \left(T(X)-\psi(\theta)\right)
    &=\iint_{\mathcal{X}\times \mathbb{R}^d}\left(T(x)-\psi(t)\right) \,d\mathbb{P}_h(x,t) \nonumber\\
    &= \iint_{\mathcal{X}\times \mathbb{R}^d} \left(T(x)-\psi(t)\right) \, dP_{t+h}(x)\, dQ(t+h) \nonumber\\
    &=\iint_{\mathcal{X}\times (\mathbb{R}^d-h)} \left(T(x)-\psi(u-h)\right) \, dP_{u}(x)\, dQ(u) \nonumber
\end{align}
where $\mathbb{R}^d-h$ is the set $\{u-h: u\in \mathbb{R}^d\}$. In particular, we have $\mathbb{R}^d-h = \mathbb{R}^d$. We then obtain 
\begin{align}
    \mathbb{E}_{\mathbb{P}_h} \left(T(X)-\psi(\theta)\right)
    &=\int_{\mathbb{R}^d}\mathbb{E}_{t}T(X)\,dQ(t) - \int_{\mathbb{R}^d} \psi(u-h)\,dQ(u). \nonumber
\end{align}
Similarly under the mixture distribution $\mathbb{P}_h^\lambda$, we have
\begin{align}
    &\mathbb{E}_{\mathbb{P}_h^\lambda} \left(T(X)-\psi(\theta)\right)\nonumber
    \\
    &\qquad=\iint_{\mathcal{X}\times \mathbb{R}^d}\left(T(x)-\psi(t)\right) \, d\mathbb{P}_h^\lambda(x,t)\nonumber\\
    &\qquad=(1-\lambda) \iint_{\mathcal{X}\times \mathbb{R}^d} \left(T(x)-\psi(t)\right) \, d\mathbb{P}_0(x,t) + \lambda\iint_{\mathcal{X}\times \mathbb{R}^d} \left(T(x)-\psi(t)\right) \, d\mathbb{P}_h(x,t) \nonumber\\
    &\qquad=(1-\lambda) \iint_{\mathcal{X}\times \mathbb{R}^d} \left(T(x)-\psi(t)\right) \,dP_t(x)\, dQ(t) \nonumber\\
    &\qquad\qquad+ \lambda\iint_{\mathcal{X}\times (\mathbb{R}^d-h)} \left(T(x)-\psi(u-h)\right)\, dP_u(x)\, dQ(u) \nonumber\\
    &\qquad=(1-\lambda)\int_{\mathbb{R}^d}\mathbb{E}_tT(X)\, dQ(t) + \lambda\int_{\mathbb{R}^d}\mathbb{E}_tT(X)\, dQ(t)- (1-\lambda) \int_{\mathbb{R}^d} \psi(u)\, dQ(u) \nonumber\\
    &\qquad\qquad- \lambda \int_{\mathbb{R}^d} \psi(u-h)\, dQ(u)\nonumber\\
    &\qquad=\int_{\mathbb{R}^d}\mathbb{E}_tT(X)\, dQ(t) - (1-\lambda) \int_{\mathbb{R}^d} \psi(u)\, dQ(u) - \lambda \int_{\mathbb{R}^d} \psi(u-h)\, dQ(u).\nonumber
\end{align}
Therefore, it follows
\begin{align}
    \mathbb{E}_{\mathbb{P}_h} \left(T(X)-\psi(\theta)\right)-\mathbb{E}_{\mathbb{P}_h^\lambda} \left(T(X)-\psi(\theta)\right) &= (1-\lambda) \left(\int_{\mathbb{R}^d} \psi(u)\,dQ(u)-\int_{\mathbb{R}^d} \psi(u-h)\,dQ(u)\right). \label{eq:remainder_mixture_shifted}
\end{align}

Next, we consider the following ratio of the joint probability over $\mathcal{X} \times \mathbb{R}^d$ given by
\begin{align}
    D_{h,\lambda} : (x, t) \mapsto \frac{d\mathbb{P}_h(x,t)-d\mathbb{P}_h^\lambda(x, t)}{d\mathbb{P}_h^\lambda(x, t)}\nonumber.
\end{align}
It follows from \eqref{eq:remainder_mixture_shifted} that 
\begin{align}
    \mathbb{E}_{\mathbb{P}_h^\lambda} D_{h,\lambda}(X, \theta) \left(T(X)-\psi(\theta)\right) &=  \mathbb{E}_{\mathbb{P}_h} \left(T(X)-\psi(\theta)\right)-\mathbb{E}_{\mathbb{P}_h^\lambda} \left(T(X)-\psi(\theta)\right)\nonumber\\
    &=(1-\lambda) \int_{\mathbb{R}^d} \left(\psi(u)-\psi(u-h)\right)\, dQ(u). \nonumber
\end{align}
Applying Cauchy-Schwarz inequality to the dual norm of the above display, we obtain 
\begin{align}
    (1-\lambda)^2\left\|\int_{\mathbb{R}^d} \left(\psi(t)-\psi(t-h)\right)\,dQ(t)\right\|^2 
    &= \left\|\mathbb{E}_{\mathbb{P}_h^\lambda} D_{h,\lambda}(X, \theta) \left(T(X)-\psi(\theta)\right)\right\|^2 \nonumber \\
    &= \sup_{a:\|a\| \le 1} \left|\mathbb{E}_{\mathbb{P}_h^\lambda} D_{h,\lambda}(X, \theta) a^{\top}\left(T(X)-\psi(\theta)\right)\right|^2\nonumber\\
    &\le \left\{\mathbb{E}_{\mathbb{P}_h^\lambda} D_{h,\lambda}^2(X, \theta)\right\}\sup_{a:\|a\| \le 1}\left\{\mathbb{E}_{\mathbb{P}_h^\lambda}|a^{\top}(T(X)-\psi(\theta))|^2\right\}\nonumber\\
    &\le \left\{\mathbb{E}_{\mathbb{P}_h^\lambda} D_{h,\lambda}^2(X, \theta)\right\} \left\{\mathbb{E}_{\mathbb{P}_h^\lambda}\|T(X)-\psi(\theta)\|^2\right\}\label{eq:vt_cauchy_schwarz}.
\end{align}
We now analyze each term in the last expression. First, by the definition of the chi-squared divergence, we have
\begin{align*}
    \mathbb{E}_{\mathbb{P}_h^\lambda} D^2(X, \theta) &= \iint_{\mathcal{X}\times \mathbb{R}^d} \left(\frac{d\mathbb{P}_h(x,t)-d\mathbb{P}_h^\lambda(x, t)}{d\mathbb{P}_h^\lambda(x, t)}\right)^2\, d\mathbb{P}_h^\lambda(x, t) = \chi^2(\mathbb{P}_h \| \lambda\mathbb{P}_h + (1-\lambda)\mathbb{P}_0).
\end{align*}
Next, for the second term of the upper bound in \eqref{eq:vt_cauchy_schwarz}, we have
\begin{align*}
    &\mathbb{E}_{\mathbb{P}_h^\lambda}\left\|T(X)-\psi(t)\right\|^2 \\
    &\qquad=\iint_{\mathcal{X}\times \mathbb{R}^d}\left\|T(x)-\psi(t)\right\|^2 \, d\mathbb{P}_h^\lambda(x,t)\nonumber\\
    &\qquad=(1-\lambda) \iint_{\mathcal{X}\times \mathbb{R}^d} \left\|T(x)-\psi(t)\right\|^2 \,dP_t(x)\, dQ(t) \nonumber\\
    &\qquad\qquad +\lambda \iint_{\mathcal{X}\times \mathbb{R}^d} \left\|T(x)-\psi(t)\right\|^2 \,dP_{t+h}(x)\, dQ(t+h) \nonumber\\
    &\qquad=(1-\lambda) \iint_{\mathcal{X}\times \mathbb{R}^d} \left\|T(x)-\psi(t)\right\|^2 \,dP_{t}(x)\, dQ(t) \nonumber\\
    &\qquad\qquad + \lambda\iint_{\mathcal{X}\times \mathbb{R}^d} \left\|T(x)-\psi(t)-\psi(t+h)+\psi(t+h)\right\|^2 \,dP_{t+h}(x)\, dQ(t+h)\nonumber\\
    &\qquad\le (1-\lambda) \int_{\mathbb{R}^d}\mathbb{E}_t\|T(X)-\psi(t)\|^2 \, dQ(t) \nonumber\\
    &\qquad\qquad + \lambda\bigg((1+L)\int_{\mathbb{R}^d-h}\mathbb{E}_u\|T(X)-\psi(u)\|^2 \, dQ(u) \\
    &\qquad\qquad\qquad+(1+1/L)\int_{\mathbb{R}^d-h}\|\psi(u)-\psi(u-h)\|^2\,dQ(u) \bigg)\nonumber\\
    &\qquad=(1+L\lambda) \int_{\mathbb{R}^d}\mathbb{E}_t\|T(X)-\psi(t)\|^2 \, dQ(t) + \lambda(1+1/L)\int_{\mathbb{R}^d}\|\psi(t)-\psi(t-h)\|^2\,dQ(t) \nonumber
\end{align*}
where we use $(a+b)^2 \le (1+L)a^2 + (1+1/L)b^2$ for any $L \ge 0$, which follows from $2ab \le a^2L + b^2/L$. Putting all intermediate results together, the inequality \eqref{eq:vt_cauchy_schwarz} implies 
\begin{align*}
    \eqref{eq:vt_cauchy_schwarz}\implies&\frac{(1-\lambda)^2\left\|\int_{\mathbb{R}^d} \left(\psi(t)-\psi(t-h)\right)\,dQ(t)\right\|^2}{\chi^2(\mathbb{P}_h \| \lambda\mathbb{P}_h + (1-\lambda)\mathbb{P}_0)} \\
    &\qquad \le (1+L\lambda) \int_{\mathbb{R}^d}\mathbb{E}_t\|T(X)-\psi(t)\|^2 \,dQ(t) + \lambda(1+1/L)\int_{\mathbb{R}^d}\|\psi(t)-\psi(t-h)\|^2\,dQ(t)\\
    \implies&\int_{\mathbb{R}^d}\mathbb{E}_t\|T(X)-\psi(t)\|^2 \,dQ(t) \\
    &\qquad \ge \frac{1}{1+L\lambda}\bigg[\frac{(1-\lambda)^2\left\|\int_{\mathbb{R}^d} \left(\psi(t)-\psi(t-h)\right)\,dQ(t)\right\|^2}{\chi^2(\mathbb{P}_h \| \lambda\mathbb{P}_h + (1-\lambda)\mathbb{P}_0)} \\
    &\qquad\qquad- \lambda(1+1/L)\int_{\mathbb{R}^d}\|\psi(t)-\psi(t-h)\|^2\,dQ(t)\bigg]_+.
\end{align*}

Since the Bayes risk is bounded by minimax risk for any prior distribution $Q$, we have 
\begin{align*}
    &\sup_{\theta \in \mathbb{R}^d}\mathbb{E}_\theta\|T(X)-\psi(\theta)\|^2\\
    &\qquad\ge \int_{\mathbb{R}^d}\mathbb{E}_t\|T(X)-\psi(t)\|^2 \,dQ(t)\\
    &\qquad \ge \frac{(1-\lambda)^2}{1+L\lambda}\left[\frac{\left\|\int_{\mathbb{R}^d} \left(\psi(t)-\psi(t-h)\right)\,dQ(t)\right\|^2}{\chi^2(\mathbb{P}_h \| \lambda\mathbb{P}_h + (1-\lambda)\mathbb{P}_0)}- \frac{\lambda(1+1/L)}{(1-\lambda)^2}\int_{\mathbb{R}^d}\|\psi(t)-\psi(t-h)\|^2\,dQ(t) \right]_+.
\end{align*}
As the introduced parameter $L\ge 0$ is arbitrary, we can optimize its choice. First, we observe that the lower bound is in the form 
\[\frac{(1-\lambda)^2}{1+L\lambda}\left[A- \frac{\lambda(1+1/L)}{(1-\lambda)^2}B \right]_+.\]
where
\begin{align}
	A := \frac{\left\|\int_{\mathbb{R}^d} \left(\psi(t)-\psi(t-h)\right)\,dQ(t)\right\|^2}{\chi^2(\mathbb{P}_h \| \lambda\mathbb{P}_h + (1-\lambda)\mathbb{P}_0)} \quad\textrm{and}\quad B:=\int_{\mathbb{R}^d}\|\psi(t)-\psi(t-h)\|^2\,dQ(t) \nonumber.
\end{align}
When $\lambda = 0$, the above display takes $A^2$ regardless of the value of $L$. The argument for $B=0$ is also similar. Hence we focus on the case with $\lambda, B > 0$. First we observe that 
\begin{align}
	\frac{(1-\lambda)^2}{1+L\lambda}\left[A- \frac{\lambda(1+1/L)}{(1-\lambda)^2}B \right]_+ = \frac{\lambda^2 B}{1+L\lambda}\left[\left(\frac{(1-\lambda)^2A}{\lambda ^2 B}- \frac{1}{\lambda}\right)-\frac{1}{L\lambda}\right]_+ = \frac{\lambda^2 B}{1+L\lambda}\left[\Gamma-\frac{1}{L\lambda}\right]_+\nonumber
\end{align}
where 
\[\Gamma := \frac{(1-\lambda)^2A}{\lambda ^2 B}- \frac{1}{\lambda}.\]
The optimization of $L\ge 0$ is thus equivalent to 
\begin{align}
	\max_{\ell\ge 0}\, \left\{\frac{1}{1+\ell}\left(\Gamma-\frac{1}{\ell}\right)\right\}.\nonumber
\end{align}
We later check if the attained maximum is positive otherwise replace the optima with zero. The optimal value of $\ell$ is given by 
\begin{align}
	\ell^* = \frac{1+\sqrt{1+\Gamma}}{\Gamma} \quad \textrm{when} \quad \Gamma > 0.\nonumber
\end{align}  
The corresponding optima is 
\begin{align}
	\frac{1}{1+\ell^*}\left(\Gamma-\frac{1}{\ell^*}\right) &= \frac{\Gamma}{\Gamma+1+\sqrt{1+\Gamma}}\left(\Gamma-\frac{\Gamma}{1+\sqrt{1+\Gamma}}\right)\nonumber\\
	&= \frac{\Gamma^2\sqrt{1+\Gamma}}{\sqrt{1+\Gamma}\left(\sqrt{\Gamma+1}+1\right)^2}\nonumber\\
	&= \frac{\left(\sqrt{\Gamma+1}-1\right)^2\left(\sqrt{\Gamma+1}+1\right)^2}{\left(\sqrt{\Gamma+1}+1\right)^2}\nonumber\\
	&= \left(\sqrt{\Gamma+1}-1\right)^2\nonumber.
\end{align}
This is non-negative when $\Gamma>0$ and thus it is a valid optimum for $(1+\ell)^{-1}[\Gamma-1/\ell]_+$. Plugging this result into the original expression, we obtain 
\begin{align}
	\lambda^2 B\left(\sqrt{ \frac{(1-\lambda)^2A}{\lambda ^2 B}- \frac{1}{\lambda}+1}-1\right)^2 =\left(\sqrt{ (1-\lambda)\left\{A-\lambda(A+B)\right\}}- \sqrt{\lambda^2 B}\right)^2.\nonumber
\end{align}
When $\Gamma \le 0$, or equivalently $(1-\lambda)^2 A \le \lambda B$, the lower bound becomes zero. As $T$ and $h$ only appear on one side of the inequality, we conclude the claim by taking the infimum over  $T$  and the supremum over $h\in\mathbb{R}^d$.

\subsection{Proof of Theorem~\ref{thm:crvtBound_hellinger}}
Following the analogous derivation leading up to equation \eqref{eq:remainder_mixture_shifted} given by the proof of Theorem~\ref{thm:crvtBound} with the special case of $\lambda=0$, we obtain
\begin{align}
    \mathbb{E}_{\mathbb{P}_h} \left(T(X)-\psi(\theta)\right)-\mathbb{E}_{\mathbb{P}_0} \left(T(X)-\psi(\theta)\right) &= \int_{\mathbb{R}^d} \left(\psi(u)-\psi(u-h)\right)\,dQ(u). \label{eq:remainder_shifted}
\end{align}
Next, we consider the density ratio of the joint probability over $(\mathcal{X} \times \mathbb{R}^d)$ given by
\begin{align}
    D_h := (x, t) \mapsto \frac{d\mathbb{P}_h(x,t)-d\mathbb{P}_0(x,t)}{d\mathbb{P}_0(x,t)}\nonumber.
\end{align}
By the application of Cauchy-Schwarz inequality to \eqref{eq:remainder_shifted}, we obtain 
\begin{align}
    \left\|\int_{\mathbb{R}^d} \left(\psi(u)-\psi(u-h)\right)\,dQ(u)\right\|^2 &=\left\|\mathbb{E}_{\mathbb{P}_0} D_h(X, \theta) \left(T(X)-\psi(\theta)\right)\right\|^2\nonumber\\
    &=\left\|\mathbb{E}_{\mathbb{P}_0} \left(\frac{d\mathbb{P}_h(X,\theta)-d\mathbb{P}_0(X,\theta)}{d\mathbb{P}_0(X,\theta)}\right) \left(T(X)-\psi(\theta)\right)\right\|^2 \nonumber \\
    &= \left\|\mathbb{E}_{\mathbb{P}_0} \left(\sqrt{\frac{d\mathbb{P}_h(X,\theta)}{d\mathbb{P}_0(X,\theta)}}-1\right)\left(\sqrt{\frac{d\mathbb{P}_h(X,\theta)}{d\mathbb{P}_0(X,\theta)}}+1\right) \left(T(X)-\psi(\theta)\right)\right\|^2 \nonumber \\
    &\le \left\{\mathbb{E}_{\mathbb{P}_0} \left(\sqrt{\frac{d\mathbb{P}_h(X,\theta)}{d\mathbb{P}_0(X,\theta)}}+1\right)^2 \left\|T(X)-\psi(\theta)\right\|^2 \right\}H^2(\mathbb{P}_0, \mathbb{P}_h)\nonumber \\
    &\le 2\left(\mathbb{E}_{\mathbb{P}_0} \|T(X)-\psi(\theta)\|^2 + \mathbb{E}_{\mathbb{P}_h} \|T(X)-\psi(\theta)\|^2\right)H^2(\mathbb{P}_0, \mathbb{P}_h) \label{eq:vt_cauchy_schwarz_hellinger}
\end{align}
where we apply the inequality $(a+b)^2 \le 2(a^2+b^2)$ in the last step. We now analyze two terms inside the parenthesis of the above display. Since minimax risk gives an upper bound of Bayes risk, we have 
\[\mathbb{E}_{\mathbb{P}_0} \|T(X)-\psi(\theta)\|^2  = \int_{\mathbb{R}^d}\mathbb{E}_{t} \|T(X)-\psi(t)\|^2 \, dQ(t)\le \sup_{\theta \in \mathbb{R}^d} \mathbb{E}_\theta \|T(X)-\psi(\theta)\|^2.\]
For the second term, we have
\begin{align*}
    &\mathbb{E}_{\mathbb{P}_h} \left\|T(X)-\psi(\theta)\right\|^2 \\
    &\qquad = \int_{\mathbb{R}^d}\mathbb{E}_{t+h}\left\|T(X)-\psi(t)\right\|^2\,dQ(t+h) \\
    &\qquad =\int_{\mathbb{R}^d}\mathbb{E}_{t+h}\left\|T(X)-\psi(t)-\psi(t+h)+\psi(t+h)\right\|^2\,dQ(t+h) \\
    &\qquad \le \int_{\mathbb{R}^d}\mathbb{E}_{t+h}\left\|T(X)-\psi(t+h)\right\|^2\,dQ(t+h) \\
    &\qquad\qquad + \int_{\mathbb{R}^d}\|\psi(t+h)-\psi(t)\|^2 \,dQ(t+h) \\
    &\qquad\qquad + 2\sqrt{\int_{\mathbb{R}^d}\mathbb{E}_{t+h}\left\|T(X)-\psi(t+h)\right\|^2\,dQ(t+h) }\, \sqrt{\int_{\mathbb{R}^d} \left\|\psi(t+h)-\psi(t)\right\|^2\,dQ(t+h)}\\
    &\qquad=\left(\sqrt{\int_{\mathbb{R}^d}\mathbb{E}_{t+h}\left\|T(X)-\psi(t+h)\right\|^2\,dQ(t+h) }+\sqrt{\int_{\mathbb{R}^d} \left\|\psi(t+h)-\psi(t)\right\|^2\,dQ(t+h)}\right)^2
\end{align*}
where the inequality follows from Cauchy-Schwarz inequality. Therefore \eqref{eq:vt_cauchy_schwarz_hellinger} implies the following:
\begin{align*}
     &\eqref{eq:vt_cauchy_schwarz_hellinger} \implies\frac{\left\|\int_{\mathbb{R}^d} \left(\psi(u)-\psi(u-h)\right)\,dQ(u)\right\|^2}{2H^2(\mathbb{P}_0, \mathbb{P}_h)} \\
    &\qquad\le\sup_{\theta \in\mathbb{R}^d}\mathbb{E}_{\theta} \|T(X)-\psi(\theta)\|^2\\
    &\qquad\qquad+ \left( \sqrt{\int_{\mathbb{R}^d}\mathbb{E}_{t+h}\|T(X)-\psi(t+h)\|^2\,dQ(t+h) }+\sqrt{\int_{\mathbb{R}^d} \left\|\psi(t+h)-\psi(t)\right\|^2\,dQ(t+h)}\right)^2\\
    &\qquad\le\sup_{\theta \in\mathbb{R}^d}\mathbb{E}_{\theta} \|T(X)-\psi(\theta)\|^2\\
    &\qquad\qquad+ \left( \sqrt{\sup_{\theta \in\mathbb{R}^d}\mathbb{E}_{\theta}\|T(X)-\psi(\theta)\|^2}+\sqrt{\int_{\mathbb{R}^d-h} \left\|\psi(t)-\psi(t-h)\right\|^2\,dQ(t)}\right)^2\\
        &\qquad\le2\left( \sqrt{\sup_{\theta \in\mathbb{R}^d}\mathbb{E}_{\theta}\|T(X)-\psi(\theta)\|^2}+\sqrt{\int_{\mathbb{R}^d} \left\|\psi(t)-\psi(t-h)\right\|^2\,dQ(t)}\right)^2
\end{align*}
where the last step used the fact that $\sqrt{\int_{\mathbb{R}^d} \left\|\psi(t)-\psi(t-h)\right\|^2\,dQ(t)} \ge 0$.
We thus obtain 
\begin{align*}
&\frac{\left\|\int_{\mathbb{R}^d} \left(\psi(u)-\psi(u-h)\right)\,dQ(u)\right\|^2}{2H^2(\mathbb{P}_0, \mathbb{P}_h)} \\
&\qquad \le2\left( \sqrt{\sup_{\theta \in\mathbb{R}^d}\mathbb{E}_{\theta}\|T(X)-\psi(\theta)\|^2}+\sqrt{\int_{\mathbb{R}^d} \|\psi(t)-\psi(t-h)\|^2\,dQ(t)}\right)^2
\end{align*}
\begin{align*}
    \implies&\sup_{\theta \in\mathbb{R}^d}\mathbb{E}_{\theta}\|T(X)-\psi(\theta)\|^2 \\
    &\qquad\ge \left[\frac{\left\|\int_{\mathbb{R}^d} \left(\psi(u)-\psi(u-h)\right)\,dQ(u)\right\|}{2H(\mathbb{P}_0, \mathbb{P}_h)} -\left(\int_{\mathbb{R}^d} \left\|\psi(t)-\psi(t-h)\right\|^2\,dQ(t)\right)^{1/2}\right]^2_+.
\end{align*}
As with the proof of Theorem~\ref{thm:crvtBound}, the function $T$ and $h$ only appear on one side of the inequality, and we conclude the claim by taking the infimum over $T$ and the supremum over $h \in \Theta$.


\subsection{Proof of Lemma~\ref{lemma:sharper_ihbound}}
The main idea is provided by Theorem 6.1 of \citet{ibragimov1981statistical}. Here, we derive the improved constant. We focus on the estimation of the real-valued functional $\psi : \Theta \mapsto \mathbb{R}$ for $\Theta \subseteq \mathbb{R}^d$. An analogous proof can be applied to each specific choice of the vector norm $\|\cdot\|$ (See remark 6.1 of \citet{ibragimov1981statistical}).
For any real-valued measurable function $T := \mathcal{X}\mapsto\mathbb{R}$, two points in the parameter space $\{\theta, \theta+h\}\in\Theta$, and an arbitrary scalar constant $C$, we have 
\begin{align}
    \left|\mathbb{E}_{\theta+h} T(X) - \mathbb{E}_{\theta} T(X)\right|^2 &= \left|\int_\mathcal{X} (T(x)-C) \, \left(dP_{\theta+h}(x)-dP_\theta(x)\right)\right|^2\nonumber\\
    &=\left|\int_\mathcal{X}  (T(x)-C) \left(dP^{1/2}_{\theta+h}(x)-dP^{1/2}_\theta(x)\right)\left(dP^{1/2}_{\theta+h}(x)+dP^{1/2}_\theta(x)\right)\right|^2\nonumber\\
    &\le2H^2(P_{\theta+h}, P_\theta)\left(\int_\mathcal{X}  |T(x)-C|^2  \, dP_{\theta+h}(x)+\int_\mathcal{X}  |T(x)-C|^2 \, dP_{\theta}(x)\right).\label{eq:proofB2_CS_ineq}
\end{align}
where the last inequalities follow by Cauchy–Schwarz inequality and $(a+b)^2 \le 2(a^2 + b^2)$. The first integral on the right-hand side can be written out as
\begin{align*}
    \int_\mathcal{X}  \left|T(x)-C\right|^2 \,dP_{\theta+h}(x)&=\int_\mathcal{X}  |T(x)-\mathbb{E}_{\theta+h}T(X)+\mathbb{E}_{\theta+h}T(X)-C|^2 \,dP_{\theta+h}(x) \\
    &=\int_\mathcal{X}|T(x)-\mathbb{E}_{\theta+h}T(X)|^2\,dP_{\theta+h}(x)+ |\mathbb{E}_{\theta+h}T(X)-C|^2.
\end{align*}
We also have the following standard bias-variance decomposition:
\begin{align*}
    \int_\mathcal{X}|T(x)-\mathbb{E}_{\theta+h}T(X) |^2 \,dP_{\theta+h}(x) &= \mathbb{E}_{\theta+h}|T(X)- \psi(\theta+h)|^2 - |\mathbb{E}_{\theta+h}T(X)-\psi(\theta+h)|^2.
\end{align*}
Putting together, we obtain
\begin{align*}
    &\int_\mathcal{X}  \left|T(x)-C\right|^2 \,dP_{\theta+h}(x)\\
    &\qquad=\mathbb{E}_{\theta+h}|T(X)- \psi(\theta+h)|^2 - |\mathbb{E}_{\theta+h}T(X)-\psi(\theta+h)|+ |\mathbb{E}_{\theta+h}T(X)-C|^2.
\end{align*}
By repeating the analogous argument for $\int_\mathcal{X}  |T(x)-C|^2 \,dP_{\theta}(x)$ and plugging them into \eqref{eq:proofB2_CS_ineq}, we obtain 
\begin{align*}
     &\left|\mathbb{E}_{\theta+h} T(X) - \mathbb{E}_{\theta} T(X)\right|^2 \\
     &\qquad \le2H^2(P_{\theta+h}, P_\theta)\left(\mathbb{E}_{\theta+h}|T(X)-\psi(\theta+h)|^2 -|\mathbb{E}_{\theta+h}T(X) - \psi(\theta+h)|^2\right.\\
     &\qquad \qquad+ |\mathbb{E}_{\theta+h}T(X)-C|^2 + \mathbb{E}_{\theta}|T(X)-\psi(\theta)|^2 -|\mathbb{E}_{\theta}T(X) - \psi(\theta)|^2+ |\mathbb{E}_{\theta}T(X)-C|^2\left.\right).
\end{align*}
Since the above inequality holds for an arbitrary scalar constant $C$, we choose $C$ to minimize the upper bound. As the optimal $C^*$ is attained by $C^* = 1/2\left(\mathbb{E}_{\theta}T(X)+\mathbb{E}_{\theta+h}T(X)\right)$, we can further simplify the expression as 
\begin{align}
     \left|\mathbb{E}_{\theta+h} T(X) - \mathbb{E}_{\theta} T(X)\right|^2 &\le2H^2(P_{\theta+h}, P_\theta)\left(\mathbb{E}_{\theta+h}|T(X)-\psi(\theta+h)|^2 +\mathbb{E}_{\theta}|T(X)-\psi(\theta)|^2\right.\nonumber \\
     &\qquad  \left.-|d(\theta+h)|^2-|d(\theta)|^2+ \frac{\left|\mathbb{E}_{\theta}T(X)-\mathbb{E}_{\theta+h}T(X)\right|^2}{2}  \right)\, .\label{eq:middle_step_IH}
\end{align}
where $d(t):=\mathbb{E}_t T(X)-\psi(t)$.
Hence, the above display immediately implies the following:
\begin{align*}
    \eqref{eq:middle_step_IH}&\implies \mathbb{E}_{\theta+h}|T(X)-\psi(\theta+h)|^2 +\mathbb{E}_{\theta}|T(X)-\psi(\theta)|^2 \\
     &\qquad\ge \frac{1-H^2(P_{\theta+h}, P_\theta)}{2H^2(P_{\theta+h}, P_\theta)}\left|\mathbb{E}_{\theta+h} T(X) - \mathbb{E}_{\theta} T(X)\right|^2+ |d(\theta+h)|^2+|d(\theta)|^2\\
     &\implies \mathbb{E}_{\theta+h}|T(X)-\psi(\theta+h)|^2 +\mathbb{E}_{\theta}|T(X)-\psi(\theta)|^2 \\
     &\qquad \ge \frac{1-H^2(P_{\theta+h}, P_\theta)}{2H^2(P_{\theta+h}, P_\theta)}\left| \psi(\theta+h)-\psi(\theta)+
     d(\theta+h)-d(\theta)\right|^2+ |d(\theta+h)|^2+|d(\theta)|^2.
\end{align*}
The existing proofs by \cite{ibragimov1981statistical} and \cite{lin2019optimal} proceed by splitting the analysis into two cases: (i) $\max\{|d(\theta+h)|, |d(\theta)|\} < 1/4|\psi(\theta+h)-\psi(\theta)|$ and (ii) $|d(\theta)| > 1/4|\psi(\theta+h)-\psi(\theta)|$ where the leading constant $1/4$ for the boundary was chosen for convenience by \citet{ibragimov1981statistical} as they did not focus on the optimal constant. We instead optimize this boundary to obtain a sharper constant.

We now define the following function:
\[
\eta(x, y) := A|B + x - y|^2 + |x|^2 + |y|^2
\]
with $A := (1 - H^2(P_{\theta + h}, P_{\theta}))/(2H^2(P_{\theta + h}, P_{\theta}))$ and $B := \psi(\theta + h) - \psi(\theta)$. The lower bound can be written as
\[
E_{\theta + h}|T(X) - \psi(\theta + h)|^2 + E_{\theta}|T(X) - \psi(\theta)|^2 \ge \eta(d(\theta + h), d(\theta)).
\]
This further implies that
\[
E_{\theta + h}|T(X) - \psi(\theta + h)|^2 + E_{\theta}|T(X) - \psi(\theta)|^2 \ge \min_{x, y\in\mathbb{R}}\,\eta(x, y).
\]
The minimizer of $\eta$ is given by $x^* = -y^* = -\frac{2AB}{4A + 2}$ and hence we have,
\[
\min_{x, y}\eta(x, y) = \frac{4AB^2}{(4A + 2)^2} + \frac{8A^2B^2}{(4A + 2)^2} = \frac{4AB^2(1 + 2a)}{(4A + 2)^2} = \frac{AB^2}{2A + 1}.
\]
Additionally, we have
\begin{align*}
    \partial_{xx}\eta = 2A+2,\quad  \partial_{xy}\eta = -2A,\quad \text{ and }\quad \partial_{yy}\eta = 2A+2
\end{align*}
and the discriminant is given by $(2A+2)^2-4A^2 = 4(2A+1)$. Therefore, the optima is at a saddle point when $A < -1/2$. The constant $A$ is defined as $(1 - H^2(P_{\theta + h}, P_{\theta}))/(2H^2(P_{\theta + h}, P_{\theta}))$, which is monotone decreasing with respect to $H^2(P_{\theta + h}, P_{\theta})$ and attains the minimum $-1/4$ at $H^2(P_{\theta + h}, P_{\theta})=2$. Thus the minimizer of $\eta$ is well-defined for all values of $A$ and thereby all values of $H^2(P_{\theta + h}, P_{\theta})$. Putting it together, the minimum of the function $\eta$ with respect to $d(\theta + h)$ and $d(\theta)$ is given by 
\begin{align*}
    \frac{AB^2}{2A+1} &= \frac{1-H^2(P_{\theta+h}, P_\theta)}{2H^2(P_{\theta+h}, P_\theta)}\left| \psi(\theta+h)-\psi(\theta)\right|^2  \left(\frac{1-H^2(P_{\theta+h}, P_\theta)}{H^2(P_{\theta+h}, P_\theta)}+1\right)^{-1} \\
    &= \frac{1-H^2(P_{\theta+h}, P_\theta)}{2}\left| \psi(\theta+h)-\psi(\theta)\right|^2.
\end{align*}

When $H^2(P_{\theta+h}, P_\theta) > 1$, the leading constant becomes negative and we can replace it with a trivial lower bound of zero. 
Therefore, we conclude
\[
 \mathbb{E}_{\theta + h}|T(X) - \psi(\theta + h)|^2 + \mathbb{E}_{\theta}|T(X) - \psi(\theta)|^2 \ge  \left[\frac{1 -H^2(P_{\theta + h}, P_{\theta})}{2}\right]_+|\psi(\theta + h) - \psi(\theta)|^2.
\]
Since minimax risk gives an upper bound of the average of risks at arbitrary two points, we have
\begin{align*}
    \sup_{t\in\Theta}\, \mathbb{E}_{t}|T(X) - \psi(t)|^2 & \ge \sup_{\{\theta,\theta+h\} \in \Theta}\,\frac{\mathbb{E}_{\theta + h}|T(X) - \psi(\theta + h)|^2 + \mathbb{E}_{\theta}|T(X) - \psi(\theta)|^2}{2}\\
    &\ge  \sup_{\{\theta,\theta+h\} \in \Theta}\,\left[\frac{1 -H^2(P_{\theta + h}, P_{\theta})}{4}\right]_+|\psi(\theta + h) - \psi(\theta)|^2.
\end{align*}
As $T$ and $h$ only appear on one side of the inequality, we conclude the claim by taking the infimum over $T$ and the supremum over any pair of parameters $\theta, \theta+h \in \Theta$. 
An analogous proof can be extended to a vector-valued functional with a general vector norm $\|\cdot\|$. We only provide a result with a real-valued functional since the optimal constant depends on the choice of the norm as it requires the derivative with respect to the norm. 

\clearpage
\section{Additional technical facts about divergence metrics}\label{sec:local-behavior}
This manuscript primarily focuses on two divergence metrics, namely the chi-squared divergence and the Hellinger distance. This section discusses sufficient conditions that imply certain limiting behaviors of these metrics over a parametric path as it passes through $P_0$. The base measure for the sample space $\mathcal{X}$ is denoted by $\nu$. 
\subsection{Regularity conditions}
 We first extend the standard notion of absolute continuity of a univariate function to a multivariate function as follows:
\begin{definition}[Multivariate absolute continuity]\label{def:multi-abs-cts}
A function $\omega : \mathbb{R}^d\mapsto \mathbb{R}$ is absolutely continuous over an open $\mathbb{R}^d$-ball $B(\theta, \delta)$ if for all  $u \in \mathbb{S}^{d-1}$, the induced univariate function $t \mapsto \omega(\theta + tu)$ is absolutely continuous over $0 < t < \delta$. 
\end{definition}

The concept of multivariate absolute continuity has been explored in various real analysis literature. For instance, \citet{maly1999absolutely} and \citet{hencl2004notes} extend the classical $\delta$-$\varepsilon$ definition of absolute continuity by considering the oscillation of the functions over $d$-dimensional balls. Additionally, \citet{vsremr2010absolutely} presents a similar extension using $d$-dimensional hyper-cubes. We do not claim that Definition~\ref{def:multi-abs-cts} is the most general definition of multivariate absolutely continuous functions. 

We introduce the following regularity conditions:
\begin{enumerate}[label=\textbf{(A\arabic*)},leftmargin=2cm]
\setcounter{enumi}{1}
\item There exists $\delta >0$ such that \label{as:regular_density_iso}
\begin{enumerate}
    \item for $\nu$-almost everywhere, the mapping $t \mapsto dP_{t}$ is absolutely continuous over an open $\mathbb{R}^d$-ball $B([0], \delta)$ with the gradient $t \mapsto u^{\top}\dot{\rho}_{t, u}$ for each $u \in \mathbb{S}^{d-1}$,
    \item for all $u \in \mathbb{S}^{d-1}$, the gradient is continuous in $t$ such that $\lim_{\|t\| \longrightarrow 0}u^{\top}\dot{\rho}_{t, u} = u^{\top}\dot{\rho}_0$ for $\nu$-almost everywhere, and
    \item for all $0 < t < \delta$ and $u \in \mathbb{S}^{d-1}$, $dP_0(x) = 0$ implies $u^{\top}\dot{\rho}_{t, u}(x)=0$, and
    \begin{align*}
        \int \sup_{0 < t_1,t_2 < \delta}u^{\top}\left(\frac{\dot{\rho}_{t_1, u}\dot{\rho}_{t_2, u}^{\top}}{dP_0}\right) u\, d\nu < \infty.
    \end{align*}
\end{enumerate}
\item There exits $\delta >0$ such that \label{as:regular_root_density_iso}
\begin{enumerate}
    \item for $\nu$-almost everywhere, the mapping $t \mapsto dP^{1/2}_{t}$ is absolutely continuous over an open $\mathbb{R}^d$-ball $B([0], \delta)$ with the gradient $t \mapsto u^{\top}\dot{\gamma}_{t, u}$ for each $u \in \mathbb{S}^{d-1}$,
    \item for all $u \in \mathbb{S}^{d-1}$, the gradient is continuous such that $\lim_{\|t\| \longrightarrow 0}u^{\top}\dot{\gamma}_{t, u} = u^{\top}\dot{\gamma}_0$ for $\nu$-almost everywhere, and 
    \begin{align*}
        \int \sup_{0 < t_1,t_2 < \delta}u^\top \dot{\gamma}_{t_1, u}\dot{\gamma}_{t_2, u}^{\top}u \, d\nu< \infty.
    \end{align*}
\end{enumerate}
\item The statistical model $\{P_t : t \in\Theta\}$ is Hellinger differentiable.\label{as:hellinder-diff}
\end{enumerate}

We claim that 
\[\ref{as:regular_density_iso} \implies  \ref{as:regular_root_density_iso} \implies \ref{as:hellinder-diff},\]
meaning that \ref{as:regular_density_iso} is most restrictive and \ref{as:hellinder-diff} is most general.
\begin{proof}[\bfseries{Proof of \ref{as:regular_density_iso} $\implies$ \ref{as:regular_root_density_iso}}]
Given a small positive scalar $\eta > 0$, we define $dP_{t,\eta} := dP_{t}+\eta$. The resulting object is no longer a probability density since it does not integrate to one. Since $dP_{t,\eta}$ is bounded away from zero, the gradient of $dP^{1/2}_{t,\eta}$ exists and is given by $\frac{1}{2}\left\{\dot{\rho}_{t, u}/(dP_{t}+\eta)^{1/2}\right\}$. 
It now follows for any $0 < b < \delta$ that 
\begin{align*}
    dP^{1/2}_{t,\eta}(ub) - dP^{1/2}_{t,\eta}(0) = \int_0^b\frac{u^{\top}\dot{\rho}_{t, u}}{2(dP_{t}+\eta)^{1/2}}\, dt= \int_0^b\frac{u^{\top}\dot{\rho}_{t, u}}{2(dP_{t}+\eta)^{1/2}}I(dP_{t} > 0)\, dt.
\end{align*}
The second equality follows by \ref{as:regular_density_iso}(c) as $dP_0(x) = 0$ implies $u^{\top}\dot{\rho}_{t, u}(x)=0$, which justifies to insert the indicator $I(dP_{t} > 0)$ inside the integral. Assuming for a moment that we can invoke the dominated convergence theorem, it follows that 
\begin{align*}
    \lim_{\eta \longrightarrow 0}\left\{dP^{1/2}_{t,\eta}(ub) - dP^{1/2}_{t,\eta}(0)\right\} = \int_0^b\lim_{\eta \longrightarrow 0}\frac{u^{\top}\dot{\rho}_{t, u}}{2(dP_{t}+\eta)^{1/2}}I(dP_{t} > 0)\, dt = \int_0^b\frac{u^{\top}\dot{\rho}_{t, u}}{2dP^{1/2}_{t}}I(dP_{t} > 0)\, dt.
\end{align*}
Hence, we conclude \ref{as:regular_root_density_iso}(a) with
\begin{align}\label{eq:gradient}
    \dot{\gamma}_{t, u} = \frac{\dot{\rho}_{t, u}}{2dP^{1/2}_{t}}I(dP_{t} > 0).
\end{align}
It thus remains to check the condition for the dominated convergence theorem. This follows since we have that
\begin{align*}
    \int_0^b\left|\frac{u^{\top}\dot{\rho}_{t, u}}{2(dP_{t}+\eta)^{1/2}}I(dP_{t} > 0)\right|\, dt &\le \int_0^b\left|\frac{u^{\top}\dot{\rho}_{t, u}}{2(dP_{t})^{1/2}}I(dP_{t} > 0)\right|\, dt \\
    &\le \left(\frac{1}{2}\int_0^b\frac{u^{\top}\dot{\rho}_{t, u}\dot{\rho}_{t, u}^{\top}u}{dP_{t}}I(dP_{t} > 0)\, dt\right)^{1/2}.
\end{align*}
The last term is integrable by \ref{as:regular_density_iso}(c) and the dominated convergence theorem holds. The second statement \ref{as:regular_root_density_iso}(b) follows directly from \eqref{eq:gradient}, \ref{as:regular_density_iso}(b) and \ref{as:regular_density_iso}(c).
\end{proof}

\begin{proof}[\bfseries{Proof of \ref{as:regular_root_density_iso} $\implies$ \ref{as:hellinder-diff}}]
By the absolute continuity of the square root density, it follows
\begin{align*}
    dP_t^{1/2}- dP_0^{1/2} = \int_0^t \dot\gamma_{s, \mathrm{sign}(t)} \, ds= \int_0^1 \dot\gamma_{t\widetilde{s}, \mathrm{sign}(t)}t \, d\widetilde{s}
\end{align*}
and hence we have
\begin{align*}
    \int\left(dP_t^{1/2}- dP_0^{1/2}\right)^2 \, d\nu &= \int\left(\int_0^1 \dot\gamma_{t\widetilde{s}, \mathrm{sign}(t)}t \, d\widetilde{s}\right)^2\, d\nu \\
    &= t^{\top}\left\{ \int\left(\int_0^1\int_0^1 \dot\gamma_{t\widetilde{s}_1, \mathrm{sign}(t)}\dot\gamma_{t\widetilde{s}_2, \mathrm{sign}(t)}^{\top}\, d\widetilde{s}_1\, d\widetilde{s}_2\right)\, d\nu \right\}\,t.
\end{align*}
By the assumption that $\lim_{s\longrightarrow 0}\dot\gamma_{s, u} = \dot\gamma_{0}$ for $\nu$-almost where and uniform integrability $s \mapsto u^\top \dot\gamma_{s, u}$, we conclude 
\begin{align*}
    \int\left(dP_t^{1/2}- dP_0^{1/2}\right)^2 \, d\nu =t^{\top} \left(\int\dot\gamma_{0}\dot\gamma_{0}^{\top}\, d\nu\right)t + o(\|t\|^2)
\end{align*}
as $\|t\| \longrightarrow 0$. This concludes the claim that the local path $t \mapsto P_t$ is Hellinger differentiable at $t=0$ with $\dot \xi_0 = \dot \gamma_0$, $\nu$-almost everywhere. As a result, we have 
\[\mathcal{I}(0) = 4 \int \dot \xi_0\dot \xi_0^{\top}\, d\nu = 4 \int \dot \gamma_0\dot \gamma_0^{\top}\, d\nu.\]
\end{proof}
Additionally, the converse is not true. For instance, location families with compact support fail to satisfy \ref{as:regular_density_iso}(c), while they may satisfy \ref{as:regular_root_density_iso} under certain conditions. See Example 7.1 of \cite{polyanskiy2022information} for more details. There is also an example where \ref{as:hellinder-diff} holds but \ref{as:regular_root_density_iso} fails. See Remark \ref{remark:fisher-defect} in the following subsection.

\subsection{Supporting lemmas on divergence metrics}\label{sec:supplement-on-divs}
In this section, we provide the supporting lemmas related to the local behaviors of the chi-squared divergence and the Hellinger distance. We provide the derivation for completeness and to unify terminology between \cite{pollardAsymptotia} and \cite{polyanskiy2022information}. Throughout this section, we use the following notation consistently. Let $\varphi : \mathbb{R}^d \mapsto \Theta$ be a continuously differentiable mapping and $\|\nabla \varphi\|_\infty < C$ for some universal constant. Unless specified otherwise, we denote the mixture probability measures by
\begin{align*}
    d\widetilde{\mathbb{P}}^n_0(x,t) := dP^n_{\varphi(t)}(x)\,dQ(t) \quad \mbox{and} \quad  d\widetilde{\mathbb{P}}^n_h(x,t) := dP^n_{\varphi(t+h)}(x)\, dQ(t+h),
\end{align*}
where $P_{\varphi(t)}^n$ is an $n$-fold product measure of $P_{\varphi(t)}$, $Q(\cdot)$ is a probability measure on $\mathbb{R}^d$ with a bounded and absolutely continuous density function $q$ with respect to the base measure on $\Theta$.

Under the regularity conditions from the previous subsection, the following local expansion of divergence metrics can be obtained:
\begin{lemma}\label{lemma:chi-convergence1}
Assume that the paths $h \mapsto P_{\varphi(t+h)}$ satisfies \ref{as:regular_density_iso} and $\nabla \varphi$ is continuous at $t$,
    \begin{align*}
    \chi^2\left(P_{\varphi(t+h)}, P_{\varphi(t)}\right)& = h^{\top} \nabla\varphi(t)^{\top} \mathcal{I}(\varphi(t)) \nabla\varphi(t) h^{\top} + o(\|h\|^2).
\end{align*}
\begin{lemma}\label{lemma:chi-convergence2}
Assume that the paths $h \mapsto P_{\varphi(t+h)}$ satisfies \ref{as:regular_root_density_iso} and $\nabla \varphi$ is continuous at $t$, 
    \begin{align*}
        &\chi^2(\widetilde{\mathbb{P}}^n_h \| \lambda\widetilde{\mathbb{P}}^n_h + (1-\lambda)\widetilde{\mathbb{P}}^n_0) \\
        &\qquad = (1-\lambda)^2h^{\top}\left\{\mathcal{I}(Q)+n\int \nabla\varphi(t)^{\top}\, \left(\mathcal{I}(\varphi(t))+ \frac{1-4\lambda}{4\lambda} \mathcal{I}^\dag(\varphi(t))\right)\,\nabla\varphi(t) \,dQ(t)\right\}h+ o(\|h\|^2).
    \end{align*}
\end{lemma}
where $\mathcal{I}^\dag(0) := 4\int \dot{\gamma}_{0}\,\dot{\gamma}_{0}^{\top} I(dP_0=0)\, d\nu$ is known as the Fisher defect.
\end{lemma}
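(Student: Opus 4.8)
The plan is to reduce the mixture chi-squared divergence to an integral that can be matched, term by term, against a Hellinger distance and against the ``missing mass'' responsible for the Fisher defect. Since $\widetilde{\mathbb P}^n_h \ll \lambda\widetilde{\mathbb P}^n_h + (1-\lambda)\widetilde{\mathbb P}^n_0$ for $\lambda\in(0,1)$, the elementary identity $\chi^2(P \,\|\, \lambda P + (1-\lambda)R) = (1-\lambda)^2\int (dP-dR)^2/(\lambda\, dP + (1-\lambda)\, dR)$ gives, with $a := d\widetilde{\mathbb P}^n_h$ and $b := d\widetilde{\mathbb P}^n_0$,
$$\chi^2\big(\widetilde{\mathbb P}^n_h \,\big\|\, \lambda\widetilde{\mathbb P}^n_h + (1-\lambda)\widetilde{\mathbb P}^n_0\big) = (1-\lambda)^2\int \frac{(a-b)^2}{\lambda a + (1-\lambda)b},$$
so the whole task is to expand the last integral as $\|h\|_2\to 0$. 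I would split the domain into $A := \{b>0\}$ and its complement. On $A^c$ the integrand equals $a/\lambda$ exactly, contributing $\lambda^{-1}\widetilde{\mathbb P}^n_h(A^c)$; on $A$, writing $(a-b)^2 = (\sqrt a-\sqrt b)^2(\sqrt a+\sqrt b)^2$ and $(\sqrt a+\sqrt b)^2/(\lambda a+(1-\lambda)b) = 4 + r(a/b)$ with $r(\ell) := (\sqrt\ell+1)^2/(\lambda\ell+1-\lambda) - 4$, and using $\int_{A^c}(\sqrt a-\sqrt b)^2 = \widetilde{\mathbb P}^n_h(A^c)$ together with $\int(\sqrt a-\sqrt b)^2 = H^2(\widetilde{\mathbb P}^n_0,\widetilde{\mathbb P}^n_h)$, one obtains the exact decomposition
$$\int \frac{(a-b)^2}{\lambda a + (1-\lambda)b} = 4H^2(\widetilde{\mathbb P}^n_0,\widetilde{\mathbb P}^n_h) + \frac{1-4\lambda}{\lambda}\,\widetilde{\mathbb P}^n_h(A^c) + \int_A (\sqrt a-\sqrt b)^2\, r(a/b),$$
in which the coefficient $\tfrac{1-4\lambda}{4\lambda}$ of the statement is already visible once the prefactor $(1-\lambda)^2$ is restored.

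It then remains to expand the first two pieces and discard the third. For $4H^2(\widetilde{\mathbb P}^n_0,\widetilde{\mathbb P}^n_h)$ I would use the exact additive identity $H^2(\widetilde{\mathbb P}^n_0,\widetilde{\mathbb P}^n_h) = H^2(Q_h,Q) + \int \sqrt{q(t)q(t+h)}\, H^2(P^n_{\varphi(t)},P^n_{\varphi(t+h)})\, dt$, then quadratic-mean differentiability of the location family ($H^2(Q_h,Q) = \tfrac14 h^\top\mathcal I(Q)h + o(\|h\|_2^2)$), the product formula $H^2(P^n,R^n) = 2-2(1-\tfrac12 H^2(P,R))^n = nH^2(P,R) + o(\|h\|_2^2)$ for fixed $n$, Hellinger differentiability with the chain rule ($H^2(P_{\varphi(t)},P_{\varphi(t+h)}) = \tfrac14 (\nabla\varphi(t)h)^\top\mathcal I(\varphi(t))(\nabla\varphi(t)h) + o(\|h\|_2^2)$), and dominated convergence in $t$ (justified by \ref{as:regular_root_density_iso}, $Q$-integrability of $\mathrm{tr}\,\mathcal I(\varphi(\cdot))$, and $\|\nabla\varphi\|_\infty<C$), reaching $4H^2(\widetilde{\mathbb P}^n_0,\widetilde{\mathbb P}^n_h) = h^\top\big(\mathcal I(Q) + n\int \nabla\varphi^\top\mathcal I(\varphi)\nabla\varphi\, dQ\big)h + o(\|h\|_2^2)$. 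For the defect mass, on $\{q>0\}$ we have $A^c = \bigcup_{i=1}^n\{p_{\varphi(t)}(x_i)=0\}$, so by independence $\widetilde{\mathbb P}^n_h(A^c) = \int q(t+h)\big[1-(1-e_{t,h})^n\big]\, dt$ with $e_{t,h} := \int_{\{p_{\varphi(t)}=0\}} p_{\varphi(t+h)}\, d\nu$; since $\sqrt{p_{\varphi(t)}}=0$ there, $e_{t,h} = \int_{\{p_{\varphi(t)}=0\}}(\sqrt{p_{\varphi(t+h)}}-\sqrt{p_{\varphi(t)}})^2 d\nu$, and Hellinger differentiability with the chain rule gives $e_{t,h} = \tfrac14 (\nabla\varphi(t)h)^\top\mathcal I^\dag(\varphi(t))(\nabla\varphi(t)h) + o(\|h\|_2^2)$. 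Because $e_{t,h}=O(\|h\|_2^2)$, $1-(1-e_{t,h})^n = ne_{t,h} + O(e_{t,h}^2)$, and a further dominated convergence argument (using the integrable envelope for $\|h\|_2^{-2}e_{t,h}$ supplied by \ref{as:regular_root_density_iso}) yields $\widetilde{\mathbb P}^n_h(A^c) = \tfrac n4 h^\top\big(\int \nabla\varphi^\top\mathcal I^\dag(\varphi)\nabla\varphi\, dQ\big)h + o(\|h\|_2^2)$. Substituting both expansions into the decomposition and multiplying by $(1-\lambda)^2$ reproduces the claimed formula, using $1+\tfrac{1-4\lambda}{4\lambda} = \tfrac{1}{4\lambda}$ to combine the $\mathcal I(\varphi)$ and $\mathcal I^\dag(\varphi)$ contributions.

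The step I expect to be the genuine obstacle is showing $\int_A (\sqrt a-\sqrt b)^2\, r(a/b) = o(\|h\|_2^2)$ — that replacing the mixture denominator by $d\widetilde{\mathbb P}^n_0$ on $A$ costs only a second-order term, not merely $O(\|h\|_2^2)$. Here $r(a/b)$ is bounded on $A$ by a constant depending only on $\lambda$ (from $\lambda a+(1-\lambda)b\ge\min(\lambda,1-\lambda)(a+b)$) and tends to $0$ pointwise on $A$ as $\|h\|_2\to 0$ since $a/b\to 1$, while $\|h\|_2^{-2}(\sqrt a-\sqrt b)^2\mathbf{1}_A$ converges in $L_1$ to the square of the quadratic-mean derivative of $h\mapsto\sqrt{d\widetilde{\mathbb P}^n_h}$; this $L_1$ convergence is precisely the content of \ref{as:regular_root_density_iso}, lifted through the $n$-fold product, the reparametrization $\varphi$, and the mixing over $Q$ (one uses that $g_h\to g$ in $L_2$ implies $g_h^2\to g^2$ in $L_1$). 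A generalized dominated convergence theorem — if $f_h\to f$ in $L_1$, $g_h\to 0$ pointwise and $\sup_h\|g_h\|_\infty<\infty$, then $\int f_h g_h\to 0$ — then closes the argument. Two subordinate bookkeeping points are the various interchanges of limit and $t$-integration against $Q$ (all consequences of the integrability built into \ref{as:regular_root_density_iso} plus boundedness of $q$ and $\nabla\varphi$), and the set $\{q=0\}$, which I would dispose of either by assuming $q>0$ Lebesgue-almost everywhere or by absorbing its contribution into $H^2(Q_h,Q)$.
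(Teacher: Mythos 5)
Your proposal is correct and reaches the claimed expansion, but by a genuinely different route from the paper. The paper's proof (following Section~7.14 of \citet{polyanskiy2022information}) defines the pointwise Lipschitz map $\phi_{x,t}(u) = (u^2 - d\widetilde{\mathbb{P}}^n_0)/\{\lambda u^2 + (1-\lambda)d\widetilde{\mathbb{P}}^n_0\}^{1/2}$, uses the absolute-continuity-along-rays condition in \ref{as:regular_root_density_iso}(a) to write $\phi(d\widetilde{\mathbb{P}}^{n/2}_h) - \phi(d\widetilde{\mathbb{P}}^{n/2}_0) = \int_0^h \phi'(d\widetilde{\mathbb{P}}^{n/2}_s)\dot\gamma_{s,u}\,ds$ via the Lipschitz chain rule, and reads off the $4$ and $1/\lambda$ factors directly from $\phi'(d\widetilde{\mathbb{P}}^{n/2}_0) = 2\cdot\mathbf{1}\{d\widetilde{\mathbb{P}}^n_0>0\} + \lambda^{-1/2}\mathbf{1}\{d\widetilde{\mathbb{P}}^n_0=0\}$, with \ref{as:regular_root_density_iso}(b)--(c) justifying the limit-integral exchange. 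You instead perform an exact algebraic split of the $\chi^2$ integrand on $\{d\widetilde{\mathbb{P}}^n_0>0\}$ versus its complement, recover the Hellinger contribution $4H^2(\widetilde{\mathbb{P}}^n_0,\widetilde{\mathbb{P}}^n_h)$ via $(\sqrt{a}+\sqrt{b})^2/(\lambda a+(1-\lambda)b)=4+r(a/b)$, separately expand the defect mass $\widetilde{\mathbb{P}}^n_h(\{d\widetilde{\mathbb{P}}^n_0=0\})$ through an inclusion--exclusion/independence calculation to isolate $\mathcal{I}^\dag$, reuse the paper's own Hellinger-mixture expansion (Lemma~\ref{lemma:hellinger-decomposition}) rather than rederiving it, and dispatch the cross term $\int (\sqrt a-\sqrt b)^2 r(a/b)$ by a generalized dominated-convergence argument that exploits the $L_2$-to-$L_1$ transfer $g_h\to g$ in $L_2$ $\Rightarrow$ $g_h^2\to g^2$ in $L_1$, boundedness of $r$ (from $\lambda a+(1-\lambda)b\ge\min(\lambda,1-\lambda)(a+b)$), and $r\to 0$ pointwise on $\{b>0\}$. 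This is more elementary in that it does not pass through the $L_2$-Lipschitz chain-rule machinery, and it makes the role of the Hellinger term and the Fisher defect explicit and additive; the paper's route is more compact once one accepts the Polyanskiy--Wu lemma. Your bookkeeping caveats (a.e. convergence of $a/b$ along subsequences, handling $\{q=0\}$ by noting $\mathcal{I}^\dag(Q)=0$ for nice priors, and the various $Q$-integral interchanges via the integrability in \ref{as:regular_root_density_iso}) are the right points to flag and are all resolvable.
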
 

\begin{lemma}\label{lemma:local_behavior_hellinger}
Assume that the paths $h \mapsto dP_{\varphi(t+h)}$ satisfies \ref{as:hellinder-diff} and $\nabla \varphi$ is continuous at $t$,
\begin{align*}
    H^2\left(P_{\varphi(t+h)}, P_{\varphi(t)}\right)& = \frac{1}{4}h^{\top} \nabla\varphi(t)^{\top} \mathcal{I}(\varphi(t)) \nabla\varphi(t) h^{\top} + o(\|h\|^2).
\end{align*}
\end{lemma}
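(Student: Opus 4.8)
The plan is to reduce the statement to the definition of Hellinger differentiability of the original path $s \mapsto P_s$ at the single point $s = \varphi(t)$, combined with the first-order expansion of the reparametrization $\varphi$. Write $k = k(h) := \varphi(t+h) - \varphi(t)$. Since $\varphi$ is differentiable at $t$ with $\nabla\varphi$ continuous there, $k = \nabla\varphi(t)\,h + o(\|h\|)$ as $\|h\|_2 \to 0$; in particular $k \to 0$ and $\|k\| = O(\|h\|)$, which is all that will be needed (no non-degeneracy of $\nabla\varphi(t)$ is required).

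First I would invoke \ref{as:hellinder-diff}, which gives Hellinger differentiability of $s \mapsto P_s$ at $s = \varphi(t)$ with derivative $\dot\xi_{\varphi(t)}$: there is a remainder $r(k)$ with
\[
dP_{\varphi(t)+k}^{1/2} - dP_{\varphi(t)}^{1/2} = k^{\top}\dot\xi_{\varphi(t)} + r(k), \qquad \int r(k)^2\, d\nu = o(\|k\|^2) = o(\|h\|^2).
\]
Substituting into $H^2(P_{\varphi(t+h)}, P_{\varphi(t)}) = \int\bigl(dP_{\varphi(t)+k}^{1/2} - dP_{\varphi(t)}^{1/2}\bigr)^2\, d\nu$ and expanding the square produces three pieces: $\int (k^{\top}\dot\xi_{\varphi(t)})^2\, d\nu$, a cross term $2\int (k^{\top}\dot\xi_{\varphi(t)})\, r(k)\, d\nu$, and $\int r(k)^2\, d\nu$. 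The first equals $\tfrac14\, k^{\top}\mathcal{I}(\varphi(t))\, k$ by the definition of the Fisher information in \eqref{eq:hellinger-fisher-info}; the third is $o(\|h\|^2)$ by the display above; and the cross term is bounded via Cauchy--Schwarz by $\bigl(\int (k^{\top}\dot\xi_{\varphi(t)})^2 d\nu\bigr)^{1/2}\bigl(\int r(k)^2 d\nu\bigr)^{1/2} = O(\|k\|)\cdot o(\|k\|) = o(\|h\|^2)$, where $\int (k^{\top}\dot\xi_{\varphi(t)})^2\, d\nu = O(\|k\|^2)$ because $\dot\xi_{\varphi(t)}$ is square integrable. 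Hence $H^2(P_{\varphi(t+h)}, P_{\varphi(t)}) = \tfrac14\, k^{\top}\mathcal{I}(\varphi(t))\, k + o(\|h\|^2)$.

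Finally I would substitute $k = \nabla\varphi(t)\,h + o(\|h\|)$ into $\tfrac14\, k^{\top}\mathcal{I}(\varphi(t))\, k$: the leading term is $\tfrac14\, h^{\top}\nabla\varphi(t)^{\top}\mathcal{I}(\varphi(t))\nabla\varphi(t)\,h$, and every remaining contribution pairs a factor $o(\|h\|)$ with a factor $O(\|h\|)$ or $o(\|h\|)$ against the fixed finite matrix $\mathcal{I}(\varphi(t))$, hence is $o(\|h\|^2)$. This gives the claimed expansion. The argument is essentially routine; the only point requiring care is that the $o(\cdot)$ remainder in the definition of Hellinger differentiability is measured in the increment $k$ of the native parameter, so one must confirm $\|k\| = O(\|h\|)$ (immediate from differentiability of $\varphi$ at $t$, together with $k\to 0$ so that the Hellinger-differentiability approximation applies) in order to turn $o(\|k\|^2)$ into $o(\|h\|^2)$.
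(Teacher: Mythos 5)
Your proof is correct and follows essentially the same route as the paper's: both reduce to the Hellinger differentiability of $s\mapsto P_s$ at $s=\varphi(t)$ with increment $k=\varphi(t+h)-\varphi(t)$, then feed in the first-order behavior of $\varphi$. You are slightly more careful on two minor points that the paper glosses over: you spell out the cross term $2\int(k^{\top}\dot\xi_{\varphi(t)})\,r(k)\,d\nu$ and control it via Cauchy--Schwarz (the paper passes silently from $\int(dP^{1/2}_{\varphi(t)+k}-dP^{1/2}_{\varphi(t)})^2$ to $\int(k^{\top}\dot\xi_{\varphi(t)})^2 + o(\|k\|^2)$); and you substitute $k=\nabla\varphi(t)h+o(\|h\|)$ directly, whereas the paper invokes a mean-value-theorem form $k=\nabla\varphi(t+\lambda h)h$ for a single $\lambda\in[0,1]$, which is not strictly valid for the vector-valued $\varphi$ here (one would need a separate $\lambda$ per coordinate or the integral form). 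Your version sidesteps this without changing the substance of the argument.
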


The main takeaway of the three lemmas is as follows: The local behavior of the Hellinger distance can be established under weaker conditions, while an analogous result under the chi-squared divergence requires more unpleasant regularity conditions. This is one of many reasons why the asymptotic theory according to H\`{a}jek and Le Cam promotes the square roots of density functions as the primary object to investigate.
\begin{remark}[On the Fisher defect $\mathcal{I}^\dag(\cdot)$]\label{remark:fisher-defect}
If the parameter $t=0$ is an interior point of the parameter space $\Theta$, then $\dot \gamma_0 = 0$ for $\nu$-almost all $x$ in $\{x : dP_0(x)=0\}$. Consequently, the Fisher defect must be zero. However, even for a Hellinger differentiable statistical model, it is still possible for the Fisher defect to be non-zero on the boundary. An illustrative example is the Bernoulli distribution with parameter $p^2$ at $p=0$. Example 7.2 of~\citet{polyanskiy2022information} provides a formal derivation, and Example 18 of~\citet{pollardAsymptotia} offers an additional example.
\end{remark}
\begin{proof}[\bfseries{Proof of Lemma~\ref{lemma:chi-convergence1}}]
We define $\mathrm{sign}(x)  := x/\|x\|$ and $u := \mathrm{sign}(\varphi(t+h)-\varphi(t))$. Since it is assumed that the path $h\mapsto P_{\varphi(t+h)}$ is absolutely continuous, there exists a gradient such that 
\begin{align*}
    dP_{\varphi(t+h)}-dP_{\varphi(t)} = \int_{\varphi(t)}^{\varphi(t+h)}\dot{\rho}_{s, u}\, ds
\end{align*}
for any $u\in\mathbb{S}^{d-1}$.
Using this result, the chi-squared divergence can be written out as follows:
\begin{align*}
    \chi^2(P_{\varphi(t+h)}\|P_{\varphi(t)}) &= \int\frac{\big(dP_{\varphi(t+h)}-dP_{\varphi(t)}\big)^2}{dP_{\varphi(t)}}\, d\nu\\
    &= \int\frac{\big(\int_{\varphi(t)}^{\varphi(t+h)}\dot{\rho}_{s, u}\, ds\big)^2}{dP_{\varphi(t)}}\, d\nu\\
    &= \int\frac{\left\{\int_{0}^{1}(\nabla \varphi(t+h\widetilde{s}) h)^{\top}\dot{\rho}_{\varphi(t+h\widetilde{s}), u}\, d\widetilde{s}\right\}^2}{dP_{\varphi(t)}(x)}\, d\nu\\
    &= h^{\top}\left(\int\frac{\int_{0}^{1}\int_{0}^{1}\nabla \varphi^{\top}(t+h\widetilde{s}_1)\dot{\rho}_{\varphi(t+h\widetilde{s}_1), u}\dot{\rho}^{\top}_{\varphi(t+h\widetilde{s}_2), u}\nabla \varphi(t+h\widetilde{s}_2) \, d\widetilde{s}_1\, d\widetilde{s}_2}{dP_{\varphi(t)}}\, d\nu\right)h.
\end{align*}
The last steps use the change of variables. By the uniform integrability assumption and the continuity of $\dot{\rho}_{t}$, both asserted by \ref{as:regular_density_iso}, it follows that 
    \begin{align*}
    \chi^2(P_{\varphi(t+h)}\|P_{\varphi(t)}) &=h^{\top}\nabla\varphi(t)^{\top}\left(\int\frac{ \dot{\rho}_{\varphi(t)} \dot{\rho}^{\top}_{\varphi(t)}}{dP_{\varphi(t)}}\, d\nu\right)\nabla \varphi(t) h+ o(\|h\|^2)
\end{align*}
by the dominated convergence. As the middle term in the parenthesis is finite under \ref{as:regular_density_iso}, Lemma 6 of \citet{gassiat2013revisiting} implies that the path $dP^{1/2}_{\varphi(t)}$ is Hellinger differentiable with the gradient 
\[\dot \xi_{\varphi(t)} := \frac{1}{2}\frac{ \dot{\rho}_{\varphi(t)}}{dP^{1/2}_{\varphi(t)}}I(dP_{\varphi(t)}>0).\]
We can use this to show that:
\begin{align*}
    \mathcal{I}(\varphi(t)) = 4\int \dot \xi_{\varphi(t)}\dot \xi_{\varphi(t)}^{\top}\, d\nu = \left(\int\frac{ \dot{\rho}_{\varphi(t)} \dot{\rho}^{\top}_{\varphi(t)}}{dP_{\varphi(t)}}\, d\nu\right)
\end{align*}
and this leads us to conclude     
\begin{align*}
    \chi^2(P_{\varphi(t+h)}\|P_{\varphi(t)}) &=h^{\top}\varphi(t)^{\top}\mathcal{I}(\varphi(t))\nabla \varphi(t) h+ o(\|h\|^2).
\end{align*} 
as desired.
\end{proof}
\begin{proof}[\bfseries{Proof of Lemma~\ref{lemma:chi-convergence2}}]
The analogous proof can be found in Section 7.14 of \cite{polyanskiy2022information}. To begin, we observe that the chi-squared divergence can be written out as
\begin{align*}
        \chi^2(\widetilde{\mathbb{P}}^n_h \| \lambda\widetilde{\mathbb{P}}^n_h + (1-\lambda)\widetilde{\mathbb{P}}^n_0)&=\iint_{\mathcal{X}\times\Theta}  \frac{\{d\widetilde{\mathbb{P}}^n_h-(\lambda\widetilde{\mathbb{P}}^n_h + (1-\lambda)\widetilde{\mathbb{P}}^n_0)\}^2}{\lambda d\widetilde{\mathbb{P}}^n_h + (1-\lambda)d\widetilde{\mathbb{P}}^n_0}\\
        &=(1-\lambda)^2\iint_{\mathcal{X}\times\Theta}  \frac{(d\widetilde{\mathbb{P}}^n_h-d\widetilde{\mathbb{P}}^n_0)^2}{\lambda d\widetilde{\mathbb{P}}^n_h + (1-\lambda)d\widetilde{\mathbb{P}}^n_0}.
    \end{align*}
We will consider the function 
\[\phi := f \mapsto \iint_{\mathcal{X}\times\Theta}  \frac{f^2-d\widetilde{\mathbb{P}}^n_0}{\{\lambda f^2 + (1-\lambda)d\widetilde{\mathbb{P}}^n_0\}^{1/2}}.\]
It can then be seen that $\chi^2(\widetilde{\mathbb{P}}^n_h \| \lambda\widetilde{\mathbb{P}}^n_h + (1-\lambda)\widetilde{\mathbb{P}}^n_0) = (1-\lambda)^2\iint_{\mathcal{X}\times\Theta} \phi(d\widetilde{\mathbb{P}}^{n/2}_h)^2$. When $\phi$ is Lipshitz continuous and $d\widetilde{\mathbb{P}}^{n/2}_h$ is absolutely continuous with the gradient $t \mapsto \dot \gamma_{t,u}$, then the composition function $\phi(d\widetilde{\mathbb{P}}^{n/2}_h)$ is also absolutely continuous with the gradient $t \mapsto \phi'(d\widetilde{\mathbb{P}}^{n/2}_t)\dot \gamma_{t,u}$. In Section 7.14 of \citet{polyanskiy2022information}, it is shown that $\phi$ is indeed Lipshitz continuous with the constant $(2-\lambda)/\{(1-\lambda)\lambda^{1/2}\}$. Additionally, $\phi$ has a continuous derivative and we have
    \begin{align*}
        \phi'(d\widetilde{\mathbb{P}}^n_0) := \begin{cases}
            2, & \mbox{if}\quad (x,t) : d\widetilde{\mathbb{P}}^n_0(x,t) > 0\\
            \lambda^{-1/2}& \mbox{if}\quad (x,t) : d\widetilde{\mathbb{P}}^n_0(x,t) = 0.
        \end{cases}
    \end{align*}
Putting together, we have 
\[\lim_{t \longrightarrow 0}\phi'(d\widetilde{\mathbb{P}}^{n/2}_t)\dot \gamma_{t,u} = \dot \gamma_{0,u}\left(2I(d\widetilde{\mathbb{P}}^n_0 > 0) + \frac{1}{\lambda^{1/2}}I(d\widetilde{\mathbb{P}}^n_0 = 0)\right).\]
Noting that $\phi(d\widetilde{\mathbb{P}}^{n/2}_0)=0$, it follows that
\begin{align*}
    \chi^2(\widetilde{\mathbb{P}}^n_h \| \lambda\widetilde{\mathbb{P}}^n_h + (1-\lambda)\widetilde{\mathbb{P}}^n_0) &= (1-\lambda)^2\iint_{\mathcal{X}\times\Theta}  \phi(d\widetilde{\mathbb{P}}^{n/2}_h)^2 \\
    &= (1-\lambda)^2\iint_{\mathcal{X}\times\Theta} \left(\phi(d\widetilde{\mathbb{P}}^{n/2}_h)-\phi(d\widetilde{\mathbb{P}}^{n/2}_0)\right)^2 \\
    &= (1-\lambda)^2\iint_{\mathcal{X}\times\Theta} \left(\int_0^h\phi'(d\widetilde{\mathbb{P}}^{n/2}_s)\dot \gamma_{s,u}\, ds\right)^2 \\
    &= (1-\lambda)^2h^{\top}\iint_{\mathcal{X}\times\Theta} \left(\int_0^1\int_0^1\phi'(d\widetilde{\mathbb{P}}^{n/2}_{hs_1})\phi'(d\widetilde{\mathbb{P}}^{n/2}_{hs_2})\dot \gamma_{hs_1,u}\dot \gamma^{\top}_{hs_2,u}\, ds_1\,ds_1\right)h. 
\end{align*}
Since $\phi'$ is continuous and $\dot \gamma$ is the uniform integrability, asserted by \ref{as:regular_root_density_iso}, it follows that 
\begin{align*}
   & \chi^2(\widetilde{\mathbb{P}}^n_h \| \lambda\widetilde{\mathbb{P}}^n_h + (1-\lambda)\widetilde{\mathbb{P}}^n_0) \\
   &\qquad = (1-\lambda)^2h^{\top}\left\{\iint_{\mathcal{X}\times\Theta} \dot \gamma_{0}\dot \gamma^{\top}_{0}\left(4I(d\widetilde{\mathbb{P}}^n_0 > 0) + \frac{1}{\lambda}I(d\widetilde{\mathbb{P}}^n_0 = 0)\right)\right\}h + o(\|h\|^2)\\
    &\qquad=(1-\lambda)^2h^{\top}4\left\{\iint_{\mathcal{X}\times\Theta} \dot \gamma_{0}\dot \gamma^{\top}_{0} + \iint_{\mathcal{X}\times\Theta} \dot \gamma_{0}\dot \gamma^{\top}_{0}\left(\frac{1-4\lambda}{4\lambda}I(d\widetilde{\mathbb{P}}^n_0 = 0)\right)\right\}h + o(\|h\|^2)
\end{align*}
as $\|h\| \longrightarrow 0$ by the dominated convergence theorem. Finally, it has been established that \ref{as:regular_root_density_iso} $\implies$ \ref{as:hellinder-diff}, and that the corresponding Fisher information can be computed using $4\iint_{\mathcal{X}\times\Theta} \dot \gamma_{0}\dot \gamma^{\top}_{0}$. By the definition of the Hellinger differentiability, it follows that  
\begin{align*}
    H^2(\widetilde{\mathbb{P}}^n_{h}, \widetilde{\mathbb{P}}^n_0) = \int\left(d\widetilde{\mathbb{P}}_{h}^{n/2}- d\widetilde{\mathbb{P}}_{0}^{n/2}\right)^2  =h^{\top} \left(\iint_{\mathcal{X}\times\Theta}\dot\gamma_{0}\dot\gamma_{0}^{\top}\right)h + o(\|h\|^2)
\end{align*}
as $\|h\| \longrightarrow 0$. Thus in view of Lemma~\ref{lemma:hellinger-decomposition}, we have 
\[\iint_{\mathcal{X}\times\Theta}\dot\gamma_{0}\dot\gamma_{0}^{\top} = \frac{1}{4}\left(\mathcal{I}(Q) +n\int_{\mathbb{R}^d}  \nabla\varphi(t)^{\top}\, \mathcal{I}(\varphi(t))\,\nabla\varphi(t) \, dQ(t)\right)\]
and similarly 
\[\iint_{\mathcal{X}\times\Theta}\dot\gamma_{0}\dot\gamma_{0}^{\top} I(d\widetilde{\mathbb{P}}^n_0 = 0)= \frac{1}{4}\left(\mathcal{I}^\dag(Q) +n\int_{\mathbb{R}^d}  \nabla\varphi(t)^{\top}\, \mathcal{I}^\dag(\varphi(t))\,\nabla\varphi(t) \, dQ(t)\right)\]
where $\mathcal{I}^\dag(0) := 4\int \dot{\gamma}_{0}\,\dot{\gamma}_{0}^{\top} I(d\widetilde{\mathbb{P}}^n_0=0)$ is the Fisher defect. When the prior $Q$ has a continuously differentiable density $q$, it must follow that $\nabla q(t)=0$ whenever $q(t)=0$. This implies that the Fisher defect $\mathcal{I}^\dag(Q)$ is zero. Thereby, we conclude that
\begin{align*}
    &\chi^2(\widetilde{\mathbb{P}}^n_h \| \lambda\widetilde{\mathbb{P}}^n_h + (1-\lambda)\widetilde{\mathbb{P}}^n_0) \\
    &\qquad = (1-\lambda)^2h^{\top}\left\{\mathcal{I}(Q)+n\int \nabla\varphi(t)^{\top}\, \left(\mathcal{I}(\varphi(t))+ \frac{1-4\lambda}{4\lambda} \mathcal{I}^\dag(\varphi(t))\right)\,\nabla\varphi(t) \,dQ(t)\right\}h+ o(\|h\|^2).
\end{align*}
as desired.
\end{proof}

\begin{proof}[\bfseries{Proof of Lemma~\ref{lemma:local_behavior_hellinger}}]

By the definition of the Hellinger differentiability, we have
\begin{align*}
     H^2\left(P_{\varphi(t+h)}, P_{\varphi(t)}\right)&=\int\left(dP^{1/2}_{\varphi(t+h)}-dP^{1/2}_{\varphi(t)}\right)^2 \\
     &=\int\left(dP^{1/2}_{\varphi(t)+\varphi(t+h)-\varphi(t)}-dP^{1/2}_{\varphi(t)}\right)^2 \\
     &= \int\left((\varphi(t+h)-\varphi(t))^{\top} \dot \xi_{\varphi(t)}\right)^2 + o(\|\varphi(t+h)-\varphi(t)\|^2)\\
     &= \int\left((\nabla\varphi(t+\lambda h)h)^{\top} \dot \xi_{\varphi(t)}\right)^2 + o(\|\nabla\varphi(t+\lambda h)h)\|^2) \\
     &= h^{\top} \nabla\varphi^{\top}(t+\lambda h) \left(\int\dot \xi_{\varphi(t)} \dot \xi_{\varphi(t)}^{\top}\, d\nu\right) \nabla\varphi(t+\lambda h) h^{\top} + o(\|\nabla\varphi(t+\lambda h)h)\|^2)
\end{align*}
for some constant $\lambda \in [0,1]$ possibley depending on $t$. Since the Fisher information is defined as $4\int\dot \xi_{\varphi(t)} \dot \xi_{\varphi(t)}^{\top}\, d\nu$ and $\nabla \varphi(t+\lambda h) \longrightarrow \nabla\varphi(t)$ as $\|h\| \longrightarrow 0$ by the continuity, we conclude that 
\begin{align*}
    H^2\left(P_{\varphi(t+h)}, P_{\varphi(t)}\right)& = \frac{1}{4}h^{\top} \nabla\varphi(t)^{\top} \mathcal{I}(\varphi(t)) \nabla\varphi(t) h^{\top} + o(\|h\|^2)
\end{align*}
as $\|h\| \longrightarrow 0$.
\end{proof}
\begin{lemma}\label{lemma:hellinger-decomposition}
Assuming $Q$ has a density function that is bounded and continuously differentiable, and $P_{t}$ is Hellinger differentiable at $t = \varphi(t)$ such that $\|\nabla\varphi\|_\infty < C$, 
    \begin{align*}
     H^2(  \widetilde{\mathbb{P}}^n_{h}, \widetilde{\mathbb{P}}^n_0) = \frac{1}{4}h^{\top}\left(\mathcal{I}(Q) +n\int_{\mathbb{R}^d}  \nabla\varphi(t)^{\top}\, \mathcal{I}(\varphi(t))\,\nabla\varphi(t) \, dQ(t) \right)h + o(\|h\|^2)
\end{align*}
\end{lemma}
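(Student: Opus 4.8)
The plan is to compute $H^2(\widetilde{\mathbb{P}}^n_h,\widetilde{\mathbb{P}}^n_0)$ exactly from the factorized joint density, and then to separate the effect of the shift $t\mapsto t+h$ in the prior coordinate from that of the shift $\varphi(t)\mapsto\varphi(t+h)$ in the likelihood coordinate. Let $q$ be the Lebesgue density of $Q$ on $\mathbb{R}^d$, let $\rho(P,P'):=\int(dP)^{1/2}(dP')^{1/2}\,d\nu=1-\tfrac12 H^2(P,P')$ be the Hellinger affinity on $\mathcal{X}$, and note $(d\widetilde{\mathbb{P}}^n_h)^{1/2}(x,t)=(dP^n_{\varphi(t+h)})^{1/2}(x)\,q(t+h)^{1/2}$. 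Since $\widetilde{\mathbb{P}}^n_h$ and $\widetilde{\mathbb{P}}^n_0$ are probability measures (by a change of variables in the $t$-coordinate), Fubini gives the exact identity
\begin{align*}
H^2(\widetilde{\mathbb{P}}^n_h,\widetilde{\mathbb{P}}^n_0)=2-2\int q(t+h)^{1/2}q(t)^{1/2}\,\rho\!\left(P_{\varphi(t)},P_{\varphi(t+h)}\right)^n d\mu(t).
\end{align*}
Equivalently, expanding $(ac-bd)^2=c^2(a-b)^2+b^2(c-d)^2+2bc(a-b)(c-d)$ with $a=(dP^n_{\varphi(t+h)})^{1/2}$, $b=(dP^n_{\varphi(t)})^{1/2}$, $c=q(t+h)^{1/2}$, $d=q(t)^{1/2}$, and using $\int b^2\,d\nu^n=1$ and $\int(a-b)b\,d\nu^n=\rho^n-1=-\tfrac12 H^2(P^n_{\varphi(t+h)},P^n_{\varphi(t)})$, one obtains $H^2(\widetilde{\mathbb{P}}^n_h,\widetilde{\mathbb{P}}^n_0)=\mathrm{I}_h+\mathrm{II}_h+\mathrm{III}_h$, where
\begin{align*}
\mathrm{I}_h=\int q(t+h)\,H^2\!\left(P^n_{\varphi(t+h)},P^n_{\varphi(t)}\right)d\mu(t),\qquad \mathrm{II}_h=H^2(Q_h,Q),
\end{align*}
with $Q_h$ the $t$-marginal of $\widetilde{\mathbb{P}}^n_h$ (a translate of $Q$, density $q(\cdot+h)$), and $\mathrm{III}_h=-\int q(t+h)^{1/2}\big(q(t+h)^{1/2}-q(t)^{1/2}\big)H^2(P^n_{\varphi(t+h)},P^n_{\varphi(t)})\,d\mu(t)$.

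Next I would expand the three pieces. Since $\{Q(\cdot-c):c\in\mathbb{R}^d\}$ is differentiable in quadratic mean with Fisher information $\mathcal{I}(Q)$ given by \eqref{eq:Fisher-information-prior}, we get $\mathrm{II}_h=\tfrac14 h^\top\mathcal{I}(Q)h+o(\|h\|^2)$. For $\mathrm{I}_h$, the product structure gives $H^2(P^n_{\varphi(t+h)},P^n_{\varphi(t)})=2-2\rho(P_{\varphi(t)},P_{\varphi(t+h)})^n$; combining the elementary bound $|1-(1-x)^n-nx|\le\binom{n}{2}x^2$ for $x\in[0,1]$ with Lemma~\ref{lemma:local_behavior_hellinger} yields, pointwise in $t$,
\begin{align*}
H^2(P^n_{\varphi(t+h)},P^n_{\varphi(t)})=n\,H^2(P_{\varphi(t)},P_{\varphi(t+h)})+O\!\left(H^2(P_{\varphi(t)},P_{\varphi(t+h)})^2\right)=\tfrac n4 h^\top\nabla\varphi(t)^\top\mathcal{I}(\varphi(t))\nabla\varphi(t)h+o(\|h\|^2).
\end{align*}
Hence $\|h\|^{-2}q(t+h)H^2(P^n_{\varphi(t+h)},P^n_{\varphi(t)})$ converges pointwise, and the dominated convergence theorem (using that $q$ is bounded and continuous, so $q(t+h)$ may be replaced by $q(t)$ in the limit) gives $\mathrm{I}_h=\tfrac n4 h^\top\big(\int\nabla\varphi(t)^\top\mathcal{I}(\varphi(t))\nabla\varphi(t)\,dQ(t)\big)h+o(\|h\|^2)$. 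Finally, $\mathrm{III}_h$ is a genuine remainder: its integrand divided by $\|h\|^2$ factors as $q(t+h)^{1/2}\cdot\|h\|^{-1}\big(q(t+h)^{1/2}-q(t)^{1/2}\big)\cdot\|h\|^{-1}H^2(P^n_{\varphi(t+h)},P^n_{\varphi(t)})$, whose first factor is bounded, second is $O(1)$ and third is $O(\|h\|)$ pointwise, so it vanishes pointwise; a dominated-convergence argument (equivalently, a Cauchy--Schwarz split $|\mathrm{III}_h|\le H^2(Q_h,Q)^{1/2}\big(\int q(t+h)H^2(P^n_{\varphi(t+h)},P^n_{\varphi(t)})^2 d\mu\big)^{1/2}$ with the first factor $O(\|h\|)$ and the second $o(\|h\|)$) gives $\mathrm{III}_h=o(\|h\|^2)$. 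Summing the three expansions yields the claimed identity. (An essentially equivalent route is to verify directly that $h\mapsto(d\widetilde{\mathbb{P}}^n_h)^{1/2}$ is $L^2(\nu^n\times\mu)$-differentiable at $0$ with $L^2$-orthogonal likelihood- and prior-score components, whose squared norms add to $\mathcal{I}(Q)+n\int\nabla\varphi^\top\mathcal{I}(\varphi(t))\nabla\varphi\,dQ$ up to the factor $\tfrac14$; the affinity computation above is preferable because it avoids differentiating $q^{1/2}$ where $q$ may vanish.)

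The main obstacle is the interchange of limit and integral in $\mathrm{I}_h$ (and in the remainder controlling $\mathrm{III}_h$): Lemma~\ref{lemma:local_behavior_hellinger} supplies only a pointwise-in-$t$ expansion, so the dominated convergence theorem requires an envelope $G\in L^1(\mu)$ with $\|h\|^{-2}q(t+h)^{1/2}q(t)^{1/2}H^2(P_{\varphi(t)},P_{\varphi(t+h)})\le G(t)$ uniformly over small $h$, together with an analogous $L^1$-bound for the corresponding squared term. Such envelopes are furnished by the ambient regularity — the boundedness and continuity of $q$, the bound $\|\nabla\varphi\|_\infty<C$, and integrability conditions of the type in Definition~\ref{as:regularprior-vt} (in particular $\int\mathrm{tr}\,\mathcal{I}(\varphi(t))\,dQ<\infty$, which is already implicit since it is needed for the right-hand side of the lemma to be finite), possibly with a mild tail condition on $q$. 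I would therefore state this domination hypothesis explicitly and invoke it, exactly as is done for the chi-squared analogue in Lemma~\ref{lemma:chi-convergence2} and for the $n=1$ case in the remark following Theorem~\ref{thm:crvtBound_hellinger}; the remaining steps are routine.
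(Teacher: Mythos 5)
Your proof is correct and follows essentially the paper's route: the Hellinger affinity factors through the product structure of $\widetilde{\mathbb{P}}^n_h$, the prior affinity and the tensorized likelihood affinity are each Taylor-expanded, and the limit is passed inside the integral over $Q$. Your three-term decomposition $\mathrm{I}_h+\mathrm{II}_h+\mathrm{III}_h$ obtained from $(ac-bd)^2 = c^2(a-b)^2 + b^2(c-d)^2 + 2bc(a-b)(c-d)$ collapses to the paper's two-term identity $H^2(Q_h,Q) + \int dQ_h^{1/2}\,dQ^{1/2}\,H^2(P^n_{\varphi(t+h)},P^n_{\varphi(t)})$ once $\mathrm{I}_h$ and $\mathrm{III}_h$ are regrouped, so the separate control of the cross term $\mathrm{III}_h$ is a small bookkeeping detour rather than a different argument. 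You are in fact somewhat more careful than the paper about the interchange of limit and integral: the paper merely asserts that it ``follows by the boundedness of $\nabla\varphi$ and $\mathcal{I}(\cdot)$,'' whereas you correctly flag that Lemma~\ref{lemma:local_behavior_hellinger} supplies only a pointwise-in-$t$ expansion, so an $L^1(Q)$ envelope for $\|h\|^{-2}H^2(P_{\varphi(t)},P_{\varphi(t+h)})$ uniform over small $\|h\|$ is genuinely required (furnished by $\|\nabla\varphi\|_\infty<C$ together with $\int\mathrm{tr}\,\mathcal{I}(\varphi(t))\,dQ<\infty$); making that domination hypothesis explicit, as you propose, is a real improvement in rigor over the paper's phrasing.
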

\begin{proof}[\bfseries{Proof of Lemma~\ref{lemma:hellinger-decomposition}}]
First, the Hellinger distance between $\widetilde{\mathbb{P}}^n_{h}$ and $\widetilde{\mathbb{P}}^n_0$ can be written out as
\begin{align*}
    H^2(  \widetilde{\mathbb{P}}_{h}, \widetilde{\mathbb{P}}_0) &=2-2\iint dP^{n/2}_{\varphi(t+h)}\,dQ^{1/2}(t+h)\,dP^{n/2}_{\varphi(t)}\,dQ^{1/2}(t)\,  \\
    &=2-2\int_{\mathbb{R}^d} dQ^{1/2}(t+h)dQ^{1/2}(t)\int_{\mathcal{X}}  dP^{n/2}_{\varphi(t+h)}\, dP^{n/2}_{\varphi(t)}\\
    &=2-2\int_{\mathbb{R}^d} dQ^{1/2}(t+h)\,dQ^{1/2}(t)\left(1-\frac{1}{2}H^2(P^n_{\varphi(t+h)}, P^n_{\varphi(t)})\right)\\
    &=H^2(Q_{h}, Q) +\int_{\mathbb{R}^d} \,dQ^{1/2}(t+h)dQ^{1/2}(t)\,H^2(P^n_{\varphi(t+h)}, P^n_{\varphi(t)}).
\end{align*}
When the density function $Q$ is continuously differentiable, the induced location family is differentiable in quadratic mean (Example 7.8 of \citet{van2000asymptotic}), which implies the Hellinger differentiability. By the analogous result to Lemma~\ref{lemma:local_behavior_hellinger} for $\varphi(t) = t$, we have
\begin{align*}
    H^2(Q_{h}, Q) = \frac{1}{4}h^{\top}\mathcal{I}(Q) h + o(\|h\|^2).
\end{align*}
Next, using the tensorization property of the Hellinger distance as well as Lemma~\ref{lemma:local_behavior_hellinger}, it implies
\begin{align}
    H^2\left(P^n_{\varphi(t)}, P^n_{\varphi(t+h)}\right) = 2-2 \left(1- \frac{H^2(P_{\varphi(t)} , P_{\varphi(t+h)})}{2}\right)^n = 2-2 \left(1-h^{\top}Ch +o(\|h\|^2)\right)^n\nonumber
\end{align}
as $\|h\|\longrightarrow 0$ where $C := \frac{1}{8}u^{\top}  \nabla \varphi(t)^{\top}\mathcal{I}(\varphi(t))\nabla\varphi(t) u$. By treating $n \ge 1$ as a fixed constant as $\|h\|\longrightarrow 0$, we have
\begin{align}
    2-2\left(1- h^{\top} C h+ o(\|h\|^2)\right)^n = 2-2(1-nh^{\top} C h+o(n\|h\|^2)) = 2nh^{\top}Ch + o(n\|h\|^2), \nonumber
\end{align}
by a first-order Taylor expansion. Putting them together, we have
\begin{align*}
    H^2(  \widetilde{\mathbb{P}}^n_{h}, \widetilde{\mathbb{P}}^n_0) =\frac{1}{4}h^{\top}\left(\mathcal{I}(Q) +n\int_{\mathbb{R}^d}  \nabla\varphi(t)^{\top}\, \mathcal{I}(\varphi(t))\,\nabla\varphi(t) \, dQ(t) \right)h + o(\|h\|^2).
\end{align*}
where we invoke the dominated convergence theorem to interchange the limit and integration operations, which follows by the boundedness of $\nabla \varphi$ and $\mathcal{I}(\cdot)$.
\end{proof}


\clearpage
\section{Proofs of asymptotic properties}\label{supp:asympt}
This section provides the derivation of results from Section~\ref{section:asympt_const}. Throughout, we assume that $X_1, \dots, X_n$ is an IID observation from $P_{\theta_0} \in \{P_\theta : \theta \in \Theta\}$ and each distribution in this model is Hellinger differentiable at $\theta$. 
\subsection{Proofs of local asymptotic minimax theorem}
In this section, we use Theorems~\ref{cor:vt-approximation}--\ref{thm:crvtBound_hellinger} and Lemma~\ref{lemma:sharper_ihbound} to prove the local asymptotic minimax (LAM) theorem. Let $\psi: \Theta \mapsto \mathbb{R}$ be continuously differentiable at $\theta_0$ and $T : \mathcal{X}^n \mapsto \mathbb{R}$ be any sequence of measurable functions.
We can then state the following result:
\begin{proposition}\label{prop:LAM_paramtric1}
Assuming the setting of Theorem~\ref{theorem:LAM_parametric}, the following statements hold:
\begin{enumerate}
	\item[(i)] Theorem~\ref{cor:vt-approximation} implies the LAM theorem,
    \item[(ii)] if \ref{as:regular_root_density_iso} holds and the Fisher defect is zero at $\theta_0$, then Theorem~\ref{thm:crvtBound} implies the LAM theorem, 
    \item[(iii)] Theorem~\ref{thm:crvtBound_hellinger} implies the LAM theorem, and 
    \item[(iv)] Lemma~\ref{lemma:sharper_ihbound} implies the LAM theorem with its lower bound multiplied by the constant $C \approx 0.28953$. 
\end{enumerate}
\end{proposition}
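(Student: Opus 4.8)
The plan is to reduce parts (i)--(iii) to the chain ``[each bound] $\Longrightarrow$ van Trees inequality $\Longrightarrow$ LAM'', the last step being the argument of \citet{gassiat2013revisiting}, and to treat (iv) by a direct two-point localization. In every case I localize by taking $\Theta_0 = B(\theta_0, cn^{-1/2})$; for the bounds stated on all of $\mathbb{R}^d$ (Theorems~\ref{thm:crvtBound} and~\ref{thm:crvtBound_hellinger}) I pass through the diffeomorphism $\varphi(t) = \theta_0 + cn^{-1/2}\varphi_0(t)$ of Section~\ref{section:diffeomorphism} with a fixed $\varphi_0:\mathbb{R}^d\to B([0],1)$, $\varphi_0(0)=0$, $\nabla\varphi_0$ bounded, and apply the corresponding corollary; for Theorem~\ref{cor:vt-approximation} I instead feed in the prior $Q_n = \mathrm{Law}(\theta_0 + cn^{-1/2}W)$ with $W$ drawn from a fixed ``nice'' density $q_0$ on $B([0],1)$, for which $\mathcal{I}(Q_n) = (n/c^2)\mathcal{I}(q_0)$ by the scaling of the Fisher information of a location family. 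Throughout I use that $\psi$ and $\mathcal{I}$ are $C^1$ on a neighbourhood of $\theta_0$, so for $n$ large $\Theta_0$ lies inside it, and that $\mathcal{I}(\theta_0)$ is positive definite.

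For (i): take $\phi=\psi$. Local absolute continuity of $\psi$ near $\theta_0$ makes the approximation-bias term $\int|\psi-\phi|^2\,dQ_n$ vanish identically for $n$ large, so Theorem~\ref{cor:vt-approximation} (in its $n$-IID form, with $n\int\mathcal{I}(t)\,dQ_n$) gives the lower bound $n\Gamma_{Q_n,\psi}$. Dominated convergence and continuity of $\nabla\psi,\mathcal{I}$ give $\int\nabla\psi\,dQ_n\to\nabla\psi(\theta_0)$ and $\int\mathcal{I}(t)\,dQ_n\to\mathcal{I}(\theta_0)$, whence
\[
n\Gamma_{Q_n,\psi}\longrightarrow \nabla\psi(\theta_0)^{\top}\bigl(c^{-2}\mathcal{I}(q_0)+\mathcal{I}(\theta_0)\bigr)^{-1}\nabla\psi(\theta_0)\quad(n\to\infty),
\]
and then $\to\nabla\psi(\theta_0)^{\top}\mathcal{I}(\theta_0)^{-1}\nabla\psi(\theta_0)$ as $c\to\infty$. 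This is exactly the recovery of LAM from the van Trees inequality and is essentially immediate given Theorem~\ref{cor:vt-approximation}.

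For (iii): apply the diffeomorphism corollary of Theorem~\ref{thm:crvtBound_hellinger} with $\varphi$ as above and a fixed prior $Q$. For fixed $n,c$ and direction $u$, let $h=\rho u$ with $\rho\downarrow 0$: since $\widetilde\psi=\psi\circ\varphi$ is $C^1$ one has $\int[\widetilde\psi(t)-\widetilde\psi(t-h)]\,dQ = h^{\top}\!\int\nabla\widetilde\psi\,dQ + o(\|h\|)$ with $\nabla\widetilde\psi(t)=cn^{-1/2}\nabla\varphi_0(t)^{\top}\nabla\psi(\varphi(t))$, while Lemma~\ref{lemma:hellinger-decomposition} gives $4H^2(\widetilde{\mathbb{P}}_0^n,\widetilde{\mathbb{P}}_h^n)=h^{\top}\bigl(\mathcal{I}(Q)+c^2\!\int\nabla\varphi_0^{\top}\mathcal{I}(\varphi(t))\nabla\varphi_0\,dQ\bigr)h+o(\|h\|^2)$ (using $n\,\nabla\varphi^{\top}\mathcal{I}\nabla\varphi = c^2\nabla\varphi_0^{\top}\mathcal{I}\nabla\varphi_0$), and $B_{\widetilde\psi,Q,h}=O(\|h\|^2)$ so $\sqrt{B}$ is negligible against $\sqrt{A}$. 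Hence $\sup_h n[\sqrt{A_{\widetilde\psi,Q,h}}-\sqrt{B_{\widetilde\psi,Q,h}}]_+^2 \ge c^2\,\bigl|u^{\top}\!\int\nabla\varphi_0^{\top}\nabla\psi(\varphi(t))\,dQ\bigr|^2\big/\bigl(u^{\top}(\mathcal{I}(Q)+c^2\!\int\nabla\varphi_0^{\top}\mathcal{I}(\varphi(t))\nabla\varphi_0\,dQ)u\bigr)$. Letting $n\to\infty$ (dominated convergence, $\varphi(t)\to\theta_0$, continuity of $\mathcal{I}$), then $c\to\infty$ (which removes $\mathcal{I}(Q)$), then taking the supremum over $u$ followed by $Q$ and $\varphi_0$ — concentrating $Q$ near $0$ and choosing $\varphi_0$ nearly linear there, so $\int\nabla\varphi_0\,dQ\to\nabla\varphi_0(0)$ invertible — recovers $\sup_u |u^{\top}\nabla\varphi_0(0)^{\top}\nabla\psi(\theta_0)|^2/(u^{\top}\nabla\varphi_0(0)^{\top}\mathcal{I}(\theta_0)\nabla\varphi_0(0)u) = \nabla\psi(\theta_0)^{\top}\mathcal{I}(\theta_0)^{-1}\nabla\psi(\theta_0)$. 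Part (ii) is the same with $4H^2(\widetilde{\mathbb{P}}_0^n,\widetilde{\mathbb{P}}_h^n)$ replaced by $\chi^2(\widetilde{\mathbb{P}}_h^n\|\widetilde{\mathbb{P}}_0^n)$ (Corollary~\ref{cor:lambda-zero}, $\lambda=0$): under \ref{as:regular_root_density_iso} the $\lambda\downarrow 0$ limit in Lemma~\ref{lemma:chi-convergence2} produces the defect coefficient $\tfrac{1-4\lambda}{4\lambda}\mathcal{I}^{\dag}$, which blows up unless $\mathcal{I}^{\dag}(\theta_0)=0$; when it does vanish (automatic at interior $\theta_0$ by Remark~\ref{remark:fisher-defect}), the expansion of $\chi^2$ matches the above quadratic form and the argument goes through verbatim. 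This explains why (ii) carries the extra hypotheses.

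For (iv): apply Lemma~\ref{lemma:sharper_ihbound} with the two-point perturbation $\theta_1=\theta_0+n^{-1/2}h$, $\theta_2=\theta_0$ (legal once $c>\|h\|_2$). By Hellinger differentiability $H^2(P_{\theta_1},P_{\theta_2})=\tfrac14 n^{-1}h^{\top}\mathcal{I}(\theta_0)h+o(n^{-1})$, so by tensorization $H^2(P_{\theta_1}^n,P_{\theta_2}^n)=2-2\bigl(1-\tfrac{1}{8n}h^{\top}\mathcal{I}(\theta_0)h+o(n^{-1})\bigr)^n\to 2\bigl(1-e^{-h^{\top}\mathcal{I}(\theta_0)h/8}\bigr)$, while $|\psi(\theta_1)-\psi(\theta_2)|^2=n^{-1}|\nabla\psi(\theta_0)^{\top}h|^2+o(n^{-1})$. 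Thus for each $h$, $\liminf_n\inf_T\sup_{\Theta_0}n\mathbb{E}_{P_\theta^n}|T-\psi(\theta)|^2\ge\bigl[\tfrac{2e^{-h^{\top}\mathcal{I}(\theta_0)h/8}-1}{4}\bigr]_+|\nabla\psi(\theta_0)^{\top}h|^2$, and $c\to\infty$ removes the constraint, leaving $\sup_h$ of this expression. Fixing $r=h^{\top}\mathcal{I}(\theta_0)h$ and maximizing $|\nabla\psi(\theta_0)^{\top}h|^2$ by Cauchy--Schwarz (optimal $h\propto\mathcal{I}(\theta_0)^{-1}\nabla\psi(\theta_0)$, value $r\,\nabla\psi(\theta_0)^{\top}\mathcal{I}(\theta_0)^{-1}\nabla\psi(\theta_0)$) gives the LAM constant times
\[
C:=\sup_{r\ge 0}\frac{r\,(2e^{-r/8}-1)}{4},
\]
whose maximizer solves $(1-r/8)\,e^{-r/8}=\tfrac12$, i.e. $r^{\ast}\approx 2.517$, so $C\approx 0.28953$. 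The main obstacle, in all of (i)--(iii), is the bookkeeping that legitimizes interchanging the $\|h\|_2\to0$ limit, the integral over the prior, and the nested $\liminf_{c}\liminf_{n}$: one must check that the $o(\|h\|_2^2)$ remainders in the local expansions are dominated uniformly in the relevant sense — precisely what Definition~\ref{as:regularprior-vt} and condition \ref{as:regular_root_density_iso} (resp.\ Hellinger differentiability) supply — and in (ii) that the $\lambda\downarrow0$ and $\|h\|_2\to0$ limits of $\chi^2(\widetilde{\mathbb{P}}_h^n\|\widetilde{\mathbb{P}}_0^n)$ are compatible, which is what forces the zero-defect hypothesis; for (iv) the only genuine content beyond the scaling computation is identifying $C$ through the transcendental equation above.
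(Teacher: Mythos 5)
Your proposal is correct and follows essentially the same route as the paper's proof: part (i) via the van Trees inequality with a prior dilated to scale $n^{-1/2}$, parts (ii)--(iii) via the diffeomorphism $\varphi(t)=\theta_0+cn^{-1/2}\varphi_0(t)$ together with the local expansions of Lemmas~\ref{lemma:hellinger-decomposition} and~\ref{lemma:chi-convergence2} (the Fisher-defect coefficient $\frac{1-4\lambda}{4\lambda}\mathcal{I}^{\dag}$ being exactly what forces the zero-defect hypothesis in (ii)), and part (iv) via a two-point application of Lemma~\ref{lemma:sharper_ihbound} with the tensorization $H^2(P^n_{\theta_0+hn^{-1/2}},P^n_{\theta_0})\to 2(1-e^{-h^{\top}\mathcal{I}(\theta_0)h/8})$ and the one-variable optimization giving $C\approx 0.28953$. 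The only cosmetic difference is how the supremum over $(\varphi_0,Q,u)$ is resolved in (iii): you concentrate $Q$ at the origin and take $\varphi_0$ nearly linear there, whereas the paper fixes $u$ by a Cauchy--Schwarz choice and then sends the scale parameter in an $\arctan$-type $\varphi_0$ to infinity; both hinge on the same fact that $|\int\nabla\varphi_0\,dQ|^2/\int\nabla\varphi_0^{\top}\mathcal{I}\nabla\varphi_0\,dQ$ degenerates to the efficient quadratic form.
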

\begin{proof}[\bfseries{Proof of Proposition~\ref{prop:LAM_paramtric1} (i)}]
Since $\Phi$ is arbitrary in Lemma~\ref{thm:L2-approximation}, we choose \[\Phi := \{\phi: \Theta \mapsto \mathbb{R} \, : \, \phi \textrm{ is continuously differentiable at $\theta_0$}\}.\] Since $\psi \in \Phi$  by the differentiability of $\psi$, it simply follows that 
\begin{align*}
\sup_{\theta\in\Theta_0}\,\mathbb{E}_{\theta}|T(X)-\psi(P_\theta)|^2  &\ge \sup_{\phi \in \Phi,\, Q\in \mathcal{Q}}\, \left[\Gamma_{Q, \phi}^{1/2} - \left(\int \|\psi(\theta)-\phi(\theta)\|^2dQ(\theta)\right)^{1/2}\right]_+^2\\
&\ge \left(\int_{\Theta_0} \nabla \psi(t)\,dQ\right)^\top \left(\mathcal{I}(Q)+\int_{\Theta_0}\mathcal{I}(t)\, dQ\right)^{-1} \left(\int_{\Theta_0} \nabla \psi(t)\,dQ\right).
\end{align*} 
Section 4 of \citet{gassiat2013revisiting} shows that the van Trees inequality indeed recovers the optimal asymptotic constant of the LAM theorem. This concludes the claim.
\end{proof}
We prove the statement (iii) first.
\begin{proof}[\bfseries{Proof of Proposition~\ref{prop:LAM_paramtric1} (iii)}]
In this application, the parameter space considered is given by $\Theta := B(\theta_0, cn^{-1/2})$, which is an open $\mathbb{R}^d$-ball centered at $\theta_0$ with radius $cn^{-1/2}$. A diffeomorphism $t \mapsto \varphi(t) = \theta_0 + cn^{-1/2}\varphi_0(t)$ is defined where $\varphi_0$ is itself a diffeomorphism from $\mathbb{R}^d$ to $B([0], 1)$. It follows that $\nabla \varphi(t) = cn^{-1/2}\nabla \varphi_0(t)$. We further assume that $\|\nabla \varphi_0\|_\infty < C$ by some universal constant $C$, which is defined later. Theorem~\ref{thm:crvtBound_hellinger} is then applied to the composite function $t \mapsto \widetilde{\psi}(t) := (\psi\circ \varphi)(t)$, and the following inequality is obtained:
\begin{equation}\label{eq:LAM_hellinger_step1}
    \begin{aligned}
    &\inf_{T} \, \sup_{\theta \in\mathbb{R}^d}\, \mathbb{E}_{P_\theta^n}|T(X)-\widetilde{\psi}(\theta)|^2\\
    &\qquad\ge\left[\frac{|\int_{\mathbb{R}^d} (\widetilde{\psi}(t)-\widetilde{\psi}(t-h))\,dQ(t)|}{2H(\widetilde{\mathbb{P}}_0, \widetilde{\mathbb{P}}_h)} -\left(\int_{\mathbb{R}^d} |\widetilde{\psi}(t)-\widetilde{\psi}(t-h)|^2 \, dQ(t)\right)^{1/2}\right]^2_+
\end{aligned}
\end{equation}
for any $h \in \mathbb{R}^d$. Using the mean value theorem and the chain rule, it follows that 
\begin{align}
    \widetilde{\psi}(t)-\widetilde{\psi}(t-h) 
    &=cn^{-1/2}\nabla \psi(\lambda_1 \varphi(t) + (1-\lambda_1)\varphi(t-h))^{\top} \nabla \varphi_0(t - \lambda_2 h)^{\top}h \nonumber
\end{align}
where $\lambda_1, \lambda_2 \in [0,1]$ are constants that can possibly depend on $t$. Next, under the Hellinger differentiability and $n$ IID observations from $P_{\theta_0}$, it is shown by Lemma~\ref{lemma:hellinger-decomposition} that as $\|h\|_2 \longrightarrow 0$, 
\begin{align*}
    H^2(\widetilde{\mathbb{P}}_0, \widetilde{\mathbb{P}}_h) &= \frac{1}{4}h^{\top}\left(\mathcal{I}(Q) +n\int_{\mathbb{R}^d}  \nabla\varphi(t)^{\top}\, \mathcal{I}(\varphi(t))\,\nabla\varphi(t) \, dQ(t) \right)h + o(\|h\|_2^2) \\
    &= \frac{1}{4}h^{\top}\left(\mathcal{I}(Q) +c^2\int_{\mathbb{R}^d}  \nabla\varphi_0(t)^{\top}\, \mathcal{I}(\varphi(t))\,\nabla\varphi_0(t) \, dQ(t) \right)h + o(\|h\|_2^2).
\end{align*}
The results obtained thus far are now used to evaluate the expression~\eqref{eq:LAM_hellinger_step1}. Since the inequality holds for any $h \in \mathbb{R}^d$, it also holds as $\|h\|_2\longrightarrow0$. We now denote $h=u \varepsilon$ where $u \in \mathbb{S}^{d-1}$ and $\varepsilon \longrightarrow 0$. This yields the following lower bound: 
\begin{align*}
    &\sup_{h \in \mathbb{R}^d}\,\left[\frac{|\int_{\mathbb{R}^d} (\widetilde{\psi}(t)-\widetilde{\psi}(t-h))\,dQ(t)|}{2H(\mathbb{P}_0, \mathbb{P}_h)} -\left(\int_{\mathbb{R}^d} |\widetilde{\psi}(t)-\widetilde{\psi}(t-h)|^2 \, dQ(t)\right)^{1/2}\right]^2_+\\
    &\qquad \ge \sup_{u \in \mathbb{S}^{d-1}}\,\limsup_{\varepsilon \longrightarrow 0} \left[\frac{|\int_{\mathbb{R}^d} (\widetilde{\psi}(t)-\widetilde{\psi}(t-u\varepsilon))\,dQ(t)|}{2H(\mathbb{P}_0, \mathbb{P}_{u\varepsilon})} -\left(\int_{\mathbb{R}^d} |\widetilde{\psi}(t)-\widetilde{\psi}(t-u\varepsilon)|^2 \, dQ(t)\right)^{1/2}\right]^2_+\\
    &\qquad = \sup_{u \in \mathbb{S}^{d-1}}\,\frac{c^2n^{-1}|\int_{\mathbb{R}^d} \nabla \psi(\varphi(t))^{\top} \nabla \varphi_0(t)^{\top}u\,dQ(t)|^2}{u^{\top}\left(\mathcal{I}(Q) +c^2\int_{\mathbb{R}^d}  \nabla\varphi_0(t)^{\top}\, \mathcal{I}(\varphi(t))\,\nabla\varphi_0(t) \, dQ(t) \right)u}.
\end{align*}
By multiplying both sides by $n$, we have for any $u \in \mathbb{S}^{d-1}$,
\begin{align*}
    &\liminf_{n\longrightarrow \infty}\, \inf_{T} \, \sup_{\theta \in\mathbb{R}^d}\, n\mathbb{E}_{P_\theta^n}|T(X)-\widetilde{\psi}(\theta)|^2\ge \frac{c^2|\int_{\mathbb{R}^d} \nabla \psi(\theta_0)^{\top} \nabla \varphi_0(t)^{\top}u\,dQ(t)|^2}{u^{\top}\left(\mathcal{I}(Q) +c^2\int_{\mathbb{R}^d}  \nabla\varphi_0(t)^{\top}\, \mathcal{I}(\theta_0)\,\nabla\varphi_0(t) \, dQ(t) \right)u}\\
    \implies & \liminf_{c\longrightarrow \infty}\,\liminf_{n\longrightarrow \infty}\, \inf_{T} \, \sup_{\theta \in\mathbb{R}^d}\, n\mathbb{E}_{P_\theta^n}|T(X)-\widetilde{\psi}(\theta)|^2  \ge \frac{|\int_{\mathbb{R}^d} \nabla \psi(\theta_0)^{\top} \nabla \varphi_0(t)^{\top}u\,dQ(t)|^2}{u^{\top}\left(\int_{\mathbb{R}^d}  \nabla\varphi_0(t)^{\top}\, \mathcal{I}(\theta_0)\,\nabla\varphi_0(t) \, dQ(t) \right)u}
\end{align*}
where the dominated convergence theorem is invoked during the last steps to exchange the limiting and the integration operations. This follows since $\varphi(t) \longrightarrow \theta_0$ as $n\longrightarrow \infty$ and both $\nabla \psi$ and $\mathcal{I}$ are continuous at $\theta_0$ by assumption. Using $u := \|M^{-1/2}\|^{-1}M^{-1/2}$ where $M := \int_{\mathbb{R}^d}  \nabla\varphi_0(t)^{\top}\, \mathcal{I}(\theta_0)\,\nabla\varphi_0(t) \, dQ(t)$, it implies that
\begin{align*}
    &\liminf_{c\longrightarrow \infty}\,\liminf_{n\longrightarrow \infty}\, \inf_{T} \, \sup_{\theta \in\mathbb{R}^d}\, n\mathbb{E}_{P_\theta^n}|T(X)-\widetilde{\psi}(\theta)|^2
    \\
    & \qquad \ge \nabla \psi(\theta_0)^{\top}\cdot\left\{ \int_{\mathbb{R}^d}  \nabla \varphi_0(t)^{\top}\left(\int_{\mathbb{R}^d}  \nabla\varphi_0(t)^{\top}\, \mathcal{I}(\theta_0)\,\nabla\varphi_0(t) \, dQ(t)\right)^{-1}\nabla \varphi_0(t)\,dQ(t)\right\} \cdot\nabla \psi(\theta_0).
\end{align*}
Since the choice of $\varphi_0$ was also arbitrary, we may consider 
\[\varphi_0(t) = \frac{2}{\pi} \arctan(\|t\|/\gamma) \quad \mbox{and}\quad\nabla \varphi_0 (t) =  \frac{2}{\pi\gamma}\frac{1}{1+(\|t\|/\gamma)^2} \frac{t}{\|t\|}.\]
This choice also satisfies that $\|\nabla \varphi_0\|_\infty < 2(\pi \gamma)^{-1}$. Plugging them into the expression, the leading constant $2(\pi \gamma)^{-1}$ will be canceled between the numerator and the denominator. Taking $\gamma \longrightarrow \infty$, the gradient of $\varphi_0$ converges to a constant. Since the gradient of $\varphi_0$ is uniformly bounded by construction, it is concluded that 
\begin{align*}
    &\liminf_{c\longrightarrow \infty}\,\liminf_{n\longrightarrow \infty}\, \inf_{T} \, \sup_{\theta \in\mathbb{R}^d}\, n\mathbb{E}_{P_\theta^n}|T(X)-\widetilde{\psi}(\theta)|^2\ge \nabla \psi(\theta_0)^{\top}\cdot \mathcal{I}(\theta_0)^{-1}\cdot \nabla \psi(\theta_0)
\end{align*}
in view of the dominated convergence theorem. 
\end{proof}
\begin{proof}[\bfseries{Proof of Proposition~\ref{prop:LAM_paramtric1} (ii)}]
Similar to the proof of Proposition~\ref{prop:LAM_paramtric1} (iii), the parameter space is given by $\Theta := B(\theta_0, cn^{-1/2})$, and we define a diffeomorphism $t \mapsto \varphi(t) = \theta_0 + cn^{-1/2}\varphi_0(t)$ where $\varphi_0$ is itself a diffeomorphism from $\mathbb{R}^d$ to $B([0], 1)$. We further assume that $\|\nabla \varphi_0\|_\infty < C$ by some universal constant $C$.  

Theorem~\ref{thm:crvtBound} is then applied to the composite function $t \mapsto \widetilde{\psi}(t) := (\psi\circ \varphi)(t)$, and for any $h \in \mathbb{R}^d$, the following inequality is obtained:
\begin{equation}
    \begin{aligned}
    &\inf_T\sup_{t \in \mathbb{R}^d}\, \mathbb{E}_{P_{\varphi(s)}}|T(X)-\widetilde{\psi}(t)|^2\\
    &\quad \ge \sup_{h, L\ge 0}\frac{(1-\lambda)^2}{1+L\lambda}\left[\frac{|\int_{\mathbb{R}^d} (\widetilde{\psi}(t)-\widetilde{\psi}(t-h))\, dQ(t)|^2}{\chi^2(\widetilde{\mathbb{P}}_h \| \lambda\widetilde{\mathbb{P}}_h + (1-\lambda)\widetilde{\mathbb{P}}_0)}- \frac{\lambda(1+1/L)}{(1-\lambda)^2}\int_{\mathbb{R}^d}|\widetilde{\psi}(t)-\widetilde{\psi}(t-h)|^2\, dQ(t) \right]_+.\label{eq:lam_chi2_step1}
    \end{aligned}
\end{equation}

We further assume that the prior distribution $Q$ has a continuously differentiable Lebesgue density. Then under the $n$ IID observations from the Hellinger differentiable distribution with the Fisher defect being zero, Lemma~\ref{lemma:chi-convergence2} implies as $\|h\|_2 \longrightarrow 0$,  
\begin{align*}
    &\chi^2(\widetilde{\mathbb{P}}_h \| \lambda\widetilde{\mathbb{P}}_h + (1-\lambda)\widetilde{\mathbb{P}}_0) \\
    &\qquad= \chi^2(Q_h\|Q)+\int_{\mathbb{R}^d} \chi^2(P^n_{\varphi(t+h)}\|\lambda P^n_{\varphi(t+h)}+(1-\lambda)P^n_{\varphi(t)})\,\frac{dQ_h^2}{dQ} \\
    &\qquad=h^{\top}\left(\mathcal{I}(Q) +n(1-\lambda)^2\int_{\mathbb{R}^d}  \nabla\varphi(t)^{\top}\, \mathcal{I}(\varphi(t))\,\nabla\varphi(t) \, dQ(t) \right)h + o(\|h\|_2^2) \\
    &\qquad= h^{\top}\left(\mathcal{I}(Q) +c^2(1-\lambda)^2\int_{\mathbb{R}^d}  \nabla\varphi_0(t)^{\top}\, \mathcal{I}(\varphi(t))\,\nabla\varphi_0(t) \, dQ(t) \right)h + o(\|h\|_2^2).
\end{align*}

The results obtained previously are now used to evaluate the expression~\eqref{eq:lam_chi2_step1}. Since the inequality holds for any $h \in \mathbb{R}^d$, it also holds as $\|h\|_2\longrightarrow0$. Here, $h$ is denoted by $u \varepsilon$ where $u \in \mathbb{S}^{d-1}$ and $\varepsilon \longrightarrow 0$. This yields the following inequality: 
\begin{align*}
    &\frac{(1-\lambda)^2}{1+L\lambda}\left[\frac{|\int_{\mathbb{R}^d} (\widetilde{\psi}(t)-\widetilde{\psi}(t-h))\, dQ(t)|^2}{\chi^2(\widetilde{\mathbb{P}}_h \| \lambda\widetilde{\mathbb{P}}_h + (1-\lambda)\widetilde{\mathbb{P}}_0)}- \frac{\lambda(1+1/L)}{(1-\lambda)^2}\int_{\mathbb{R}^d}|\widetilde{\psi}(t)-\widetilde{\psi}(t-h)|^2\, dQ(t) \right]_+\\
    &\qquad \ge \sup_{u \in \mathbb{S}^{d-1}}\,\limsup_{\varepsilon \longrightarrow 0} \,\frac{(1-\lambda)^2}{1+L\lambda}\bigg[\frac{|\int_{\mathbb{R}^d} (\widetilde{\psi}(t)-\widetilde{\psi}(t-u\varepsilon))\, dQ(t)|^2}{\chi^2(\widetilde{\mathbb{P}}_{u\varepsilon} \| \lambda\widetilde{\mathbb{P}}_{u\varepsilon} + (1-\lambda)\widetilde{\mathbb{P}}_0)} \\
    &\qquad\qquad- \frac{\lambda(1+1/L)}{(1-\lambda)^2}\int_{\mathbb{R}^d}|\widetilde{\psi}(t)-\widetilde{\psi}(t-u\varepsilon)|^2\, dQ(t) \bigg]_+\\
    &\qquad \ge \sup_{u \in \mathbb{S}^{d-1}}\,\frac{(1-\lambda)^2}{1+L\lambda}\left\{\frac{c^2n^{-1}|\int_{\mathbb{R}^d} \nabla \psi(\varphi(t))^{\top} \nabla \varphi_0(t)^{\top}u\,dQ(t)|^2}{u^{\top}\left(\mathcal{I}(Q) +c^2(1-\lambda)^2\int_{\mathbb{R}^d}  \nabla\varphi_0(t)^{\top}\, \mathcal{I}(\varphi(t))\,\nabla\varphi_0(t) \, dQ(t) \right)u}\right\}.
\end{align*}
Since the above display holds for any $\lambda \in [0,1]$, we let $\lambda \longrightarrow 0$, which results in 
\begin{align*}
    \inf_T\sup_{t \in \mathbb{R}^d}\, \mathbb{E}_{P_{\varphi(s)}}|T(X)-\widetilde{\psi}(t)|^2 & \ge \sup_{u \in \mathbb{S}^{d-1}}\,\left\{\frac{c^2n^{-1}|\int_{\mathbb{R}^d} \nabla \psi(\varphi(t))^{\top} \nabla \varphi_0(t)^{\top}u\,dQ(t)|^2}{u^{\top}\left(\mathcal{I}(Q) +c^2\int_{\mathbb{R}^d}  \nabla\varphi_0(t)^{\top}\, \mathcal{I}(\varphi(t))\,\nabla\varphi_0(t) \, dQ(t) \right)u}\right\}.
\end{align*}
The remaining proof is identical to the proof of the second statement of Proposition~\ref{prop:LAM_paramtric1}.
\end{proof}

\begin{proof}[\bfseries{Proof of Proposition~\ref{prop:LAM_paramtric1} (iv)}]


First, we observe that 
\begin{align*}
&\sup_{\|\theta_0 - \theta\| < cn^{-1/2}}\, n\mathbb{E}_{\theta}\big|T(X) - \psi(\theta)\big|^2
   \\
   & \qquad =n\sup_{\|h\| < c} \mathbb{E}_{\theta_0 + hn^{-1/2}}\big|T(X) - \psi(\theta_0 + hn^{-1/2})\big|^2 \\
   & \qquad\ge n\sup_{\|h\| < c}\frac{1}{2}\left(\mathbb{E}_{\theta_0}\big|T(X) - \psi(\theta_0)\big|^2+\mathbb{E}_{\theta_0 + hn^{-1/2}}\big|T(X) - \psi(\theta_0 + hn^{-1/2})\big|^2 \right)\nonumber.
\end{align*}
We now apply Lemma~\ref{lemma:sharper_ihbound} to two points in parameter space $\theta_0$ and $\theta_0 + hn^{-1/2}$, which implies 
\begin{align}
    &\sup_{\|h\| < c}\, n\mathbb{E}_{\theta_0 + hn^{-1/2}}\big|T(X) - \psi(\theta_0 + hn^{-1/2})\big|^2 \nonumber\\
    &\qquad\ge n\left[\frac{1 -H^2(P^n_{\theta_0 + hn^{-1/2}}, P^n_{\theta_0})}{4}\right]_+|\psi(\theta_0 + hn^{-1/2}) - \psi(\theta_0)|^2\quad \mbox{for all} \quad \|h\| < c.\nonumber
\end{align}
For the remaining of the proof, $h$ is denoted by $u \varepsilon$ where $u \in \mathbb{S}^{d-1}$ and $0 < \varepsilon < c$. We treat $\varepsilon$ and $u$ as fixed constants as $n \longrightarrow \infty$. First by the mean value theorem, we have
\begin{align}
    |\psi(\theta_0 + u \varepsilon n^{-1/2}) - \psi(\theta_0)| = \varepsilon n^{-1/2} \nabla \psi(\theta^*)^{\top}u \nonumber
\end{align}
where $\theta^* := \lambda \theta_0 + (1-\lambda)(\theta_0 + u \varepsilon n^{-1/2}) = \theta_0 - \lambda u \varepsilon n^{-1/2}$ for $\lambda \in [0,1]$. This implies that
\begin{align*}
    n |\psi(\theta_0 + u \varepsilon n^{-1/2}) - \psi(\theta_0)|^2 = \varepsilon^2 \left(\nabla \psi(\theta_0)^{\top}u\right)^2 + o(1)
\end{align*}
as $n \longrightarrow \infty$, which follows by the continuity of $\nabla \psi^{\top}$ at $\theta_0$. Next, by the Hellinger differentiability of $P_\theta$ at $\theta_0$, Lemma~\ref{lemma:local_behavior_hellinger} implies that the Hellinger distance between $P_{\theta_0 + u\varepsilon n^{-1/2}}$ and $P_{\theta_0}$ associated with \textit{one} observation converges to 
\begin{align*}
    H^2(P_{\theta_0 + u\varepsilon n^{-1/2}}, P_{\theta_0}) &= \frac{\varepsilon^2 u^{\top}\mathcal{I}(\theta_0)u}{4n} + o(n^{-1})
\end{align*}
as $n \longrightarrow \infty$. By the tensorization property of the Hellinger distance and the fact that $(1-Z_n/n)^n \longrightarrow \exp(-Z)$ as $Z_n \longrightarrow Z$ for $n \longrightarrow \infty$, we have
\begin{align*}
   H^2(P^n_{\theta_0 + u\varepsilon n^{-1/2}}, P^n_{\theta_0})= 2-2\left(1-\frac{H^2(P_{\theta_0 + u\varepsilon n^{-1/2}}, P_{\theta_0})}{2}\right)^n \longrightarrow 2-2\exp\left(-\frac{\varepsilon^2 u^{\top}\mathcal{I}(\theta_0)u}{8}\right)
\end{align*}
as $n \longrightarrow \infty$. Putting them together, we obtain 
\begin{align}
    &\liminf_{n \longrightarrow \infty}\,\sup_{\|\theta_0 - \theta\| < cn^{-1/2}}\, n\mathbb{E}_{\theta}\big|T(X) - \psi(\theta)\big|^2 \nonumber\\
    &\qquad\ge\left[-\frac{1}{4}-\frac{1}{2}\exp\left(-\frac{\varepsilon^2 u^{\top}\mathcal{I}(\theta_0)u}{8}\right)\right]_+\varepsilon^2 \left(\nabla \psi(\theta_0)^{\top}u\right)^2\quad \mbox{for all} \quad 0 \le \varepsilon < c \quad \mbox{and} \quad u \in \mathbb{S}^{d-1}\nonumber.
\end{align}
It now remains to optimize the above display for $0 \le \varepsilon < c$ and $u \in \mathbb{S}^{d-1}$ as $c \longrightarrow \infty$. Since $u^{\top}\mathcal{I}(\theta_0)u$ is a scalar, we parameterize $\varepsilon$ such that $\overline{\varepsilon} = \varepsilon \big(u^{\top}\mathcal{I}(\theta_0)u\big)^{1/2}$ and so we can optimize over  $\overline{\varepsilon}$ instead. This gives us that 
\begin{align}
    &\liminf_{c \longrightarrow \infty}\, \liminf_{n \longrightarrow \infty}\,\sup_{\|\theta_0 - \theta\| < cn^{-1/2}}\, n\mathbb{E}_{\theta}\big|T(X) - \psi(\theta)\big|^2 \nonumber\\
    &\qquad\ge\sup_{u \in \mathbb{S}^{d-1}}\, \sup_{0 \le \overline{\varepsilon} < \infty}\left(-\frac{1}{4}-\frac{1}{2}\exp\left(-\frac{\overline{\varepsilon}^2}{8}\right)\right)_+\overline{\varepsilon}^2 \left(\nabla \psi(\theta_0)^{\top}u\right)^2\big(u^{\top}\mathcal{I}(\theta_0)u\big)^{-1}\nonumber
\end{align}
For the leading constant, we obtain 
\begin{align*}
    \sup_{0 \le \overline{\varepsilon} < \infty}\left\{-\frac{1}{4}+\frac{1}{2}\exp\left(-\frac{\overline{\varepsilon}^2 }{8}\right)\right\}\overline{\varepsilon}^2 = C \approx 0.28953
\end{align*}
and the optimal $u$ is given by $u^* := \|\mathcal{I}(\theta_0)^{-1/2}\|^{-1}\mathcal{I}(\theta_0)^{-1/2}$. Therefore, we conclude that 
\begin{align}
    \liminf_{c \longrightarrow \infty}\, \liminf_{n\longrightarrow \infty} \, \inf_{T}\,\sup_{\|\theta_0 - \theta\| < cn^{-1/2}}\, n\mathbb{E}_{\theta}\big|T(X) - \psi(\theta)\big|^2 \ge C (\nabla \psi(\theta_0)^{\top} \, \mathcal{I}(\theta_0)^{-1}\nabla \psi(\theta_0))\nonumber
\end{align}
where $C \approx 0.28953$.
\end{proof}

\subsection{Proof of Proposition~\ref{prop:LAMsemiparametric}}
We first define the parametric path to be used for the proof of Proposition~\ref{prop:LAMsemiparametric}. Following Example 25.16 of \cite{van2000asymptotic}, we define bounded univariate parametric paths as follows:
\begin{equation}
    dP_t(x) = \frac{1}{C_t}\kappa(t g(x))\, dP_0(x)\nonumber
\end{equation}
where $C_t := \int \kappa(t g(x))\, dP_0$. We assume that $\kappa(0)=\kappa'(0)=1$ and $\|\kappa'\|_\infty \le K$ and $\|\kappa''\|_\infty \le K$ for some constant $K$. We then use the following result from \citet{duchi2021constrained}.
\begin{lemma}[Lemma 1 of \cite{duchi2021constrained}]\label{lemma:chi-div-on-path}

Assuming $g \in \mathcal{T}_{P_0}$ and $dP_{t}$ is the parametric path defined as above, then as $t \longrightarrow 0$, 
\begin{align*}
    \chi^2(P_{t,g} \|P_0) = t^2 \int g^2\, dP_0 + o(t^2)
\end{align*}
\end{lemma}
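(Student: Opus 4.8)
The plan is to prove Lemma~\ref{lemma:chi-div-on-path} by a direct Taylor expansion of the likelihood ratio $dP_{t,g}/dP_0 = \kappa(tg(x))/C_t$ around $t=0$, using the assumptions $\kappa(0)=\kappa'(0)=1$ and the uniform bounds $\|\kappa'\|_\infty,\|\kappa''\|_\infty \le K$ to control remainders. First I would write, by the second-order Taylor theorem with Lagrange remainder, $\kappa(tg(x)) = 1 + tg(x) + \tfrac{1}{2}t^2 g(x)^2 \kappa''(\zeta_{t,x})$ for some $\zeta_{t,x}$ between $0$ and $tg(x)$; integrating against $dP_0$ and using $\int g\, dP_0 = 0$ (since $g \in \mathcal{T}_{P_0} \subseteq L_2^0(P_0)$) gives $C_t = 1 + \tfrac{1}{2}t^2 \int g^2 \kappa''(\zeta_{t,x})\, dP_0 = 1 + O(t^2)$, where the $O(t^2)$ term is finite because $|\kappa''| \le K$ and $g \in L_2(P_0)$. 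In particular $C_t \to 1$ and $C_t$ is bounded away from $0$ for $t$ small.

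Next I would compute the chi-squared divergence directly from its definition:
\begin{align*}
\chi^2(P_{t,g}\|P_0) = \int \left(\frac{dP_{t,g}}{dP_0} - 1\right)^2 dP_0 = \int \left(\frac{\kappa(tg)}{C_t} - 1\right)^2 dP_0 = \frac{1}{C_t^2}\int \big(\kappa(tg) - C_t\big)^2\, dP_0.
\end{align*}
Using $\kappa(tg(x)) - C_t = tg(x) + t^2 R(t,x)$ where $R(t,x) := \tfrac12 g(x)^2\kappa''(\zeta_{t,x}) - \tfrac12\int g^2\kappa''(\zeta_{t,\cdot})\,dP_0$ satisfies $|R(t,x)| \le \tfrac{K}{2}(g(x)^2 + \int g^2\, dP_0)$ uniformly in $t$, I would expand the square: $\int(\kappa(tg)-C_t)^2\,dP_0 = t^2\int g^2\, dP_0 + 2t^3\int gR(t,\cdot)\,dP_0 + t^4\int R(t,\cdot)^2\,dP_0$. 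The cross term is $O(t^3)$ and the last term is $O(t^4)$, both by Cauchy--Schwarz together with the uniform bound on $R$ and $g \in L_2(P_0)$ (note $g \in L_2(P_0)$ implies $g^2 \cdot g = g^3$ need not be integrable, so instead I bound $|\int g R\,dP_0| \le (\int g^2\,dP_0)^{1/2}(\int R^2\,dP_0)^{1/2}$ and use $\int R^2\,dP_0 < \infty$). Dividing by $C_t^2 = 1 + O(t^2)$ preserves the leading term and absorbs the corrections into $o(t^2)$, yielding $\chi^2(P_{t,g}\|P_0) = t^2\int g^2\,dP_0 + o(t^2)$.

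The main obstacle is handling integrability of the remainder terms carefully: $g \in L_2^0(P_0)$ guarantees $\int g^2\,dP_0 < \infty$ but not higher moments, so every remainder estimate must be arranged so that only $\int g^2\,dP_0$ (times the bounded quantity $K$) appears, never $\int |g|^3$ or $\int g^4$. The bounds $\|\kappa'\|_\infty, \|\kappa''\|_\infty \le K$ are exactly what makes this work: wherever a power of $g$ beyond the square would appear, it is multiplied by $\kappa'$ or $\kappa''$ evaluated at an argument, and one instead uses the \emph{boundedness} of that factor plus one factor of $g^2$, pulling the rest out as $\|\kappa''\|_\infty$ or similar. A secondary minor point is justifying that $\zeta_{t,x}$ is measurable in $x$ (or avoiding this altogether by using the integral form of the Taylor remainder, $\kappa(tg) = 1 + tg + t^2g^2\int_0^1(1-s)\kappa''(stg)\,ds$, which is manifestly measurable), but this is routine. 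Since this is precisely Lemma~1 of \citet{duchi2021constrained}, the argument above merely recapitulates their computation in the present notation.
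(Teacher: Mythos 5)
The paper does not prove this lemma itself --- it is cited directly as Lemma~1 of \cite{duchi2021constrained} --- so there is no internal argument to compare against. That said, your proposed proof has a genuine gap. The second-order Taylor expansion produces a remainder $R(t,x)$ with pointwise bound $|R(t,x)| \le \tfrac{K}{2}\bigl(g(x)^2 + \int g^2\,dP_0\bigr)$, and hence $R^2$ is controlled by $g^4$ plus constants; the claim $\int R^2\,dP_0 < \infty$ that you invoke in the Cauchy--Schwarz step therefore silently requires $g \in L_4(P_0)$, which is not guaranteed by $g \in L_2^0(P_0)$. You correctly identify integrability of remainders as the main obstacle, and even note that $\int |g|^3$ must be avoided, but the fourth moment re-enters through $\int R^2\,dP_0$: going to second order in the Taylor expansion is exactly one order too far for an $L_2$-only hypothesis.

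The fix is to expand only to first order and use dominated convergence instead of a Lagrange remainder. By the mean value theorem together with $\kappa'(0)=1$ and $\|\kappa'\|_\infty \le K$, one has $|\kappa(tg(x)) - 1| \le K|t|\,|g(x)|$ for all $t,x$, while $(\kappa(tg(x)) - 1)/t \to g(x)$ pointwise as $t\to 0$. Hence $t^{-2}\bigl(\kappa(tg)-1\bigr)^2 \to g^2$ pointwise with dominating function $K^2 g^2 \in L_1(P_0)$, so $\int(\kappa(tg)-1)^2\,dP_0 = t^2\int g^2\,dP_0 + o(t^2)$. Similarly $(C_t - 1)/t \to \int g\,dP_0 = 0$ (dominated by $K|g|\in L_1(P_0)$), giving $(C_t - 1)^2 = o(t^2)$. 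Since $C_t = \int\kappa(tg)\,dP_0$, the algebraic identity
\begin{align*}
\int\bigl(\kappa(tg)-C_t\bigr)^2\,dP_0 = \int\bigl(\kappa(tg)-1\bigr)^2\,dP_0 - (C_t-1)^2
\end{align*}
holds, and dividing by $C_t^2 \to 1$ yields $\chi^2(P_{t,g}\|P_0) = t^2\int g^2\,dP_0 + o(t^2)$ under exactly the stated $L_2$ assumption, with no higher moments of $g$ needed.
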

\begin{proof}[\bfseries{Proof of Proposition~\ref{prop:LAMsemiparametric} (i)}] 
By an analogous argument from Proposition~\ref{prop:LAM_paramtric1}, we can choose 
\[\Phi = \{\phi : \mathcal{P} \mapsto \mathbb{R}\, : \, \phi \textrm{ is pathwise differentiable relative to } \mathcal{T}_{P_0}\}\]
for the application of Theorem~\ref{cor:vt-approximation}. We then apply the van Trees inequality along each parametric path. Although \citet{gassiat2013revisiting} does not provide an explicit statement for nonparametric settings, the proof remains analogous. 
\end{proof}

\begin{proof}[\bfseries{Proof of Proposition~\ref{prop:LAMsemiparametric} (ii)}] 
For each fixed $g \in L_2^0(P_0)$, consider a parametric path defined by $P_{t,g}$. Without loss of generality, we assume that the unknown data-generating distribution corresponds to $P_{0, g}$. We then consider the diffeomorphism $t \mapsto \varphi(t) = cn^{-1/2} \varphi_0(t)$ where $\varphi_0 : \mathbb{R} \mapsto (-1,1)$. As the score function $g$ is fixed throughout the proof, we denote 
the parametric path by $P_t$ and omit the dependency on $g$.

We apply Corollary~\ref{cor:lambda-zero} to the univariate functional $t \mapsto \psi(P_{\varphi(t)})$ over the joint probability measures defined as
 \[d\widetilde{\mathbb{P}}_0(x, t) := dP^n_{\varphi(t)}(x)\, dQ(t)\quad\mbox{and}\quad d\widetilde{\mathbb{P}}_h(x,t) := dP^n_{\varphi(t+h)}(x)\, dQ(t+h).\]
we then obtain that 
 \begin{align*}
    &\inf_{T}\, \sup_{|\theta| < cn^{-1/2}}\, \mathbb{E}_{P_\theta^n}|T(X)-\psi(\theta)|^2~\ge~ \sup_{h\in\mathbb{R}}\,\frac{\left|\int_{\mathbb{R}} \left(\psi(P_{\varphi(t)})-\psi(P_{\varphi(t-h)})\right)\,dQ(t)\right|^2}{\chi^2(\widetilde{\mathbb{P}}_h \| \widetilde{\mathbb{P}}_0)}.
\end{align*}
By the pathwise differentiablity of the functional, it follows that 
\begin{align*}
    &\psi(P_{\varphi(t)})-\psi(P_{\varphi(t-h)}) \\
    &\qquad= \psi(P_{ cn^{-1/2}\varphi_0(t)})-\psi(P_{cn^{-1/2}\varphi_0(t-h)}) \\
    &\qquad= cn^{-1/2} (\varphi_0(t)-\varphi_0(t-h)) \int \dot \psi_{cn^{-1/2}\varphi_0(t)}\, g_{cn^{-1/2}\varphi_0(t)} \, dP_{cn^{-1/2}\varphi_0(t)} \\
    &\qquad\qquad + o(cn^{-1/2} (\varphi_0(t)-\varphi_0(t-h))) \\
    &\qquad= cn^{-1/2} \varphi'_0(t-\lambda h)h \int \dot \psi_{cn^{-1/2}\varphi_0(t)}\, g_{cn^{-1/2}\varphi_0(t)} \, dP_{cn^{-1/2}\varphi_0(t)} + o(cn^{-1/2}h)
\end{align*}
for some constant $\lambda \in [0,1]$ possibly depending on $t$. After multiplying by $n$ both sides and taking $n \longrightarrow \infty$, we obtain 
\begin{align*}
   &\liminf_{n\longrightarrow \infty}\, n\left|\int_{\mathbb{R}} \left(\psi(P_{\varphi(t)})-\psi(P_{\varphi(t-h)})\right)\,dQ(t)\right| \\
   &\qquad =  \liminf_{n\longrightarrow \infty}\,c^2h^2 \left|\int_{\mathbb{R}}\varphi'_0(t-\lambda h)\left(\int \dot \psi_{cn^{-1/2}\varphi_0(t-h)}\, g_{cn^{-1/2}\varphi_0(t)} \, dP_{cn^{-1/2}\varphi_0(t)} \right)\, dQ(t)\right|^2+ o(c^2h^2)\\
   &\qquad \ge c^2h^2 \left(\int \dot \psi_{0}\, g \, dP_{0} \right)^2 \left|\int_{\mathbb{R}}\varphi'_0(t-\lambda h)\, dQ(t)\right|^2+ o(c^2h^2)
\end{align*}
where we use Fatou's lemma in the last step. Specifically, the tangent space under consideration corresponds to the entire $L_2^0(P_0)$ and thus $g_{cn^{-1/2}\varphi_0(t-h)} \in L_2^0(P_0)$. Since $L_2^0(P_0)$ is also a complete space, it follows that
\[g_{cn^{-1/2}\varphi_0(t)} \longrightarrow g_{0} \in L_2^0(P_0).\]
Hence, we can deduce that
\[\int \dot \psi_{cn^{-1/2}\varphi_0(t-h)}\, g_{cn^{-1/2}\varphi_0(t)} \, dP_{cn^{-1/2}\varphi_0(t)}\longrightarrow \int \dot \psi_{0}\, g_0 \, dP_{0}\]
as $n \longrightarrow \infty.$

Next, we analyze the local behavior of the chi-squared divergence on the path. By Lemma~\ref{lemma:chi-div-on-path} provided above, we obtain 
\begin{align*}
    \chi^2(P_{cn^{-1/2}\varphi_0(t+h)}\|P_{cn^{-1/2}\varphi_0(t)}) &= c^2n^{-1}\{\varphi_0(t+h)-\varphi_0(t)\}^2\int g^2\, dP_{cn^{-1/2}\varphi_0(t)}\\
    &= c^2n^{-1}h^2\{\varphi'_0(t+\lambda h)\}^2\int g^2\, dP_{cn^{-1/2}\varphi_0(t)}
\end{align*}
for some constant $\lambda \in [0,1]$. By the tensorization property of the chi-squared divergence, we have
\begin{align*}
    \chi^2(P^n_{cn^{-1/2}\varphi_0(t+h)}\|P^n_{cn^{-1/2}\varphi_0(t)}) 
    & =\left\{1+ \chi^2(P_{cn^{-1/2}\varphi_0(t+h)}\|P_{cn^{-1/2}\varphi_0(t)})\right\}^n-1\\
    &=  \left[1+c^2n^{-1} \{\varphi'_0(t+\lambda h)\}^2h^2 \int  g^2 \, dP_{cn^{-1/2}\varphi_0(t)}\right]^n -1\\
    & \longrightarrow \exp\left(c^2 \{\varphi'_0(t+\lambda h)\}^2h^2 \int  g^2 \, dP_{0}\right) -1
\end{align*}
as $n \longrightarrow \infty$. Therefore, we conclude that 
\begin{align*}
      \chi^2(\mathbb{P}_h \| \mathbb{P}_0)&= \chi^2(Q_h\|Q)+\int_{\mathbb{R}} \chi^2(P^n_{\varphi(t+h)}\|P^n_{\varphi(t)})\,\frac{dQ_h^2}{dQ}\\&= 
      \chi^2(Q_h\|Q)+\int_{\mathbb{R}} \left\{\exp\left(c^2 \{\varphi'_0(t+\lambda 
 h)\}^2h^2 \int  g^2 \, dP_{0}\right) -1\right\}\,\frac{dQ_h^2}{dQ}
\end{align*}
as $ h \longrightarrow 0$. Thus we obtain 
 \begin{align*}
    &\liminf_{n\longrightarrow\infty}\,\inf_{T}\, \sup_{|\theta| < cn^{-1/2}}\, n\mathbb{E}_{P_\theta^n}|T(X)-\psi(\theta)|^2\\
    &\qquad \ge \liminf_{n\longrightarrow\infty}\,\frac{\left|\int_{\mathbb{R}} \left(\psi(P_{\varphi(t)})-\psi(P_{\varphi(t-h)})\right)\,dQ(t)\right|^2}{\chi^2(\mathbb{P}_h \| \mathbb{P}_0)} \quad\mbox{for all } h \in\mathbb{R}\\
    &\qquad=\frac{c^2h^2 \left(\int \dot \psi_{0}\, g \, dP_{0} \right)^2 \left|\int_{\mathbb{R}}\varphi'_0(t-\lambda h)\, dQ(t)\right|^2+ o(c^2h^2)}{\chi^2(Q_h\|Q)+\int_{\mathbb{R}} \left\{\exp\left(c^2 \{\varphi'_0(t+\lambda 
 h)\}^2h^2 \int  g^2 \, dP_{0}\right) -1\right\}\,\frac{dQ_h^2}{dQ}} \quad\mbox{for all } h \in\mathbb{R}.
\end{align*}
Since the above inequality holds for any $h\in\mathbb{R}$, we take $h \longrightarrow 0$ to obtain
 \begin{align*}
&\liminf_{n\longrightarrow\infty}\,\inf_{T}\, \sup_{|\theta| < cn^{-1/2}}\, n\mathbb{E}_{P_\theta^n}|T(X)-\psi(\theta)|^2~\ge~ \frac{c^2\left(\int  \dot \psi_{0}\, g \, dP_{0}\right)^2\left|\int_{\mathbb{R}} \varphi'_0(t)\,dQ\right|^2}{\mathcal{I}(Q) + c^2\left(\int  g^2 \, dP_{0}\right)\int_{\mathbb{R}} \{\varphi'_0(t)\}^2 \, dQ} \\
    \implies & 
    \liminf_{c\longrightarrow\infty}\,\liminf_{n\longrightarrow\infty}\,\inf_{T}\, \sup_{|\theta| < cn^{-1/2}}\, n\mathbb{E}_{P_\theta^n}|T(X)-\psi(\theta)|^2~\ge~ \frac{\left(\int  \dot \psi_{0}\, g \, dP_{0}\right)^2\left|\int_{\mathbb{R}} \varphi'_0(t)\,dQ\right|^2}{\left(\int  g^2 \, dP_{0}\right)\int_{\mathbb{R}} \{\varphi'_0(t)\}^2 \, dQ}.
\end{align*}
As shown in the proof of Proposition~\ref{prop:LAM_paramtric1}, it follows that $\sup_{\varphi_0}\, \{\mathbb{E}_Q\varphi_0'(t)\}^2/\mathbb{E}_Q \{\varphi_0'(t)\}^2=1$ with $\arctan$. Finally, taking the supremum of the score functions $g$ over $L_2^0(P_0)$, we obtain 
\begin{align*}
    &\sup_{g\in L_2^0(P_0)}\, \liminf_{c\longrightarrow\infty}\,\liminf_{n\longrightarrow\infty}\,\inf_{T}\, \sup_{|\theta| < cn^{-1/2}}\, n\mathbb{E}_{P_{\theta,g}^n}|T(X)-\psi(\theta)|^2~\ge~ \sup_{g\in L_2^0(P_0)}\,\frac{\left(\int  \dot \psi_{0}\, g \, dP_{0}\right)^2}{\int  g^2 \, dP_{0}} = \int  \dot \psi_{0}^2\, dP_{0}
\end{align*}
where the last equality follows by the fact that $L_2^0(P_0)$ is linear closure and by definition, the efficient influence function $\dot \psi_{0}$ is contained in $L_2^0(P_0)$ (See Lemma 2.2 of \citet{van2002semiparametric})

\end{proof}
\begin{proof}[\bfseries{Proof of Proposition~\ref{prop:LAMsemiparametric} (iii)}]
Following the notation and the setting from the previous proof, we apply Theorem~\ref{thm:crvtBound_hellinger} to the univariate functional $t \mapsto \psi(P_{\varphi(t)})$ over the joint probability measures $\widetilde{\mathbb{P}}_0(x, t)$ and $\widetilde{\mathbb{P}}_h(x,t)$, and we obtain 
\begin{equation*}
    \begin{aligned}
    &\inf_{T} \, \sup_{|\theta| < cn^{-1/2}}\, \mathbb{E}_{P_\theta^n}|T(X)-\psi(\theta)|^2\\
    &\qquad\ge\left[\frac{|\int_{\mathbb{R}} \left(\psi(P_{\varphi(t)})-\psi(P_{\varphi(t-h)})\right)\,dQ(t)|}{2H(\widetilde{\mathbb{P}}_0, \widetilde{\mathbb{P}}_h)} -\left(\int_{\mathbb{R}} \left(\psi(P_{\varphi(t)})-\psi(P_{\varphi(t-h)})\right)^2\,dQ(t)\right)^{1/2}\right]^2_+
\end{aligned}
\end{equation*}
for all $h \in\mathbb{R}$. As shown in the previous proof, it follows that 
\begin{align*}
   &\liminf_{n\longrightarrow \infty}\, n\left|\int_{\mathbb{R}} \left(\psi(P_{\varphi(t)})-\psi(P_{\varphi(t-h)})\right)\,dQ(t)\right|  \\
   &\qquad \ge c^2h^2 \left(\int \dot \psi_{0}\, g \, dP_{0} \right)^2 \left|\int_{\mathbb{R}}\varphi'_0(t-\lambda h)\, dQ(t)\right|^2+ o(c^2h^2)
\end{align*}
by Fatou's lemma. Similarly, it also follows that 
\begin{align*}
   &\liminf_{n\longrightarrow \infty}\, n\left(\int_{\mathbb{R}} \left(\psi(P_{\varphi(t)})-\psi(P_{\varphi(t-h)})\right)^2\,dQ(t)\right)^{1/2}  = O(c^2h^2).
\end{align*}
Next, we study the local behavior of the Hellinger distance. Since $\{P_\theta : \theta \in \Theta\}$ is a QMD family, the Hellinger distance associated with \textit{one} observation follows:
\begin{align}
    &H^2(P_{\varphi(t+h)}, P_{\varphi(t)}) \nonumber\\
    &\qquad = \int \left(dP_{cn^{-1/2}\varphi_0(t+h)}^{1/2}-dP_{cn^{-1/2}\varphi_0(t)}^{1/2}\right)^2\nonumber\\
    &\qquad = \frac{1}{4}c^2n^{-1}
    \int \{\varphi_0(t+h)-\varphi_0(t)\}^2\, g_{cn^{-1/2}\varphi_0(t)}^2 \, dP_{cn^{-1/2}\varphi_0(t)} + o(c^2n^{-1}\{\varphi_0(t+h)-\varphi_0(t)\}^2)\nonumber\\
    &\qquad = \frac{1}{4}c^2n^{-1}h^2
    \int \left\{\varphi'_0(t+\lambda h)\right\}^2\, g_{cn^{-1/2}\varphi_0(t)}^2 \, dP_{cn^{-1/2}\varphi_0(t)} + o(c^2n^{-1}h^2)\nonumber
\end{align}
and by the tensorization property,
\begin{align}
    &\liminf_{n\longrightarrow \infty}\,H^2\left(P^n_{\varphi(t+h)}, P^n_{\varphi(t)}\right) \nonumber\\
    &\qquad = \liminf_{n\longrightarrow \infty}\,\left\{2-2\left(1- \frac{H^2\left(P_{\varphi(t+h)}, P_{\varphi(t)}\right)}{2}\right)^n\right\}\nonumber\\
    &\qquad=\liminf_{n\longrightarrow \infty}\, \left\{2-2\left(1- \frac{c^2h^2
    \int \left\{\varphi'_0(t+\lambda h)\right\}^2\, g_{cn^{-1/2}\varphi_0(t)}^2 \, dP_{cn^{-1/2}\varphi_0(t)} + o(c^2n^{-1}h^2)}{8n}\right)^n\right\}\nonumber\\
    &\qquad= 2-2\exp\left(-\frac{c^2h^2
    \int \left\{\varphi'_0(t+\lambda h)\right\}^2\, g^2 \, dP_{0}}{8}\right).\nonumber
\end{align}
Thus we obtain 
\begin{align}
    &\liminf_{n \longrightarrow \infty}\, \inf_{T} \, \sup_{|\theta| < cn^{-1/2}}\, n\mathbb{E}_{P_\theta^n}|T(X)-\psi(\theta)|^2\nonumber\\
    &\qquad\ge\liminf_{n\longrightarrow \infty}\, \frac{n|\int_{\mathbb{R}} \left(\psi(P_{\varphi(t)})-\psi(P_{\varphi(t-h)})\right)\,dQ(t)|^2}{4H^2\left(\widetilde{\mathbb{P}}_0, \widetilde{\mathbb{P}}_h\right)} -O(c^2h^2)\quad\mbox{for all } h \in\mathbb{R}\nonumber\\
    &\qquad= \frac{c^2h^2 \left(\int \dot \psi_{0}\, g \, dP_{0} \right)^2 \left|\int_{\mathbb{R}}\varphi'_0(t-\lambda h)\, dQ(t)\right|^2+ o(c^2h^2)}{4\left(H^2(Q_h, Q_0) + \int_{\mathbb{R}} \left\{2-2\exp\left(-\frac{c^2h^2
    \int \left\{\varphi'_0(t+\lambda h)\right\}^2\, g^2 \, dP_{0}}{8}\right)\right\}\,dQ_h^{1/2} \, dQ^{1/2}\right)} -O(c^2h^2)\nonumber
\end{align}
for all $ h \in\mathbb{R}$. Since the above inequality holds for any $h\in\mathbb{R}$, we take $h \longrightarrow 0$ to obtain
\begin{align*}
&\liminf_{n\longrightarrow\infty}\,\inf_{T}\, \sup_{|\theta| < cn^{-1/2}}\, n\mathbb{E}_{P_\theta^n}|T(X)-\psi(\theta)|^2~\ge~ \frac{c^2\left(\int  \dot \psi_{0}\, g \, dP_{0}\right)^2\left|\int_{\mathbb{R}} \varphi'_0(t)\,dQ\right|^2}{\mathcal{I}(Q) + c^2\left(\int  g^2 \, dP_{0}\right)\int_{\mathbb{R}} \{\varphi'_0(t)\}^2 \, dQ}.
\end{align*}
This follows since 
\begin{align*}
    &\lim_{h\longrightarrow 0}\, H^2(Q_h, Q_0) + \int_{\mathbb{R}} \left\{2-2\exp\left(-\frac{c^2h^2
    \int \left\{\varphi'_0(t+\lambda h)\right\}^2\, g^2 \, dP_{0}}{8}\right)\right\}\,dQ_h^{1/2} \, dQ^{1/2} \\
    &\qquad = \frac{1}{4}h^2 \mathcal{I}(Q)+ \frac{1}{4}c^2h^2\left(\int g^2 \, dP_{0}\right)\int_{\mathbb{R}}\left\{\varphi'_0(t)\right\}^2\,dQ 
\end{align*} 
by Taylor expansion. The rest of the proof is identical to the statement (ii) of Proposition~\ref{prop:LAMsemiparametric}.
\end{proof}
\begin{proof}[\bfseries{Proof of Proposition~\ref{prop:LAMsemiparametric} (iv)}]
Similar to the proof of the statement (iv) of Proposition~\ref{prop:LAM_paramtric1}, we apply Lemma~\ref{lemma:sharper_ihbound} to the QMD parametric paths. Without loss of generality, we assume that $\theta_0 = 0$. Then for fixed $g \in \mathcal{T}_{P_0}$, we invoke Lemma~\ref{lemma:sharper_ihbound} as follows:
    \begin{align*}
&\sup_{|\theta| < cn^{-1/2}}\, n\mathbb{E}_{P^n_{\theta,g}}\big|T(X) - \psi(P_{\theta,g})\big|^2
 \\
 &\qquad \ge n\sup_{|h| < c}\frac{1}{2}\left(\mathbb{E}_{0}\big|T(X) - \psi(P_{0})\big|^2+\mathbb{E}_{hn^{-1/2}}\big|T(X) - \psi(P_{hn^{-1/2}})\big|^2 \right)\nonumber\\
    &\qquad\ge n\left[\frac{1 -H^2(P^n_{hn^{-1/2}}, P^n_{0})}{4}\right]_+|\psi(P_{hn^{-1/2}}) - \psi(P_{0})|^2\quad \mbox{for all} \quad |h| < c\nonumber.
\end{align*}
Since $\psi$ is pathwise differentiable, it follows that 
\begin{align*}
   &\liminf_{n\longrightarrow \infty}\, n\left|\psi(P_{0})-\psi(P_{hn^{-1/2}})\right| = h^2 \left(\int \dot \psi_{0}\, g \, dP_{0} \right)^2.
\end{align*}
Also by the QMD assumption of the parametric path, we have

\begin{align}
    H^2(P_{hn^{-1/2}}, P_{0}) &= \int \left(dP_{hn^{-1/2}}^{1/2}-dP_{0}^{1/2}\right)^2\nonumber= \frac{1}{4}h^2n^{-1}
    \int g^2 \, dP_{0} + o(h^2n^{-1})\nonumber
\end{align}
followed by the tensorization property of the Hellinger distance, 
\begin{align*}
    \liminf_{n\longrightarrow \infty}H^2(P^n_{hn^{-1/2}}, P^n_{0}) &= \liminf_{n\longrightarrow\infty}\, \left\{2-2\left(1- \frac{H^2\left(P_{hn^{-1/2}}, P_{0}\right)}{2}\right)^n\right\} \\
    &= \liminf_{n\longrightarrow\infty}\, \left\{2-2\left(1- \frac{h^2\int g^2 \, dP_{0} + o(h^2n^{-1})}{8n}\right)^n\right\} \\
    &= 2-2\exp\left(-\frac{h^2\int g^2 \, dP_{0}}{8}\right).
\end{align*}
Putting them together, we obtain 
\begin{align}
    &\liminf_{c \longrightarrow \infty}\,\liminf_{n \longrightarrow \infty}\,\sup_{|\theta| < cn^{-1/2}}\, n\mathbb{E}_{\theta}\big|T(X) - \psi(P_{\theta})\big|^2 \nonumber\\
    &\qquad \ge\sup_{0 \le h < \infty}\left[-\frac{1}{4}-\frac{1}{2}\exp\left(-\frac{h^2\int g^2 \, dP_{0}}{8}\right)\right]_+h^2 \left(\int \dot \psi_{0}\, g \, dP_{0} \right)^2. \nonumber
\end{align}
Similar to the proof of Proposition~\ref{prop:LAM_paramtric1} (iv), we let $\widetilde{h} = h  \left(\int g^2 \, dP_{0}\right)^{1/2}$ and optimize over $\widetilde{h}$ instead. This yields that 
\begin{align*}
    &\liminf_{c \longrightarrow \infty}\,\liminf_{n \longrightarrow \infty}\,\sup_{|\theta| < cn^{-1/2}}\, n\mathbb{E}_{\theta}\big|T(X) - \psi(P_{\theta})\big|^2 \ge C\left(\int \dot \psi_{0}\, g \, dP_{0} \right)^2 / \left(\int g^2 \, dP_{0} \right) \nonumber
\end{align*}
where $C \approx 0.28953$. Using Lemma 2.2 of \citet{van2002semiparametric} and taking the supremum over the linear closure of the tangent set, we conclude that 
\begin{align*}
    \sup_{g \in\mathcal{T}_{P_0}}\,\liminf_{c \longrightarrow \infty}\,\liminf_{n \longrightarrow \infty}\,\sup_{|\theta| < cn^{-1/2}}\, n\mathbb{E}_{\theta}\big|T(X) - \psi(P_{\theta})\big|^2 \ge C\int \dot \psi^2_{0}\, dP_0\nonumber
\end{align*}
where $C \approx 0.28953$.
\end{proof}
\subsection{Proof of Lemma~\ref{lemma:alpha-beta}}
To begin, we define a diffeomorphism $\varphi_0 : \mathbb{R}^d \mapsto B([0], 1)$ between $\mathbb{R}^d$ and an open unit ball in $\mathbb{R}^d$. We then construct the following mapping:
\begin{align}\label{eq:diffeomorphism_n_gamma}
    \varphi(s) := \theta_0 + cn^{-1/\alpha}\varphi_0(s)
\end{align}
for all $s \in \mathbb{R}^d$. The resulting mapping is a valid diffeomorphism between $\mathbb{R}^d$ and $B(\theta_0,cn^{-1/\alpha})$. Additionally by the differentiablity of $\varphi_0$, we have $\nabla \varphi(s) =c n^{-1/\alpha}\nabla \varphi_0(s)$. Similar to several preceding proofs such as the proof of Proposition~\ref{prop:LAM_paramtric1}, we apply Theorem~\ref{thm:crvtBound_hellinger} to the composition function $t \mapsto \widetilde{\psi}(t) := (\psi \circ \varphi)(t)$ and the statistical models 
 \begin{align*}
    d\widetilde{\mathbb{P}}^n_0(x,t) := dP^n_{\varphi(t)}(x)\,dQ(t) \quad \mbox{and} \quad  d\widetilde{\mathbb{P}}^n_h(x,t) := dP^n_{\varphi(t+h)}(x)\, dQ(t+h),
\end{align*}
which implies 
\begin{align*}
    &\inf_{T} \, \sup_{\theta \in\mathbb{R}^d}\, \mathbb{E}_{\theta}|T(X)-\widetilde \psi(\theta)|^2\\
    &\qquad\ge \bigg[\frac{\left|\int_{\mathbb{R}^d} \left(\widetilde \psi(t)-\widetilde \psi(t-h)\right)\,dQ(t)\right|}{2H(\widetilde{\mathbb{P}}_0, \widetilde{\mathbb{P}}_h)} -\left(\int_{\mathbb{R}^d} \left|\widetilde \psi(t)-\widetilde \psi(t-h)\right|^2 \, dQ(t)\right)^{1/2}\bigg]^2_+.
\end{align*}
for any $h \in \mathbb{R}^d$. Let $t_*$ be a point in $\mathbb{R}^d$ such that $t_* :=  t - \lambda h$ for some $\lambda \in [0,1]$ and $\vartheta_t := \theta_0 + cn^{-1/\alpha}\varphi_0(t)$. Assuming that $n$ is large enough such that $\|\theta_0 - \vartheta_t\| < \delta/2$, then by the H\"{o}lder-smoothness assumption of $\psi$, we have 
\begin{align*}
    &\widetilde{\psi}(t)-\widetilde{\psi}(t-h)\\
    &\qquad = \psi(\theta_0 + cn^{-1/\alpha}\varphi_0(t)) - \psi(\theta_0 + cn^{-1/\alpha}\varphi_0(t-h))\\
    &\qquad= \psi(\vartheta_t) - \psi(\vartheta_t - cn^{-1/\alpha}(\varphi_0(t)-\varphi_0(t-h))) \\
    &\qquad= C_{2}\big(\vartheta_t, \mathrm{sign}(\varphi_0(t)-\varphi_0(t-h))\big)c^\beta n^{-\beta/\alpha}\|\varphi_0(t)-\varphi_0(t-h)\|^\beta + o(c^\beta n^{-\beta/\alpha}\|\varphi_0(t)-\varphi_0(t-h)\|^\beta)\\
    &\qquad= C_{2}\big(\vartheta_t, \mathrm{sign}(\varphi_0(t)-\varphi_0(t-h))\big)c^\beta n^{-\beta/\alpha}\|\nabla \varphi_0(t_*)h\|^\beta + o(c^\beta n^{-\beta/\alpha}\|\nabla \varphi_0(t_*)h\|^\beta).
\end{align*}
Multiplying by $n^{\beta/\alpha}$ and taking the limit as $n \longrightarrow \infty$, we have 
\begin{align*}
    &\liminf_{n\longrightarrow \infty }\, n^{\beta/\alpha}\left|\int_{\mathbb{R}^d} \left(\widetilde \psi(t)-\widetilde \psi(t-h)\right)\,dQ(t)\right|  \\
    &\qquad ~\ge~ c^\beta \int_{\mathbb{R}^d} C_{2}\big(\vartheta_t, \mathrm{sign}(\varphi_0(t)-\varphi_0(t-h))\big)\|\nabla \varphi_0(t-\lambda h)h\|^\beta \, dQ(t)
\end{align*}
by Fatou's lemma. We also have that 
\begin{align*}
    &\liminf_{n\longrightarrow \infty }\, \left(\int_{\mathbb{R}^d} \left|\widetilde \psi(t)-\widetilde \psi(t-h)\right|^2\,dQ(t)\right)^{1/2} \\
    &\qquad = c^\beta \left(\int_{\mathbb{R}^d} C_{2}^2\big(\vartheta_t, \mathrm{sign}(\varphi_0(t)-\varphi_0(t-h))\big)\|\nabla \varphi_0(t-\lambda h)h\|^{2\beta} \, dQ(t)\right)^{1/2}.
\end{align*}
by the dominated convergence theorem, which follows since $\nabla \varphi_0$ is continuous and thus bounded as $h \longrightarrow 0$. Moving onto the Hellinger distance, by the H\"{older}-smoothness assumption, we have 
\begin{align*}
&H^2\left(P_{\varphi(t+h)}, P_{\varphi(t)}\right)\\
&\qquad =H^2\left(P_{\theta_0 + cn^{-1/\alpha}\varphi_0(t+h)}, P_{\theta_0 + cn^{-1/\alpha}\varphi_0(t)}\right)\\
&\qquad=H^2\left(P_{\theta_t+cn^{-1/\alpha}(\varphi_0(t+h)-\varphi_0(t))}, P_{\theta_t}\right)\\
    &\qquad= C_{1}\big(\vartheta_t, \mathrm{sign}(\varphi_0(t+h)-\varphi_0(t))\big)c^\alpha n^{-1}\|\varphi_0(t+h)-\varphi_0(t)\|^\alpha + o(c^\alpha n^{-1}\|\varphi_0(t+h)-\varphi_0(t)\|^\alpha)\\
    &\qquad= C_{1}\big(\vartheta_t, \mathrm{sign}(\varphi_0(t+h)-\varphi_0(t))\big)c^\alpha n^{-1}\|\nabla \varphi_0(t+\lambda h)h\|^\alpha + o(c^\alpha n^{-1}\|\nabla \varphi_0(t+\lambda h)h\|^\alpha),
\end{align*}
which then followed by the tensorization property of the Hellinger distance to yield
\begin{align*}
    \liminf_{n\longrightarrow \infty}\, H^2\left(P^n_{\varphi(t+h)}, P^n_{\varphi(t)}\right) &=\liminf_{n\longrightarrow \infty}\, \left\{ 2-2 \left(1- \frac{H^2\left(P_{\theta_0 + cn^{-1/\alpha}\varphi_0(t)}, P_{\theta_0 + cn^{-1/\alpha}\varphi_0(t-h)}\right)}{2}\right)^n\right\}\\
    &= 2-2 \exp \left(- \frac{C_{1}\big(\vartheta_t, \mathrm{sign}(\varphi_0(t+h)-\varphi_0(t))\big)c^\alpha \|\nabla \varphi_0(t +\lambda h)h\|^\alpha}{2}\right).
\end{align*}
We then conclude that 
\begin{align*}
    &H^2(\widetilde{\mathbb{P}}_0, \widetilde{\mathbb{P}}_h) \\
    &\qquad = H^2(Q_h, Q_0) + \int_{\mathbb{R}} H^2(P^n_{\varphi(t + h)}, P^n_{\varphi(t)})\,dQ_h^{1/2} \, dQ^{1/2}\\
    &\qquad= H^2(Q_h, Q_0) + \int_{\mathbb{R}} \left\{2-2\exp\left(-\frac{C_{1}\big(\vartheta_t, \mathrm{sign}(\varphi_0(t+h)-\varphi_0(t))\big)c^\alpha \|\nabla \varphi_0(t +\lambda h)h\|^\alpha}{2}\right)\right\}\,dQ_h^{1/2} \, dQ^{1/2}\\ 
    &\qquad= 2 - 2\int_{\mathbb{R}} \exp\left(-\frac{C_{1}\big(\vartheta_t, \mathrm{sign}(\varphi_0(t+h)-\varphi_0(t))\big)c^\alpha \|\nabla \varphi_0(t +\lambda h)h\|^\alpha}{2}\right)\,dQ_h^{1/2} \, dQ^{1/2}.
\end{align*} 

For the ease of notation, we denote the constants as follows: 
\[\widetilde{C}_1(t) := C_{1}\big( \vartheta_t, \mathrm{sign}(\varphi_0(t+h)-\varphi_0(t))\big)\quad \mbox{and} \quad \widetilde{C}_2(t) := C_{2}\big(\vartheta_t, \mathrm{sign}(\varphi_0(t)-\varphi_0(t-h))\big).\]
Then, putting these intermediate results together, we have 
\begin{align*}
    &\liminf_{n\longrightarrow\infty}\,\inf_{T} \, \sup_{\theta \in\mathbb{R}^d}\, n^{2\beta/\alpha}\mathbb{E}_{\theta}|T(X)-\widetilde \psi(\theta)|^2\\
    &\qquad\ge \left[\frac{c^\beta \int_{\mathbb{R}^d} \widetilde{C}_{2}(t)\|\nabla \varphi_0(t-\lambda h)h\|^{\beta} \, dQ(t)}{2\left\{2 - 2\int_{\mathbb{R}} \exp\left(-\widetilde{C}_{1}(t)c^\alpha \|\nabla \varphi_0(t +\lambda h)h\|^\alpha/2\right)\,dQ_h^{1/2} \, dQ^{1/2}\right\}^{1/2}} \right.\\
    &\qquad\qquad-c^\beta \left(\int_{\mathbb{R}^d} \widetilde{C}^2_{2}(t)\|\nabla \varphi_0(t-\lambda h)h\|^{2\beta} \, dQ(t)
    \right)^{1/2}\bigg]^2_+ 
\end{align*}
where the above display holds for any $h \in \mathbb{R}^d$. We denote $\bar{h} = hc$ and let $c \longrightarrow \infty$. This further simplifies the expression to 

\begin{align*}
    &\liminf_{c\longrightarrow\infty}\liminf_{n\longrightarrow\infty}\,\inf_{T} \, \sup_{\theta \in\mathbb{R}^d}\, n^{2\beta/\alpha}\mathbb{E}_{\theta}|T(X)-\widetilde \psi(\theta)|^2\\
    &\qquad\ge \liminf_{\bar{h}\longrightarrow\infty}\left[\frac{\int_{\mathbb{R}^d} \widetilde{C}_{2}(t)\|\nabla \varphi_0(t)\bar{h}\|^{\beta} \, dQ(t)}{2\left\{2 - 2\int_{\mathbb{R}} \exp\left(-\widetilde{C}_{1}(t)\|\nabla \varphi_0(t )\bar{h}\|^\alpha/2\right)\,dQ\right\}^{1/2}}\right. \\
    &\qquad\qquad-\left(\int_{\mathbb{R}^d} \widetilde{C}^2_{2}(t)\|\nabla \varphi_0(t)\bar{h}\|^{2\beta} \, dQ(t)
    \right)^{1/2}\bigg]^2_+.
\end{align*}

We claim that there exists the choice of $\varphi_0$ and $Q$ to make the lower bound strictly greater than zero. Since the above inequality holds for any $\varphi_0$ and $\bar{h} \in \mathbb{R}^d$, we consider the class of diffeomorphism such that 
\[\|\nabla\varphi_0\bar{h}\| \longrightarrow \gamma \quad \text{as} \quad \|\bar{h}\| \longrightarrow \infty. \]
Denoting $\bar{h} = u \varepsilon$ for $u \in \mathbb{S}^{d-1}$, this can be obtained, for instance, by 
\[\varphi_0(t) = \frac{2}{\pi} \arctan\left(\frac{\pi \gamma \|t\|}{2\varepsilon}\right) \quad \mbox{and}\quad\nabla \varphi_0 (t) =  \frac{\gamma}{\varepsilon}\left\{1+\left(\frac{\pi \gamma \|t\|}{2\varepsilon}\right)^2\right\}^{-1} \frac{t}{\|t\|}.\]
For any choice of $\gamma > 0$, the image of $\varphi_0$ is $B([0], 1)$ and $\|\nabla\varphi_0\bar{h}\| \longrightarrow \gamma$ as $\|\bar h\|\longrightarrow \infty$. Since $C_{2}(t,\mathrm{sign}(h)) \le \overline{C}_1$ and $C_{2}(t,\mathrm{sign}(h)) \le \overline{C}_2$ uniformly, we obtain
\begin{align*}
    &\liminf_{c\longrightarrow\infty}\liminf_{n\longrightarrow\infty}\,\inf_{T} \, \sup_{\theta \in\mathbb{R}^d}\, n^{2\beta/\alpha}\mathbb{E}_{\theta}|T(X)-\widetilde \psi(\theta)|^2\\
    &\qquad \ge  \left[\frac{\gamma^\beta \int \widetilde{C}_2(t)\, dQ}{2\left\{2 - 2 \exp\left(-\overline{C}_{1}\gamma^\alpha/2\right)\right\}^{1/2}} -\gamma^\beta\left(\int \widetilde{C}^2_{2}(t) \, dQ
    \right)^{1/2}\right]^2_+.
\end{align*}

Choosing $\gamma = (2\Delta /\overline{C}_1)^{1/\alpha}$ for some $\Delta > 0$, the lower bound becomes
\begin{align*}
  &\left[\frac{(2\Delta /\overline{C}_1)^{\beta/\alpha} \int \widetilde{C}_2(t)\, dQ}{2\left\{2 - 2 \exp\left(-\Delta\right)\right\}^{1/2}} -(2\Delta /\overline{C}_1)^{\beta/\alpha}\left(\int \widetilde{C}^2_{2}(t) \, dQ
    \right)^{1/2}\right]^2_+ \\
    &\qquad = (2\Delta /\overline{C}_1)^{2\beta/\alpha}\left[\frac{\int \widetilde{C}_2(t)\, dQ}{2\left\{2 - 2 \exp\left(-\Delta\right)\right\}^{1/2}}-\left(\int \widetilde{C}^2_{2}(t) \, dQ
    \right)^{1/2}\right]_+^2.
\end{align*}
The final display is strictly greater than zero when 
\begin{align*}
    &\frac{\int \widetilde{C}_2(t)\, dQ}{2\left\{2 - 2 \exp\left(-\delta\right)\right\}^{1/2}}-\left(\int \widetilde{C}^2_{2}(t) \, dQ
    \right)^{1/2} > 0 \Longrightarrow \frac{\int \widetilde{C}_2(t)\, dQ}{\left(\int \widetilde{C}^2_{2}(t) \, dQ
    \right)^{1/2}} > 2\left\{2 - 2 \exp\left(-\Delta\right)\right\}^{1/2}.
\end{align*}
Hence, it suffices to show that there exists $Q$ such that the 
\[\int \widetilde{C}_2(t)\, dQ/\left(\int \widetilde{C}^2_{2}(t) \, dQ
    \right)^{1/2} > \varepsilon\]
satisfies for any $\varepsilon > 0$. By the assumption that $\overline{C}_2$ is strictly positive and it is continuous in $t$ around $\overline{C}_2$, we have $\int \widetilde{C}_2(t)\, dQ > \underbar{c}_2$ and $\int \widetilde{C}^2_{2}(t) \, dQ \le \|C^2_2\|_\infty< \infty$ by the H\"{o}lder inequality. Finally, selecting $\Delta$ small enough such that 
\begin{align*}
\underbar{c}_2/\overline{C}_2^2 > 2\left\{2 - 2 \exp\left(-\Delta\right)\right\}^{1/2},
\end{align*}
the lower bound becomes strictly positive. 
\newpage
\section{Proof of Lemma~\ref{lemma:density-lam}}\label{supp:density-estimation}
Let $X_1, \ldots, X_n$ be drawn IID from the unknown density $f_0$. In this section, we develop a local asymptotic minimax lower bound for the density at $X=x_0$, which is a non-differentiable functional. We consider the approximation functional via convolution such that 
\begin{align}
	\psi(f) := f(x_0)\quad \textrm{and}\quad \phi(f) := \int h^{-1}K\left(\frac{x-x_0}{h}\right) \, f(x)\, dx\nonumber
\end{align}
where $K$ is a function, satisfying \ref{as:kernel}, and $h>0$ is a bandwidth parameter. We assume that $f_0$ is $s$-times continuously differentiable at $x_0$. We consider the following set of density functions:
\begin{align}
	\mathcal{F}(s, x_0, M) := \left\{f \textrm{ is $s$-times differentiable at $x_0$, satisfying } 
	|f^{(s)}(x_0)| \le (1+M)|f^{(s)}_0(x_0)|\nonumber
\right\}
\end{align}
Furthermore, we define the following localized set of density functions 
\begin{align}
	U(\delta; \varepsilon) := \left\{f\in\mathcal{F}(s, x_0, M)\,:\, \int_{|x-x_0|\le \varepsilon}\, |f^{(k)}(x)-f^{(k)}_0(x)|\, dx\le \delta\quad \textrm{for all } k \in \{0, 1, \ldots, s\}\right\}\nonumber.
\end{align}

\begin{proof}[\bfseries{Proof of Lemma~\ref{lemma:density-lam}}]Throughout the proof, $\varepsilon$ and $M$ are considered fixed. With the choice of a kernel function, satisfying~\ref{as:kernel}, the approximation functional $\phi$ is absolutely continuous with respect to $f$ and thus Theorem~\ref{cor:vt-approximation} applies. It then implies for any approximating functional $\phi$,
\begin{align}\label{eq:lowerbound-density}
&\inf_T\, \sup_{f \in U(\delta; \varepsilon)}\,\mathbb{E}_{f}|T(X)-\psi(f)|^2 \ge \sup_{Q}\, \left[\sqrt{\frac{|\int_{\Theta} \phi'(f)\,dQ|^2}{\mathcal{I}(Q)+\int_{\Theta}\mathcal{I}(f)\, dQ}} - \|\psi-\phi\|_{L_2(Q)}\right]_+^2.
\end{align}
where the probability measure $Q$ is placed over the collection of $f$, which we formalize by constructing a differentiable parametric submodel. Since the current result holds for any choice of $\phi$, and thus for any $h > 0$, the lower bound still holds by restricting ourselves to the choice of $h \in (0, \varepsilon \wedge 1)$. Similarly, the lower bound is agnostic to the choice of $K$. Thus we focus on the $K$ whose support is contained in $[-1,1]$. Throughout this section, we denote by $\mathfrak{C}$ a fixed constant that may vary line by line.

\paragraph{Local parameter space}
For any function $g \in L_0^2(f_0)$, that is, a mean-zero and finite variance under the density $f_0$, we define a differentiable parametric path of densities as 
\begin{align}
	f_t(x) = (1+tg(x))f_0(x)\nonumber
\end{align}
for all $x \in \mathcal{X}$ and $t \geq 0$. As Theorem~\ref{cor:vt-approximation} holds for any choice of differentiable paths, we consider the choice of $g=\Phi_0$ such that 
\begin{align}
	\Phi_0 := x\mapsto h^{-1}K\left(\frac{x-x_0}{h}\right) - \int h^{-1}K\left(\frac{x-x_0}{h}\right) f_0(x) \, dx.\nonumber
\end{align}
It is straightforward to verify $\Phi_0$ is a valid score function. In fact, $\Phi_0$ is an efficient influence function of the functional $\mathbb{E}_f[\phi]$ relative to the maximal tangent space. We then consider the particular choice of paths given by $\widetilde f_t(x) := (1+t\Phi_0(x))f_0(x)$. 

First, we ensure the choice of parameter permits $\widetilde f_t$ to belong to the class of interest for all $t$. The score function $\Phi_0$ is bounded uniformly as
\begin{align}
	\|\Phi_0\|_\infty  \le \left\|h^{-1}K\left(\frac{x-x_0}{h}\right) - \int h^{-1}K\left(\frac{x-x_0}{h}\right) f_0(x) \, dx\right\|_\infty \le 2h^{-1}\|K\|_\infty \nonumber.
\end{align}
Since $K$ is uniformly bounded, it follows that  
\begin{align}
	1+t\Phi_0(x) \ge 1-t\|\Phi_0\|_\infty \ge 1-2th^{-1}\|K\|_\infty .\nonumber
\end{align}
Therefore the induced path generates a nonnegative function (that integrates to one) so long as $t\le h/(2\|K\|_\infty)$. Next, we examine the smoothness of the induced path. Since $\Phi_0^{(k)}(x) = h^{-(k+1)}K^{(k)}\left((x-x_0)/h\right)$, it follows 
\begin{align}
	|\widetilde f_t^{(s)}(x_0)|&\le |f_0^{(s)}(x_0)|+\left|\sum_{k=0}^s {s \choose k} t\left(\frac{d^{s-k}}{dx^{s-k}}\Phi_0(x_0)\right)f_0^{(k)}(x_0)\right|\nonumber\\
&\le | f_0^{(s)}(x_0)| + \sum_{k=0}^s {s \choose k} \left(\frac{t}{h^{s-k+1}} |K^{(s-k)}(0)|\right)| f_0^{(k)}(x_0)|\nonumber\\
	&\le | f_0^{(s)}(x_0)| + t\left\{\frac{|K^{(s)}(0)|}{h^{s+1}}f_0(x_0) + \frac{\mathfrak{C}}{h^s}\right\}\nonumber
\end{align}
for some constant $\mathfrak{C} > 0$. We then define 
\begin{align}
	C_M := \left(\frac{1-h\mathfrak{C}}{|K^{(s)}(0)|f_0(x_0)}\right)M | f_0^{(s)}(x_0)|,\nonumber
\end{align} for $t \in [0, C_M h^{s+1}]$. With this choice of $t$, the density on the path satisfies $|\widetilde f_t^{(s)}(x_0)| \le (1+M)| f_0^{(s)}(x_0)|$. Next, we observe that for any $\widetilde f_t$
\begin{align}
	\int_{|x-x_0|\le \varepsilon} |\widetilde f^{(\ell)}_t(x)-f^{(\ell)}_0(x)|\, dx\nonumber
	&=  t\sum_{k=0}^\ell {\ell \choose k} \frac{1}{h^{\ell-k+1}} \int_{|x-x_0|\le \varepsilon}  \left| K^{(\ell-k)}\left(\frac{x-x_0}{h}\right)f_0^{(k)}(x)\right|\, dx\nonumber\\
	&\le   th^{-\ell} \sup_{|x-x_0|\le h}\,|f_0(x)|\int_{-1}^1 |K^{(\ell)}(u)|\,du + \mathfrak{C}h^{-(\ell-1)} \nonumber
\end{align}
where the last step follows since $h < (1\wedge \varepsilon)$. All densities on the parametric path hence belong to $U(\delta; \varepsilon)$ so long as 
\begin{align}
	&t\le \frac{\delta h^{s}}{ \sup_{|x-x_0|\le h}\,|f_0(x)|\int_{-1}^1 |K^{(s)}(u)|\,du + \mathfrak{C}h} = \delta h^{s}\Delta\quad\textrm{where}\nonumber\\&\qquad\Delta:=\left(\sup_{|x-x_0|\le h}\,|f_0(x)|\int_{-1}^1 |K^{(s)}(u)|\,du+ \mathfrak{C}h\right)^{-1}\nonumber.
\end{align} 

It remains to analyze the lower bound given by \eqref{eq:lowerbound-density} over the parametric submodel $\{\widetilde f_t : t \in \Theta_0\}$ where 
\begin{align}
	\Theta_0 := \left\{t: 0 \le t \le  (\delta h^s \Delta\wedge C_Mh^{s+1})\right\}\nonumber.
\end{align}
This suggests that we need to analyze two regimes, $\delta \lesssim h$ and $h \lesssim \delta$. For instance, the usual global minimax lower bound over $\mathcal{F}$ corresponds to taking $\delta$ as a fixed constant. 

\paragraph{Approximation bias}
Let $\mathfrak{B}:=(\delta h^s \Delta\wedge C_Mh^{s+1})$. Based on the earlier derivation, we consider a prior Q supported on $[0, \mathfrak{B}]$. Such a prior can be constructed from the following dilation:
\begin{align}
	Q(t) := \frac{2}{\mathfrak{B}}Q_0\left(\frac{t-\mathfrak{B}/2}{\mathfrak{B}/2}\right) \quad \textrm{where} \quad Q_0(t) := \cos^2(\pi t/2)I(|t|\le 1).\nonumber
\end{align}
 The choice of the cosine density is motivated by the fact that this density minimizes Fisher information among density with support over $[-1,1]$ \citep{uhrmann1995minimal}. The corresponding Fisher information is $\mathcal{I}(Q_0)=\pi^2$. First, for any $\widetilde f_t$ over $t \in \Theta_0$, we have
\begin{align}
	\|\psi-\phi\|_{L_2(Q)} &= \left(\int |\psi(\widetilde f_t)-\phi(\widetilde f_t)|^2\, dQ(t)\right)^{1/2}\nonumber\\
	&= \left(\int \left|\widetilde f_t(x_0)-\int h^{-1}K\left(\frac{x-x_0}{h}\right) \, \widetilde f_t(x)\, dx\right|^2\, dQ(t)\right)^{1/2}\nonumber\\
	&= \left(\int \left|\int_{-1}^1 K(u)\int_{x_0}^{x_0 + uh} \frac{\widetilde f_t^{(s)}(\xi)(x_0+uh-\xi)^{s-1}}{(s-1)!}\, d\xi\, du\right|^2\, dQ(t)\right)^{1/2}\nonumber\\
	&\le \left(\int \left|\int_{-1}^1 K(u)\frac{|uh|^{s-1}}{(s-1)!}\int_{x_0}^{x_0 + uh} |\widetilde f_t^{(s)}(\xi)|\, d\xi\, du\right|^2\, dQ(t)\right)^{1/2}\nonumber
\end{align}

Furthermore, it follows that 
\begin{align}
	\int_{x_0}^{x_0 + uh} |\widetilde f_t^{(s)}(\xi)|\, d\xi &= \int_{x_0}^{x_0 + uh} \left|f_0^{(s)}(\xi) +  \sum_{k=0}^s {s \choose k} \left(\frac{t}{h^{s-k+1}} K^{(s-k)}((\xi-x_0)/h)\right)\,f_0^{(k)}(\xi)\right|\, d\xi\nonumber\\
	&\le \int_{x_0}^{x_0 + uh} |f_0^{(s)}(\xi)| \, d\xi+ \int_{x_0}^{x_0 + uh} \left| \frac{t}{h^{s+1}} K^{(s)}((\xi-x_0)/h)\,f_0(\xi) + \mathfrak{C}h^{-s}\right|\, d\xi\nonumber\\
	&\le \int_{x_0}^{x_0 + uh} |f_0^{(s)}(\xi)| \, d\xi+ \frac{t}{h^{s}} \int_{0}^{u} \left| K^{(s)}(\zeta)\,f_0(x_0+\zeta h)+ \mathfrak{C}h\right|\, d\zeta\nonumber\\
	&\le |uh|\sup_{|x-x_0|\le h} |f_0^{(s)}(x)| + \frac{\mathfrak{B}}{h^{s}} \sup_{|x-x_0|\le h} |f_0(x)|\int_{0}^{u} | K^{(s)}(\zeta) + \mathfrak{C}h|\, d\zeta.\nonumber
\end{align}
 The lower order terms cancel by the fact that $\int u^k K(u)\, du=0$ for all $k \in \{1,2,\ldots,s-1\}$. Also by the definition of $\mathfrak{B}$, it follows $\mathfrak{B}/h^s = (\delta  \Delta\wedge C_Mh) \le C_M h$. This leads to  
 \begin{align}
 	&\left(\int \left|\int_{-1}^1 K(u)\frac{|uh|^{s-1}}{(s-1)!}\int_{x_0}^{x_0 + uh} |\widetilde f_t^{(s)}(\xi)|\, d\xi\, du\right|^2\, dQ(t)\right)^{1/2}\nonumber \\
 	&\qquad \le\left|\int_{-1}^1 K(u)\frac{|uh|^{s-1}}{(s-1)!}\left(|uh|\sup_{|x-x_0|\le h} |f_0^{(s)}(x)| + C_Mh \sup_{|x-x_0|\le h} |f_0(x)|\int_{0}^{u} | K^{(s)}(\zeta)+ \mathfrak{C}h|\, d\zeta\right)\, du\right|\nonumber\\
 	&\qquad \le h^s\left|\int_{-1}^1 \frac{K(u)|u|^{s}}{(s-1)!}\, du\left(\sup_{|x-x_0|\le h} |f_0^{(s)}(x)| +  C_M \int_{-1}^{1} | K^{(s)}(\zeta) |\, d\zeta\sup_{|x-x_0|\le h} |f_0(x)|+ \mathfrak{C}h\right)\right|\nonumber
 \end{align}

Thus, we conclude that 
\begin{align}\label{eq:def-B}
	&\|\psi-\phi\|_{L_2(Q)} \le Bh^s \nonumber \\
	&\qquad\textrm{where}\quad B:=\left|\int_{-1}^1 \frac{K(u)|u|^{s}}{(s-1)!}\, du\left(\sup_{|x-x_0|\le h} |f_0^{(s)}(x)| +  \frac{C_M}{\Delta}+ \mathfrak{C}h \right)\right|
\end{align}
and this term no longer depends on $Q$. 

\paragraph{Surrogate efficiency}
We now turn to the numerator and the denominator from the first term of \eqref{eq:lowerbound-density}. First, by the choice of differentiable paths, the derivative with respect to $t$ is given by $\Phi_0(x)f_0(x)$. This does not depend on $t$ as we constructed a linear path. Hence the numerator of \eqref{eq:lowerbound-density} is given by 
\begin{align}
	\sqrt{\left|\int_{ 0 \le t <  \mathfrak{B}} \phi'(\widetilde f_t)\, dQ\right|^2} = \sqrt{\left|\int \frac{1}{h}K\left(\frac{x-x_0}{h}\right)\Phi_0(x)f_0(x)\, dx\right|^2}.\nonumber
\end{align}
This result also comes directly from the fact that the approximation functional $\phi$ is pathwise differentiable with its efficient influence function given by $\Phi_0$. We can further write out this term as 
\begin{align}
	\int \frac{1}{h}K\left(\frac{x-x_0}{h}\right)\Phi_0(x)f_0(x)\, dx&=\int \frac{1}{h^2}K^2\left(\frac{x-x_0}{h}\right)f_0(x)\, dx - \left(\int h^{-1}K\left(\frac{x-x_0}{h}\right) f_0(x) \, dx\right)^2 \nonumber\\
	&= h^{-1}\int_{-1}^1 K^{2}(u)f_0(x_0+uh)\, du - \left(\int_{-1}^1 K(u) f_0(x_0+uh) \, du\right)^2\nonumber.
\end{align} 
We introduce the notation 
\begin{align}\label{eq:def-V1}
	V_1 := \left|\int_{-1}^1 K^{2}(u)f_0(x_0+uh)\, du - h\left(\int_{-1}^1 K(u) f_0(x_0+uh) \, du\right)^2\right|,
\end{align}
and the numerator can be denoted by $h^{-1}V_1$. We now move onto the denominator. First, the Fisher information of the dilation $Q$ is simply given by 
\begin{align}\label{eq:def-gamma}
	\mathcal{I}(Q) = \frac{4\pi^2}{\mathfrak{B}^2}\quad\textrm{where}\quad\mathfrak{B}:=(\delta h^s \Delta\wedge C_Mh^{s+1}).
\end{align}
The Fisher information associated with $\widetilde f_t$ under the $n$ IID observations is given by 
\begin{align}
	&\int \left(\frac{\frac{d}{dt} \widetilde f_t^n(x_1, \ldots, x_n) }{\widetilde f_t^n(x_1, \ldots, x_n)}\right)^2\widetilde f_t^n(x_1, \ldots, x_n) \,d(x_1,\ldots, x_n)\nonumber \\
	&\qquad =\int \left(\frac{\widetilde f_t'(x_1)\widetilde f_t(x_2, \ldots, x_n) + \ldots + \widetilde f_t'(x_n)\widetilde f_t(x_1, \ldots, x_{n-1})}{\widetilde f_t^n(x_1, \ldots, x_n)}\right)^2\widetilde f_t^n(x_1, \ldots, x_n) \,d(x_1,\ldots, x_n) \nonumber\\
	&\qquad =\int \left(\sum_{i=1}^n\frac{\widetilde f_t'(x_i)}{\widetilde f_t(x_i)}\right)^2\widetilde f_t^n(x_1, \ldots, x_n) \,d(x_1,\ldots, x_n) \nonumber \\
	&\qquad = n\int  \left(\frac{\widetilde f_t'(x)}{\widetilde f_t(x)}\right)^2\widetilde f_t(x)\, dx + \sum_{i\neq j} \int \widetilde f_t'(x_i)\, dx_i\int \widetilde f_t'(x_j)\, dx_j\nonumber
\end{align}
Since the choice of the path implies $\widetilde f_t' = \Phi_0(x)f_0(x)$, which integrates to zero, the Fisher information for the path for each $t \in \Theta_0$ is given by 
\begin{align}
	\mathcal{I}(t) = n \int\frac{\Phi_0^2(x)f^2_0(x)}{(1-t\Phi_0(x))f_0(x)}\, dx = n \int\frac{\Phi_0^2(x)f_0(x)}{(1-t\Phi_0(x))}\, dx.\nonumber
\end{align}
The denominator of \eqref{eq:lowerbound-density} is thus given by 
\begin{align}
	&\mathcal{I}(Q) + n \int_{\Theta_0} \mathcal{I}(\widetilde f_t)\, dQ(t) \nonumber\\
	&\qquad = \frac{4\pi^2}{\mathfrak{B}^2} + n  \int_{0\le t\le \mathfrak{B}}\int \frac{\Phi^2_0(x)f_0(x) }{(1-t\Phi(x))}\, dx\, dQ(t)\nonumber\nonumber\\
	&\qquad=\frac{4\pi^2}{\mathfrak{B}^2}+ n  \int_{-1}^1\int \frac{\Phi^2_0(x)f_0(x) }{(1-(\mathfrak{B}/2+t\mathfrak{B}/2)\Phi_0(x))}\, dx\, dQ_0(t)\nonumber.
\end{align}
As it follows that 
\begin{align}
	&\int \frac{\Phi^2_0(x)f_0(x) }{(1-(\mathfrak{B}/2+t\mathfrak{B}/2)\Phi_0(x))}\, dx \nonumber\\
	&\qquad \le \int_{-1}^1 \frac{h^{-1} K^2(u) f_0(x_0 + uh)\, }{(1-(\mathfrak{B}/2+t\mathfrak{B}/2)\{K(u)f_0(x_0+uh)-\int K(u')f_0(x_0+u'h)\, du'\})}\, du\nonumber\\
	&\qquad \le \int_{-1}^1 \frac{h^{-1} K^2(u) f_0(x_0 + uh)\, }{(1-(\mathfrak{B}/2+t\mathfrak{B}/2)K(u)f_0(x_0 + uh))}\, du,\nonumber
\end{align}
we conclude
\begin{align}\nonumber
	\mathcal{I}(Q) + n \int_{\Theta_0} \mathcal{I}(\widetilde f_t)\, dQ(t) \le \frac{4\pi^2}{\mathfrak{B}^2}+ nh^{-1} V_2\nonumber
\end{align}
where 
\begin{align}\label{eq:def-V2}
	V_2 := \int_{-1}^1\int_{-1}^1 \frac{K^2(u) f_0(x_0 + uh)\, }{(1-(\mathfrak{B}/2+t\mathfrak{B}/2)K(u)f_0(x_0 + uh))}\, du\, dQ_0(t).
\end{align}

\paragraph{Optimization}
Putting together all intermediate results \eqref{eq:def-B}, \eqref{eq:def-V1}, \eqref{eq:def-gamma}, and \eqref{eq:def-V2} together and plugging them into  \eqref{eq:lowerbound-density}, we obtain  
\begin{align}
    \inf_{T} \, \sup_{f \in U(\delta)}\, \mathbb{E}_{f}|T(X)-\psi(f)|^2&\ge \left[\sqrt{\frac{h^{-2}V_1^2}{ \mathfrak{B}^{-2}4\pi^2+nh^{-1} V_2}} - \sqrt{h^{2s}B^2}\right]_+^2\nonumber.
\end{align}
Now we consider two cases. 
\paragraph{When $\mathfrak{B} = C_Mh^{s+1}$.} In this case, the lower bound becomes 
\begin{align}
	&\left[\sqrt{\frac{V_1^2}{C_M^{-2}h^{-2s}4\pi^2+nh V_2}} - \sqrt{h^{2s}B^2}\right]_+^2 \nonumber\\
 &\qquad = \left[\sqrt{\frac{V_1^2 h^{2s}}{C_M^{-2}4\pi^2+nh^{2s+1} V_2}} - \sqrt{h^{2s}B^2}\right]_+^2\nonumber\\
	&\qquad = \frac{h^{2s}}{C_M^{-2}4\pi^2+nh^{2s+1} V_2}\left[\sqrt{V_1^2} - \sqrt{B^2(C_M^{-2}4\pi^2+nh^{2s+1} V_2)}\right]_+^2\nonumber.
\end{align}
Recall that the present lower bound holds for any $h$, we choose an optimal choice of $h$ for the term outside of the bracket. The optimal choice here is given by 
\begin{align}
	h = \left(\frac{8sC_M^{-2}\pi^2}{nV_2}\right)^{\frac{1}{2s+1}}\nonumber,
\end{align}
and the corresponding lower bound is 
\begin{align}
	\inf_T \sup_{f\in U(\delta)} \mathbb{E}_{f}|T(X)-f(x_0)|^2
	& \ge \frac{8s^{2s/(2s+1)}}{(4+8s)}\left(C_M^{-2}\pi^2\right)^{\frac{-1}{2s+1}}\left(\frac{1}{nV_2}\right)^{\frac{2s}{2s+1}}\left[V_1 - BC_M^{-1}\pi\sqrt{(4+8s)
	}\right]_+^2 \nonumber\\
	& =\frac{8s^{2s/(2s+1)}}{(4+8s)}\pi^{-\frac{2}{2s+1}}C_M^{-\frac{4s}{2s+1}}\left(\frac{1}{nV_2}\right)^{\frac{2s}{2s+1}}\left[V_1C_M - B\pi\sqrt{(4+8s)
	}\right]_+^2. \nonumber
\end{align}
The above inequality holds for any $n \ge 1$. To derive a simpler asymptotic constant, we multiply both sides on the inequality by $n^{2s/(2s+1)}$ and consider the limit as $n\longrightarrow \infty$. First, we recall from the definition $\mathfrak{B}\longrightarrow 0$ as $n\longrightarrow \infty$ and thus 
\begin{align}
	V_1, V_2  &\longrightarrow\int K^2(u) \, du f_0(x_0) \quad \textrm{,} \quad C_M \longrightarrow \left(\frac{M |f_0^{(s)}(x_0)|}{|K^{(s)}(0)| f_0(x_0)}\right) \quad\textrm{and}\nonumber\\
	B &\longrightarrow |f_0^{(s)}(x_0)| \left|\int_{-1}^1 \frac{K(u)|u|^{s}}{(s-1)!}\, du\left(1 +  \frac{M\int_{-1}^1 |K^{(s)}(u)|\,du}{|K^{(s)}(0)|} \right)\right|\nonumber.
\end{align}
Therefore, we have
\begin{align}
	&\liminf_{n\longrightarrow \infty}\, \inf_T \sup_{f\in U(\delta)} n^{2s/(2s+1)}\mathbb{E}_{f}|T(X)-f(x_0)|^2
	\nonumber\\
	&  \qquad \ge C(s)f_0(x_0)^{2s/(2s+1)}|f_0^{(s)}(x_0)|^{2/(2s+1)}\left(\frac{|K^{(s)}(0)|}{M}\right)^{\frac{4s}{2s+1}}\left(\int K^2(u)\, du\right)^{2s/(2s+1)}\nonumber\\
	&\qquad\qquad \left[\frac{M\int K^2(u)\, du}{|K^{(s)}(0)|}-\sqrt{\pi^2(4+8s)}\left|\int_{-1}^1 \frac{K(u)|u|^{s}}{(s-1)!}\, du\left(1 +  \frac{M\int_{-1}^1 |K^{(s)}(u)|\,du}{|K^{(s)}(0)|} \right)\right|\right]_+^2\nonumber\\
	&  \qquad = C(s, M, K)f_0(x_0)^{2s/(2s+1)}|f_0^{(s)}(x_0)|^{2/(2s+1)}\nonumber
	\end{align}
where $C(s)$ is a constant only depending on $s$ and 
\begin{align}
	C(s, M, K) &:= \frac{8s^{2s/(2s+1)}}{(4+8s)}\pi^{-\frac{2}{2s+1}}\left(\frac{|K^{(s)}(0)|}{M}\right)^{\frac{4s}{2s+1}}\left(\int K^2(u)\, du\right)^{2s/(2s+1)}\nonumber\\
	&\qquad \left[\frac{M\int K^2(u)\, du}{|K^{(s)}(0)|}-\sqrt{\pi^2(4+8s)}\left|\int_{-1}^1 \frac{K(u)|u|^{s}}{(s-1)!}\, du\left(1 +  \frac{M\int_{-1}^1 |K^{(s)}(u)|\,du}{|K^{(s)}(0)|} \right)\right|\right]_+^2.\nonumber
\end{align} 
Crucially, the expression above does not depend on $f_0$. Finally, we can take $\mathfrak{B} = C_Mh^{s+1}$ when $\Delta \delta/C_M > h$, meaning that 
\begin{align}
	\delta > \frac{C_M}{\Delta}\left(\frac{8s\pi^2}{C_M^2 V_2}\right)^{\frac{1}{2s+1}} n^{-\frac{1}{2s+1}}\nonumber
\end{align}
This is certainly the case when $\delta = c_0 n^{-r}$ for any $r < \frac{1}{2s+1}$. This proves the result. 
\end{proof}

We provide the corresponding results outside of the setting of Lemma~\ref{lemma:density-lam}. First, when $r=\frac{1}{2s+1}$ and $c_0 > \frac{C_M}{\Delta}\left(\frac{8s\pi^2}{C_M^2 V_2}\right)^{\frac{1}{2s+1}}$, the above bound remains valid. Next, we consider when $\mathfrak{B} = \Delta h^s \delta$. This is the case when $\Delta \delta/C_M < h$. In this case, we consider the choice $h \downarrow \Delta\delta/C_M$. The corresponding lower bound is 

\begin{align}
	&\inf_{T} \, \sup_{f \in U(\delta)}\, \mathbb{E}_{f}|T(X)-\psi(f)|^2 \nonumber\\
	&\qquad \ge \left[\sqrt{\frac{V_1^2}{\Delta^{-2}h^{-2s}(h/\delta)^24\pi^2+nh V_2}} - \sqrt{h^{2s}B^2}\right]_+^2 \nonumber\\
	&\qquad \ge  \left[\sqrt{\frac{V_1^2(\Delta c_0n^{-r}/C_M)^{2s}}{C_M^{-2}4\pi^2+n(\Delta c_0n^{-r}/C_M)^{2s+1} V_2}} - \sqrt{(\Delta c_0n^{-r}/C_M)^{2s}B^2}\right]_+^2\nonumber\\
	&\qquad = \frac{1}{n^{1-r}}\left[\sqrt{\frac{V_1^2(\Delta c_0n^{-r}/C_M)^{2s}}{C_M^{-2}4\pi^2/(n^{1-r})+((\Delta c_0/C_M)(\Delta c_0n^{-r}/C_M)^{2s}) V_2}} - \sqrt{n^{1-r}(\Delta c_0n^{-r}/C_M)^{2s}B^2}\right]_+^2\nonumber.
\end{align}
Multiplying both sides of the expression by $n^{1-r}$ and taking the limit as $n\longrightarrow \infty$, we conclude that 
\begin{align}
	\liminf_{n\longrightarrow \infty}\, \inf_{T} \, \sup_{f \in U(\delta)}\, n^{1-r}\mathbb{E}_{f}|T(X)-\psi(f)|^2 \ge 0\nonumber
\end{align}
since the first term in the square bracket converges to a constant while the second term in the square bracket is divergent as $n^{1-r} \longrightarrow \infty$.
We can take $\mathfrak{B} = \Delta h^s \delta$ when $\Delta \delta/C_M < h$, which is the case $\delta = c_0 n^{-r}$ for any $r > \frac{1}{2s+1}$. When $r=\frac{1}{2s+1}$ and $c_0 < \frac{C_M}{\Delta}\left(\frac{8s\pi^2}{C_M^2 V_2}\right)^{\frac{1}{2s+1}}$,  we have 
\begin{align}
	\liminf_{n\longrightarrow \infty}\, \inf_{T} \, \sup_{f \in U(\delta)}\, n^{2s/(2s+1)}\mathbb{E}_{f}|T(X)-\psi(f)|^2 \ge 0\nonumber,
\end{align}
Hence this derivation leads to a trivial lower bound when $r\ge 1/(2s + 1)$.
\clearpage
\section{Proof of Lemma~\ref{lemma:max-vt}}\label{supp:ode}
In this section, we aim to derive a concrete constant for the lower bound given by Lemma~\ref{lemma:max-vt} in the main text. Suppose $n$ IID observation is drawn from $P_0$, which belongs to the local model $\{P_\theta : |\theta|<\delta\}$ for any $\delta > 0$. The functional of interest is $\psi(\theta) = \max(0, \theta)$, which is non-smooth at $\theta=0$. We apply Theorem~\ref{cor:vt-approximation} and obtain the lower bound 
\begin{align}
	\inf_T\, \sup_{|\theta| < \delta} \, \mathbb{E}_\theta |T(X)-\psi(\theta)|^2 \ge \inf_{Q\in \mathcal{Q}^\dag}\, \frac{\left(\int_{-\delta}^\delta I(t > 0) q(t)\, dt\right)^2}{\mathcal{I}(Q)+n\int_{-\delta}^\delta \mathcal{I}(t)q(t)\, dt}\nonumber
\end{align}
where $\mathcal{Q}^\dag\equiv \mathcal{Q}^\dag(-\delta,\delta)$ is the set of ``nice" probability measures on $(-\delta, \delta)$ that satisfies Definition~\ref{as:regularprior-vt}. We further define the density function $q$ as the dilation of the probability density $\nu$ such that $q(t) := \delta^{-1}\nu(t/\delta)$ for $\delta > 0$. The density $\nu$ is defined on $[-1,1]$. Then by simple change of variables with $u = t/\delta$, we obtain 
\begin{align}
	\int_{-\delta}^\delta I(t > 0) q(t)\, dt = \int_{-\delta}^\delta I(t > 0) \delta^{-1}\nu(t/\delta)\, dt =  \int_{-1}^{1} I(u > 0) \nu(u)\, du,\nonumber 
\end{align}
\begin{align}
		\int_{-\delta}^\delta n\mathcal{I}(t)q(t)\, dt = \int_{-\delta}^{\delta} n\mathcal{I}(t)\delta^{-1}\nu(t/\delta)\, dt = \int_{-1}^{1} n\mathcal{I}(\delta u)\nu(u)\, du\nonumber 
\end{align}
and $\delta^2\mathcal{I}(Q) = \mathcal{I}(\nu)$. Plugging them in, we obtain 
\begin{align}
	\inf_T\, \sup_{|\theta| < \delta} \, \mathbb{E}_\theta |T(X)-\psi(\theta)|^2 \ge \inf_{\nu \in \mathcal{V}(-1,1)}\frac{\left(\int_{-1}^{1} I(u > 0) \nu(u)\, du\right)^2}{\delta^{-2}\mathcal{I}(\nu)+n\int_{-1}^{1} \mathcal{I}(\delta u)\nu(u)\, du}\nonumber.
\end{align}

We now provide a concrete lower bound given by \eqref{eq:kepler-main}. To make progress, we bound the denominator with the largest Fisher information $\mathcal{I}(t)$ over the local model $|t|<\delta$. When $\{P_t : t \in \Theta\}$ is a location model, the Fisher information is a constant for all $t \in \Theta$. We then have that 
\begin{align}
\inf_{\nu \in \mathcal{V}(-1,1)}\, \frac{\left(\int_{-1}^{1} I(u > 0) \nu(u)\, du\right)^2}{\delta^{-2}\mathcal{I}(\nu)+n\int_{-1}^{1} \mathcal{I}(\delta u)\nu(u)\, du} &\ge \inf_{\nu \in \mathcal{V}(-1,1)}\,\frac{\left(\int_{-1}^{1} I(u > 0) \nu(u)\, du\right)^2}{\delta^{-2}\mathcal{I}(\nu)+n\int_{-1}^{1} \{\sup_{t\in[-\delta,\delta]}\mathcal{I}(t)\}\nu(u)\, du}\nonumber \\
&=\sup_{a\in[0,1]}\, \inf_{\mathcal{I}(\nu)}\,\frac{a^2}{\delta^{-2}\mathcal{I}(\nu)+\sup_{t\in[-\delta,\delta]}\, n\mathcal{I}(t)}\label{eq:van-trees-max}
\end{align}
where the infimum minimizes the Fisher information of the density $\nu$ under the constraint that $\int_{0}^1 \nu(t)\, dt=a$. In other words, we need to solve the following problem:
\begin{align}
	\inf_\nu \int_{-1}^1 \frac{(\nu'(t))^2}{\nu(t)}dt \textrm{ i.e., } \nu \textrm{ is absolutely continuous, }\int_0^1 \nu(t)\,dt=a \textrm{, and } \lim_{t\longrightarrow\pm\delta}\nu(t) = 0.\nonumber
\end{align}
The last constraint is introduced so $\nu$ satisfies Definition \ref{as:regularprior-vt} required for the van Trees inequality. Theorem 2.1 of~\cite{ernst2017minimizing} implies that the minimum Fisher information under the constraint satisfies
\[
-2\eta'(t) - \eta^2(t) = B_1I\{-1 \le t \le 0\} + B_2I\{0 < t\le 1\}.
\]
where $\eta(t) = \nu'(t)/\nu(t)$ and $B_1$, $B_2$ are constants. The general solution of this first-order non-homogeneous ordinary differential equation is given by 
\begin{align*}
    \eta(t) =\nu'(t)/\nu(t)= \begin{cases}
        \sqrt{B_1}\tan\left(\frac{\sqrt{B_1}}{2}(c_1-t)\right) & \text{where } -1 \le x \le 0\\
        \sqrt{B_2}\tan\left(\frac{\sqrt{B_2}}{2}(c_2-t)\right) & \text{where } 0 < x \le 1
    \end{cases}
\end{align*}
for constants $c_1, c_2$. The general solution $\nu$ is then given by 
\begin{align*}
    \nu(t) = \begin{cases}
        {C_1}\cos^2\left(\frac{\sqrt{B_1}}{2}(t-c_1)\right) & \text{where } -1 \le x \le 0\\
        {C_2}\cos^2\left(\frac{\sqrt{B_2}}{2}(t-c_2)\right) & \text{where } 0 < x \le 1,
    \end{cases}
\end{align*}
for constants $C_1, C_2$. In other words, combining two squared cosine functions gives the minimum Fisher information prior $\nu$. A squared cosine function is known to minimize the Fisher information over distributions supported on $[-1,1]$ \citep{uhrmann1995minimal}. In the current setting, the density $\nu$ can have support over any subset of $[-1,1]$. 

To simplify the derivation, we focus on a case when $C\equiv{C_1}={C_2}$, $B\equiv B_1=B_2$, and $c\equiv c_1=c_2$. In other words, we consider a location-scale family of squared cosine densities whose support is contained in $[-1,1]$ such that $\supp(\nu) \subseteq [-1,1]$. Although we investigated other cases, this simplification still resulted in the best constant. We denote the left end of the support as $s_{-}\le 0$ and the right end as $s_{+} \ge 0$. The width of the support is defined as $(s_{+}-s_{-})$. First, by the assumption that the density vanishes towards the boundary, we have
\begin{align}
	C\cos^2\left(\frac{\sqrt{B}}{2}(s_+-c)\right) = 0 \Longrightarrow \sqrt{B}(s_+-c) = \pi \textrm{ and}\nonumber\\
	C\cos^2\left(\frac{\sqrt{B}}{2}(s_--c)\right) = 0 \Longrightarrow \sqrt{B}(s_--c) = -\pi.\nonumber
\end{align}
Putting together, we have $\sqrt{B}(s_+ - s_-) = 2\pi$ and $c=(s_+-s_-)/2$.
Next, by the constraint of $\int_{0}^1 \nu(t)\,dt = a$, we have
\begin{align}
	C\int_{0}^{s_+}\cos^2\left(\frac{\sqrt{B}}{2}(x-c)\right) \, dt = \frac{Cs_{+}}{2} - \frac{C}{2\sqrt{B}}\sin(-c\sqrt{B})=a, \textrm{ and}
     \label{constraint:integral_right}\\
    C\int_{s_{-}}^0\cos^2\left(\frac{\sqrt{B}}{2}(x-c)\right) \, dt = -\frac{Cs_{-}}{2} + \frac{C}{2\sqrt{B}}\sin(-c\sqrt{B})=1-a.\label{constraint:integral_left}
\end{align}
Putting together, we have $C = 2/(s_+-s_-)$. 
Finally, we derive the Fisher information for this parametric family. First, for each $t \in [s_-, s_+]$, 
\begin{align}
    \frac{\{\nu'(t)\}^2}{\nu(t)} &= \frac{BC^2\cos^2\left(\frac{\sqrt{B}}{2}(t-c)\right)\sin^2\left(\frac{\sqrt{B}}{2}(t-c)\right)}{C\cos^2\left(\frac{\sqrt{B}}{2}(t-c)\right)}\nonumber= BC\sin^2\left(\frac{\sqrt{B}}{2}(t-c)\right).\nonumber
\end{align}
This implies that 
\begin{align}
	\mathcal{I}(\nu) = \int_{s_-}^{s^+}BC\sin^2\left(\frac{\sqrt{B}}{2}(t-c)\right)\, dt = \frac{BC}{2}(s_+-s_-) = \frac{4\pi^2}{(s_+-s_-)^2}.\label{eq:fisher-information-cosine}
\end{align}
Hence, the Fisher information only depends on the width of the support. It thus remains to characterize the width of a squared cosine prior under the constraint $\int_0^1\nu(t)\, dt=a$. Plugging in the reduced expressions into \eqref{constraint:integral_right} and \eqref{constraint:integral_left}, we obtain 
\begin{align}
 \frac{Cs_{+}}{2} - \frac{C}{2\sqrt{B}}\sin(-c\sqrt{B})=a &\implies \frac{s_{+}}{s_{+}-s_{-}} - \frac{\sin\left(-\pi\frac{s_{+}+s_{-}}{s_{+}-s_{-}}\right)}{2\pi}=a\textrm{, and}\nonumber\\
 -\frac{Cs_{-}}{2} + \frac{C}{2\sqrt{B}}\sin(-c\sqrt{B})=1-a &\implies -\frac{s_{-}}{s_{+}-s_{-}} + \frac{\sin\left(-\pi\frac{s_{+}+s_{-}}{s_{+}-s_{-}}\right)}{2\pi}=1-a.\nonumber
\end{align}
Putting together, we obtain 
\begin{align}
 Y_a - \frac{\sin\left(-\pi Y_a\right)}{\pi}=2a-1\, \textrm{ where }\, Y_a=\frac{s_{+}+s_{-}}{s_{+}-s_{-}}.\label{eq:kepler}
\end{align}
This expression is known as the Kepler equation and there is no closed-form solution for the inverse problem \citep{kepler}. We thus provide a computational method to estimate the smallest Fisher information given $Y_a$ for each $a\in [0,1]$. Once $Y_a$ is estimated from $a$, it follows that 
\begin{align*}
    &\frac{s_{+}+s_{-}}{s_{+}-s_{-}} = Y_a \Longleftrightarrow (1-Y_a)s_{+} = -(1+Y_a)s_{-}.
\end{align*}
This concludes that 
\begin{align}
	s_+ - s_- = \frac{2}{Y_a+1}s_+ = -\frac{2}{1-Y_a}s_-. \nonumber
\end{align}
As the minimum Fisher information is achieved by maximizing the width (see Equation~\ref{eq:fisher-information-cosine}), we take $s_+=1$ when $a > 0.5$ or $s_-=-1$ when $a \le 0.5$. Denoting by $W_a$ the width of the support of $\nu$ that minimizes the Fisher information for each $a \in [0,1]$, we conclude that 
\begin{align*}
	\inf\left\{\mathcal{I}(\nu) :\int_0^1 \nu(t)\, dt = a\right\}= 4\pi^2/W_a^2 \quad \textrm{where} \quad W_a =\begin{cases}
		2/(Y_a+1) & \textrm{ when } a > 0.5\\
		2/(1-Y_a)& \textrm{ when } a \le 0.5
	\end{cases}
\end{align*}
We also note that \eqref{eq:kepler} is an odd function and the expression for $W_a$ is an even function. Hence, we can modify them to
\begin{align}
 Y_a - \frac{\sin\left(-\pi Y_a\right)}{\pi}=|2a-1|\, \textrm{ and }\, W_a=2/(Y_a+1).\nonumber
\end{align}
\begin{figure}[h]
         \centering
       \includegraphics[width=5in]{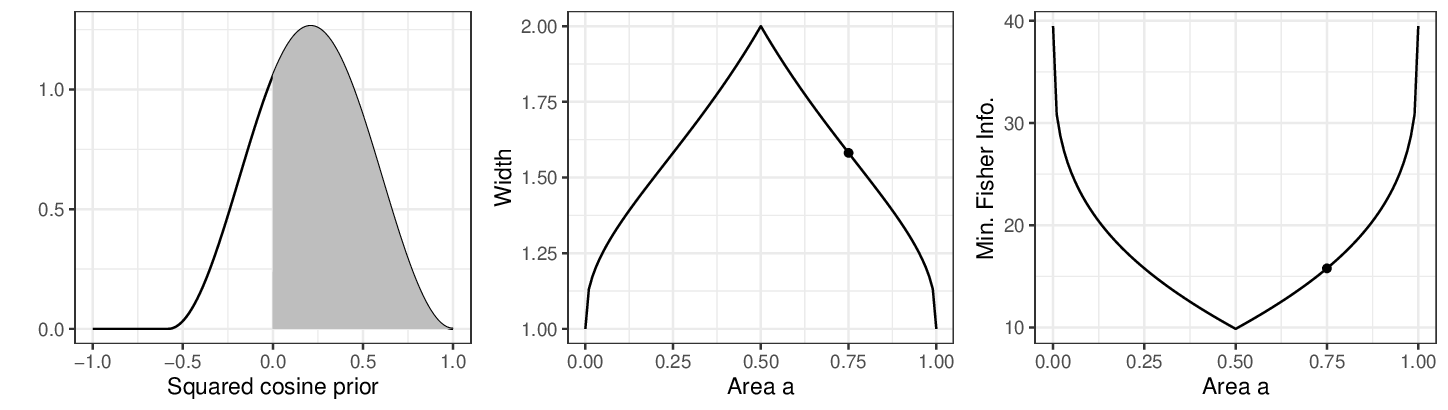}
     \caption{Visual representation of the minimum Fisher information priors for the location-scale family of squared cosine densities. The left panel displays an example of the minimum Fisher information prior when $a=0.75$. The shaded area corresponds to the constraint $\int_0^1 \nu(t)\,dt$, which is $0.75$ in this case. The center panel displays the relationship between the constraint $a$ (or the shaded area in the left panel) and the width of the support of the minimum Fisher information priors. The dot in the plot corresponds to the density with $a=0.75$ displayed in the left panel. The right panel displays the relationship between the moment constraint $a$ and the minimum Fisher information. The dot on the plot again corresponds to the density with $a=0.75$ displayed in the left panel. The minimum Fisher information is achieved by setting $a=0.5$; however, this is not necessarily the least-favorable prior for minimax lower bounds.}\label{fig:kepler}
\end{figure}

Finally, plugging this expression back into the van Trees inequality given by \eqref{eq:van-trees-max}, we conclude 
\begin{align}
	\inf_T\, \sup_{|\theta| < \delta} \, \mathbb{E}_\theta |T(X)-\psi(\theta)|^2 \ge \sup_{a\in[0,1]}\, \frac{a^2}{4\pi^2\delta^{-2}W_a^2+n \sup_{t\in[-\delta,\delta]}\mathcal{I}(t)}.\nonumber
\end{align}

We note that $a=0.5$ corresponds to the minimum Fisher information prior while this may not necessarily maximize the minimax lower bound due to the second term in the denominator. Figure~\ref{fig:kepler} provides a visual representation of the squared cosine priors and the minimum Fisher information.

\clearpage
\section{Additional derivations related to examples}
In this section, we provide the remaining derivation for Examples~\ref{example2} and \ref{example3} from the main text.
\subsection{Proof of Lemma~\ref{lemma:max-power}}
Suppose $X_1, \ldots, X_n$ is drawn from $P_{\theta_0}$ that belongs to a local model $\{P_\theta : |\theta| < \delta, \theta\in\mathbb{R}\}$ for fixed $\delta > 0$. We assume that this model is Hellinger differentiable with the Fisher information $\mathcal{I}(t)$ for $t \in (-\delta, \delta)$. Let $\psi(P_\theta) = \max(0, \theta^\alpha)$ for $0 < \alpha \le 1$.

For fixed $\delta > 0$, we consider the diffeomorphism for the local univariate parameter set $\{\theta : |\theta-\theta_0| < \delta\}$ defined as $\varphi(t) := \theta_0 + \delta \varphi_0(t)$ where $\varphi_0 : \mathbb{R} \mapsto (-1,1)$. As Theorem~\ref{thm:crvtBound_hellinger} holds for any choice of diffeomorphism, we further assume that $\varphi_0(0)=0$, $\|\varphi'_0\|_\infty < C$ for some constant, and $\varphi_0$ is an increasing function. The derivative of this mapping is $\varphi'(t) = \delta\varphi_0'(t)$. We now apply Theorem~\ref{thm:crvtBound_hellinger} to the composite function $(\psi \circ \varphi) := t\mapsto \max(0, \varphi^\alpha(t))$, 
	\begin{equation}
    \begin{aligned}
    &\inf_{T}\, \sup_{|\theta-\theta_0| < \delta} \, \mathbb{E}_{\theta}|T(X)-\psi(\varphi(\theta)) |^2\\
    &\qquad\ge \limsup_{h \longrightarrow 0}\, \left[\frac{\left|\int_{\mathbb{R}} \left(\psi(\varphi(t))-\psi(\varphi(t-h))\right)\,dQ(t)\right|}{2H\left(\widetilde{\mathbb{P}}_0, \widetilde{\mathbb{P}}_h\right)} -\left(\int_{\mathbb{R}} \left|\psi(\varphi(t))-\psi(\varphi(t-h))\right|^2 \, dQ(t)\right)^{1/2}\right]^2_+.\nonumber
    \end{aligned}
\end{equation}
By Lemma~\ref{lemma:hellinger-decomposition} in Supplementary Material, we have
\begin{align*}
    H^2(\widetilde{\mathbb{P}}_0, \widetilde{\mathbb{P}}_h) 
    &= \frac{h^2}{4}\left(\mathcal{I}(Q) +\delta^2\int_{\mathbb{R}}  \{\varphi_0'(t)\}^2\, n\mathcal{I}(\theta_0 + \delta\varphi_0(t))\, dQ(t) \right) + o(h^2).
\end{align*}
We now analyze the behavior of $\psi(\varphi(t))-\psi(\varphi(t-h))$ as $|h|\longrightarrow 0$ for different values of $\theta_0$. As the functional $\psi$ is directionally differentiable, we need to consider $h \longrightarrow 0$ from left and right.

\paragraph{Case 1 $\theta_0 < 0$:} When $h \longrightarrow 0$ from right, or $h \downarrow 0$, we have $I\{t:\theta_0 + \delta\varphi_0(t-h)> 0\} \Longrightarrow I\{t:\theta_0 + \delta\varphi_0(t)> 0\}$ by the monotonicity of $\varphi_0$. We also observe that 
\begin{align}
	\theta_0 + \delta\varphi_0(t-h)> 0 \Longleftrightarrow \theta_0 >-\delta\varphi_0(t-h) \Longleftrightarrow |\theta_0|/  \delta <\varphi_0(t-h). \nonumber
\end{align}
We then have 
\begin{align*}
    \psi(\varphi(t)) - \psi(\varphi(t-h)) &= \psi(\varphi(t)) I\{t:|\theta_0|/  \delta <\varphi_0(t)\} - \psi(\varphi(t-h)) I\{t:|\theta_0|/  \delta <\varphi_0(t-h)\} \\
    &= \left\{\varphi(t)^\alpha-\varphi(t-h)^\alpha\right\} I\{t:|\theta_0|/  \delta <\varphi_0(t-h)\} \\
    &\qquad + \varphi(t)^\alpha I\{t:\varphi_0(t-h)\le |\theta_0|/  \delta < \varphi_0(t)\}.
    \end{align*}
We denote by $\lambda_1, \lambda_2$ constants $0 \le \lambda_1, \lambda_2 \le 1$, possibly depening on $t$. By the mean value theorem,
\begin{align*}
    \varphi(t)^\alpha-\varphi(t-h)^\alpha
    &= \alpha \{(1-\lambda_1)\varphi(t)+\lambda_1\varphi(t-h)\}^{\alpha-1}\varphi'(t -\lambda_2 h)h\\
    &= \delta \alpha [\theta_0 +\delta\{(1-\lambda_1)\varphi_0(t)+\lambda_1\varphi_0(t-h)\}]^{\alpha-1}\varphi'_0(t-\lambda_2 h)h.
\end{align*}
Therefore, as $h \downarrow 0$, we have 
\begin{align*}
    \varphi(t)^\alpha-\varphi(t-h)^\alpha = \delta\alpha \{\theta_0 +\delta\varphi_0(t)\}^{\alpha-1}\varphi'_0(t)h + o(h),
\end{align*}
which follows by the continuity of $\varphi_0$ and $\varphi'_0$. Similarly as $h \downarrow 0$, we have
\begin{align}
	\varphi(t)^\alpha I\{t:\varphi_0(t-h)\le |\theta_0|/  \delta < \varphi_0(t)\} &= \varphi(t)^\alpha I\{t:\varphi_0(t)-h\varphi'_0(t_*)\le |\theta_0|/  \delta < \varphi_0(t)\}\nonumber\\
 &= \varphi(t)^\alpha I\{t:-h\varphi'_0(t_*)\le |\theta_0|/  \delta < 0\}\nonumber
\end{align}
for some $t_* \in [t-h, t]$. For any fixed $\theta_0, \varphi_0$ and $\delta$, this term is zero for $h$ small enough. Putting together, we conclude that, as $h \downarrow 0$,
\begin{align}
	\psi(\varphi(t)) - \psi(\varphi(t-h)) = \delta\alpha \{\theta_0 +\delta\varphi_0(t)\}^{\alpha-1}\varphi'_0(t)h\, I\{t:|\theta_0|/  \delta <\varphi_0(t)\} + o(h).\nonumber
\end{align}
We repeat the analysis for $h \longrightarrow 0$ from left, or $h \uparrow 0$. The analysis is identical except now we have $I\{t:\theta_0 + \delta\varphi_0(t)> 0\} \Longrightarrow I\{t:\theta_0 + \delta\varphi_0(t-h)> 0\}$. All results remain the same for $h \uparrow 0$ as well. 

Finally, by plugging each term into the lower bound given by Theorem~\ref{thm:crvtBound_hellinger} and taking the limit in view of the dominated convergence theorem, we conclude that 
\begin{align}
	&\inf_{T}\, \sup_{|\theta-\theta_0| < \delta} \, \mathbb{E}_{\theta}|T(X)-\psi(\varphi(\theta)) |^2\nonumber\\
    &\qquad\ge \limsup_{h \longrightarrow 0}\, \left[\frac{\left|\int_{\mathbb{R}} \left(\psi(\varphi(t))-\psi(\varphi(t-h))\right)\,dQ(t)\right|}{2H\left(\widetilde{\mathbb{P}}_0, \widetilde{\mathbb{P}}_h\right)} -\left(\int_{\mathbb{R}} \left|\psi(\varphi(t))-\psi(\varphi(t-h))\right|^2 \, dQ(t)\right)^{1/2}\right]^2_+.\nonumber\\
    &\qquad = \frac{\delta^2\alpha^2 |\int_{\mathbb{R}} \{\theta_0 +\delta\varphi_0(t)\}^{\alpha-1}\varphi'_0(t)\, I\{t: |\theta_0|/  \delta <\varphi_0(t)\}\, dQ(t)|^2}{\mathcal{I}(Q) +\delta^2\int_{\mathbb{R}}  \{\varphi_0'(t)\}^2\, n\mathcal{I}(\theta_0 + \delta\varphi_0(t))\, dQ(t) }.\nonumber
    \end{align}
As the upper bound does not involve $Q, \varphi_0$, we take the supremum over them. When $|\theta_0| > \delta$, the indicator in the numerator evaluates zero for all values of $t \in \mathbb{R}$. The local minimax lower bound becomes zero in such cases.

\paragraph{Case 2 $\theta_0 > 0$:}
Similar to the case with $\theta_0 < 0$, we consider $h\longrightarrow 0$ from two directions. The results are analogous thus we only provide the argument for $h \downarrow 0$. When $h \downarrow 0$, we have $I\{t:\theta_0 - \delta\varphi_0(t) > 0\} \Longrightarrow I\{t:\theta_0 - \delta\varphi_0(t-h) > 0\}$. We then have 
\begin{align*}
    \psi(\varphi(t)) - \psi(\varphi(t-h)) &= \left\{\varphi(t)^\alpha-\varphi(t-h)^\alpha\right\} I\{t:\varphi_0(t) < \theta_0/  \delta \} \\
    &\qquad - \varphi(t-h)^\alpha I\{t:\varphi_0(t-h)< \theta_0/  \delta \le \varphi_0(t)\}.
    \end{align*}
By the analogous argument from $\theta_0 < 0$, we can show that as $h \downarrow 0$,
\begin{align}
	\psi(\varphi(t)) - \psi(\varphi(t-h)) = \delta\alpha \{\theta_0 +\delta\varphi_0(t)\}^{\alpha-1}\varphi'_0(t)h\, I\{t:\varphi_0(t) < \theta_0/  \delta \} + o(h).\nonumber
\end{align}
In view of the dominated convergence theorem, we conclude that 
\begin{align}
	&\inf_{T}\, \sup_{|\theta-\theta_0| < \delta} \, \mathbb{E}_{\theta}|T(X)-\psi(\varphi(\theta)) |^2\ge \frac{\delta^2\alpha^2 |\int_{\mathbb{R}} \{\theta_0 +\delta\varphi_0(t)\}^{\alpha-1}\varphi'_0(t)\, I\{t: \varphi_0(t) < \theta_0/  \delta\}\, dQ(t)|^2}{\mathcal{I}(Q) +\delta^2\int_{\mathbb{R}}  \{\varphi_0'(t)\}^2\, n\mathcal{I}(\theta_0 + \delta\varphi_0(t))\, dQ(t) }.\nonumber
 \end{align}
\paragraph{Case 3 $\theta_0 = 0$:} The analysis is also similar to the previous two cases. After the analogous derivation, we arrive at
\begin{align}
	&\inf_{T}\, \sup_{|\theta-\theta_0| < \delta} \, \mathbb{E}_{\theta}|T(X)-\psi(\varphi(\theta)) |^2\ge \frac{\delta^2\alpha^2 |\int_{\mathbb{R}} \{\theta_0 +\delta\varphi_0(t)\}^{\alpha-1}\varphi'_0(t)\, I\{t: \varphi_0(t) < \theta_0/  \delta\}\, dQ(t)|^2}{\mathcal{I}(Q) +\delta^2\int_{\mathbb{R}}  \{\varphi_0'(t)\}^2\, n\mathcal{I}(\theta_0 + \delta\varphi_0(t))\, dQ(t) }.\nonumber
 \end{align}
 By plugging in $\theta_0=0$, we obtain that 
\begin{align}
	&\inf_{T}\, \sup_{|\theta-\theta_0| < \delta} \, \mathbb{E}_{\theta}|T(X)-\psi(\varphi(\theta)) |^2\ge \frac{\delta^{2\alpha}\alpha^2 |\int_{\mathbb{R}} \{\varphi_0(t)\}^{\alpha-1}\varphi'_0(t)\, I\{t: \varphi_0(t) > 0\}\, dQ(t)|^2}{\mathcal{I}(Q) +\delta^2\int_{\mathbb{R}}  \{\varphi_0'(t)\}^2\, n\mathcal{I}(\delta \varphi_0(t))\, dQ(t) }.\nonumber
 \end{align}

\subsection{Proof of Lemma~\ref{lemma:uniform}}
Suppose $X_1, \dots, X_n$ is an IID observation from $P_{\theta_0}$, which belongs to a model $\{\text{Unif}(0, \theta) \,: \,  0 < \theta\}$.
The local parameter set we consider is given by $\Theta = (\theta_0 - cn^{-1}, \theta_0+cn^{-1})$, we define the diffeomorephism in the form of $\varphi(t) := \theta_0 + cn^{-1} \varphi_0(t)$ where $\varphi_0 : \mathbb{R} \mapsto (-1,1)$ is invertible and differentiable increasing function. Based on the analogous application of Theorem~\ref{thm:crvtBound_hellinger} as in Example~\ref{example2}, we obtain, for any $h \in \mathbb{R}$, 
\begin{equation}
    \begin{aligned}
    \inf_{T}\, \sup_{|\theta-\theta_0| < cn^{-1}} \, \mathbb{E}_{P^n_{\theta}}|T(X)-\theta |^2
    & =\inf_{T}\, \sup_{t \in \mathbb{R}}\,  \mathbb{E}_{P^n_{\varphi(t)}}|T(X)-\varphi(t) |^2\\
    &\ge  \left[\frac{\left|\int_{\mathbb{R}} \left(\varphi(t)-\varphi(t-h)\right)\, dQ(t)\right|}{2H\left(\widetilde{\mathbb{P}}^n_0, \widetilde{\mathbb{P}}^n_h\right)} -\left(\int_{\mathbb{R}} \left|\varphi(t)-\varphi(t-h)\right|^2\, dQ(t)\right)^{1/2}\right]^2_+ 
    \end{aligned}
\end{equation}
For each $c$, we assume that $n$ is a large enough constant such that $\theta_0 - cn^{-1} > 0$. By the mean value theorem, we have 
\[
\varphi(t) - \varphi(t-h) = cn^{-1}(\varphi_0(t) - \varphi_0(t - h)) = cn^{-1}h\varphi_0'(t+\lambda h)
\]
for some $\lambda \in [0,1]$. The derivation thus focuses primarily on the Hellinger distance between $\mathrm{Unif}(0, \varphi(t))$ and $\mathrm{Unif}(0, \varphi(t+h))$. First, the Hellinger distance associated with one observation is given by
\begin{align*}
    2-2\int dP^{1/2}_{\varphi(t)}\, dP^{1/2}_{\varphi(t+h)} &= 2-2\frac{\varphi(t)}{\varphi^{1/2}(t)\varphi^{1/2}(t+h)}\\
    &= 2-2\frac{\varphi^{1/2}(t)+\varphi^{1/2}(t+h)-\varphi^{1/2}(t+h)}{\varphi^{1/2}(t+h)}\\
    &=2\frac{\varphi^{1/2}(t+h)-\varphi^{1/2}(t)}{\varphi^{1/2}(t+h)} \\
    &= \frac{\{\lambda_1\varphi(t)+(1-\lambda_1)\varphi(t+h)\}^{-1/2}\varphi'(t+\lambda_2 h)h}{\varphi^{1/2}(t+h)} \\
    &= \frac{c\varphi'_0(t+\lambda_2 h)h}{n\theta_0} + o(1/n).
\end{align*}
Furthermore, by the tensorization property of the Hellinger distance, we have
\begin{align*}
    \liminf_{n\longrightarrow \infty}\,H^2\left(P^n_{\varphi(t+h)}, P^n_{\varphi(t)}\right) &= \liminf_{n\longrightarrow \infty}\,\left\{2-2 \left(1- \frac{H^2\left(P_{\varphi(t)}, P_{\varphi(t+h)}\right)}{2}\right)^n\right\}\\
    &= 2-2 \exp \left(- \frac{c\varphi'_0( t + \lambda_2 h)h}{2\theta_0}\right).
\end{align*}
The last step follows by $(1-x/n)^n \longrightarrow \ \exp(-x)$ as $n \longrightarrow \infty$. Therefore we have 
\begin{align*}
    \liminf_{n\longrightarrow \infty}\, H^2\left(\widetilde{\mathbb{P}}^n_0,  \widetilde{\mathbb{P}}^n_{h}\right) 
    & =\liminf_{n\longrightarrow \infty}\left\{\,H^2(Q_{h}, Q) +\int H^2\left(P^n_{\varphi(t+h)}, P^n_{\varphi(t)}\right)\,dQ^{1/2}(t+h)\,dQ^{1/2}(t)\right\} \\
    & =2 -2\int\exp \left(- \frac{c\varphi'_0( t + \lambda h)h}{2\theta_0}\right)\,dQ^{1/2}(t+h)\,dQ^{1/2}(t). 
\end{align*}

Putting together, we have 
\begin{align*}
    &\liminf_{n \longrightarrow \infty}\,\inf_{T}\, \sup_{|\theta-\theta_0| < cn^{-1}} \, n^2\mathbb{E}_{P^n_{\theta}}|T(X)-\theta |^2\\
    &\qquad \ge  \bigg[\frac{ch\left|\int \left(\varphi_0'(t+\lambda h)\right)\, dQ(t)\right|}{2\left(2 -2\int  \exp \left(- \frac{c\varphi'_0( t + \lambda h)h}{2\theta_0}\right)\,dQ^{1/2}(t+h)\,dQ^{1/2}(t)\right)^{1/2}} \\
    &\qquad\qquad -ch\left(\int \{\varphi_0'(t+\lambda h)\}^2\, dQ(t)\right)^{1/2}\bigg]^2_+.
\end{align*}
Since the above inequality holds for any $h$, we denote by $h = (\theta_0/c)\eta $ and it still holds for any $\eta$. Under this parameterization, we have
\begin{align*}
    &\liminf_{n \longrightarrow \infty}\,\inf_{T}\, \sup_{|\theta-\theta_0| < cn^{-1}} \, n^2\mathbb{E}_{P^n_{\theta}}|T(X)-\theta |^2\\
    &\qquad \ge  \bigg[\frac{\theta_0\eta \left|\int \left(\varphi_0'(t+\lambda (\theta_0/c)\eta )\right)\, dQ(t)\right|}{2\left(2 -2\int  \exp \left(- \frac{\varphi'_0( t + \lambda (\theta_0/c)\eta )\eta }{2}\right)\,dQ^{1/2}(t+(\theta_0/c)\eta )\,dQ^{1/2}(t)\right)^{1/2}} \\
    &\qquad\qquad -\theta_0\eta \left(\int \{\varphi_0'(t+\lambda (\theta_0/c)\eta )\}^2\, dQ(t)\right)^{1/2}\bigg]^2_+
\end{align*}
for any $\eta$. Now, taking $c \longrightarrow \infty$, we have 
\begin{align*}
        &\liminf_{c \longrightarrow \infty}\,\liminf_{n \longrightarrow \infty}\,\inf_{T}\, \sup_{|\theta-\theta_0| < cn^{-1}} \, n^2\mathbb{E}_{P^n_{\theta}}|T(X)-\theta |^2\\
    &\qquad \ge  \sup_{\eta \in\mathbb{R},\, Q,\, \varphi_0}\, \theta_0^2\left[\frac{\eta \left|\int \varphi_0'(t)\, dQ(t)\right|}{2\left(2 -2\int  \exp \left(- \varphi'_0( t)\eta /2\right)\, dQ(t)\right)^{1/2}} -\eta \left(\int \{\varphi_0'(t)\}^2\, dQ(t)\right)^{1/2}\right]^2_+.
\end{align*}
The constant can be made strictly positive for certain choices of $Q$ by the analogous argument as the proof of Lemma~\ref{lemma:alpha-beta}. This concludes the claim.
\subsection{On the constant for Example 4}
As discussed in the main text, the optimal constant for the lower bound is known and is 1. However, the current expression involves the choice of $Q$ and $\varphi_0$, and it is unclear whether the optimal constant can be obtained by certain choices of $Q$ and $\varphi_0$. While we do not pursue this to the full extent, we provide some initial attempts in the following propositions. 
\begin{proposition}\label{prop:unif-constant1}
Consider a sequence of diffeomorphism $\varphi_0(t;\eta)$, which is indexed by $\eta > 0$. Assuming that $\eta\varphi_0'(t;\eta) \longrightarrow C$ as $\eta \longrightarrow \infty$ for some finite constant, we obtain
\begin{align*}
    &\liminf_{c\longrightarrow \infty}\, \liminf_{n\longrightarrow \infty}\, \inf_{T}\,\sup_{|\theta -\theta_0|< cn^{-1}}\mathbb{E}_{P_\theta^n}n^2|T(X)-\theta|^2 \ge C^* \theta_0^2 
\end{align*}
where $C^* \approx 0.0635^2$.
\end{proposition}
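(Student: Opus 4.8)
The plan is to start from the lower bound established in Lemma~\ref{lemma:uniform}, namely
\begin{align*}
    &\liminf_{c\longrightarrow \infty}\, \liminf_{n\longrightarrow \infty}\, \inf_{T}\,\sup_{|\theta -\theta_0|< cn^{-1}}\mathbb{E}_{P_\theta^n}n^2|T(X)-\theta|^2 \\
    &\qquad \ge \theta_0^2 \sup_{\eta,\,Q,\,\varphi_0}\, \left[\frac{\left|\int \eta\varphi_0'(t)\, dQ(t)\right|}{2\left(2 -2\int  \exp \left(- \eta\varphi'_0( t)/2\right)\, dQ(t)\right)^{1/2}} -\left(\int \{\eta\varphi_0'(t)\}^2\, dQ(t)\right)^{1/2}\right]^2_+,
\end{align*}
and to restrict the supremum to a tractable one-parameter subfamily. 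First I would fix a sequence of diffeomorphisms $\varphi_0(\cdot;\eta)$ satisfying the hypothesis $\eta\varphi_0'(t;\eta)\to C$ as $\eta\to\infty$ (for concreteness the $\arctan$-type family $\varphi_0(t;\eta)=\tfrac{2}{\pi}\arctan(t/(\gamma\eta))$ works once $\gamma$ is chosen appropriately, giving $\eta\varphi_0'(t;\eta)\to 2/(\pi\gamma)$ pointwise with a uniform envelope). Then, holding $Q$ fixed and invoking the dominated convergence theorem — justified by the uniform boundedness of $\eta\varphi_0'(\cdot;\eta)$ — the three integrals converge: $\int \eta\varphi_0'(t)\,dQ \to C$, $\int \{\eta\varphi_0'(t)\}^2\,dQ\to C^2$, and $\int \exp(-\eta\varphi_0'(t)/2)\,dQ\to \exp(-C/2)$. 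Substituting these limits, the bracketed quantity becomes
\[
\left[\frac{C}{2\left(2-2e^{-C/2}\right)^{1/2}} - C\right]_+ \;=\; C\left[\frac{1}{2\left(2-2e^{-C/2}\right)^{1/2}} - 1\right]_+,
\]
so the lower bound reduces to $\theta_0^2 \cdot g(C)$ with $g(C):= C^2\left[\tfrac{1}{2(2-2e^{-C/2})^{1/2}}-1\right]_+^2$.

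Next I would optimize the scalar function $g$ over $C>0$ (equivalently over the limiting slope, which the diffeomorphism family lets us tune freely by varying $\gamma$). This is a one-dimensional calculus problem: one computes $g'(C)=0$, noting that the positive-part constraint is active precisely when $2(2-2e^{-C/2})^{1/2}<1$, i.e.\ for $C$ below a threshold where $2-2e^{-C/2}<1/4$. The maximizer $C^\star$ and the value $g(C^\star)$ have no closed form and must be found numerically; the claim is that $g(C^\star)=C^\star \approx 0.0635^2$, which I would verify by a short numerical evaluation (this matches the structure of the analogous computations in the proofs of Proposition~\ref{prop:LAM_paramtric1}(iv) and Lemma~\ref{lemma:alpha-beta}, where a transcendental optimum is reported to several digits). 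Finally, since this holds for the chosen subfamily and the supremum in Lemma~\ref{lemma:uniform} is over \emph{all} admissible $(\eta,Q,\varphi_0)$, the bound $\theta_0^2 g(C^\star)$ is a valid lower bound for the full supremum, which establishes the proposition.

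The main obstacle is not the analysis — the limiting argument is a routine application of dominated convergence once the diffeomorphism family is pinned down — but rather (i) confirming that the hypothesis "$\eta\varphi_0'(t;\eta)\to C$" is genuinely attainable by a family of \emph{valid} increasing diffeomorphisms $\mathbb{R}\to(-1,1)$ with the uniform integrability needed for the interchange of limit and integral, and (ii) the numerical optimization of $g$, including correctly identifying the regime where the positive part is active so that one does not report a spurious critical point. A secondary subtlety is that the constant obtained depends only on the limiting slope and not on $Q$ at all, so one should remark that further optimization over $Q$ (or over non-degenerate limiting behavior of $\eta\varphi_0'$) could in principle improve the constant toward the known optimum $1$ — consistent with the discussion in the main text that Theorem~\ref{thm:crvtBound_hellinger} does not readily accommodate the compactly-supported least-favorable prior that achieves the sharp constant.
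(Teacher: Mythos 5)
Your proposal is correct and follows essentially the same route as the paper's proof: restrict to a diffeomorphism family with $\eta\varphi_0'(t;\eta)\to C$, pass to the limit under dominated convergence, reduce the supremum to the one-variable function $C\mapsto\bigl[\tfrac{C}{2(2-2e^{-C/2})^{1/2}}-C\bigr]_+^2$, and optimize numerically. The only slip is notational — you write ``$g(C^\star)=C^\star$'' where you mean $g(C^\star)=C^*\approx 0.0635^2$, i.e.\ the optimal \emph{value} of $g$, not the optimal \emph{argument} (which lies near $C\approx 0.06$--$0.07$, below the positive-part threshold $2\ln(8/7)$ that you correctly identify).
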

\begin{proof}[\bfseries{Proof of Proposition~\ref{prop:unif-constant1}}]
Consider the following sequence of diffeomorphisms indexed by $\eta$ such that
\[\eta \varphi'_0(t; \eta) \longrightarrow C\]
as $\eta \longrightarrow \infty$. This is satisfied, for instance, by $2/\pi \arctan(t/\eta)$. Then the lower bound can be simplified as 
    \begin{align*}
        &\sup_{\eta \in\mathbb{R},\, Q,\, \varphi_0}\, \left[\frac{\eta \left|\int \varphi_0'(t)\, dQ(t)\right|}{2\left(2 -2\int  \exp \left(- \varphi'_0( t)\eta /2\right)\, dQ(t)\right)^{1/2}} -\eta \left(\int \{\varphi_0'(t)\}^2\, dQ(t)\right)^{1/2}\right]^2_+\\
        &\qquad \ge \sup_{C}\, \left[\frac{C}{2\left(2 -2  \exp \left(- C/2\right)\right)^{1/2}} -C\right]^2_+ \approx 0.0635^2
\end{align*}
by optimizing for $C$.
\end{proof}
\begin{proposition}\label{prop:unif-constant2}
Under the settings as Example~\ref{example3}, Lemma~\ref{lemma:sharper_ihbound} provides the following local asymptotic minimax lower bound:
\begin{align*}
    &\liminf_{c\longrightarrow \infty}\, \liminf_{n\longrightarrow \infty}\, \inf_{T}\,\sup_{|\theta -\theta_0|< cn^{-1}}\mathbb{E}_{P_\theta^n}n^2|T(X)-\theta|^2 \ge C^* \theta_0^2 
\end{align*}
where $C^* \approx 0.0558$.
\end{proposition}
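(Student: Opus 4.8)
The plan is to apply the refined two‑point Hellinger inequality of Lemma~\ref{lemma:sharper_ihbound} directly to $\psi(\theta)=\theta$ on the local model $\Theta=(\theta_0-cn^{-1},\theta_0+cn^{-1})$, choosing the two comparison parameters to be separated at the exact rate $n^{-1}$ dictated by the irregularity. Concretely, fix $\eta>0$ and take $\theta_1=\theta_0$ and $\theta_2=\theta_0(1+\eta/n)$; for $c>\theta_0\eta$ both points lie in $\Theta$, so Lemma~\ref{lemma:sharper_ihbound} gives
\begin{align*}
\inf_{T}\,\sup_{|\theta-\theta_0|<cn^{-1}}\,\mathbb{E}_{P_\theta^n}|T(X)-\theta|^2\ \ge\ \left[\frac{1-H^2(P_{\theta_0}^n,P_{\theta_0(1+\eta/n)}^n)}{4}\right]_+\left|\theta_0\frac{\eta}{n}\right|^2 .
\end{align*}
The next step is to evaluate the Hellinger distance between the $n$‑fold product uniform laws. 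Reusing the affinity computation already carried out in the proof of Lemma~\ref{lemma:uniform}, the Hellinger affinity of $\mathrm{Unif}(0,a)$ and $\mathrm{Unif}(0,b)$ with $a\le b$ equals $\sqrt{a/b}$, so by tensorization $H^2(P_a^n,P_b^n)=2-2(a/b)^{n/2}$. With $a=\theta_0$, $b=\theta_0(1+\eta/n)$ this is $2-2(1+\eta/n)^{-n/2}\to 2-2e^{-\eta/2}$ as $n\to\infty$, whence $\frac{1-H^2}{4}\to\frac{2e^{-\eta/2}-1}{4}$. Since also $n^2|\theta_0\eta/n|^2=\theta_0^2\eta^2$, multiplying the displayed inequality by $n^2$ and letting $n\to\infty$ and then $c\to\infty$ yields, for every $\eta\ge 0$,
\begin{align*}
\liminf_{c\to\infty}\,\liminf_{n\to\infty}\,\inf_{T}\,\sup_{|\theta-\theta_0|<cn^{-1}}\,n^2\,\mathbb{E}_{P_\theta^n}|T(X)-\theta|^2\ \ge\ \left[\frac{2e^{-\eta/2}-1}{4}\right]_+\theta_0^2\eta^2 .
\end{align*}

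It then remains to optimize the right‑hand side over $\eta$. Writing $g(\eta)=\frac14(2e^{-\eta/2}-1)\eta^2$, which is positive precisely on $(0,2\ln 2)$, the first‑order condition reduces to the transcendental equation $(4-\eta)e^{-\eta/2}=2$; this has no closed form but a unique root $\eta^\ast\approx 0.885$ in the relevant interval, and substituting back gives $\sup_\eta g(\eta)=\frac14(2e^{-\eta^\ast/2}-1)(\eta^\ast)^2=:C^\ast\approx 0.0558$. Combining with the previous display produces the claimed bound $\ge C^\ast\theta_0^2$.

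There is no serious analytic obstacle here: the only points requiring care are (i) checking that the two chosen parameters remain in the shrinking ball once $c$ is taken large, and (ii) the (routine) interchange of limit and the power $(1+\eta/n)^{-n/2}\to e^{-\eta/2}$, both of which are immediate. The ``hard part'' is really conceptual rather than technical — one must recognize that in the irregular regime the correct local separation is $n^{-1}$ rather than $n^{-1/2}$, so that the Hellinger affinity stabilizes to $e^{-\eta/2}$; given Lemma~\ref{lemma:uniform}'s affinity formula this is essentially forced. I would also remark, consistent with the discussion after Lemma~\ref{lemma:sharper_ihbound} and with Proposition~\ref{prop:LAMsemiparametric}(iv), that the resulting $C^\ast\approx 0.0558$ is strictly below the sharp constant $1$ of \cite{korostelev2011mathematical}, illustrating again that a single two‑point comparison cannot recover the optimal minimax constant even in this simple irregular problem.
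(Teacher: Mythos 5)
Your argument is correct and is essentially identical to the paper's proof: both fix a second parameter at distance $O(n^{-1})$ from $\theta_0$, apply Lemma~\ref{lemma:sharper_ihbound}, compute the tensorized Hellinger affinity $\sqrt{a/b}$ of the two uniform laws, pass to the limit $(1+\eta/n)^{-n/2}\to e^{-\eta/2}$, and then optimize the resulting scalar function; your parameterization $\theta_2=\theta_0(1+\eta/n)$ versus the paper's $\theta_2=\theta_0+hn^{-1}$ with $h=2\eta\theta_0$ is only a cosmetic rescaling (your $\eta$ is twice theirs, and your stationarity condition $(4-\eta)e^{-\eta/2}=2$ is the paper's $(2-\eta)e^{-\eta}=1$ after substituting $\eta\mapsto\eta/2$), yielding the same $C^*\approx 0.0558$.
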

\begin{proof}[\bfseries{Proof of Proposition~\ref{prop:unif-constant2}}]
We now apply Lemma~\ref{lemma:sharper_ihbound} to derive the optimal constant for the lower bound. Consider two points in parameter space $\theta_0$ and $\theta_0 + hn^{-1}$, and Lemma~\ref{lemma:sharper_ihbound} implies 
\begin{align}
    &\liminf_{n\longrightarrow\infty}\,\sup_{|h| < c}\, n^2\mathbb{E}_{\theta_0 + hn^{-1/2}}\big|T(X) - \psi(\theta_0 + hn^{-1/2})\big|^2 \ge \liminf_{n\longrightarrow\infty}\,\left[\frac{1 -H^2(P^n_{\theta_0 + hn^{-1/2}}, P^n_{\theta_0})}{4}\right]_+h^2
\end{align}
for all $|h| < c$. An analogous derivation to Example 3 will yield 
\begin{align*}
    \liminf_{n\longrightarrow \infty}\,H^2\left(P^n_{\theta_0+hn^{-1}}, P^n_{\theta_0}\right) 
    &= 2-2 \exp \left(- \frac{h}{2\theta_0}\right)
\end{align*}
for $n$ IID observations from two uniform distributions. Putting together, it follows that
\begin{align}
    \liminf_{c\longrightarrow \infty}\,\liminf_{n\longrightarrow\infty}\,\sup_{|h| < \infty}\, n^2\mathbb{E}_{\theta_0 + hn^{-1/2}}\big|T(X) - \psi(\theta_0 + hn^{-1/2})\big|^2 &\ge \sup_{|h|<c}\,\left[-\frac{1}{4}+\frac{1}{2}\exp \left(- \frac{h}{2\theta_0}\right)\right]_+h^2 \nonumber\\
    &\ge \sup_{|\eta |<\infty}\,4\theta_0^2\left[-\frac{1}{4}+\frac{1}{2}\exp (- \eta )\right]_+\eta ^2\nonumber \\
    &\approx0.0558\,\theta_0^2\nonumber
\end{align}
where we parameterize $h = 2\eta \theta_0$ and optimize for $\eta$ to obtain the result.
\end{proof}

\clearpage
\section{Derivation of the risk of estimators}\label{supp:upper-bound}
In this section, we provide exact upper bounds and lower bounds for the truncated Gaussian mean estimation. To recall, we consider the following estimation problem:
\begin{align}
	\sup_{|\theta| < \delta}\, n\mathbb{E}_\theta |T(X) - \max(\theta, 0)|^2 \nonumber
\end{align}
where we observe $n \ge 1$ IID observations $X:=X_1, \ldots, X_n$ from $N(0, 1)$. We define a local model as $\{N(\theta,1) : |\theta| < \delta\}$ for $\delta > 0$.

\subsection{Lower bound under Gaussian prior and arctan diffeomorphism}
In Example~\ref{example2} (iii) with $\alpha=1$, we have derived that 
\begin{align}
	\inf_{T}\, \sup_{|\theta| < \delta} \, n\mathbb{E}_{\theta}|T(X)-\psi(P_\theta)  |^2 \ge\sup_{Q, \varphi_0}\, \frac{n\delta^{2} |\mathbb{E}_Q \varphi'_0(t)\, I\{t:\varphi_0(t) >0 \}|^2}{\mathcal{I}(Q) +\delta^2\mathbb{E}_Q  \{\varphi_0'(t)\}^2\, \mathcal{I}(\delta\varphi_0(t))}.\nonumber
\end{align}
The Fisher information of distributions in our local model is given by $\mathcal{I}(\theta) = n$ for all $|\theta| < \delta$ under $n$ IID observations. To simplify the derivation, we further restrict the choice of priors and diffeomorphism to be 
\begin{align}
	Q \in \{N(\mu, \sigma^2) : (\mu, \sigma)\in \mathbb{R}\times \mathbb{R}_+\}\textrm{ and } \varphi_0 \in \{t \mapsto \pi/2 \arctan(t/\eta) : \eta > 0\}.\nonumber
\end{align}
We define $\xi_1 := \mu/\eta$ and $\xi_2:=\sigma/\eta$. Since $\varphi'_0(t) = 2(\pi\eta)^{-1}(1+(t/\eta)^2)^{-1}$, we have
 \begin{align}
	\mathbb{E}_Q \{\varphi_0'(t)\}^2 &=\int \frac{4}{\pi^2\eta^2}\left(\frac{1}{1+(t/\eta)^2}\right)^2\frac{1}{\sigma\sqrt{2\pi}}\exp(-(t-\mu)^2/(2\sigma^2))\, dt\nonumber \\
	&= \frac{4}{\pi^2\eta^2}\int\left(\frac{1}{1+(\xi_1+u\xi_2)^2}\right)^2\phi(u)\, du \nonumber
\end{align}
where $\phi$ is a density function of a standard normal distribution. Similarly, 
\begin{align}
	|\mathbb{E}_Q \varphi_0'(t)I(\varphi_0(t)>0)|^2 &=\left|\int \frac{2}{\pi\eta}\left(\frac{1}{1+(t/\eta)^2}\right)\frac{1}{\sigma\sqrt{2\pi}}\exp(-(t-\mu)^2/(2\sigma^2))I(t > 0)\, dt \right|^2\nonumber\\
	&=\frac{4}{\pi^2\eta^2}\left|\int \left(\frac{1}{1+(\mu/\eta+u\sigma/\eta)^2}\right)\frac{1}{\sqrt{2\pi}}\exp(-u^2/2)I(\mu + u\sigma > 0)\, du\right|^2 \nonumber\\
	&= \frac{4}{\pi^2\eta^2}\left|\int \left(\frac{1}{1+(\xi_1+u\xi_2)^2}\right)I(\xi_1+u\xi_2 > 0)\phi(u)\, du\right|^2\nonumber.
\end{align}
Using the fact that $\mathcal{I}(Q)=\sigma^{-2}$, the lower bound becomes
\begin{align}
	\frac{n\delta^2|\mathbb{E}_Q \varphi_0'(t)I(t>0)|^2}{(1/\sigma^{2}) + \delta^2 n\mathbb{E}_Q \{\varphi_0'(t)\}^2} &= \frac{4 n\xi_2^2\left|\int \left(1+(\xi_1+u\xi_2)^2\right)^{-1}I(u >-\xi_1/\xi_2)\phi(u)\, du\right|^2}{\pi^2\delta^{-2} + 4n\int\left(1+(\xi_1+u\xi_2)^2\right)^{-2}\phi(u)\, du }\nonumber\\
	&=\frac{4 n\xi_2^2\left|\E \left[\left(1+(\xi_1+Z\xi_2)^2\right)^{-1}I(Z >-\xi_1/\xi_2)\right]\right|^2}{\pi^2\delta^{-2} + 4n\E\left[\left(1+(\xi_1+Z\xi_2)^2\right)^{-2} \right]}\nonumber
\end{align}
where $Z \overset{d}{=} N(0,1)$. We conclude the claim by optimizing over $(\xi_1, \xi_2) \in\mathbb{R}\times \mathbb{R}_+$ for given $\delta > 0$ and $n \ge 1$.

\subsection{Proof of Proposition~\ref{prop:risk-mle}}
In this section, we provide the exact local risks of the following plug-in estimators, defined as
\begin{align}
    S_n^{\text{plug-in}} &:=  \psi(\widehat \theta_{\text{MLE}}) \quad \text{where} \quad 
    \widehat \theta_{\text{MLE}} := \overline{X}_n = \frac{1}{n}\sum_{i=1}^n X_i, \quad \text{and}\nonumber\\
    S_n^{\text{pre-test}} &:=  \psi(\widehat \theta_{\text{pre-test}})\quad \text{where} \quad 
    \widehat \theta_{\text{pre-test}} := \begin{cases}
    \overline{X}_n & \text{If}\quad  |\overline{X}_n| \ge n^{-1/4}\\
    0 &\text{otherwise}.
    \end{cases}\nonumber
\end{align}
\begin{proof}[\bfseries{Proof of Proposition~\ref{prop:risk-mle}}]
	First, we consider the plug-in MLE. By splitting the risk into two cases, we have
	\begin{align}
		&\sup_{|\theta|<\delta}\, n\mathbb{E}_\theta|S_n^{\text{plug-in}}-\max(\theta,0)|^2  = \max\left\{\sup_{0 \le \theta <\delta}\, n\mathbb{E}_\theta|S_n^{\text{plug-in}}-\theta|^2, \sup_{-\delta < \theta <0}\, n\mathbb{E}_\theta|S_n^{\text{plug-in}}|^2\right\}\nonumber.
	\end{align}
	For the first case, we have
	\begin{align}
		&\sup_{0 \le \theta <\delta}\, n\mathbb{E}_\theta|S_n^{\text{plug-in}}-\theta|^2 \nonumber\\
		&\qquad = \sup_{0 \le \theta <\delta}\, n\left\{\mathbb{E}_\theta[|\overline{X}_n-\theta|^2I(\overline{X}_n \ge 0)] + \mathbb{E}_\theta[\theta^2I(\overline{X}_n < 0)]\right\} \nonumber\\
		&\qquad = \sup_{0 \le \theta <\delta}\, \left\{\mathbb{E}_\theta[|n^{1/2}(\overline{X}_n-\theta)|^2I(n^{1/2}(\overline{X}_n-\theta) \ge -n^{1/2}\theta)]+ n\theta^2\mathbb{E}_\theta[I(n^{1/2}(\overline{X}_n-\theta) < -n^{1/2}\theta)]\right\}\nonumber.
	\end{align}
	Since $Z:=n^{1/2}(\overline{X}_n-\theta) \overset{d}{=} N(0,1)$, we conclude 
	\begin{align}
		&\sup_{0 \le \theta <\delta}\, n\mathbb{E}_\theta|S_n^{\text{plug-in}}-\theta|^2= \sup_{0 \le \theta <\delta}\, \left\{\mathbb{E}_\theta[Z^2I(Z \ge -n^{1/2}\theta)]+ n\theta^2\mathbb{E}_\theta[I(Z < -n^{1/2}\theta)]\right\}\nonumber.
	\end{align}
	
Next, we claim that 
\begin{align}\label{eq:risk_inequality}
   \sup_{- \delta < \theta \le 0}\,n\mathbb{E}_\theta |S_n^{\text{plug-in}}|^2 \le  \sup_{0 \le \theta < \delta}\, \mathbb{E}_\theta |S_n^{\text{plug-in}} - \theta|^2,
\end{align}
that is, the risk is always greater when $\theta \ge 0$ than $\theta < 0$. Following an analogous argument from $\theta \ge 0$, we obtain
\begin{align*}
    \sup_{ - \delta < \theta \le 0}\, n\mathbb{E}_\theta |S_n^{\text{plug-in}}|^2 &= \sup_{ - \delta < \theta \le 0}\,n\mathbb{E}_\theta [\overline{X}_n^{2} I(\overline{X}_n \ge 0)] \\
    &= \sup_{-\delta \le \theta < 0}\,\E [(Z+n^{1/2}\theta)^2 I(Z \ge -n^{1/2}\theta)] 
\end{align*}
where $Z \overset{d}{=}N(0,1)$. Denoting by $\eta = n^{1/2}\theta$ and by $\phi$ the density function of a standard Gaussian distribution, it follows, for any $\eta \le 0$, 
\begin{align*}
    \E (Z+\eta)^{2} I(Z\ge -\eta) &= \int_{-\eta}^\infty (z+\eta)^{2}\phi(z)\, dz\le  \int_{-\eta}^\infty (z+\eta)^{2}\phi(z+\eta)\, dz=  \int_{0}^\infty \tilde{z}^{2}\phi(\tilde{z})\, d\tilde{z}.
\end{align*}
The middle inequality follows since the density $\phi(z)$ is non-increasing on $0 \le z$ and thus $\phi(z) \le \phi(z+\eta)$ for any $\eta \le 0$. The last quantity is equivalent to $\E [Z^{2} I(Z\ge 0)]$ hence the risk of the estimator for any  $\eta \le 0$ is upper bounded by the case when $\eta = 0$, or equivalently when $\theta = 0$. Therefore, equation \eqref{eq:risk_inequality} is implied as desired. This concludes the claim.

For the pre-test estimator, the derivation is almost analogous except that we have 
	\begin{align}
		\sup_{0 \le \theta <\delta}\, n\mathbb{E}_\theta|S_n^{\text{pre-test}}-\theta|^2 &= \sup_{0 \le \theta <\delta}\, n\left\{\mathbb{E}_\theta[|\overline{X}_n-\theta|^2I(\overline{X}_n \ge n^{-1/4})] + \mathbb{E}_\theta[\theta^2I(\overline{X}_n < n^{-1/4})]\right\} \nonumber\\
		& = \sup_{0 \le \theta <\delta}\, \left\{\mathbb{E}_\theta[Z^2I(Z \ge n^{1/4}-n^{1/2}\theta)]+ n\theta^2\mathbb{E}_\theta[I(Z < n^{1/4}-n^{1/2}\theta)]\right\}\nonumber.
	\end{align}
The remaining derivation is omitted.
\end{proof}

\clearpage
\section{Connection to the modulus of continuity}\label{supp:modulus}
We discuss the connections between Theorem~\ref{thm:crvtBound_hellinger} and existing minimax lower bounds in terms of the Hellinger distance. Previous works by \citet{donoho1987geometrizing, donoho1991geometrizing2} provide a geometric interpretation of the minimax rate through \textit{the modulus of continuity}. We define the modulus of continuity of a real-valued functional $\psi$ with respect to the Helligner distance as
\begin{equation*}
    \omega(\varepsilon) := \left\{\sup_{\theta_1, \theta_2\in\Theta_0}\,|\psi(P_{\theta_1})-\psi(P_{\theta_2})| \, : \, H^2(P_{\theta_1}, P_{\theta_2}) \le \varepsilon^2 \right\}.
\end{equation*}
This quantity captures the maximum fluctuation of functionals evaluated at sufficiently ``similar" distributions. We momentarily focus on estimating $\psi(P_0)$ based on $n$ IID observations $X_1, \ldots, X_n$ drawn from $P_0$. The minimax risk is considered for the supremum over a local model $\mathcal{P}_n := \{P : H^2(P_0, P) \le n^{-1}\}$, which is a set of distributions concentrated at $P_0$, analogous to the setting of the LAM theorem. Section 9.4 of \cite{donoho1987geometrizing} proves that, under additional assumptions, for any $n$ sufficiently large, \begin{align}\label{eq:hellinger_modulus}
    &\inf_{T}\, \sup_{P \in\mathcal{P}_n}\, n\mathbb{E}_{P}|T(X)-\psi(P)|^2 > \frac{1}{16}\left(\frac{\omega(n^{-1/2})}{n^{-1/2}}\right)^2.
   	\end{align}
\cite{donoho1987geometrizing} also shows that, under the setting of the LAM theorem (Theorem~\ref{theorem:LAM_parametric}) and $\psi(P_\theta) = \theta$, it holds
\begin{align}\label{eq:lam-donoho}
	 \liminf_{n\longrightarrow\infty}\,\frac{1}{16}\left(\frac{\omega(n^{-1/2})}{n^{-1/2}}\right)^2= \frac{1}{4\mathcal{I}(\theta_0)}.
\end{align}
Hence, the non-asymptotic minimax lower bound given by \eqref{eq:hellinger_modulus} converges to $1/4$ the optimal constant according to the LAM theorem. It may be tempting to multiply the lower bound of \eqref{eq:hellinger_modulus} by $4$, that is, to consider the sequence $4^{-1}\{n^{1/2}\omega(n^{-1/2})\}^2$. Although this sequence converges to the correct constant as $n \longrightarrow \infty$, it is an invalid lower bound as there exists an estimator that violates this inequality, provided in Section 8.4 of \cite{donoho1987geometrizing}. \cite{donoho1987geometrizing} conjectured that a different approach is necessary to develop a sequence of non-asymptotic minimax lower bounds that converge to the correct limit. This result is provided by Theorem~\ref{thm:crvtBound_hellinger}. Our result is similar to \cite{donoho1987geometrizing} in the sense that both define local models relative to the Hellinger distance; however, they differ as we consider the mixture of distribution over the Hellinger ball instead of two distributions. It is also worth noting that an analogous non-asymptotic result in terms of the two-point modulus of continuity is studied by \citet{chen1997general} without particular focus on a sharp constant.

Finally, similar objects as the Hellinger modulus of continuity have been considered in the econometrics literature. When the local asymptotic minimax lower bound, such as the right-hand term of Theorem~\ref{thm:crvtBound_hellinger}, is bounded away from zero, the functional estimation is called \textit{ill-posed} \citep{potscher2002lower, forchini2005ill}. This precludes the existence of a (locally) uniformly consistent estimator of $\psi(P_0)$ in the nonparametric model. \citet{potscher2002lower} and \citet{forchini2005ill} define the modulus of continuity in the total variation distance as
\begin{equation*}
    \omega(\varepsilon) := \left\{\sup_{\theta_1, \theta_2\in\Theta}|\psi(P_{\theta_1})-\psi(P_{\theta_2})| \, : \, \textrm{TV}(P_{\theta_1}, P_{\theta_2}) \le \varepsilon \right\}
\end{equation*}
where the total variation distance is defined as 
\begin{align*}
     \textrm{TV}(P_0, P_1) := \sup_{A \in \mathcal{A}}|P_0(A) - P_1(A)|,
\end{align*}
with the supremum over all Borel measurable sets $A$.
Theorem 2.1 of \citet{potscher2002lower} provides the sufficient condition under which the following holds:
\begin{align}
	\lim_{\varepsilon\longrightarrow 0}\, \inf_{T}\, \sup_{P \in\mathcal{P}(\varepsilon)}\, \mathbb{E}_{P}|T(X)-\psi(P)|^2&\ge \lim_{\varepsilon\longrightarrow 0}\, \frac{1}{4}\omega(\varepsilon)^2.\nonumber
\end{align}
where $\mathcal{P}(\varepsilon) := \{P  : \mathrm{TV}(P_0, P)\le \varepsilon\}$. \citet{potscher2002lower} and \citet{forchini2005ill} analyze concrete problems where the lower bound is bounded away from zero. We note that there is no significant loss in considering the TV- or Hellinger-moduli as both distances define the same topology of the space of probability measures. However, our result also differs from \citet{potscher2002lower} and \citet{forchini2005ill} as we focus on the non-asymptotic results as well as the optimal constant. When the asymptotic lower bound of Theorem~\ref{thm:crvtBound_hellinger} is bounded away from zero, it can be considered as the measure of \textit{ill-posedness}. We defer the corresponding analysis to future works.

\section{Additional illustration}
Section~\ref{section:estimators} provides the detail of the simulation study. Figure~\ref{fig:estimators1} below shows the same phenomena as Figure~\ref{fig:estimators} by fixing $\delta$ and varying the sample size $n$.
\setcounter{figure}{1}
\begin{figure*}[h]
    \centering
        \centering
        \includegraphics[width=5in]{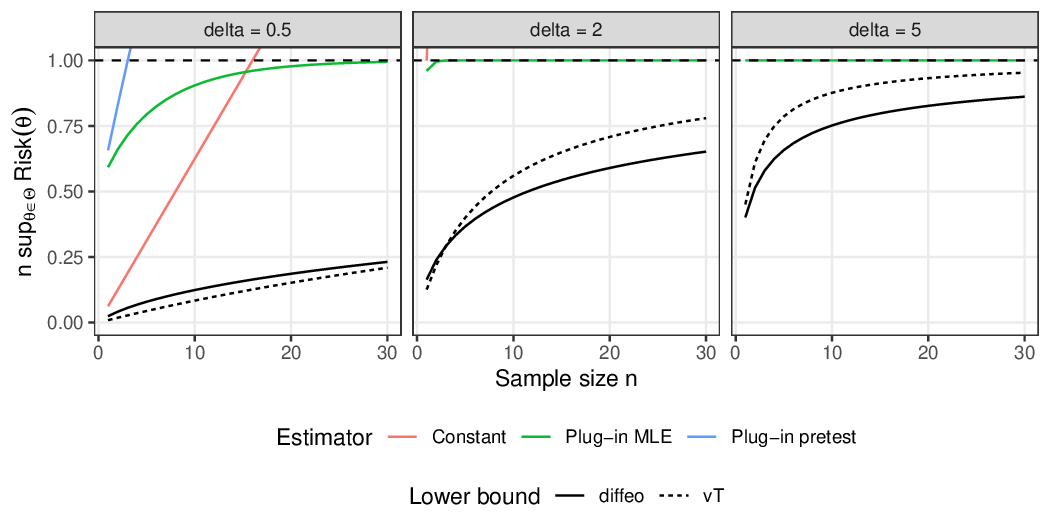}
    \caption{The non-asymptotic local minimax lower bounds and the risk given by different estimators: For fixed local neighborhoods with varying sample sizes.}
\label{fig:estimators1}
\end{figure*}

\newpage
\section{Further remarks on the present results}\label{supp:extensions}
\subsection{Non-pathwise differentiable functionals}

While we demonstrate the application of the proposed lower bounds to derive the semiparametric efficiency bound for pathwise differentiable functionals, the main results from Section~\ref{section:van_trees} still hold for general non-pathwise differentiable functionals. First, by taking $\Phi$ as a collection of pathwise differentiable functionals, Lemma~\ref{thm:L2-approximation} implies
\begin{align*}
    &\sup_{g \in \mathcal{T}_{P_0}}\, \inf_{T} \, \sup_{\theta \in \Theta}\, \mathbb{E}_{\theta}|T(X)-\psi(P_{\theta,g})|^2\\
    &\,\,\ge \sup_{g \in \mathcal{T}_{P_0}}\,\sup_{\phi \in \Phi,\, Q\in \mathcal{Q}(\Theta)}\, \left[\left(\int \,\mathbb{E}_{\theta}|T(X)-\phi(P_{\theta,g})|^2\, dQ\right)^{1/2} - \left(\int \|\psi(P_{\theta,g})-\phi(P_{\theta,g})\|^2dQ(\theta)\right)^{1/2}\right]_+^2
\end{align*}
This suggests that the efficiency bound for non-pathwise differentiable functionals can be analyzed by first placing a prior over the local parametric paths and then deriving the Bayes risk for estimating $\phi$ at the expense of the approximation error.

Alternatively, we can apply Theorem~\ref{thm:crvtBound_hellinger} to each parametric path and take the supremum over any generic tangent set $\mathcal{T}_{P_0} \subseteq L_2^0(P_0)$. This implies
\begin{align*}
    &\sup_{g \in \mathcal{T}_{P_0}}\, \inf_{T} \, \sup_{\theta \in\mathbb{R}^d}\, \mathbb{E}_{\theta}|T(X)-\psi(P_{\theta,g})|^2\\
    &\,\, \ge \sup_{g \in \mathcal{T}_{P_0}}\, \sup_{h\in\mathbb{R}^d}\left[\left(\frac{|\int_{\mathbb{R}^d} \left(\psi(P_{t,g})-\psi(P_{t-h,g}\right)\,dQ(t)|^2}{4H^2(\mathbb{P}_{0,g}, \mathbb{P}_{h,g})} \right)^{1/2}-\left(\int_{\mathbb{R}^d} |\psi(P_{t,g})-\psi(P_{t-h,g})|^2 \, dQ(t)\right)^{1/2}\right]^2_+
\end{align*}
where
 \[d\mathbb{P}_{0,g}(x, \theta) := dP_{\theta, g}(x)\, dQ(\theta)\quad\mbox{and}\quad d\mathbb{P}_{h,g}(x,\theta) := dP_{\theta+h, g}(x)\, dQ(\theta+h).\]
In the above non-asymptotic lower bound, the limiting behavior of $\psi(P_{t,g})-\psi(P_{t-h,g})$ as $h \longrightarrow 0$ is unspecified, allowing $\psi$ to be non-smooth at $t$.

\subsection{Non-convex sets}
This manuscript extends the result to a general parameter set $\Theta_0 \subseteq \Theta$ via diffeomorphism. 
Alternatively, we can use the fact that the supremum over $\theta\in\Theta_0$ can be obtained by considering any two points $\theta_0, \theta_1\in\Theta_0$ and any smooth path $\gamma  :=  \gamma_{\theta_0, \theta_1}:(-\infty, \infty)\mapsto\Theta_0$ such that $\lim_{u\to-\infty}\gamma(u) = \theta_0, \lim_{u\to\infty}\gamma(u) = \theta_1$. Let $\widetilde{\psi}(u) := \psi(\gamma(u))$ for $u\in\mathbb{R}$,
\[
d\mathbb{P}_0(x,u) := dP_{\gamma(u)}(x)dQ(u)\quad\mbox{and}\quad d\mathbb{P}_h(x,u) := dP_{\gamma(u + h)}(x)dQ(u+h),
\]
where $Q$ is a probability measure on $\mathbb{R}$.
Using this notation, we obtain
\begin{align*}
&\inf_{T}\,\sup_{\theta\in\Theta_0}\,\mathbb{E}_{\theta}\|T(X) - \psi(\theta)\|^2\\ &\quad= \inf_{T}\,\sup_{\theta_0, \theta_1}\,\sup_{\gamma}\,\sup_{u\in\mathbb{R}}\,\mathbb{E}_{\gamma(u)}\|T(X) - \widetilde{\psi}(u)\|^2\\
&\quad\ge \sup_{\theta_0, \theta_1}\,\sup_{\gamma}\,\sup_{u\in\mathbb{R}}\,\left[\left(\frac{\|\int_{\mathbb{R}} (\widetilde{\psi}(u) - \widetilde{\psi}(u - h))\, dQ(u)\|^2}{4H^2(\mathbb{P}_0, \mathbb{P}_h)}\right)^{1/2} - \left(\int_{\mathbb{R}} \|\widetilde{\psi}(u) - \widetilde{\psi}(u - h)\|^2\, dQ(u)\right)^{1/2}\right]_+^2.
\end{align*}
As a concrete example, suppose we are interested in finding a lower bound when $\Theta_0 := \{\theta:\,\rho(\theta - \theta_0) \le \delta\}$ for some positive homogeneous norm function $\rho(\cdot)$, then one can take $\theta_1 = \theta_0 + \delta h$ such that $\rho(h) \le 1$ and consider $\gamma(u) = \theta_0 + \Phi(u)\delta h$ where $\Phi(\cdot)$ is the CDF of the standard normal distribution. This approach of taking a smooth path between two arbitrary points allows for the application of the results above to non-convex, but continuous, parameter spaces $\Theta_0$.



\end{appendices}
\end{document}